\DeclarePairedDelimiter\ceil{\lceil}{\rceil}
\author{Damian D{\k a}browski}
\address{Department of Mathematics and Statistics\\ University of Jyv\"askyl\"a,
	P.O. Box 35 (MaD)\\
	FI-40014 University of Jyv\"askyl\"a\\
	Finland\vspace{1em}} 
\email{\href{mailto:damian.m.dabrowski@jyu.fi}{damian.m.dabrowski@jyu.fi}}
\author{Tuomas Orponen}
\email{\href{mailto:tuomas.t.orponen@jyu.fi}{tuomas.t.orponen@jyu.fi}}
\title{On the logarithmic equilibrium measure on curves}
\date{\today}
\keywords{potential theory, logarithmic potential, regularity of equilibrium measure}
\thanks{D.D is supported by the Research Council of Finland postdoctoral grant \emph{Quantitative rectifiability and harmonic measure beyond the Ahlfors-David-regular setting}, grant no. 347123. T.O. is supported by the European Research Council (ERC) under the European Union’s Horizon Europe research and innovation programme (grant agreement No 101087499), and by the Research Council of Finland via the project \emph{Approximate incidence geometry}, grant no. 355453.}
\newcommand{\R}{\mathbb{R}}
\newcommand{\N}{\mathbb{N}}
\newcommand{\C}{\mathbb{C}}
\newcommand{\spt}{\operatorname{spt}}
\newcommand{\Hd}{\dim_{\mathrm{H}}}
\newcommand{\diam}{\operatorname{diam}}
\newcommand{\dist}{\operatorname{dist}}
\newcommand{\sgn}{\operatorname{sgn}}
\newcommand{\Rea}{\operatorname{Re}}
\newcommand{\bmu}{\boldsymbol{\mu}}
\def\Barint_#1{\mathchoice
          {\mathop{\vrule width 6pt height 3 pt depth -2.5pt
                  \kern -8pt \intop}\nolimits_{#1}}%
          {\mathop{\vrule width 5pt height 3 pt depth -2.6pt
                  \kern -6pt \intop}\nolimits_{#1}}%
          {\mathop{\vrule width 5pt height 3 pt depth -2.6pt
                  \kern -6pt \intop}\nolimits_{#1}}%
          {\mathop{\vrule width 5pt height 3 pt depth -2.6pt
                  \kern -6pt \intop}\nolimits_{#1}}}
\numberwithin{equation}{section}
\theoremstyle{plain}
\newtheorem{thm}{Theorem}
\numberwithin{thm}{section}
\newtheorem*{"thm"}{"Theorem"}
\newtheorem{lemma}[thm]{Lemma}
\newtheorem{ex}[thm]{Example}
\newtheorem{cor}[thm]{Corollary}
\newtheorem{proposition}[thm]{Proposition}
\theoremstyle{definition}
\newtheorem{definition}[thm]{Definition}
\newtheorem{notation}[thm]{Notation}
\theoremstyle{remark}
\newtheorem{remark}[thm]{Remark}
\newcommand{\nref}[1]{(\hyperref[#1]{#1})}
\DeclareMathSymbol{\intop}  {\mathop}{mathx}{"B3}
\DeclareMathOperator{\lip}{Lip}
\DeclareMathOperator{\cpc}{Cap}
\DeclareMathOperator{\graph}{graph}
\begin{document}

\begin{abstract} Let $\mu$ be the logarithmic equilibrium measure on a compact set $\gamma \subset \R^{d}$. We prove that $\mu$ is absolutely continuous with respect to the length measure on the part of $\gamma$ which can be locally expressed as the graph of a $C^{1,\alpha}$-function $\R \to \R^{d - 1}$, $\alpha > 0$. 

For $d = 2$, at least in the case where $\gamma$ is a compact $C^{1,\alpha}$-graph, our result can also be deduced from the classical fact that $\mu$ coincides with the harmonic measure of $\Omega = \R^{2} \, \setminus \, \gamma$ with pole at $\infty$. For $d \geq 3$, however, our result is new even for $C^{\infty}$-graphs. In fact, up to now it was not even known if the support of $\mu$ has positive dimension. \end{abstract}

\maketitle

\tableofcontents

\section{Introduction}

\subsection{Equilibrium measures and previous results on their structure}\label{s:background} One of the key concepts in potential theory is the \emph{equilibrium measure}. If $X$ is a topological space, $k \colon X \times X \to \R$ is a sufficiently nice \emph{kernel}, and $K \subset X$ is compact, the \emph{$k$-equilibrium measure on $K$} is the unique probability measure $\mu$ supported on $K$ which minimises the \emph{$k$-energy} 
\begin{displaymath} \mathcal{E}_{k}(\mu) = \iint k(x,y) \, d\mu(x) \, d\mu(y). \end{displaymath}
(This notion is also sometimes known as the \emph{$k$-capacitary measure}.) The definition only makes sense if the existence and uniqueness of $\mu$ are guaranteed, but this is true under fairly general hypotheses on $X$ and $k$, see Fuglede's paper \cite{MR117453}. 

In this paper, we are only interested in the special case where $X = \R^{d}$, and $k$ is the \emph{$s$-Riesz kernel} $k(x,y) = |x - y|^{-s}$ with $s > 0$, or the \emph{logarithmic kernel $k(x,y) = -\log |x - y|$}. In these cases, the equilibrium measure exists and is unique on a compact set $K \subset \R^{d}$, provided that $K$ supports at least one probability measure with finite $k$-energy. An existence proof covering the Riesz and logarithmic kernels simultaneously can be found in \cite[Theorem 5.4]{hayman1976subharmonic}. The uniqueness in the case of positive-order Riesz potentials is explained and stated in \cite[Corollary, p. 146]{MR350027}, and in the case of the logarithmic potential in \cite[Section 2]{MR125248}. The existence and uniqueness are also explained in the unpublished notes \cite[Theorem 6.8 + Theorem 6.11]{PratsTolsa}. These are not the original references: \cite{MR125248} quotes the thesis of Frostman \cite{Solr-kuopio.291385} and the paper \cite{Riesz1988} of Riesz for the main observations needed in the proof of the existence of the logarithmic equilibrium measure.

From now on, the equilibrium measure for the $s$-Riesz kernel will be called the \emph{$s$-equilibrium measure}, and the \emph{$\log$-equilibrium measure} is the one associated with the logarithmic kernel. How do the $(s/\log)$-equilibrium measures look like on "natural" compact sets $K \subset \R^{d}$? Here are some known results:
\begin{itemize}
	\item[(a)] In concrete cases such as line segments, balls, spheres, ellipsoids, explicit formulae for the (densities of the) measures, or at least their potentials, are available, see \cite[p. 166, 172]{MR350027} and \cite[Section 4.6]{MR3970999}. For example, the density of the logarithmic equilibrium measure $\mu_{\log}$ on $[-1,1]$ is given (see \cite[Proposition 4.6.1]{MR3970999})  by
	\begin{displaymath} d\mu_{\log}(x) = \frac{dx}{\pi\sqrt{1 - x^{2}}}. \end{displaymath}
	It is noteworthy that $d\mu_{\log}$ is an unbounded function, and indeed so singular the highest (global) exponent $s > 0$ such that $\mu_{\log}(B(x,r)) \lesssim r^{s}$ is $s = \tfrac{1}{2}$.
	\item[(b)] If $\Omega \subset \R^{d}$ is a Wiener regular unbounded open set with $\partial \Omega$ compact, the $(d - 2)$-equilibrium measure on $\partial \Omega$ (or the $\log$-equilibrium measure for $d = 2$) equals a constant times $\omega^{\infty}_{\Omega}$, the harmonic measure of $\Omega$ with pole at $\infty$, see \cite[Theorem 7.32]{PratsTolsa}. The connection between harmonic measure in $\R^{2}$ and the $\log$-equilibrium measure is also explained in \cite[Chapter III]{MR2150803}, although the result above is not stated in this generality.
\end{itemize}

Harmonic measure in $\R^{d}$ is quite well-understood. For example, it is known \cite{MR1078740} that 
\begin{displaymath} \mathcal{H}^{d - 1}|_{\partial \Omega} \ll \omega^{\infty}_{\Omega} \ll \mathcal{H}^{d - 1}|_{\partial \Omega} \end{displaymath}
whenever $\partial \Omega$ is Ahlfors $(d - 1)$-regular, and $\Omega$ an NTA domain. In particular, if $\Omega$ is a Lipschitz domain, then the harmonic measure is mutually absolutely continuous with respect to the $(d - 1)$-dimensional Hausdorff measure on $\partial \Omega$, and indeed an $A_{\infty}$-weight. (This special case of Lipschitz domains is due to Dahlberg \cite{MR466593}.) For more general conditions for the absolute continuity of harmonic measure, see \cite{MR1078268} (in the plane) or \cite[Theorem I]{MR3903916} and \cite[Theorem III]{MR4251289} in general dimensions.

 These results translate immediately to $(d - 2)$-equilibrium measures thanks to (b). A little imprecisely, one could summarise by saying that the $(d - 2)$-equilibrium measure on "Lipschitz $(d - 1)$-manifolds" is absolutely continuous with respect to $\mathcal{H}^{d - 1}$.

\begin{itemize}
	\item[(c)] Another result which applies to a wide variety of sets is the following due to Calef and Hardin \cite[Theorem 1.3]{MR2491459}: if $A \subset \R^{d}$ is a compact \emph{strongly $m$-rectifiable set} (\cite[Definition 1.1]{MR2491459}), then the $s$-equilibrium measures $\mu_{s}$ weak* converge to the normalised $m$-dimensional Hausdorff measure on $A$ as $s \nearrow m$. The same result for (subsets of) $C^{1}$-manifolds is due to Frostman and Wallin \cite[Theorem 2]{10.1007/BFb0086328}. Calef \cite{MR2670134} also obtained a corresponding result for $m$-dimensional self-similar sets in $\R^{d}$ (where $m$, unlike above, need not be an integer). These results apply to many sets, but say nothing about the measure $\mu_{s}$ for $s$ fixed.
	\item[(d)] For $s \in (0,d) \, \setminus \, \{d - 2\}$ fixed, results are scarce. A paper of Wallin \cite{MR204693} from the 60s concerns sets $K \subset \R^{d}$ with non-empty interior: for $s \in (d - 2,d)$, he proves that the $s$-equilibrium measure on $K$ is mutually absolutely continuous with respect to Lebesgue measure -- indeed with analytic density -- on the interior of $K$. 
	\item[(e)] A version of Wallin's result for $(d - 1)$-dimensional $C^{\infty}$-manifolds $\Gamma \subset \R^{d}$ is due to Hardin, Reznikov, Saff, and Volberg \cite[Theorem 2.7]{MR4009530} from 2017: for $s \in (d - 2,d - 1)$, the $s$-equilibrium measure is absolutely continuous with respect to $\mathcal{H}^{d - 1}|_{\Gamma}$, with locally bounded density. The result also holds in the "harmonic" case $s = d - 2$, but this was known earlier (under milder regularity).
\end{itemize}

\begin{remark} The proof of \cite[Theorem 2.7]{MR4009530} contained a point we did not fully understand. It is based on the invertibility of elliptic pseudo-differential operators, but it seemed to us that operator "$P$", whose symbol "$p$" is defined above \cite[(7.10)]{MR4009530}, is not elliptic in the claimed domain $\tilde{B}$. Indeed, the cut-off function $u$ (in the definition of the symbol) is compactly supported on $\mathrm{int\,} \tilde{B}$, so the $p(\tilde{x},\xi) = 0$ for all $(\tilde{x},\xi)$ with $\tilde{x} \in \tilde{B} \, \setminus \, \spt u$.  \end{remark} 

All the results in points (b)-(e) concern $s$-equilibrium measures for $s \in [d - 2,d]$. For $s \in [0,d - 2)$, there is a lack of "general" results on the structure of the $s$-equilibrium measure -- by this, we mean results beyond a few very specific sets (see (a)) where explicit formulae are available. One exception is a result \cite{MR2297961} of Hardin, Saff, and Stahl, which studies the support of the $\log$-equilibrium measure on \emph{surfaces of revolution} in $\R^{3}$. They prove that $\spt \mu_{\log}$ is contained in the "outermost" part of the surface. For example, the torus is a surface of revolution, based on a circle $\R^{2}$. The result says that $\spt \mu_{\mathrm{log}}$ is contained on the part of the torus obtained by revolving only the "outer" semi-circle. 

\subsubsection{The discrete energy minimisation problem} We briefly mention the discrete variant of our problem, and its connection to the continuous variant. Let $k$ be either the logarithmic or the Riesz kernel on $\R^{d}$ (although what follows would work under much broader generality, see \cite[Chapter 4]{MR3970999}). For a compact infinite set $K \subset \R^{d}$, and $N \in \N$ fixed, a \emph{$k$-minimising $N$-set on $K$} is any finite set $P = \{p_{1},\ldots,p_{N}\} \subset K$ which minimises the discrete $k$-energy
\begin{displaymath} \mathcal{E}_{k}(P) := \sum_{i \neq j} k(p_{i},p_{j}). \end{displaymath}
Define $\mathcal{E}_{k}(K,N)$ as the discrete $k$-energy of a $k$-minimising $N$-set on $K$. Define also the normalised quantity $\tau_{k}(K,N) := \mathcal{E}_{k}(K,N)/[N(N - 1)]$. It is known \cite[Proposition 2.1.1]{MR3970999} (the result was originally discovered by Fekete \cite{MR1544613}) that $N \mapsto \tau_{k}(K,N)$ is increasing, so there exists a limit $\tau_{k}(K) \in \R \cup \{+\infty\}$, known as the \emph{$k$-transfinite diameter of $K$}. The relation between $k$-minimising $N$-sets, $k$-transfinite diameter, and the equilibrium measure, is now summarised in \cite[Theorem 4.2.2]{MR3970999}:
\begin{itemize}
	\item $\tau_{k}(K)$ equals the $k$-energy of the $k$-equilibrium measure on $K$. 
	\item Let $P_{2},P_{3},\ldots \subset K$ be a sequence of finite sets such that $P_{N}$ is a $k$-minimising $N$-set. Then the discrete measures
	\begin{displaymath} \mu_{N} := \frac{1}{N} \sum_{p \in P_{N}} \delta_{p} \end{displaymath}
	converge weak* to the $k$-equilibrium measure on $K$ (provided that the $k$-equilibrium measure exists, i.e. $K$ supports measures with finite $k$-energy).
\end{itemize}
Therefore, any insight into the distribution of the equilibrium measure yields information about the asymptotic distribution, as $N \to \infty$, of discrete measures supported on $k$-minimising $N$-sets -- which are often called sets of \emph{Fekete points}.

A significant new feature in the discrete -- versus continuous -- energy minimisation problem is that it makes sense for $s$-Riesz kernels with $s > \Hd K$: while there are no measures $\mu$ supported on $K$ with finite $s$-energy, it is still reasonable to ask questions about the distribution of $s$-minimising $N$-sets. Hardin and Saff \cite[Theorem 2.4]{MR2132763} proved that if $K$ is an $m$-rectifiable manifold (see \cite[(19)]{MR2132763} for the definition), and $s > m$, then any sequence of $s$-minimising $N$-sets distributes asymptotically uniformly, as $N \to \infty$, with respect to the normalised $m$-dimensional Hausdorff measure on $K$.

Problems related to discrete energy minimisation form a large research area of their own, with connections to mathematical physics and discrepancy theory. Since this topic is a little outside the scope of our paper, we refer the reader to \cite{MR4496723,2023arXiv230213067B,MR3841839,MR3970999,MR3825947,MR4431907} for further information. We finally mention that the discrete energy minimisation problem (with various exponents) in the special case of the unit sphere $S^{d - 1}$ has attracted a lot of attention, see e.g. \cite{MR2398782,MR1972249,MR1415178}, perhaps due to its connection to the $7^{th}$ problem on Smale's list \cite{MR1631413} (provide an efficient algorithm for finding $\log$-minimising $N$-sets on $S^{2}$).

\subsection{New results} The main purpose of this paper is to provide the first (fairly) "general" result of the structure of the $\log$-equilibrium measure on curves in $\R^{d}$. The main result is Theorem \ref{main}, which roughly says that the $\log$-equilibrium measure on $C^{1,\alpha}$-curves is absolutely continuous with respect to the length measure. We first set up some terminology needed to state our theorem precisely.

\begin{definition}[$\gamma_{\mathrm{reg}}$]
	Given a compact set $\gamma\subset\R^d$ and $\alpha\in (0,1]$, we say that $x\in \gamma$ is a \emph{$C^{1,\alpha}$-regular point} if $\gamma$ is locally a $1$-dimensional $C^{1,\alpha}$-graph in a neighbourhood of $x$. That is, there exists $A\in C^{1,\alpha}(\R;\R^{d-1})$ such that
	\begin{equation*}
		\gamma\cap B(x,r)=\rho(\graph(A))\cap B(x,r)
	\end{equation*}
	for some radius $r>0$ and rotation $\rho\in \mathrm{SO}(d)$. We denote $\gamma_{\mathrm{reg}}$ the set of all points $x\in\gamma$ which are $C^{1,\alpha}$-regular points for some $\alpha\in (0,1]$. \end{definition}


\begin{thm}\label{main} Let $\gamma\subset \R^{d}$ be a compact set, and let $\mu$ be the logarithmic equilibrium measure on $\gamma$. Then $\mu|_{\gamma_{\mathrm{reg}}}$ is absolutely continuous with respect to the length measure. \end{thm}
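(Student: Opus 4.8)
My plan is to localise near a point of $\gamma_{\mathrm{reg}}$, convert the Frostman conditions for $\mu$ into a one–dimensional singular integral equation along the $C^{1,\alpha}$-graph, and then invert that equation. First I would record the standard consequences of the equilibrium characterisation: writing $U^{\mu}(x)=\int-\log|x-y|\,d\mu(y)$ for the logarithmic potential and $V$ for the Robin constant of $\gamma$, one has $U^{\mu}\le V$ everywhere on $\R^{d}$ and $U^{\mu}\ge V$ quasi-everywhere on $\gamma$; since sets of vanishing logarithmic capacity are $\mathcal{H}^{1}$-null, this yields $U^{\mu}=V$ at $\mathcal{H}^{1}$-a.e. point of $\gamma$, and in particular $\mathcal{H}^{1}$-a.e. along every $C^{1,\alpha}$-subarc. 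Because $\gamma_{\mathrm{reg}}$ is relatively open in $\gamma$, it is covered by countably many pieces $\gamma\cap B(x_{0},r)$, each a rotated graph $\graph(A)\cap B(x_{0},r)$ with $A\in C^{1,\alpha}$. Fixing one such piece I would split $\mu=\mu_{1}+\mu_{2}$ with $\mu_{1}=\mu|_{\gamma\cap B(x_{0},r/2)}$; then $\spt\mu_{2}$ misses $B(x_{0},r/4)$, so $U^{\mu_{2}}$ is real-analytic there, and on the arc $\Gamma:=\gamma\cap B(x_{0},r/4)$ we obtain $U^{\mu_{1}}=h$ $\mathcal{H}^{1}$-a.e. on $\Gamma$, where $h:=V-U^{\mu_{2}}$ is real-analytic near $\Gamma$. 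Since $\mathcal{H}^{1}|_{\Gamma}$ is length on $\Gamma$, it suffices to show $\mu_{1}\ll\mathcal{H}^{1}|_{\Gamma}$ with density bounded on compact subsets of $\Gamma$, and then sum over the cover.

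Next I would straighten $\Gamma$. Let $g\colon I\to\Gamma$ be an arc-length parametrisation on an interval $I\subset\R$ (chosen with $|I|\neq 4$, for a reason that appears below) and $\nu:=(g^{-1})_{\ast}\mu_{1}$, a finite positive measure of finite logarithmic energy on $\bar I$. Using $g\in C^{1,\alpha}$, $|g'|\equiv1$, one gets $g(s)-g(t)=(s-t)g'(t)+O(|s-t|^{1+\alpha})$ and hence $-\log|g(s)-g(t)|=-\log|s-t|+E(s,t)$ with $|E(s,t)|\lesssim|s-t|^{\alpha}$, where $E$ is correspondingly regular off the diagonal (its $s$-derivative is $O(|s-t|^{\alpha-1})$). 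Transporting the identity of the first step through $g$ gives, for $\mathcal{L}^{1}$-a.e. $s\in I$,
\[ (L\nu)(s)+(R\nu)(s)=\psi(s),\qquad L\nu(s)=\int_{I}-\log|s-t|\,d\nu(t),\quad R\nu(s)=\int_{I}E(s,t)\,d\nu(t), \]
with $\psi:=h\circ g\in C^{1,\alpha}(I)$. Since $L\nu$ is convex, hence continuous on $\interior I$, and $\psi$ is continuous, $R\nu=\psi-L\nu$ is continuous there and the displayed identity in fact holds at every point of $\interior I$; in particular $L\nu+R\nu=\psi$ as distributions on $\interior I$. The promotion from ``$\mathcal{H}^{1}$-a.e.'' to a pointwise/distributional identity on all of $\interior I$ is the reason it mattered to work with $U^{\mu}=V$ on the whole arc rather than only on $\spt\mu$.

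The heart of the matter is then to invert $L+R$. Here $L$ is the finite logarithmic single-layer operator on $I$: it is elliptic of order $-1$ (full symbol $\sim|\xi|^{-1}$), and — since its only obstruction to being an isomorphism $H^{-1/2}(I)\to H^{1/2}(I)$ is the exceptional value $|I|=4$ — it is, for our pieces, invertible with an order-$(+1)$ inverse $L^{-1}$ mapping $H^{\sigma}\to H^{\sigma-1}$ (should a one-dimensional kernel occur anyway, it is spanned by an arcsine-type density, which is $C^{\infty}$ on compact subsets of $\interior I$ and hence harmless). The remainder $R$ has a kernel that is $O(|s-t|^{\alpha})$ and Hölder off the diagonal, so it is smoothing of order $1+\alpha$: $R\colon H^{\sigma}\to H^{\sigma+1+\alpha}$. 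Writing $\nu=L^{-1}(\psi-R\nu)$ and iterating, starting from $\nu\in H^{-1/2}(I)$ (finite logarithmic energy), each pass improves the Sobolev regularity of $\nu$ by $\alpha$, until the regularity $\psi\in C^{1,\alpha}\subset H^{3/2+\alpha-\varepsilon}$ becomes the bottleneck; after finitely many steps $\nu\in H^{1/2+\alpha-\varepsilon}_{\mathrm{loc}}(\interior I)\hookrightarrow C^{0,\alpha-\varepsilon}_{\mathrm{loc}}(\interior I)$. Thus $\nu$ is absolutely continuous with density bounded on compact subsets of $\interior I$, and transporting back through $g$ gives the desired local bound for $\mu_{1}$; summing over the countable cover yields the theorem. (The constant in the density bound depends on the piece — on $\|h\|_{C^{2}}$ near $\Gamma$ and on $\|A\|_{C^{1,\alpha}}$ — and may degenerate as $x_{0}$ approaches $\gamma\setminus\gamma_{\mathrm{reg}}$, which is why one should expect only absolute continuity, not a global density bound, on $\gamma_{\mathrm{reg}}$.)

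The hard part will be the inversion step, carried out for a general measure $\nu$ rather than an a priori function and with only $C^{1,\alpha}$ geometric data: one must establish the mapping properties and invertibility of $L$ by hand from the explicit kernel $-\log|s-t|$, prove the order-$(1+\alpha)$ smoothing bounds for $R$ directly from the $C^{1,\alpha}$-geometry of the graph (this is precisely where $\alpha>0$ is used — for $\alpha=0$ the gain is merely logarithmic and the iteration stalls), and deal with behaviour near the endpoints of $I$ and with the exceptional/arcsine mode. One should also keep in mind that the input ``$U^{\mu_1}=h$ holds $\mathcal{H}^{1}$-a.e. on $\Gamma$'' is genuinely stronger than ``$U^{\mu_1}=h$ $\mu_1$-a.e.'': a measure whose logarithmic potential is constant only on its support — which can have positive capacity yet zero length, e.g. the equilibrium measure of a Cantor subset of a segment — need not be absolutely continuous, and these configurations are excluded here exactly because $\Gamma\subset\gamma$. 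Finally, for $d=2$ the whole inversion can be bypassed: $\mu$ coincides with the harmonic measure of $\R^{2}\setminus\gamma$ with pole at $\infty$, and Kellogg--Warschawski-type boundary regularity gives absolute continuity (with locally bounded density) on $C^{1,\alpha}$-arcs of the boundary; but for $d\ge 3$ the logarithmic kernel carries no such harmonic interpretation and the singular-integral route appears unavoidable.
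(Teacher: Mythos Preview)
Your proposal has a fundamental gap at the very first step, and it is precisely the gap the paper is built to overcome. You assert that ``$U^{\mu}\le V$ everywhere on $\R^{d}$''. This is true for $d=2$ (where $-\log|\cdot|$ is the fundamental solution of $\Delta$ and the potential is subharmonic off $\spt\mu$), but it is \emph{false} for $d\ge 3$: there $\Delta(-\log|x|)=-(d-2)|x|^{-2}<0$, so $U^{\mu}$ is \emph{super}harmonic off $\spt\mu$, and the maximum principle goes the wrong way. The equilibrium characterisation (the paper's Theorem~\ref{thm:eq-estimates}) only gives $U^{\mu}\le V$ on $\spt\mu$ and $U^{\mu}\ge V$ quasi-everywhere on $\gamma$. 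In particular, on $\gamma\setminus\spt\mu$ --- which can have positive length, as in Remark~\ref{rem6} --- one genuinely has only the one-sided inequality $U^{\mu}\ge V$. Consequently your claimed identity ``$U^{\mu}=V$ at $\mathcal{H}^{1}$-a.e.\ point of $\gamma$'' is unproven for $d\ge 3$, and without it your entire promotion from ``a.e.'' to ``pointwise on $\interior I$'' and the subsequent inversion of $L+R$ never gets off the ground. You yourself flag, in your final paragraph, that the input ``$U^{\mu_{1}}=h$ $\mathcal{H}^{1}$-a.e.\ on $\Gamma$'' is strictly stronger than ``$U^{\mu_{1}}=h$ $\mu_{1}$-a.e.'' --- but you have not established the former.

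The paper's route is designed exactly around this obstacle. It works only with what is available: $U^{\Gamma}\mu=\mathcal{L}$ on $\spt\mu\cap I_{0}$ and $U^{\Gamma}\mu\ge\mathcal{L}$ on $I_{0}$ (Theorem~\ref{mainTechnical}). The key new idea is the decomposition $U=P+R$ where $P$ has a kernel built from a \emph{mollified} gradient of $A$ (Lemma~\ref{PLemma}), so that $P\mu$ is convex on each component of $\R\setminus\spt\mu$; combined with H\"older continuity of $R\mu$ (Lemma~\ref{RLemma}, Lemma~\ref{lemma10}) and the abstract one-sided Proposition~\ref{prop16}, this yields H\"older bounds for $U\mu$ on intervals bridging gaps of $\spt\mu$, using only equality at the endpoints (which lie in $\spt\mu$). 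A bootstrapping loop (Section~\ref{s:proofMainTechnical}) then raises the Frostman exponent step by step, each time feeding improved regularity of $U\mu$ into the invertibility of the operators $T^{\Gamma}_{\beta}=\Delta^{(1-\beta)/2}U^{\Gamma}\Delta^{\beta/2}$ on $L^{p}$ (Theorem~\ref{thm1}, Proposition~\ref{prop8-2}). Your inversion idea for $L+R$ is, in spirit, what the paper does \emph{after} the hard part --- see the discussion in Section~\ref{s:outline} beginning ``Assume, nevertheless, that for some reason $U_{\log}\mu = C$ everywhere on $\gamma$'' --- but the substance of the theorem lies in getting there without that assumption.
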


In particular, the $\log$-equilibrium measure on a $C^{1,\alpha}$-regular curve, or a finite union of such curves, is absolutely continuous with respect to the length measure.


\begin{remark}\label{rem6} It remains an interesting open problem to determine conditions under which the $\log$-equilibrium measure is \emph{mutually} absolutely continuous with respect to the length measure. The hypotheses of Theorem \ref{main} are not sufficient, even when $\gamma=\gamma_{\mathrm{reg}}$ and $d = 2$. To see why, let $\gamma := S(1) \cup S$, where $S(1)$ is the unit circle, and $S \subset B(1)$ is an arbitrary compact set. If $S$ is a smooth curve, then $\gamma = \gamma_{\mathrm{reg}}$. It nonetheless turns out that the $\log$-equilibrium measure $\mu_{\log,\gamma}$ equals $\sigma$, the normalised length measure on $S(1)$.
	
	To see this, first note that 
	\begin{equation}\label{form158} \mu_{\log,\bar{B}(0,1)} = \sigma \end{equation}
	by \cite[Theorem 4.6.7]{MR3970999}. Thus, $\sigma$ uniquely minimises the logarithmic energy among probability measures supported on $\bar{B}(0,1)$. In particular, $\sigma$ uniquely minimises the logarithmic energy among probability measures supported on $\gamma \subset \bar{B}(0,1)$. Therefore $\sigma = \mu_{\mathrm{log},\gamma}$. \end{remark} 

We will reduce the proof of Theorem \ref{main} to a somewhat more general statement concerning measures on $[0,1]$, Theorem \ref{mainTechnical}. We first make a definition concerning the \emph{a priori} regularity of the measures considered in Theorem \ref{mainTechnical}.

\begin{definition}[Measures with continuous logarithmic potential]\label{def5Intro} For $\gamma \subset \R^{d}$, we let $\mathcal{M}(\gamma)$ be the family of compactly supported Radon measures on $\gamma$ such that
	\begin{itemize}
		\item[(a)] $y \mapsto -\log |x - y| \in L^{1}(\mu)$ for $x \in \gamma$, and
		\item[(b)] $x \mapsto \int -\log |x - y| \, d\mu(y) \in C(\spt \mu)$.
	\end{itemize}
\end{definition}

\begin{remark}\label{rem7} If properties (a)-(b) are satisfied, then $x \mapsto \int -\log |x - y| \, d\mu(y) \in C(\R^{d})$. This is known as the \emph{continuity principle} for the logarithmic potential, see Theorem \ref{thm:continuity principle}. \end{remark}

We postpone the proof of the following easy lemma to Section \ref{s:preliminariesII}.

\begin{lemma}\label{lemma13} Let $\gamma \subset \R^{d}$, and let $f \colon \gamma \to \R^{D}$ be a bi-Lipschitz embedding. If $\mu \in \mathcal{M}(\gamma)$, then $f\mu \in \mathcal{M}(f(\gamma))$. \end{lemma}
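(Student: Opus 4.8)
The plan is to verify the two defining properties of $\mathcal{M}(f(\gamma))$ for the pushforward measure $f\mu$, using the bi-Lipschitz bounds $L^{-1}|x - y| \leq |f(x) - f(y)| \leq L|x - y|$ for all $x, y \in \gamma$, where $L \geq 1$ is the bi-Lipschitz constant. First I would record the change of variables formula: for any Borel $g \colon \R^{D} \to [0,\infty]$,
\begin{displaymath}
	\int g(u) \, d(f\mu)(u) = \int g(f(y)) \, d\mu(y),
\end{displaymath}
and note that $\spt(f\mu) = f(\spt \mu)$ since $f$ is a homeomorphism onto its image (being a bi-Lipschitz embedding). Also, $f\mu$ is clearly a compactly supported Radon measure on $f(\gamma)$ because $f$ maps the compact set $\spt\mu$ to a compact subset of $f(\gamma)$.

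For property (a), fix $u = f(x) \in f(\gamma)$ with $x \in \gamma$. Applying the change of variables formula with $g(v) = |{-}\log|u - v||$ gives
\begin{displaymath}
	\int \bigl| {-}\log|u - v| \bigr| \, d(f\mu)(v) = \int \bigl| {-}\log|f(x) - f(y)| \bigr| \, d\mu(y).
\end{displaymath}
The bi-Lipschitz bounds give $|{-}\log|f(x) - f(y)|| \leq |{-}\log|x-y|| + \log L$ whenever $0 < |x - y| < 1/(2L)$ say, and on the complementary region $\{y : |x-y| \geq 1/(2L)\}$ the integrand $|{-}\log|f(x)-f(y)||$ is bounded (since $|f(x)-f(y)|$ ranges in a compact subset of $(0,\infty)$, using that $f(x) \ne f(y)$ for $x \ne y$ and $\diam f(\gamma) < \infty$). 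Hence the right-hand side is dominated by $\int |{-}\log|x-y||\, d\mu(y) + C\mu(\gamma) < \infty$ by property (a) for $\mu$. So $v \mapsto -\log|u-v| \in L^{1}(f\mu)$.

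For property (b), I would show $u \mapsto \int -\log|u - v|\, d(f\mu)(v)$ is continuous on $\spt(f\mu) = f(\spt\mu)$. Writing $U_{f\mu}(u) := \int -\log|u-v|\, d(f\mu)(v)$ and $U_{\mu}(x) := \int -\log|x-y|\, d\mu(y)$, the change of variables gives
\begin{displaymath}
	U_{f\mu}(f(x)) = \int -\log|f(x) - f(y)| \, d\mu(y).
\end{displaymath}
The cleanest route is to invoke Remark \ref{rem7}: it suffices to check (b) as stated, i.e. continuity on the support. Since $f \colon \spt\mu \to f(\spt\mu)$ is a homeomorphism, continuity of $U_{f\mu}$ on $f(\spt\mu)$ is equivalent to continuity of $x \mapsto U_{f\mu}(f(x)) = \int -\log|f(x)-f(y)|\, d\mu(y)$ on $\spt\mu$. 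To get this I would split off the logarithmic singularity: write $-\log|f(x)-f(y)| = \bigl(-\log|x-y|\bigr) + \phi(x,y)$ where $\phi(x,y) := \log\frac{|x-y|}{|f(x)-f(y)|}$ satisfies $|\phi(x,y)| \leq \log L$ everywhere (interpreting $\phi(x,x)$ suitably, though it only enters on a $\mu$-null diagonal after integration — more precisely $\mu$ may have atoms, so I would instead argue with the genuinely singular part). Then $\int -\log|x-y|\, d\mu(y) = U_{\mu}(x)$ is continuous on $\spt\mu$ by property (b) for $\mu$, and I must show $x \mapsto \int \phi(x,y)\, d\mu(y)$ is continuous on $\spt\mu$. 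The main obstacle — and the only genuinely non-routine point — is precisely this continuity of the bounded error term: $\phi(x,y)$ is bounded but not obviously continuous in $x$ uniformly in $y$ near the diagonal, since $f$ is only assumed bi-Lipschitz, not $C^1$. I would handle it by a dominated-convergence / equicontinuity argument: for $x_n \to x$ in $\spt\mu$, split $\int \phi(x_n,y)\,d\mu(y)$ over $\{|x-y| \geq \delta\}$ and $\{|x-y| < \delta\}$; on the far region $\phi(\cdot,y)$ is continuous at $x$ (as $f$ is continuous and $|f(x_n)-f(y)|$, $|f(x)-f(y)|$ stay bounded away from $0$), giving convergence by dominated convergence; on the near region, $|\int_{|x-y|<\delta}\phi(x_n,y)\,d\mu(y)| \leq (\log L)\,\mu(B(x,\delta+|x_n-x|))$, which is small uniformly in large $n$ provided $\mu(\{x\}) $ is controlled — and here I would use that $\mu \in \mathcal{M}(\gamma)$ forces $\mu(\{x\}) = 0$ for every $x$ (an atom would make the logarithmic potential $+\infty$ there, contradicting finiteness from (a)), so $\mu(B(x,\delta')) \to 0$ as $\delta' \to 0$. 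Combining the two regions yields continuity, and together with Remark \ref{rem7} (or directly) this finishes property (b), hence $f\mu \in \mathcal{M}(f(\gamma))$.
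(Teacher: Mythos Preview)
Your proof is correct and rests on the same two ingredients as the paper's: the bi-Lipschitz comparison of $\log|f(x)-f(y)|$ with $\log|x-y|$, and the non-atomicity of $\mu$ (which you correctly extract from property (a)). The organization differs slightly: you write $-\log|f(x)-f(y)| = -\log|x-y| + \phi(x,y)$ with $|\phi|\leq\log L$, so continuity of $U_{f\mu}\circ f$ splits into continuity of $U_\mu$ (given) plus continuity of the bounded remainder $\int\phi(x,y)\,d\mu(y)$, handled by your near/far split and $\mu(B(x,\delta))\to 0$. The paper instead works throughout with the positive kernel $k(x)=\max\{-\log|x|,0\}$ and the one-sided bound $k(f(x)-f(y))\leq k(x-y)+k(c)$, then runs the same $\epsilon$--$\delta$ near/far argument as in Lemma~\ref{lemma12} on the whole integral, using the continuity of $U_k\mu$ to make the near-ball piece small and non-atomicity to absorb the extra $k(c)\,\mu(B(x_0,\delta_0))$. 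Your decomposition is arguably a touch cleaner since it invokes the continuity of $U_\mu$ in one stroke; the paper's version has the advantage of treating (b) uniformly with the proof of Lemma~\ref{lemma12}. Either way the content is the same.
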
 

We may use Lemma \ref{lemma13} to make sense of the following \emph{logarithmic graph potential}:

\begin{definition}[$U^{\Gamma}\mu$]\label{defGraphPotential} Let $d \geq 2$, and let $A \colon \R \to \R^{d - 1}$ be Lipschitz. Then the \emph{graph map} $\Gamma(x) := (x,A(x))$ is a bi-Lipschitz embedding $\R \to \R^{d}$. Let $\mu \in \mathcal{M}(\R)$. Then the following map $U^{\Gamma}\mu \colon \R \to \R$ is well-defined and continuous:
	\begin{displaymath} U^{\Gamma}\mu(x) := \int \log \frac{1}{|\Gamma(x) - \Gamma(y)|} \, d\mu(y), \qquad x \in \R. \end{displaymath} 
	Indeed, the continuity of $U^{\Gamma}\mu$ at $x \in \R$ is equivalent to the continuity of $U(\Gamma\mu)$ at $\Gamma(x)$ along $\Gamma(\R)$, and this is true by Lemma \ref{lemma13}.
\end{definition}

We may now state the main technical result of the paper:

\begin{thm}\label{mainTechnical} Let $\alpha \in (0,1]$, $p\in (1,\infty)$ and $A \in C^{1,\alpha}(\R;\R^{d-1})$. If $\lip(A)\le\delta$, where $\delta=\delta(p,\alpha,d)\in (0,1)$, then the following holds.
	
	Let $\mathcal{L} \colon \R \to \R$ be Lipschitz. Assume that $\mu \in \mathcal{M}(\R)$ has the following properties on a compact interval $I_{0} \subset \R$:
	\begin{itemize}
		\item $U^{\Gamma}\mu(x) = \mathcal{L}(x)$ for all $x \in \spt \mu \cap I_{0}$,
		\item $U^{\Gamma}\mu(x) \geq \mathcal{L}(x)$ for all $x \in I_{0}$.
	\end{itemize}
	Here $\Gamma(x) = (x,A(x))$. Then $\mu$ is absolutely continuous with respect to Lebesgue measure on $I_{0}$. In fact, if $I \subset \mathrm{int \,} I_{0}$ is a compact subinterval, then $\mu \in L^{p}(I)$. \end{thm}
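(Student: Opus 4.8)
The plan is to exploit the Euler–Lagrange structure: on $\spt\mu\cap I_0$ the graph potential $U^\Gamma\mu$ equals the Lipschitz function $\mathcal L$, while on all of $I_0$ it dominates $\mathcal L$. Heuristically, if the graph $\Gamma$ were a straight line the potential $U^\Gamma\mu(x)=\int\log\frac1{|x-y|}\,d\mu(y)$ would be the classical logarithmic potential on $\R$, and the variational inequalities would say that $U^\Gamma\mu-\mathcal L$ attains its minimum precisely on $\spt\mu$. Since the $1$-dimensional logarithmic potential has the operator $-\partial_x$ mapping it (after a Hilbert-transform-type manipulation) to something well-behaved, one expects $\mu$ to inherit regularity from $\mathcal L$. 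Concretely, I would write $U^\Gamma\mu(x) = U\mu(x) + E\mu(x)$, where $U\mu(x) = \int\log\frac1{|x-y|}\,d\mu(y)$ is the flat logarithmic potential and
\begin{displaymath}
E\mu(x) = \int \log\frac{|x-y|}{|\Gamma(x)-\Gamma(y)|}\,d\mu(y) = \tfrac12\int \log\frac{|x-y|^2}{|x-y|^2+|A(x)-A(y)|^2}\,d\mu(y).
\end{displaymath}
The point of the smallness hypothesis $\lip(A)\le\delta$ and the $C^{1,\alpha}$ regularity is that the kernel of $E$ is much more regular than that of $U$: since $|A(x)-A(y)| \le \delta|x-y|$ and $A\in C^{1,\alpha}$, the integrand $\log(1+|A(x)-A(y)|^2/|x-y|^2)$ is a bounded $C^{1,\alpha}$-type function of $(x,y)$ with a small Lipschitz constant, so $E$ is essentially a smoothing operator that perturbs the flat problem without destroying it.

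The main steps I would carry out are: (1) Reduce to a statement about $\mu$ restricted to a compact subinterval $I\subset\operatorname{int} I_0$ by a standard localization: multiply by a smooth cutoff and absorb the contribution of $\mu|_{I_0\setminus I}$ into a harmless smooth term (its potential is real-analytic near $I$ since the logarithmic kernel is smooth off the diagonal). (2) Differentiate the variational equality. Formally, $\partial_x U\mu$ is (up to sign/constant) the "finite Hilbert transform" of $\mu$ on $I$; combined with $\partial_x(U^\Gamma\mu) = \partial_x\mathcal L \in L^\infty$ on $\spt\mu\cap I_0$, and using the obstacle-type inequality to control the behavior off $\spt\mu$, one should derive that $\mu$ solves an integral equation of the form $T\mu = g$ on $I$, where $T$ is the finite Hilbert transform plus the small smooth perturbation $\partial_x E$, and $g\in L^\infty$ (or better). (3) Invert: the finite Hilbert transform on an interval is a Fredholm/singular-integral operator whose $L^p\to L^p$ mapping properties and invertibility (modulo finite-dimensional issues related to the endpoint singularities $1/\sqrt{\cdot}$, cf.\ the density on $[-1,1]$ in point (a) of the introduction) are classical; the small perturbation $\partial_x E$ is handled by a Neumann series. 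From $T\mu=g\in L^p$ one concludes $\mu\in L^p(I)$, in particular $\mu\ll$ Lebesgue on $I_0$ after letting $I\uparrow\operatorname{int} I_0$.

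There is one subtlety that forces a gentler argument than "just differentiate": a priori $\mu$ is only a measure, so $\partial_x U\mu$ is a distribution and the variational inequalities hold only $\mu$-a.e.\ and everywhere respectively, not in a form one can naively differentiate. The standard way around this, which I would follow, is to test against a nonnegative variation: for any signed measure $\nu$ with $\nu(I_0)=0$ supported on $I_0$ and $\mu+t\nu\ge 0$ for small $t>0$, minimality gives $\int U^\Gamma\mu\,d\nu \ge \int\mathcal L\,d\nu$, and choosing $\nu$ supported off $\spt\mu$ minus a piece of $\mu$ yields $\int(U^\Gamma\mu-\mathcal L)\,d\nu\ge 0$ with equality structure pinned down on $\spt\mu$. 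Running this with $\nu$ of the form $(\phi - c_\phi)\,dx$ for smooth $\phi$, integrating by parts onto the (smooth, small) part of the kernel and handling the singular part via the known $L^p$-theory of the logarithmic potential, converts the obstacle problem into the claimed integral equation in the weak sense, after which the regularity bootstrap above applies.

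The hard part will be Step (2)–(3): making the passage from the obstacle inequalities to a genuine invertible integral equation rigorous, i.e.\ controlling the contribution of the contact set vs.\ its complement and verifying that the perturbed finite Hilbert transform is invertible on $L^p(I)$ with the endpoint/cokernel issues correctly accounted for (this is presumably where the constant $\delta(p,\alpha,d)$ and the restriction $p\in(1,\infty)$, rather than $p=\infty$, enter). The $C^{1,\alpha}$ hypothesis (as opposed to merely $C^1$) should be exactly what is needed to place $\partial_x E\mu$ in a space where the Neumann series converges — i.e.\ $\alpha>0$ buys a quantitative gain that a bare modulus of continuity would not.
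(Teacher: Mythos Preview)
Your broad strategy---decompose $U^\Gamma$ into the flat logarithmic potential plus a perturbation, then invert a small perturbation of an explicit operator---is the right shape, and is indeed what the paper does. But two of your steps have genuine gaps that the paper's argument is specifically designed to overcome.

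\textbf{Gap 1: from the obstacle problem to an equation.} You write that one should use ``the obstacle-type inequality to control the behavior off $\spt\mu$'' and derive $T\mu=g$ on $I$. But the variational equality $U^\Gamma\mu=\mathcal L$ holds only on $\spt\mu\cap I_0$, which a priori could be a Lebesgue-null (even Cantor-type) set; differentiating along such a set is meaningless, and testing against variations $\nu$ as you propose gives only one-sided information off $\spt\mu$. The paper's key device is a different decomposition $U^\Gamma=P+R$ (not flat-plus-error) in which the principal kernel $P$ is engineered so that $P\mu$ is \emph{convex} on each component of $I\setminus\spt\mu$. Convexity plus the one-sided obstacle inequality $U^\Gamma\mu\ge\mathcal L$ plus the equality at the endpoints of each gap then yields two-sided H\"older control of $U^\Gamma\mu$ across the gaps (their Proposition~3.10). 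Without an analogue of this convexity argument, your Step~(2) does not go through.

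\textbf{Gap 2: the missing bootstrap.} You assert that $g\in L^\infty$ and that $\partial_x E$ is a small smoothing perturbation handled by a Neumann series. But the kernel of your $E$ satisfies only $|\partial_x E(x,y)|\lesssim |x-y|^{\alpha-1}$, so for a general measure $\mu$ the function $\partial_x E\mu$ is at best an $\alpha$-Riesz potential of $\mu$---not bounded, and not small in $L^p\to L^p$ as an operator on measures. The paper resolves this by an iteration: H\"older regularity of $U^\Gamma\mu$ of order $\zeta$ (initially $\zeta\approx\alpha$) feeds, via the invertibility of $\Delta^{(1-\zeta)/2}U^\Gamma\Delta^{\zeta/2}$ on $L^p$, into a Frostman bound $\mu(B(x,r))\lesssim r^{\gamma}$; this in turn upgrades the regularity of the remainder $R\mu$ to H\"older of order $\alpha+\gamma$, whence $U^\Gamma\mu$ is H\"older of order $\min\{1,\alpha+\gamma\}$; and one loops, gaining $\sim\alpha$ each time, until the exponent exceeds $1-\alpha$ and a final application with $\beta=0$ gives $\mu\in L^p$. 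Your one-shot inversion skips this; the $C^{1,\alpha}$ hypothesis is used not merely to make $\partial_x E$ ``nice enough for a Neumann series'' but to furnish the increment $\alpha$ at each step of the bootstrap.

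In short: the perturbative inversion you describe corresponds to the paper's Proposition~\ref{prop8} (with fractional Laplacians and complex interpolation rather than the finite Hilbert transform), but the passage from the obstacle inequalities to usable regularity of $U^\Gamma\mu$ requires the convexity trick, and the passage from there to $\mu\in L^p$ requires the Frostman bootstrap---both of which are absent from your outline.
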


The proof of Theorem \ref{main} based on Theorem \ref{mainTechnical} is straightforward, but requires a few standard pieces of potential theory, so we postpone the argument to Section \ref{s:mainProof}.

\subsection{Proof of Theorem \ref{mainTechnical}: an outline}\label{s:outline} This section will also serve as a guide to the paper. Why are there so few earlier results on the $s$-equilibrium measure for $s \in [0,d - 2)$? The short reason is that the $s$-Riesz potential 
\begin{displaymath} U_{s}\mu(x) = \int \frac{d\mu(y)}{|x - y|^{s}}, \qquad x \in \R^{d}, \end{displaymath}
is (in general) subharmonic in $\R^{d} \, \setminus \, \spt \mu$ only when $s \geq d - 2$. We now discuss the relevance of this fact. Let $\gamma \subset \R^{d}$ be a compact set with positive $s$-capacity, and let $\mu$ be the $s$-Riesz equilibrium measure on $\gamma$. Then there exists a constant $C = \mathrm{Cap}_{s}(\gamma)$ such that $U_{s}\mu \leq C$ everywhere on $\spt \mu$, and $U_{s}\mu \geq C$ \emph{approximately everywhere} on $\gamma$, see \cite[p. 138]{MR350027}. If $\gamma$ is Ahlfors regular, the lower bound $U_{s}\mu \geq C$ holds everywhere on $\gamma$ by \cite[Theorem 2.5]{reznikov2017minimum}. 

Now, when $s \in [d - 2,d]$, the upper bound $U_{s}\mu \leq C$ on $\spt \mu$ can be upgraded for free to $U_{s}\mu \leq C$ on $\R^{d}$, see \cite[(1.3.11)]{MR350027}. In particular, if $\gamma$ is Ahlfors regular, then $U_{s}\mu = C$ everywhere on $\gamma$ -- and not just everywhere on $\spt \mu$. 

The argument above is based on the subharmonicity of the $s$-Riesz potential, so it is not available for the logarithmic potential $U_{\log}$ when $d \geq 3$. Assume, nevertheless, that for some reason $U_{\log}\mu = C$ everywhere on $\gamma$. Let us discuss why this would be useful. Assuming that $\gamma$ is a Lipschitz graph with sufficiently small Lipschitz constant, the potential $U_{\log}\mu$ turns out to be bounded and invertible $L^{2}(\mathcal{H}^{1}|_{\gamma}) \to \dot{H}^{1}(\mathcal{H}^{1}|_{\gamma})$ (the homogeneous Sobolev space where first order derivatives "along $\gamma$" lie in $L^{2}$). Since $U_{\log}\mu \equiv C \in \dot{H}^{1}(\mathcal{H}^{1}|_{\gamma})$, we might now conclude that $\mu \in L^{2}(\mathcal{H}^{1}|_{\gamma})$, and in particular $\mu \ll \mathcal{H}^{1}|_{\gamma}$. We will never actually define the space $\dot{H}^{1}(\mathcal{H}^{1}|_{\gamma})$ in the paper, but the invertibility of $U_{\log} \colon L^{2}(\mathcal{H}^{1}|_{\gamma}) \to \dot{H}^{1}(\mathcal{H}^{1}|_{\gamma})$ corresponds to the case $\beta = 0$ of Theorem \ref{thm1}. 

The main obstacle is that we do not know \emph{a priori} that $U_{\log}\mu = C$ everywhere on $\gamma$. We only know this on $\spt \mu$, and it is far from obvious that $\spt \mu = \gamma$ (recall Remark \ref{rem6}). We now outline how we overcome the obstacle, and prove Theorem \ref{mainTechnical}. Abbreviate $U := U^{\Gamma}$. Then the hypotheses are that $U\mu(x) = \mathcal{L}(x)$ for $x \in \spt \mu$, and $U\mu(x) \geq \mathcal{L}(x)$ everywhere. The key idea in the proof is to start by decomposing the potential $U\mu$ as a sum $U\mu = P\mu + R\mu$, where (roughly speaking) $P\mu$ is convex outside $\spt \mu$, and $R\mu$ is $\alpha$-H\"older continuous (where $\alpha$ is the H\"older regularity exponent of $\Gamma$), see Proposition \ref{prop15} and Lemma \ref{lemma10} for more precise statements. From all of these pieces of information combined, we are able to deduce that $U\mu$ itself is $\alpha$-H\"older continuous, the main tool for this is Proposition \ref{prop16}. 

From this point on, the proof outline contains many white lies, so the statements should not be taken literally (but we give pointers to the precise versions). To use the $\alpha$-H\"older continuity of $U\mu = U^{\Gamma}\mu$, we prove that the operators
\begin{displaymath} T_{\beta} := \Delta^{\beta/2}U\Delta^{(1 - \beta)/2}, \qquad \beta \in [0,1], \end{displaymath}
are bounded and invertible on $L^{p}(\R)$, for arbitrary $p \in (1,\infty)$, provided that $\mathrm{Lip}(\Gamma)$ is sufficiently small, depending on $d,p$ (see Theorem \ref{thm1}). Here $\Delta^{\beta/2}$ is a fractional Laplacian, see Section \ref{s:fractionalLaplacians}. The fact that we cannot take $p = \infty$ is a technical nuisance, but let us assume for now that the invertibility holds for $p = \infty$. Then, taking $\beta = \alpha$, we may deduce
\begin{equation}\label{form159} \|\Delta^{(\alpha - 1)/2}\mu\|_{L^{\infty}} \lesssim \|\Delta^{\alpha/2}U\Delta^{(1 - \alpha)/2}\Delta^{(\alpha - 1)/2}\mu\|_{L^{\infty}} = \|\Delta^{\alpha/2}U\mu\|_{L^{\infty}}. \end{equation}
Making this formal estimate rigorous is the content of Proposition \ref{prop8-2}. Now, the $\alpha$-H\"older continuity of $U\mu$ implies (up to changing the exponent infitesimally) that $\Delta^{\alpha/2}U\mu \in L^{\infty}$ (see Proposition \ref{prop18}), and therefore $\Delta^{(\alpha - 1)/2}\mu \in L^{\infty}$. But $\Delta^{(\alpha - 1)/2}\mu$ is the $\alpha$-Riesz potential of $\mu$, so we have managed to deduce that $\mu(B(x,r)) \lesssim r^{\alpha}$. In the actual proof we only get that $\Delta^{(\alpha - 1)/2}\mu \in L^{p}$ for high values of $p$, but this also implies Frostman regularity for $\mu$ with a lower exponent, depending on $p$, see Proposition \ref{frostmanProp}.

This is a non-trivial estimate on the dimension of $\mu$, but unsatisfactory as a final conclusion: the "gain" in the dimension of $\mu$ so far only matchs the H\"older regularity of $\Gamma$. However, it turns out that the potential $R\mu$ is actually $\min\{1,(\alpha + \kappa)\}$-H\"older continuous, provided that $\mu(B(x,r)) \lesssim r^{\kappa}$, see Lemma \ref{lemma10}. This observation gives rise to a bootstrapping scheme, where any "known" Frostman exponent for $\mu$ implies a strictly higher Frostman exponent -- until the exponent reaches (almost) $1$.

Since we already learned above that $\mu(B(x,r)) \lesssim r^{\alpha}$, we may deduce that $R\mu$ is $(2\alpha)$-H\"older continuous. Combining this information with the inequality \eqref{lemma10} (replacing $\alpha$ by $2\alpha$) then shows that $\Delta^{(2\alpha - 1)/2}\mu \in L^{\infty}(\R)$, and therefore $\mu(B(x,r)) \lesssim r^{2\alpha}$.

This "loop" is iterated $\sim \alpha^{-1}$ times (see Section \ref{s:proofMainTechnical} for the details), until we know that $\mu(B(x,r)) \lesssim r^{\kappa}$ for some $\kappa > 1 - \alpha$. At this point we may deduce that $R\mu$ is Lipschitz continuous. This (almost) implies that $U\mu$ is also Lipschitz continuous, and in particular $U\mu \in \dot{H}^{1}(\mathcal{H}^{1}|_{\gamma})$. As discussed below \eqref{lemma10}, this implies $\mu \in L^{2}(\R)$, and the proof of Theorem \ref{mainTechnical} is complete.

\subsection{Further research} Developing the regularity theory for $s$-equilibrium measures seems like a worthwhile research program. We list a few concrete directions for further research.
\begin{enumerate}
\item Relax $C^{1,\alpha}$-regularity to Lipschitz regularity, and generalise Theorem \ref{main} from curves to $m$-dimensional surfaces (replacing the $\log$-equilibrium measure by the $(m - 1)$-equilibrium measure). The Lipschitz regularity seems to be beyond our method, given the bootstrapping scheme explained in Section \ref{s:outline}.
\item For compact smooth $m$-dimensional surfaces, is it plausible that the $s$-equilibrium measures are absolutely continuous for all $s \in (m - 2,m)$?
\item If $\gamma \subset \R^{d}$ is a compact Lipschitz graph, does the support of the $\log$-equilibrium measure coincide with $\gamma$? How about if one adds $C^{1,\alpha}$-regularity for $\alpha > 0$?
\item Our result shows that $\mu\ll \mathcal{H}^1$ for the equilibrium measure on a $C^{1,\alpha}$-graph, with some local $L^p$ estimates for the density of $\mu$. Can one show improved regularity of this density depending on $\alpha$? For example: local boundedness, continuity, Sobolev regularity...
\item For "fractal" compact sets $K \subset \R^{d}$, find estimates for the dimension of the $s$-equilibrium measure. For $s = d - 2$, there are several results (but also major open questions) on the dimension of harmonic measure, see for example \cite{MR4140087,MR874032,MR4664655,MR4735637}.
\end{enumerate}

\begin{remark} A theory of \emph{higher-codimensional harmonic measure} has been recently developed by David, Feneuil, and Mayboroda \cite{MR4341338,MR4597210,MR3634676,MR3926132,MR4456214}. It would be interesting to know if there is any connection between the $\log$-equilibrium measure on a Lipschitz graph $\gamma \subset \R^{d}$, and the DFM harmonic measure on $\R^{d} \, \setminus \, \gamma$. As explained in Section \ref{s:background}(b), the $\log$-equilibrium measure on a compact Lipschitz graph $\Gamma \subset \R^{2}$ coincides with the usual harmonic measure of $\Omega = \R^{2} \, \setminus \, \Gamma$ with pole at $\infty$.

How about higher dimensions? The $\log$-equilibrium measure $\mu_{\log}$ is isometry-invariant. Therefore, if the curve $\gamma \subset \R^{2}$ from Remark \ref{rem6} is isometrically embedded into $\R^{d}$ with $d \geq 3$, the $\log$-equilibrium measure remains unchanged; the support of $\mu_{\log}$ will remain a strict subset of $\gamma$. On the other hand, the DFM harmonic measure $\omega_{\mathrm{DFM}}$ is mutually absolutely continuous with respect to the length measure on $\gamma$ (provided that $\gamma$ is chosen Ahlfors $1$-regular), see \cite[Theorem 1.1]{MR4597210} or \cite[Theorem 1.14]{MR4456214} (\cite[Theorem 6.7]{MR4255042} contains a special case). This indicates that $\mu_{\log}$ and $\omega_{\mathrm{DFM}}$ are \emph{a priori} rather different objects. \end{remark} 

\section{Preliminaries}\label{s:preliminaries}
\subsection{Classical potential theory}
Given a set $\gamma\subset\R^d$ denote by $\mathcal{P}(\gamma)$ the set of Radon probability measures with compact support contained in $\gamma$. Given $\mu\in\mathcal{P}(\R^d)$ we define its \emph{logarithmic energy} as 
\begin{equation*}
	\mathcal{E}_{\log}(\mu)\coloneqq \int U\mu(x)\, d\mu(x) = \iint -\log|x-y|\, d\mu(x)d\mu(y).
\end{equation*}
We say that a compact set $\gamma$ has \emph{non-zero capacity}, denoted by $\cpc(\gamma)\neq 0$, if there exists some $\mu\in\mathcal{P}(\gamma)$ with $\mathcal{E}_{\log}(\mu)<\infty$.

\begin{thm}\label{thm:equilibrium-existence}
	Suppose that $\gamma\subset\R^d$ is compact and $\cpc(\gamma)\neq 0$. Then there is a unique $\mu\in\mathcal{P}(\gamma)$ which minimizes the logarithmic energy in $\mathcal{P}(\gamma)$. That is,
	\begin{equation*}
		\mathcal{E}_{\log}(\mu) = \min_{\nu\in\mathcal{P}(\gamma)}\mathcal{E}_{\log}(\nu).
	\end{equation*}
\end{thm}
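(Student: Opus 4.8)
The plan is to obtain existence by the direct method of the calculus of variations and uniqueness from a (strict) convexity property of the logarithmic energy.

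For existence, the starting point is that $\mathcal{P}(\gamma)$ is weak-$*$ compact (as $\gamma$ is compact), so it suffices to show that $\nu \mapsto \mathcal{E}_{\log}(\nu)$ is lower semicontinuous for weak-$*$ convergence: then any minimising sequence has a weak-$*$ convergent subsequence whose limit realises the infimum, and the infimum is finite precisely because $\cpc(\gamma) \neq 0$. For the lower semicontinuity I would use that $k(x,y) = -\log|x-y|$ is lower semicontinuous on $\R^{d} \times \R^{d}$ and bounded below on $\gamma \times \gamma$ by the finite constant $-\log\diam\gamma$ (here $\diam\gamma \in (0,\infty)$, since a compact set of zero diameter has zero capacity). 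Writing $k = \sup_{n} k_{n}$ with $k_{n} = \min\{k,n\}$, each $k_{n}$ is bounded and continuous on $\gamma \times \gamma$; if $\nu_{j} \to \nu$ weak-$*$ then $\nu_{j} \times \nu_{j} \to \nu \times \nu$ weak-$*$, so $\iint k_{n}\,d\nu\,d\nu = \lim_{j} \iint k_{n}\,d\nu_{j}\,d\nu_{j} \le \liminf_{j} \mathcal{E}_{\log}(\nu_{j})$, and letting $n \to \infty$ via monotone convergence gives $\mathcal{E}_{\log}(\nu)\le\liminf_j\mathcal{E}_{\log}(\nu_j)$.

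For uniqueness, suppose $\mu_{1}, \mu_{2} \in \mathcal{P}(\gamma)$ are both minimisers with common minimal energy $V < \infty$, and set $\sigma := \mu_{1} - \mu_{2}$, a compactly supported signed measure with $\sigma(\R^{d}) = 0$. After checking that the mutual energy $\iint k\,d\mu_{1}\,d\mu_{2}$ is finite --- a routine truncation bookkeeping --- the map $t \mapsto \mathcal{E}_{\log}(\mu_{2} + t\sigma)$ is a quadratic polynomial $V + bt + \mathcal{E}_{\log}(\sigma)\,t^{2}$ on $[0,1]$ which equals $V$ at $t = 0$ and $t=1$, forcing $b = -\mathcal{E}_{\log}(\sigma)$; and since $(1-t)\mu_2+t\mu_1 = \mu_{2} + t\sigma \in \mathcal{P}(\gamma)$ for $t \in [0,1]$, it must be $\ge V$ there, which forces $\mathcal{E}_{\log}(\sigma) \le 0$. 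On the other hand, the \emph{energy inequality} states that $\mathcal{E}_{\log}(\sigma) \ge 0$ for every compactly supported finite-energy signed measure of total mass zero, with equality only if $\sigma = 0$; combining the two bounds gives $\mathcal{E}_{\log}(\sigma) = 0$ and hence $\mu_{1} = \mu_{2}$.

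The main obstacle is the energy inequality, which I would prove through the integral representation
\[ \log\frac{1}{|x|} = \frac{1}{2}\int_{0}^{\infty}\bigl(e^{-t|x|^{2}} - e^{-t}\bigr)\,\frac{dt}{t}, \qquad x \neq 0, \]
a consequence of Frullani's identity. Using that $\sigma$ has finite logarithmic energy to justify Fubini, and $\sigma(\R^{d}) = 0$ to kill the $e^{-t}$ term, one gets
\[ \mathcal{E}_{\log}(\sigma) = \frac{1}{2}\int_{0}^{\infty}\Bigl(\iint e^{-t|x-y|^{2}}\,d\sigma(x)\,d\sigma(y)\Bigr)\,\frac{dt}{t}. \]
Each Gaussian kernel $e^{-t|x-y|^{2}}$ is positive-definite --- its Fourier transform is, up to a positive constant, $e^{-|\xi|^{2}/(4t)}$ --- so the inner integral is $\ge 0$ for every $t > 0$, giving $\mathcal{E}_{\log}(\sigma) \ge 0$. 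If equality holds, the inner integral vanishes for a.e.\ $t$, hence for all $t > 0$ by continuity; writing it via Parseval as a positive multiple of $\int e^{-|\xi|^{2}/(4t)}\,|\widehat{\sigma}(\xi)|^{2}\,d\xi$ forces $\widehat{\sigma} \equiv 0$, so $\sigma = 0$. The places needing genuine care are the Fubini justification (where finiteness of energy enters) and the finiteness of the mutual energies used in the uniqueness step; both are classical and are carried out in detail in the references cited in the introduction.
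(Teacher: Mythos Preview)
The paper does not actually prove this theorem; immediately after the statement it simply cites the standard references (Landkof for $d=2$, Hayman--Kennedy for existence in higher dimensions, and Choquet or Cegrell for uniqueness). Your proposal supplies precisely the classical argument those references contain: existence via weak-$*$ compactness of $\mathcal{P}(\gamma)$ together with lower semicontinuity of the energy, and uniqueness via the energy inequality $\mathcal{E}_{\log}(\sigma)\ge 0$ for compactly supported signed measures of total mass zero. So there is nothing to compare against in the paper itself, and your outline matches the standard route.

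Your Gaussian/Frullani derivation of the energy inequality is correct and works uniformly in all dimensions $d\ge 1$, which is a mild advantage over some presentations that treat $d=2$ separately via complex-analytic tools. The points you flag as needing care --- finiteness of the mutual energy $\iint k\,d\mu_1\,d\mu_2$ and the Fubini step in the Gaussian representation --- are indeed the only places where something could go wrong, and both are handled by the assumption that the minimisers have finite energy together with the uniform lower bound $-\log\diam\gamma$ on the kernel. One small remark: in the Fubini justification it is cleanest to first work with the truncated kernels $k_n$ and pass to the limit at the end, rather than to split the $t$-integral directly, since $\int_0^\infty e^{-t}\,dt/t$ diverges and the cancellation from $\sigma(\R^d)=0$ only becomes rigorous after integrating in $x,y$ first.
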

In the planar case $d=2$ the standard reference for this and other results of this section is \cite[Chapter II]{MR350027}. In case $d>2$ the proof of existence can be found in \cite[Theorem 5.4]{hayman1976subharmonic}. For uniqueness, see \cite[p. 5]{MR125248} or \cite[Theorem 2.5]{cegrell1998two}.

The unique minimizer from Theorem \ref{thm:equilibrium-existence} is the \emph{logarithmic equilibrium measure on $\gamma$}, but we will often abbreviate this to ``equilibrium measure''. The following classical estimates for the potentials of equilibrium measures can be found in \cite[Theorem 5.8]{hayman1976subharmonic}.
\begin{thm}\label{thm:eq-estimates}
	Suppose that $\gamma\subset\R^d$ is compact with $\cpc(\gamma)\neq 0$, and let $\mu$ be the equilibrium measure on $\gamma$. Then
	\begin{align*}
		U\mu(x) &\le \mathcal{E}_{\log}(\mu)\quad\quad\quad\qquad\text{for all $x\in \spt\mu$},\\
		U\mu(x)&\ge \mathcal{E}_{\log}(\mu)\quad\quad\quad\qquad\text{for approximately all $x\in \gamma$.}
	\end{align*}
\end{thm}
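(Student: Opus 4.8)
The plan is to run the classical Frostman variational argument. I will write $V := \mathcal{E}_{\log}(\mu)$ for the equilibrium energy, which is finite because $\cpc(\gamma) \neq 0$, and I will freely use two standard facts from classical potential theory (see \cite[Chapter II]{MR350027} or \cite{hayman1976subharmonic}): (i) the logarithmic potential $U\nu$ of any compactly supported finite measure $\nu$ is lower semicontinuous on $\R^{d}$ (and bounded below, since the support is bounded); and (ii) a measure of finite logarithmic energy gives zero mass to every set of zero capacity. I will also use that capacity is inner regular on Borel sets, so a Borel set of positive capacity contains a compact subset supporting a probability measure of finite energy. Here ``approximately all $x \in \gamma$'' is read as ``all $x \in \gamma$ outside a set that contains no compact set of positive capacity''.

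For the lower bound I would argue by contradiction. Let $E := \{x \in \gamma : U\mu(x) < V\}$, which is Borel by (i), and suppose $E$ is non-polar, i.e. it contains a compact subset $K$ with $\cpc(K) \neq 0$. Pick $\nu \in \mathcal{P}(K)$ with $\mathcal{E}_{\log}(\nu) < \infty$, set $\sigma := \nu - \mu$ (a compactly supported signed measure of total mass $0$), and test the minimality of $\mu$ against the competitor $\mu_{t} := \mu + t\sigma = (1-t)\mu + t\nu \in \mathcal{P}(\gamma)$, $t \in (0,1]$. Bilinearity of the energy gives
\[ \mathcal{E}_{\log}(\mu_{t}) = \mathcal{E}_{\log}(\mu) + 2t \int U\mu \, d\sigma + t^{2}\,\mathcal{E}_{\log}(\sigma). \]
The quadratic coefficient $\mathcal{E}_{\log}(\sigma) = \mathcal{E}_{\log}(\nu) - 2\iint -\log|x-y| \, d\nu(x)\,d\mu(y) + \mathcal{E}_{\log}(\mu)$ is finite, the mixed term being estimated by the Cauchy--Schwarz inequality for the mutual logarithmic energy (valid after shifting the kernel by a constant so that it is nonnegative on the bounded supports, hence positive definite) together with $\mathcal{E}_{\log}(\mu), \mathcal{E}_{\log}(\nu) < \infty$. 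The linear coefficient equals $\int U\mu \, d\nu - \int U\mu \, d\mu = \int U\mu \, d\nu - V < 0$, since $U\mu < V$ everywhere on $\spt\nu \subset E$ and $\nu$ is a probability measure. Hence $\mathcal{E}_{\log}(\mu_{t}) < \mathcal{E}_{\log}(\mu)$ for all small enough $t > 0$, contradicting minimality. Therefore $E$ is polar, which is exactly the assertion $U\mu(x) \ge V$ for approximately all $x \in \gamma$.

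For the upper bound on $\spt\mu$ I would first upgrade the lower bound to $U\mu = V$ $\mu$-almost everywhere: by (ii) we have $\mu(E) = 0$, so $U\mu \ge V$ holds $\mu$-a.e.; combined with $\int U\mu \, d\mu = \mathcal{E}_{\log}(\mu) = V$, this forces $U\mu = V$ $\mu$-a.e. Now suppose $U\mu(x_{0}) > V$ for some $x_{0} \in \spt\mu$, and fix $\varepsilon > 0$ with $U\mu(x_{0}) > V + 2\varepsilon$. Lower semicontinuity of $U\mu$ provides $r > 0$ with $U\mu > V + \varepsilon$ throughout $B(x_{0}, r)$, and since $x_{0} \in \spt\mu$ we have $\mu(B(x_{0},r)) > 0$, whence $\mu(\{U\mu > V\}) > 0$ --- contradicting $U\mu = V$ $\mu$-a.e. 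Thus $U\mu(x) \le V$ for every $x \in \spt\mu$.

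The only genuinely delicate point I anticipate is the bookkeeping forced by the sign change of the logarithmic kernel: one must verify that the signed-measure energy $\mathcal{E}_{\log}(\sigma)$ (equivalently, the mutual energy of $\mu$ and $\nu$) is finite, and remain mindful that $V$ and the potentials involved may take negative values. The remaining ingredients --- lower semicontinuity of potentials, polarity of the exceptional set for finite-energy measures, and the elementary expansion of $\mathcal{E}_{\log}(\mu_{t})$ --- are soft and entirely standard.
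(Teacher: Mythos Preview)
Your argument is the classical Frostman variational proof and is essentially correct. Note that the paper itself does not prove this theorem: it is quoted from \cite[Theorem 5.8]{hayman1976subharmonic} without proof, so there is no ``paper's own proof'' to compare against. What you have written is exactly the standard argument one finds in that reference or in \cite[Chapter II]{MR350027}.

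One small remark on your bookkeeping for the quadratic term: invoking a Cauchy--Schwarz inequality ``valid after shifting the kernel by a constant so that it is nonnegative on the bounded supports, hence positive definite'' is a detour, and the implication ``nonnegative $\Rightarrow$ positive definite'' is not automatic. You do not need it. The mixed energy $I(\mu,\nu) = \int U\mu \, d\nu$ is already finite for a much simpler reason: $U\mu$ is lower semicontinuous, hence bounded below on the compact set $K = \spt\nu$, and bounded above by $V$ on $K \subset E$ by construction. This gives $m \le I(\mu,\nu) < V$ directly, so $\mathcal{E}_{\log}(\sigma) = \mathcal{E}_{\log}(\nu) - 2I(\mu,\nu) + \mathcal{E}_{\log}(\mu)$ is finite without any appeal to positive definiteness. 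With that adjustment your proof is clean.
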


The terminology \emph{approximately all} above means ``for all $x\in \gamma$ except for a set of inner capacity zero'', see \cite[Definition, p. 135]{MR350027}. This terminology is quite irrelevant to us thanks to the following minimum principle, Theorem \ref{thm:minimum principle}, which is a special case of \cite[Theorem 2.5]{reznikov2017minimum}. A closed set $\gamma\subset\R^d$ is called {Ahlfors $s$-regular} if there exists a constant $C\in (1,\infty)$ such that
\begin{equation*}
	C^{-1}r^s \le \mathcal{H}^s(\gamma\cap B(x,r))\le Cr^s\quad\text{for all $x\in \gamma$ and $0<r<\diam(\gamma)$.}
\end{equation*}
\begin{thm}\label{thm:minimum principle}
	Let $s \in (0,d]$, and let $\gamma\subset \R^d$ be a compact Ahlfors $s$-regular set. Let $\mu$ be a Radon measure on $\gamma$ satisfying 
	\begin{equation*}
		U\mu(x)\ge M\quad\text{for approximately all $x\in \gamma$}
	\end{equation*}
	for some $M>0$. Then $U\mu(x)\ge M$ for all $x\in \gamma$. \end{thm}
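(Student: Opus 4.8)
Here is how I would prove Theorem~\ref{thm:minimum principle}.

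The plan is to establish the pointwise bound $U\mu(x_0)\ge M$ at each $x_0\in\gamma$ by realising $U\mu(x_0)$ as a limit, as $r\to0$, of the $\mathcal{H}^s$-averages of $U\mu$ over the relative balls $\gamma\cap B(x_0,r)$; since $U\mu\ge M$ holds $\mathcal{H}^s|_\gamma$-almost everywhere, every such average is $\ge M$, and the conclusion drops out on letting $r\to0$. The point of doing it this way is that one needs neither sub- nor superharmonicity of $U\mu$, only the Ahlfors regularity of $\gamma$; in particular the argument is uniform in the ambient dimension $d$.

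First I would fix $x_0\in\gamma$ and discard the case $U\mu(x_0)=+\infty$, so that $U\mu(x_0)<\infty$; this automatically gives $\mu(\{x_0\})=0$ and, by a routine layer-cake computation starting from $\int_{B(x_0,1)}\log\frac{1}{|x_0-w|}\,d\mu(w)<\infty$, the quantitative decay $\mu(B(x_0,t))\log\frac1t\to0$ as $t\to0$ (use $\mu(B(x_0,t))\log\frac{1}{\sqrt t}\le\int_0^{\sqrt t}\mu(B(x_0,\rho))\,\frac{d\rho}{\rho}$). Next I would record that the ``approximately all'' exceptional set $E=\{x\in\gamma:U\mu(x)<M\}$, being Borel (as $U\mu$ is lower semicontinuous) and of inner logarithmic capacity zero, satisfies $\mathcal{H}^s(E)=0$ because $\gamma$ is Ahlfors $s$-regular with $s>0$; this is a standard consequence of Frostman's lemma (a compact set of positive $\mathcal{H}^s$-measure carries a measure of finite logarithmic energy, hence has positive capacity). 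Therefore $U\mu\ge M$ holds $\mathcal{H}^s|_\gamma$-a.e., and writing $\lambda_r$ for $\mathcal{H}^s|_\gamma$ restricted to $B(x_0,r)$ and normalised to a probability measure — legitimate for $0<r<\diam\gamma$ by Ahlfors lower regularity — we get $\int U\mu\,d\lambda_r\ge M$ for all small $r$.

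The crux is then to prove $\int U\mu\,d\lambda_r\to U\mu(x_0)$ as $r\to0$. Using Tonelli to write $\int U\mu\,d\lambda_r=\int U\lambda_r\,d\mu$ and splitting $\mu$ into its restrictions to $B(x_0,2r)$ and to its complement: on the outer part one has $U\lambda_r(w)\to-\log|x_0-w|$ with an $r$-uniform bound $|U\lambda_r(w)|\lesssim 1+\bigl|\log|x_0-w|\bigr|$, which is $\mu$-integrable precisely because $U\mu(x_0)<\infty$, so dominated convergence yields $U\mu(x_0)$ there. The inner part is the hard point: the averaged kernel $U\lambda_r$ is not bounded uniformly in $r$, since it ``sees'' distances of order $\diam(\spt\lambda_r)\approx r$. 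Here I would invoke Ahlfors \emph{upper} regularity to get $\lambda_r(B(w,\rho))\lesssim_s(\rho/r)^s$ and hence $0<U\lambda_r\le C(s)+\log\frac1r$ on $B(x_0,2r)$ for small $r$; combined with the decay $\mu(B(x_0,2r))\log\frac1r\to0$ from the previous step, the inner contribution is $o(1)$. Assembling the two halves proves $\int U\mu\,d\lambda_r\to U\mu(x_0)$, and therefore $U\mu(x_0)\ge M$; since $x_0$ was arbitrary, this is the theorem.

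The only genuinely delicate ingredient is the inner estimate just sketched: the whole statement hinges on the fact that $\mu$ charges a neighbourhood of $x_0$ with mass that is asymptotically negligible on the logarithmic scale (a consequence of $U\mu(x_0)<\infty$), which is exactly what lets one absorb the $\log\frac1r$ growth of the averaged kernel, and it is here that the upper and lower Ahlfors bounds and the finiteness of $U\mu(x_0)$ all come in. Everything else is routine. The same scheme with the $s'$-Riesz kernel $|x-y|^{-s'}$ ($0<s'<s$) in place of $-\log|x-y|$ would give the corresponding Riesz statement, which is presumably why Theorem~\ref{thm:minimum principle} is only a special case of \cite[Theorem 2.5]{reznikov2017minimum}.
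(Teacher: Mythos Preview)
Your proposal is correct and follows essentially the same route as the argument in \cite{reznikov2017minimum} (which the paper cites for this theorem and whose structure is sketched in Appendix~\ref{appB}): namely, one shows that every $x_0\in\gamma$ is a weak $\mathcal{H}^s$-Lebesgue point for $U\mu$ (your averaging argument is precisely \cite[Proposition~2.7]{reznikov2017minimum}), and combines this with the observation that sets of inner capacity zero are $\mathcal{H}^s$-null. Your inner/outer decomposition and the use of $\mu(B(x_0,r))\log\tfrac{1}{r}\to 0$ are exactly the ingredients behind that Lebesgue-point result.
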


We need an extension of Theorem \ref{thm:minimum principle}, where the "global" Ahlfors regularity is relaxed:
\begin{cor}\label{thm:minimum principleExtension}
	Let $\gamma\subset \R^d$ be compact. Let $\Omega \subset \R^{d}$ be open, and assume that $\gamma \cap \overline{\Omega}$ is contained in a compact Ahlfors $s$-regular set for some $s \in (0,d]$. Let $\mu$ be a Radon measure on $\gamma$ satisfying 
	\begin{equation*}
		U\mu(x)\ge M\quad\text{for approximately all $x\in \gamma \cap \Omega$}
	\end{equation*}
	for some $M>0$. Then $U\mu(x)\ge M$ for all $x\in \gamma \cap \Omega$. \end{cor}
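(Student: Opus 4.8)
The claim is local, so I would fix $x_0 \in \gamma \cap \Omega$ and show $U\mu(x_0) \ge M$. If $\mu(\{x_0\}) > 0$ then $U\mu(x_0) = +\infty$ and there is nothing to prove, so assume $\mu(\{x_0\}) = 0$. Choose $r > 0$ with $\overline{B(x_0,r)} \subseteq \Omega$, and split the measure as $\mu = \nu + \eta$, where $\nu := \mu|_{\overline{B(x_0,r)}}$ and $\eta := \mu - \nu = \mu|_{\R^d \setminus \overline{B(x_0,r)}}$. The point of this splitting is that $\spt \eta$ is disjoint from the open ball $B(x_0,r)$, so the ``far'' potential $U\eta$ is real-analytic, in particular continuous and bounded, on a neighbourhood of $x_0$; thus $U\eta$ is harmless near $x_0$, and everything hinges on the ``near'' potential $U\nu$.

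Next I would locate a compact Ahlfors $s$-regular set $F$ adapted to $x_0$, with $\spt\nu \subseteq F$ and $F \subseteq \gamma \cap \Omega$. By hypothesis $\gamma \cap \overline\Omega$ is contained in a compact Ahlfors $s$-regular set $E$, and $\spt\nu \subseteq \gamma \cap \overline{B(x_0,r)} \subseteq \gamma \cap \overline\Omega \subseteq E$; intersecting $E$ with a small closed ball around $x_0$ — choosing the radius generically so that the truncation stays Ahlfors $s$-regular, or enlarging slightly to a compact Ahlfors $s$-regular set if needed — produces such an $F$. In the setting of Theorem \ref{main} this truncation is simply a compact $C^{1,\alpha}$-arc, so this step is immediate there.

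On $F$ we then have, for approximately every $x$,
\[
U\nu(x) = U\mu(x) - U\eta(x) \ge M - U\eta(x),
\]
using the hypothesis $U\mu \ge M$ approximately everywhere on $\gamma \cap \Omega \supseteq F$. Shrinking $r$ and using the continuity of $U\eta$ at $x_0$, the right-hand side can be replaced by $M - U\eta(x_0) - \varepsilon$ uniformly on $F$. Since $F$ is a compact Ahlfors $s$-regular set and $\spt\nu \subseteq F$, Theorem \ref{thm:minimum principle} upgrades the bound $U\nu \ge M - U\eta(x_0) - \varepsilon$, valid approximately everywhere on $F$, to the same bound at every point of $F$; in particular $U\nu(x_0) \ge M - U\eta(x_0) - \varepsilon$. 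Hence $U\mu(x_0) = U\nu(x_0) + U\eta(x_0) \ge M - \varepsilon$, and letting $\varepsilon \downarrow 0$ gives $U\mu(x_0) \ge M$.

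The main obstacle is precisely the passage from the hypothesis — which controls $U\mu$ only on $\gamma \cap \Omega$ — to the setting of Theorem \ref{thm:minimum principle}, which demands the lower bound on all of an Ahlfors regular set: this is what forces the decomposition of $\mu$ and the reduction to the local piece $F$ of $\gamma$. The delicate point inside this is verifying that $U\eta$ is uniformly close to the constant $U\eta(x_0)$ on $F$ for $r$ small, which is not quite automatic because $\eta$ itself depends on $r$; one needs the part of $\eta$ sitting just outside $B(x_0,r)$ to contribute only a uniformly small oscillation to $U\eta$ over $F$, and for this the reduction to $\mu(\{x_0\})=0$ together with the finiteness of $U\mu(x_0)$ (equivalently, a bound on the mass of $\mu$ near $x_0$) is what one uses.
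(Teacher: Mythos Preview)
Your approach is different from the paper's and is, in outline, workable; but the paper sidesteps precisely the delicate step you flag. Rather than reducing to the constant-$M$ Theorem~\ref{thm:minimum principle}, the paper first observes (Theorem~\ref{thm:minimum principleStrengthening}) that the Lebesgue-point argument behind that theorem goes through verbatim with any \emph{continuous} lower bound $N(x)$ in place of $M$. It then decomposes $\mu=\mu_{\mathrm{in}}+\mu_{\mathrm{out}}$ once and for all by restriction to $\gamma\cap\overline\Omega$ and its complement; since $\spt\mu_{\mathrm{out}}\cap\Omega=\emptyset$, the function $N(x):=M-U\mu_{\mathrm{out}}(x)$ is continuous on $\Omega$, and the strengthened principle applied to $\mu_{\mathrm{in}}$ on the ambient Ahlfors-regular set gives the conclusion directly---no $\varepsilon$, no shrinking balls, no $r$-dependent remainder $\eta$.

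Your route (freeze $U\eta$ at $x_0$, apply the constant version, let $\varepsilon\to0$) can be completed, but two points deserve more care. First, the oscillation estimate you flag: with $\eta_r=\mu|_{\R^d\setminus\overline{B(x_0,r)}}$ one has $\sup_{|x-x_0|\le r/2}|U\eta_r(x)-U\eta_r(x_0)|\lesssim r\int_{|y-x_0|>r}|y-x_0|^{-1}\,d\mu(y)$, and one must argue that this tends to $0$. It does, using only $U\mu(x_0)<\infty$ (split the integral at $|y-x_0|=\sqrt{r}$ and compare $r/|y-x_0|$ with $\log(1/|y-x_0|)/\log(1/r)$ on the inner shell), but this is an honest computation, not a triviality. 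Second, your construction of $F$ is not quite right in the stated generality: intersecting the ambient Ahlfors-regular set $E\supseteq\gamma\cap\overline\Omega$ with a ball yields something regular but possibly not contained in $\gamma$, while $\gamma\cap\overline{B(x_0,r)}$ need not itself be Ahlfors regular---so you cannot in general get $\spt\nu\subset F\subset\gamma\cap\Omega$ with $F$ regular. In the paper's application $\gamma\cap\overline\Omega$ is itself a compact piece of a Lipschitz graph (so one may take $E=\gamma\cap\overline\Omega$), and there the issue disappears; but your sketch should say explicitly that this is where the hypothesis is being spent.
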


There are two ways to get convinced about Corollary \ref{thm:minimum principleExtension}. One way is to take a look at the proof of \cite[Theorem 2.5]{reznikov2017minimum}, and verify that the arguments are local: so long as one is only interested in the estimate $U\mu \geq M$ on $\gamma \cap \Omega$, the Ahlfors regularity outside $\Omega$ is irrelevant. The caveat is that one needs to read the whole proof of \cite[Theorem 2.5]{reznikov2017minimum}. We give another way in Appendix \ref{appB}, where the reader needs less background on \cite{reznikov2017minimum}.


The following classical \emph{continuity principle} can be found in \cite[Theorem 5.1]{hayman1976subharmonic}.
\begin{thm}\label{thm:continuity principle}
	Let $\mu$ be a compactly supported Radon measure on $\R^{d}$. If $x \in \spt \mu$, and $U\mu$ is continuous at $x$ along $\spt \mu$, then $U\mu$ is continuous at $x$ along $\R^{d}$. In particular: if $\Omega \subset \R^{d}$ is an open set, and $U\mu \in C(\Omega \cap \spt\mu)$, then $U\mu\in C(\Omega)$.
\end{thm}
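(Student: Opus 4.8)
The plan is to prove the pointwise statement and then deduce the "in particular" clause. Fix $x_0\in\spt\mu$ at which $U\mu|_{\spt\mu}$ is continuous; I want to show $U\mu$ is continuous at $x_0$ as a function on $\R^d$. First I would record that $U\mu$ is lower semicontinuous on $\R^d$: the kernel $x\mapsto-\log|x-y|$ is lower semicontinuous and, since $\mu$ is compactly supported, bounded below on any bounded set, so Fatou's lemma gives $\liminf_{z\to x_0}U\mu(z)\ge U\mu(x_0)$. Hence it only remains to bound $\limsup_{z\to x_0}U\mu(z)$ from above by $U\mu(x_0)$. If $U\mu(x_0)=+\infty$ this is automatic, so I may assume $U\mu(x_0)<\infty$; note this forces $\mu(\{x_0\})=0$ (an atom at $x_0$ would make $U\mu(x_0)=+\infty$), hence $\mu(\overline{B(x_0,\delta)})\to 0$ as $\delta\to 0$. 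Then I argue by contradiction: suppose there are $z_n\to x_0$ with $U\mu(z_n)>U\mu(x_0)+c$ for some fixed $c\in(0,1]$.

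Next I localize. Write $\mu=\nu+\mu'$ with $\nu:=\mu|_{\overline{B(x_0,\delta)}}$ and $\mu':=\mu-\nu$, the latter supported in $\{y:|y-x_0|\ge\delta\}$. On $B(x_0,\delta/2)$ the kernel $-\log|z-y|$ is bounded and jointly continuous for $y\in\spt\mu'$, so $U\mu'$ is continuous near $x_0$ by dominated convergence. Choosing $\delta$ small enough that $\|\nu\|=\mu(\overline{B(x_0,\delta)})<c/(4\log 2)$, and then $n$ large, the assumption $U\mu(z_n)>U\mu(x_0)+c$ together with $|U\mu'(z_n)-U\mu'(x_0)|<c/4$ yields $U\nu(z_n)>U\nu(x_0)+\tfrac{3c}{4}$.

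The heart of the argument is to "transport" this excess onto $\spt\mu$. Let $y_n\in\spt\nu$ realize $\dist(z_n,\spt\nu)$; since $x_0\in\spt\nu$ this distance is at most $|z_n-x_0|$, so $y_n\to x_0$. For every $y\in\spt\nu$ one has $|y_n-y|\le|y_n-z_n|+|z_n-y|\le 2|z_n-y|$, hence $-\log|y_n-y|\ge-\log 2-\log|z_n-y|$; integrating against $\nu$ gives $U\nu(y_n)\ge U\nu(z_n)-\|\nu\|\log 2>U\nu(x_0)+\tfrac{c}{2}$. Adding back $U\mu'$ (continuous at $x_0$, and $y_n\to x_0$) gives $U\mu(y_n)>U\mu(x_0)+\tfrac{c}{4}$ for $n$ large, with $y_n\in\spt\nu\subseteq\spt\mu$ and $y_n\to x_0$. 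This contradicts the continuity of $U\mu|_{\spt\mu}$ at $x_0$, proving the pointwise statement. The "in particular" clause follows at once: at points of $\Omega\cap\spt\mu$ apply the pointwise statement (using that $\Omega$ is open, so $U\mu|_{\spt\mu}$ is continuous there), and at points of $\Omega\setminus\spt\mu$ continuity of $U\mu$ is immediate since the kernel is smooth off $\spt\mu$.

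The only genuinely non-formal step is the transport estimate, and the extra input that makes it work is the observation $\mu(\{x_0\})=0$, which lets me shrink $\|\nu\|$ and absorb the lossy factor $\|\nu\|\log 2$ into $c$. I expect this to be the main obstacle in the sense that the more familiar route via a maximum/domination principle is awkward here: the naive maximum principle $U\mu\le\sup_{\spt\mu}U\mu$ actually \emph{fails} for the logarithmic kernel in $\R^d$ with $d\ge 3$ (for the uniform measure on a sphere the potential at the centre strictly exceeds its constant value on the sphere), so one would need a purely local substitute. The contradiction argument above sidesteps this by only ever comparing potential values at points near $x_0$.
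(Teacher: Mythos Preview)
Your argument is correct and is essentially the classical proof of the continuity principle (the ``transport'' step via the nearest-point projection onto $\spt\nu$ and the inequality $|y_n-y|\le 2|z_n-y|$ is exactly the standard trick). The paper itself does not supply a proof of this theorem: it is quoted as a known result with a reference to \cite[Theorem 5.1]{hayman1976subharmonic}, and the argument given there is the same one you wrote down. Your closing observation that the logarithmic maximum principle fails for $d\ge 3$ is also accurate and explains why the localized comparison argument, rather than a global domination principle, is the right tool here.
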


\subsection{Lemmas on measures in $\mathcal{M}(\gamma)$}\label{s:preliminariesII} Recall $\mathcal{M}(\gamma)$ from Definition \ref{def5Intro}. Here we prove two basic properties of $\mathcal{M}(\gamma)$, one of which was already stated as Lemma \ref{lemma13}.

\begin{lemma}\label{lemma12} Let $\gamma \subset \R^{d}$. Assume that $\mu \in \mathcal{M}(\gamma)$, and $B \subset \gamma$ is Borel. Then $\mu|_{B} \in \mathcal{M}(\gamma)$. More precisely: if $U\mu$ is continuous at any point $x_{0} \in \gamma$, then $U(\mu|_{B})$ is continuous at $x_{0}$. \end{lemma}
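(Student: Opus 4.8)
The plan is to write $\mu = \mu|_B + \mu|_{\gamma\setminus B}$, to use that the logarithmic potential of a positive compactly supported measure is lower semicontinuous, and to combine this with the assumed continuity of $U\mu$ at $x_0$. First I would dispose of the reduction: condition (a) of Definition \ref{def5Intro} for $\mu|_B$ is inherited from $\mu$ because $\mu|_B \le \mu$, so $-\log|x-y| \in L^1(\mu|_B)$ whenever it lies in $L^1(\mu)$; and $\spt(\mu|_B) \subseteq \spt \mu$, where $U\mu$ is continuous by Definition \ref{def5Intro}(b) together with Remark \ref{rem7}. Hence the first assertion of Lemma \ref{lemma12} follows once we prove the ``more precise'' one. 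So fix $x_0 \in \gamma$ at which $U\mu$ is continuous, and abbreviate $\nu_1 := \mu|_B$, $\nu_2 := \mu|_{\gamma\setminus B}$; these are positive Radon measures supported on the compact set $\spt \mu$, and by condition (a) the numbers $U\nu_1(x_0)$, $U\nu_2(x_0)$ and $U\mu(x_0) = U\nu_1(x_0) + U\nu_2(x_0)$ are all finite.

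Next I would record the lower semicontinuity. Pick $\rho$ with $\spt \mu \subset B(0,\rho)$; then $-\log|x-y| \ge -\log(2\rho)$ for all $x \in B(0,\rho)$ and $y \in \spt \mu$, so applying Fatou's lemma to the nonnegative integrand $-\log|x-y| + \log(2\rho)$ shows that $U\nu_1$ and $U\nu_2$ are lower semicontinuous on $B(0,\rho)$; in particular $\liminf_{x\to x_0} U\nu_i(x) \ge U\nu_i(x_0)$. The same pointwise bound gives $U\nu_i > -\infty$ throughout $B(0,\rho)$, so that $U\mu = U\nu_1 + U\nu_2$ holds on $B(0,\rho)$ with values in $(-\infty,+\infty]$ and no indeterminate $\infty-\infty$.

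The main step is then short. By continuity, $U\mu$ is bounded, say $|U\mu| \le M$, on some neighbourhood $V \subseteq B(0,\rho)$ of $x_0$. On $V$ the sum $U\nu_1 + U\nu_2 = U\mu$ is $\le M$ while each summand is $>-\infty$, forcing $U\nu_1,U\nu_2$ to be real-valued on $V$; moreover $U\nu_1 = U\mu - U\nu_2 \le M + \log(2\rho)\,\nu_2(\R^d)$ there (and symmetrically for $U\nu_2$), so both potentials are genuinely bounded on $V$ and the identity $U\nu_1 = U\mu - U\nu_2$ is honest on $V$. Taking $\limsup$ as $x \to x_0$, using continuity of $U\mu$ and lower semicontinuity of $U\nu_2$,
\begin{displaymath} \limsup_{x\to x_0} U\nu_1(x) \le \lim_{x\to x_0} U\mu(x) - \liminf_{x\to x_0} U\nu_2(x) \le U\mu(x_0) - U\nu_2(x_0) = U\nu_1(x_0). \end{displaymath}
Together with $\liminf_{x\to x_0} U\nu_1(x) \ge U\nu_1(x_0)$ from lower semicontinuity, this yields $\lim_{x\to x_0} U\nu_1(x) = U\nu_1(x_0)$, i.e. $U(\mu|_B)$ is continuous at $x_0$, which is what we wanted.

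I do not expect a serious obstacle here; the only delicate point is bookkeeping with the value $+\infty$. Because $-\log|\cdot|$ is not of one sign, one must first restrict attention to a bounded ball on which the kernel is bounded below (so that the decomposition $U\mu = U\nu_1 + U\nu_2$ is unambiguous), and one must rule out $U\nu_i(x)\to+\infty$ as $x\to x_0$ before solving for $U\nu_1$ — but this is automatic from the local boundedness of $U\mu$ near $x_0$ and the lower bound on $U\nu_j$, $j\neq i$.
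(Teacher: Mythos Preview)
Your proof is correct and takes a genuinely different route from the paper. The paper truncates the kernel to $k(x)=\max\{-\log|x|,0\}$ and argues directly by an $\epsilon$--$\delta$ splitting: it shows that the ``local'' part $\int_{B(x_0,\delta_0)} k(x-y)\,d\mu(y)$ can be made small uniformly for $x$ near $x_0$ (using continuity of $U_k\mu$ at $x_0$), and then simply dominates the corresponding integral for $\mu|_B$ by that of $\mu$ via $\mu|_B\le\mu$ and $k\ge 0$. Your argument instead exploits the classical lower semicontinuity of logarithmic potentials together with the complementary decomposition $U\mu=U\nu_1+U\nu_2$: lower semicontinuity of $U\nu_2$ forces $\limsup U\nu_1\le U\nu_1(x_0)$, and lower semicontinuity of $U\nu_1$ gives the matching $\liminf$ bound. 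Your route is slightly slicker and conceptually cleaner (it is the textbook mechanism behind results like the continuity principle), while the paper's route is more self-contained in that it never invokes Fatou/lower semicontinuity and makes the domination $\mu|_B\le\mu$ do all the work on a nonnegative kernel. One minor remark: when you say ``$-\log|x-y|\in L^1(\mu|_B)$ because $\mu|_B\le\mu$'', this needs the observation (which you use later anyway) that the negative part of the kernel is bounded on a compact support; otherwise $\mu|_B\le\mu$ alone does not transfer $L^1$ membership for sign-changing integrands.
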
 

\begin{proof} Property (a) in Definition \ref{def5Intro} remains clear for $\mu|_{B}$, so it remains to check property (b). Fix $x_{0} \in \gamma$ such that $U\mu$ is continuous at $x_{0}$. Abbreviate $k(x) := \max\{-\log |x|,0\} \in C_{c}(\R \, \setminus \, \{0\})$.  The continuity of $U\mu$ (resp. $U(\mu|_{B})$) at $x_{0}$ is equivalent to the continuity of $U_{k}\mu$ (resp. $U_{k}(\mu|_{B})$) at $x_{0}$, where $U_{k}$ the potential associated with the (non-negative) kernel $k$. So, it suffices to fix $\epsilon > 0$, and show that 
\begin{equation}\label{form152}  \limsup_{x \to x_{0}} \left| \int k(x - y) d(\mu|_{B})(y) - \int k(x_{0} - y) d(\mu|_{B})(y) \right| \leq 2\epsilon. \end{equation}

 By property (a), there exists $\delta_{0} > 0$ such that
\begin{displaymath} \int_{B(x_{0},\delta_{0})} k(x_{0} - y) \, d\mu(y) < \epsilon/2. \end{displaymath} 
Next, observe that the assumed continuity of $U\mu$ (hence $U_{k}\mu$) at $x_{0}$ also implies the continuity of $x \mapsto \int_{B(x_{0},\delta_{0})} k(x - y) \, d\mu(y)$ at $x_{0}$. Consequently, there exists $\delta_{1} > 0$ such that
\begin{displaymath} \int_{B(x_{0},\delta_{0})} k(x - y) \, d(\mu|_{B})(y) \leq \int_{B(x_{0},\delta_{0})} k(x - y) \, d\mu(y) < \epsilon, \qquad x \in B(x_{0},\delta_{1}). \end{displaymath}
This implies \eqref{form152}, since $x \mapsto \int_{\R^{d} \, \setminus \, B(x_{0},\delta_{0})} k(x - y) \, d(\mu|_{B})(y)$ is continuous at $x_{0}$. \end{proof}

We repeat the statement of Lemma \ref{lemma13}:

\begin{lemma} Let $\gamma \subset \R^{d}$, and let $f \colon \gamma \to \R^{D}$ be a bi-Lipschitz embedding. If $\mu \in \mathcal{M}(\gamma)$, then $f\mu \in \mathcal{M}(f(\gamma))$. \end{lemma}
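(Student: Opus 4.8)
The plan is to exploit that a bi-Lipschitz embedding distorts the logarithmic kernel only by a \emph{bounded additive} amount. Fix a constant $L \geq 1$ with $L^{-1}|x - y| \leq |f(x) - f(y)| \leq L|x - y|$ for all $x, y \in \gamma$. Then
\[ \Big| \log\tfrac{1}{|f(x) - f(y)|} - \log\tfrac{1}{|x - y|} \Big| = \Big| \log\tfrac{|x - y|}{|f(x) - f(y)|} \Big| \leq \log L, \qquad x, y \in \gamma, \ x \neq y. \]
Since $f$ is a homeomorphism onto its image $f(\gamma)$ and $\mu$ is compactly supported, $f\mu$ is a compactly supported Radon measure with $\spt(f\mu) = f(\spt \mu) \subseteq f(\gamma)$, so the support condition in the definition of $\mathcal{M}(f(\gamma))$ holds.

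First I would verify property (a) of Definition \ref{def5Intro} for $f\mu$. For $w = f(x)$ with $x \in \gamma$, the change of variables $z = f(y)$ gives $\int |\log|w - z|| \, d(f\mu)(z) = \int |\log|f(x) - f(y)|| \, d\mu(y)$, which by the displayed bound is at most $\int |\log|x - y|| \, d\mu(y) + \mu(\gamma)\log L < \infty$, using that $\mu$ is finite and satisfies (a). I would also record the elementary observation that $\mu$ has no atoms: an atom at some $x_{0} \in \spt\mu \subseteq \gamma$ would force $\int \log\tfrac{1}{|x_{0} - y|} \, d\mu(y) = +\infty$, contradicting (a). This is needed in the next step.

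For property (b), fix $x_{0} \in \spt\mu$; I claim $U(f\mu)$ is continuous at $f(x_{0})$ along $\spt(f\mu) = f(\spt\mu)$. Take any sequence $f(x_{n}) \to f(x_{0})$ with $x_{n} \in \spt\mu$; bi-Lipschitzness of $f$ forces $x_{n} \to x_{0}$. Using the change of variables and the identity $\log\tfrac{1}{|f(x_{n}) - f(y)|} = \log\tfrac{1}{|x_{n} - y|} + \log\tfrac{|x_{n} - y|}{|f(x_{n}) - f(y)|}$, one gets
\[ U(f\mu)(f(x_{n})) = U\mu(x_{n}) + \int \psi_{n} \, d\mu, \qquad \psi_{n}(y) := \log\tfrac{|x_{n} - y|}{|f(x_{n}) - f(y)|}. \]
Here $U\mu(x_{n}) \to U\mu(x_{0})$ by property (b) for $\mu$. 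Meanwhile $|\psi_{n}| \leq \log L$ uniformly, and since $f$ is continuous and injective and $\mu$ is atomless, $\psi_{n} \to \psi_{0}$ $\mu$-almost everywhere (the exceptional set $\{x_{0}, x_{1}, x_{2}, \dots\}$ is $\mu$-null). Dominated convergence yields $\int \psi_{n} \, d\mu \to \int \psi_{0} \, d\mu$, hence $U(f\mu)(f(x_{n})) \to U\mu(x_{0}) + \int \psi_{0} \, d\mu = U(f\mu)(f(x_{0}))$. As the sequence was arbitrary, $U(f\mu) \in C(\spt(f\mu))$, so $f\mu \in \mathcal{M}(f(\gamma))$.

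There is no genuine obstacle here; the statement is soft. The only point requiring a moment's care is the limit of the correction term $\int \psi_{n} \, d\mu$, where one needs $x_{0}$ (and the sequence $x_{n}$) to be $\mu$-negligible — which is precisely why it is worth noting that property (a) of Definition \ref{def5Intro} forces $\mu$ to be atomless. (Alternatively, one could avoid this by splitting $\mu = \mu|_{B(x_{0},\delta)} + \mu|_{\gamma \setminus B(x_{0},\delta)}$, controlling the near part directly via (a) and the far part by ordinary continuity, in the style of the proof of Lemma \ref{lemma12}; the dominated-convergence route above is the most direct.)
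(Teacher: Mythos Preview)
Your proof is correct, and it takes a somewhat different (and arguably cleaner) route than the paper for part (b). The paper works with the truncated kernel $k(x) = \max\{-\log|x|,0\}$ and carries out an $\epsilon$--$\delta$ argument in the style of Lemma \ref{lemma12}: it chooses $\delta_{0}$ so small that the near contribution $\int_{B(x_{0},\delta_{0})} k(x-y)\,d\mu(y)$ is $<\epsilon$, and additionally uses atomlessness of $\mu$ to absorb the extra $k(c)$ term coming from the bi-Lipschitz distortion; the far contribution is then handled by continuity of $f$. Your approach instead writes the exact identity $U(f\mu)(f(x_{n})) = U\mu(x_{n}) + \int \psi_{n}\,d\mu$ with $\|\psi_{n}\|_{L^{\infty}}\le\log L$, and lets dominated convergence do all the work. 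Both proofs need $\mu$ to be atomless, and you identify this correctly. Your route has the virtue of making transparent that a bi-Lipschitz change of variables perturbs the logarithmic potential by a \emph{uniformly bounded} continuous correction; the paper's route is more robust in that it never needs to form the quotient $|x-y|/|f(x)-f(y)|$ and so would adapt more readily to kernels where only a one-sided comparison is available.
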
 

\begin{proof} Write $k(x) := \max\{-\log |x|,0\}$. Let $c \in (0,1]$ be a constant such that $|f(x) - f(y)| \geq c|x - y|$ for all $x,y \in \gamma$. Then, since $k$ is non-increasing,
\begin{equation}\label{form154} k(f(x) - f(y)) \leq k(c(x - y)) \leq k(x - y) + k(c). \end{equation}
Since $\mu$ and $f\mu$ are finite measures, this implies
\begin{displaymath} \bar{y} \mapsto \int k(\bar{x} - \bar{y}) \, d(f\mu)(\bar{y}) \in L^{1}(f\mu), \qquad \bar{x} \in f(\gamma), \end{displaymath}
verifying property (a) for $f\mu$.

To verify property (b), fix $\bar{x}_{0} = f(x_{0}) \in f(\gamma)$, and $\epsilon > 0$. We aim to show that
\begin{displaymath} \mathop{\limsup_{\bar{x} \to \bar{x}_{0}}}_{\bar{x} \in f(\gamma)} \left| \int k(\bar{x} - \bar{y}) \, d(f\mu)(\bar{y}) - \int k(\bar{x}_{0} - \bar{y}) \, d(f\mu)(\bar{y}) \right| \leq 4\epsilon, \end{displaymath}
or equivalently
\begin{equation}\label{form153} \limsup_{x \to x_{0}} \left| \int k(f(x) - f(y)) \, d\mu(y) - \int k(f(x_{0}) - f(y)) \, d\mu(y) \right| \leq 4\epsilon. \end{equation} 
Arguing as in Lemma \ref{lemma12}, there exist $\delta_{0},\delta_{1} \in (0,1]$ such that
\begin{displaymath} \int_{B(x_{0},\delta_{0})} k(x - y) \, d\mu(y) < \epsilon, \qquad x \in B(x_{0},\delta_{1}). \end{displaymath}
We may additionally choose $\delta_{0} > 0$ so small that $k(c) \cdot \mu(B(x_{0},\delta_{0})) < \epsilon$, since $\mu$ is non-atomic. Together with \eqref{form154}, the previous estimate implies
\begin{displaymath} \int_{B(x_{0},\delta_{0})} k(f(x) - f(y)) \, d\mu(y) < 2\epsilon, \qquad x \in B(x_{0},\delta_{1}). \end{displaymath}
This yields \eqref{form153}, since 
\begin{displaymath} x \mapsto \int_{\R^{d_{1}} \, \setminus B(x_{0},\delta_{0})} k(f(x) - f(y)) \, d\mu(y) \end{displaymath}
is continuous at $x_{0}$ by the bi-Lipschitz continuity of $f$. \end{proof} 

\subsection{Proof of Theorem \ref{main} assuming Theorem \ref{mainTechnical}}\label{s:mainProof} Let $\boldsymbol{\mu}$ be the logarithmic equilibrium measure on $\gamma$. It suffices to show that if $x_{0} \in \gamma_{\mathrm{reg}}$, then there exists $r > 0$ such that $\bmu|_{\gamma \cap B(x_{0},r)} \ll \mathcal{H}^{1}$. For notational convenience, we assume that $x_{0} = 0 \in \gamma_{\mathrm{reg}}$ and set $B(r) \coloneqq B(0,r)$.
	
	Let $p=2$ and $\delta=\delta(2,\alpha,d)\in (0,1)$ be the parameter from Theorem \ref{mainTechnical}. By the definition of $\gamma_{\mathrm{reg}}$, there exist small $\alpha,\epsilon,r > 0$ such that (after possibly rotating $\gamma$) we have
	\begin{equation}\label{form149} 
		\gamma \cap \bar{B}(3r) = \graph(A) \cap \bar{B}(3r)\quad \text{and} \quad \Gamma([-\epsilon r,\epsilon r]) \subset \gamma \cap \bar{B}(r), 
	\end{equation}
	where $A:\R \to \R^{d-1}$ is a $C^{1,\alpha}$-function with $\lip(A)\le\delta$, and $\Gamma(x)=(x,A(x))$.
	We will show that $\bmu|_{\Gamma([-\epsilon r,\epsilon r])} \ll \mathcal{H}^{1}$.

Decompose 
	\begin{displaymath} \boldsymbol{\mu} := \boldsymbol{\mu}|_{\bar{B}(2r)} + \boldsymbol{\mu}|_{\gamma \, \setminus \, \bar{B}(2r)} := \bmu_{1} + \bmu_{2}. \end{displaymath}
We start with a few elementary observations. First,
	\begin{equation}\label{form162} U\bmu_{2} \in \mathrm{Lip}(\bar{B}(r)), \end{equation}
	since $\spt \bmu_{2} \subset \R^{d} \, \setminus \, B(2r)$. Second, $U\bmu \geq \mathcal{E}_{\log}(\mu)$ approximately everywhere on $\spt \bmu$ by Theorem \ref{thm:eq-estimates}. In particular this lower bound holds approximately everywhere on $\spt \bmu \cap B(3r)$. But $\spt \bmu \cap \bar{B}(3r)$ is contained on the Ahlfors $1$-regular set $\Gamma$, so Corollary \ref{thm:minimum principleExtension} applied with $M := \mathcal{E}_{\log}(\mu)$ implies $U\bmu \geq \mathcal{E}_{\log}(\mu)$ everywhere on $\spt \bmu \cap B(3r)$. The corresponding upper bound $U\mu \leq \mathcal{E}_{\log}(\mu)$ on $\spt \bmu$ is also valid by Theorem \ref{thm:eq-estimates}, so we may conclude
	\begin{displaymath} U\bmu \in C(\spt \bmu \cap B(3r)). \end{displaymath}
	Theorem \ref{thm:continuity principle} then implies $U\bmu \in C(B(3r))$, and next $U\bmu_{1} \in C(B(3r))$ by Lemma \ref{lemma12}. But since $\spt \bmu_{1} \subset \bar{B}(2r)$, this shows $U\bmu_{1} \in C(\R^{d})$, and in particular $U\bmu_{1} \in \mathcal{M}(\Gamma)$.
		
	Let $\mu := \pi \bmu_{1}$, where $\pi(x_{1},\ldots,x_{d}) = x_{1}$ is the projection to the first coordinate. In other words $\mu(B) = \boldsymbol{\mu}_{1}(\pi^{-1}(B))$ for all Borel sets $B \subset \R$. Since $\bmu_{1} \in \mathcal{M}(\Gamma)$, and $\pi \colon \Gamma \to \R$ is bi-Lipschitz, also $\mu \in \mathcal{M}(\R)$ by Lemma \ref{lemma13}.
	
	We claim that also $\bmu_{1} = \Gamma\mu$. To see this, note that every set $B \subset \Gamma$ can be written as $B = \Gamma \cap \pi^{-1}(\pi(B))$. Since $\spt \bmu_{1} \subset \Gamma$ by \eqref{form149}, we have
	\begin{displaymath} \bmu_{1}(B) = \bmu_{1}(\pi^{-1}(\pi(B))) = \mu(\pi(B)) = (\Gamma\mu)(B), \qquad B \subset \Gamma. \end{displaymath}
	Now, the relation $\bmu_{1} = \Gamma \mu$ yields $U\bmu_{1}(\Gamma(x)) = U^{\Gamma}\mu(x)$ for all $x \in \R$. In particular,
	\begin{displaymath} U^{\Gamma}\mu(x) = U\bmu(\Gamma(x)) - U\bmu_{2}(\Gamma(x)), \qquad x \in \R. \end{displaymath}
	Recall again that $U\bmu = \mathcal{E}_{\log}(\mu)$ on $\spt \bmu$, and $U\bmu \geq \mathcal{E}_{\log}(\mu)$ on $\gamma$. Therefore: 
	\begin{itemize}
		\item $U^{\Gamma}\mu(x) = \mathcal{E}_{\log}(\mu) - U\bmu_{2}(\Gamma(x))$ for $x \in \spt \mu$, since $\Gamma(\spt \mu) = \spt \bmu_{1} \subset \spt \bmu$, and
		\item $U^{\Gamma}\mu(x) \geq \mathcal{E}_{\log}(\mu) - U\bmu_{2}(\Gamma(x))$ for $x \in [-\epsilon r,\epsilon r]$, since $\Gamma([-\epsilon r,\epsilon r]) \subset \gamma$ by \eqref{form149}.
		\item According to \eqref{form162}, 
		\begin{displaymath} x \mapsto \mathcal{E}_{\log}(\mu) - U\bmu_{2}(\Gamma(x)) \end{displaymath}
		is Lipschitz on $I_{0} := [-\epsilon r,\epsilon r]$, since $\Gamma([-\epsilon r,\epsilon r]) \subset \bar{B}(r)$.
	\end{itemize}
	From these points, we see that the hypotheses of Theorem \ref{mainTechnical} are satisfied for $\mu$, $I_{0}$, and the Lipschitz function $\mathcal{L}$ defined by $\mathcal{L}(x) = \mathcal{E}_{\log}(\mu) - U\bmu_{2}(\Gamma(x))$, for $x \in I_{0}$ (the behaviour of $\mathcal{L}$ outside $I_{0}$ is irrelevant, so it may be extended in an arbitrary Lipschitz fashion). Theorem \ref{mainTechnical} then implies that $\mu|_{I_{0}}$ is absolutely continuous with respect to Lebesgue measure. This implies that $\bmu|_{\Gamma(I_{0})} \ll \mathcal{H}^{1}$, since $\bmu|_{\Gamma(I_{0})} = \bmu_{1}|_{\Gamma(I_{0})} = (\Gamma \mu)|_{\Gamma(I_{0})}$, and $\Gamma$ is bi-Lipschitz. 
	
\subsection{$L^{p}$ bounds for potentials vs. Frostman estimates for measures}
The following proposition shows that $L^p$ estimates for the Riesz potential $k_\alpha\ast\mu$ imply Frostman estimates for $\mu$. For our purposes it is more convenient to state it in terms of the fractional Laplacian, which will be properly introduced in Section \ref{s:fractionalLaplacians}.

\begin{proposition}\label{frostmanProp} Let $\mu$ be a Radon measure. Then, for all $\alpha \in (0,1)$, $p \in [1,\infty]$,
	\begin{equation}\label{form69} \mu(B(x,r)) \lesssim_{\alpha} \|\Delta^{(\alpha - 1)/2}\mu\|_{L^{p}} \cdot r^{\alpha - 1/p}, \qquad x \in \R, \, r \in (0,1]. \end{equation}
\end{proposition}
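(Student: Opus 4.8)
The plan is to exploit the pointwise positivity of the Riesz kernel and to sandwich a local average of the potential $\Delta^{(\alpha-1)/2}\mu$ between $\mu(B(x,r))$ (from below) and its $L^{p}$-norm (from above, via H\"older). Recall from Section~\ref{s:fractionalLaplacians} that $\Delta^{(\alpha-1)/2}\mu = c_{\alpha}\,|\cdot|^{-\alpha}\ast\mu$ for a constant $c_{\alpha}>0$; since $\mu\ge 0$, this potential is a nonnegative (possibly $+\infty$-valued) function. If $\|\Delta^{(\alpha-1)/2}\mu\|_{L^{p}}=\infty$ the asserted estimate is trivial, so I may assume this norm is finite, in which case the potential is finite a.e.

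First I would fix $x\in\R$ and $r>0$ and record the elementary pointwise lower bound: for every $z\in B(x,r)$, since $|z-y|\le 2r$ whenever $z,y\in B(x,r)$ and the integrand is nonnegative,
\[
\Delta^{(\alpha-1)/2}\mu(z)=c_{\alpha}\int|z-y|^{-\alpha}\,d\mu(y)\ \ge\ c_{\alpha}\int_{B(x,r)}|z-y|^{-\alpha}\,d\mu(y)\ \ge\ c_{\alpha}\,(2r)^{-\alpha}\,\mu(B(x,r)).
\]
Integrating over $z\in B(x,r)$, which has Lebesgue measure $2r$, gives
\[
\int_{B(x,r)}\Delta^{(\alpha-1)/2}\mu(z)\,dz\ \ge\ c_{\alpha}\,2^{1-\alpha}\,r^{1-\alpha}\,\mu(B(x,r)).
\]
On the other hand, H\"older's inequality (valid for all $p\in[1,\infty]$) yields
\[
\int_{B(x,r)}\Delta^{(\alpha-1)/2}\mu(z)\,dz\ \le\ \|\Delta^{(\alpha-1)/2}\mu\|_{L^{p}}\,|B(x,r)|^{1-1/p}\ =\ 2^{1-1/p}\,\|\Delta^{(\alpha-1)/2}\mu\|_{L^{p}}\,r^{1-1/p}.
\]
Comparing the two bounds and solving for $\mu(B(x,r))$ produces $\mu(B(x,r))\le c_{\alpha}^{-1}2^{\alpha-1/p}\|\Delta^{(\alpha-1)/2}\mu\|_{L^{p}}\,r^{\alpha-1/p}$, which is \eqref{form69}.

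There is essentially no genuine obstacle here; the computation is only a few lines. The points that need a little care are: (i) the identification of $\Delta^{(\alpha-1)/2}$ with convolution against a \emph{positive} multiple of $|x|^{-\alpha}$ — this positivity is what makes the lower bound legitimate, and it is exactly the content of the definitions in Section~\ref{s:fractionalLaplacians}; (ii) bookkeeping of the factor $|B(x,r)|=2r$ in dimension one; and (iii) noting that the hypothesis $r\in(0,1]$ is never used in the argument — it is imposed only because that is the range relevant in the subsequent applications of the proposition.
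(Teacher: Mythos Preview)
Your argument is correct and is considerably simpler than the paper's proof. The paper proceeds by Fourier analysis: it writes $\mu(B(x,r))\le\int\varphi_{r}\,d\mu$, uses Plancherel to rewrite this as $\int\Delta^{(\alpha-1)/2}\mu\cdot\Delta^{(1-\alpha)/2}\varphi_{r}$, applies H\"older, and then spends most of the work establishing the bound $\|\Delta^{(1-\alpha)/2}\varphi_{r}\|_{L^{p'}}\lesssim_{\alpha}r^{\alpha-1/p}$ via a pointwise analysis of the fractional Laplacian of $\varphi_{r}$ based on the representation formula \cite[(2.4.7)]{MR3243734}. Your route bypasses all of this by exploiting the positivity of the Riesz kernel directly: the lower bound $\Delta^{(\alpha-1)/2}\mu(z)\gtrsim r^{-\alpha}\mu(B(x,r))$ on $B(x,r)$ is immediate, and the rest is H\"older. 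Your approach has the further advantage that the restriction $r\in(0,1]$ is visibly irrelevant (the paper's argument needs a separate covering step and the assumption $r\le\tfrac{1}{10}$ for technical reasons in the pointwise estimate). The only caveat is that your proof relies on $\mu\ge 0$, whereas the paper's Plancherel route does not, but in this paper the measures in question are always nonnegative (as the remark following the proposition makes explicit).
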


\begin{remark} The non-negative function $\Delta^{(\alpha - 1)/2}\mu$ is pointwise defined as a convolution with (a constant times) the $\alpha$-dimensional Riesz kernel $k_\alpha(x)=|x|^{-\alpha}$, see \eqref{form161}. \end{remark}
\begin{remark}
	For $p=\infty$ the sharp version of \eqref{form69} involves $\|\Delta^{(\alpha - 1)/2}\mu\|_{\mathrm{BMO}}$ and is due to Adams \cite{adams1975note}.
\end{remark}

\begin{proof}[Proof of Proposition \ref{frostmanProp}] We may assume that $x = 0$ without loss of generality (the right hand side of \eqref{form69} is invariant under translating $\mu$). We may also assume that $r \in (0,\tfrac{1}{10}]$ by a trivial covering argument, since we only claim \eqref{form69} for $r \in (0,1]$.
	
	Let $\varphi \in C^{\infty}(\R)$ with $\mathbf{1}_{B(1)} \leq \varphi \leq \mathbf{1}_{B(2)}$. Write $\varphi_{r} \coloneqq \varphi(x/r)$ (without the usual $r^{-1}$-normalisation). By Plancherel and H\"older,
	\begin{align} \mu(B(r)) \leq \int \varphi_{r} \, d\mu & = \int |\xi|^{\alpha - 1}\hat{\mu}(\xi) \cdot |\xi|^{1 - \alpha}\widehat{\varphi_{r}}(\xi) \, d\xi \notag\\
		& =\int (\Delta^{(\alpha - 1)/2}\mu)(x) \cdot (\Delta^{(1 - \alpha)/2}\varphi_{r})(x) \, dx \notag\\
		&\label{form71} \leq \|\Delta^{(\alpha - 1)/2}\mu\|_{L^{p}}\|\Delta^{(1 - \alpha)/2}\varphi_{r}\|_{L^{p'}}, \end{align}
	where $p' \in [1,\infty]$ satisfies $1/p + 1/p' = 1$. 
	
	To establish that $\|\Delta^{(1 - \alpha)/2}\varphi_{r}\|_{L^{p'}} \lesssim_{\alpha} r^{\alpha -  1/p}$, we use the pointwise formula in \cite[(2.4.7)]{MR3243734}, similarly to \eqref{form53}-\eqref{form54}. The result looks like this:
	\begin{multline*} \sigma(1 - \alpha)(\Delta^{(1 - \alpha)/2}\varphi_{r})(x)  = \int_{|x - y| \geq 1} \frac{\sigma(\alpha - 2)\varphi_{r}(y)}{|y - x|^{2 - \alpha}} \, dy + b(\alpha)\varphi_{r}(x)\\ + \sigma(\alpha - 2)\int_{|x - y| < 1} \frac{\varphi_{r}(y) - \varphi_{r}(x)}{|y - x|^{2 - \alpha}} \, dy =: \Sigma_{1}(x) + \Sigma_{2}(x) + \Sigma_{3}(x).  \end{multline*} 
	Here $\sigma(1 - \alpha)$ and $\sigma(\alpha - 2)$ are the constants defined in \eqref{form157}, and $b(\alpha) = 2\sigma(\alpha - 2)/(\alpha - 1)$. (The formula for $\Delta^{(1 - \alpha)/2}$ looks here slightly different than the formula for $\Delta^{\beta/2}$ at \eqref{form53}-\eqref{form54}, because now we are applying \cite[(2.4.7)]{MR3243734} with $N = 0$ instead of $N = 1$.) We next claim that all the three terms $\Sigma_{j}$ in the pointwise formula for $\Delta^{(1 - \alpha)/2}\varphi_{r}$ satisfy
	\begin{equation}\label{form70} |\Sigma_{j}(x)| \lesssim_{\alpha} \min\{r^{\alpha - 1},r \cdot |x|^{\alpha - 2}\}, \qquad x \in \R. \end{equation}
	Note that $\min\{r^{\alpha - 1},r \cdot |x|^{\alpha - 2}\} = r^{\alpha - 1}$ for $|x| \leq r$.
	
	The term $\Sigma_{2}(x) = b(\alpha)\varphi_{r}(x)$ is simplest:
	\begin{displaymath} |\Sigma_{2}(x)| \leq |b(\alpha)| \cdot \mathbf{1}_{B(2r)}(x) \lesssim_{\alpha} r^{\alpha - 1}\mathbf{1}_{B(2r)}(x) \lesssim_{\alpha} \min\{r^{\alpha - 1},r \cdot |x|^{\alpha - 2}\}. \end{displaymath}
	
	We then deal with the terms $\Sigma_{1}$ and $\Sigma_{3}$. We consider separately the cases where $|x| > 3r$ and $|x| < 3r$. Assume first that $|x| > 3r$. Since $\spt \varphi_{r} \subset \bar{B}(2r)$, in this case $\varphi_{r}(x) = 0$. Moreover, whenever $\varphi_{r}(y) \neq 0$, it holds $|y - x| \sim |x|$. Consequently,
	\begin{displaymath} |\Sigma_{1}(x)| \lesssim_{\alpha} \int_{B(2r)} \frac{dy}{|x|^{2 - \alpha}} \sim r \cdot |x|^{\alpha - 2} \lesssim \min\{r^{\alpha - 1},r \cdot |x|^{\alpha - 2}\}. \end{displaymath}
	The term $\Sigma_{3}(x)$ can be estimated similarly, since $\varphi_{r}(x) = 0$.
	
	Assume next that $|x| < 3r$. In this case $\Sigma_{1}(x) = 0$, since $\varphi_{r}(y) \neq 0$ implies $|y| \leq 2r$, and therefore $|x - y| \leq 5r \leq \tfrac{1}{2}$ by our hypothesis $r \in (0,\tfrac{1}{10}]$. Thus $\varphi_{r}(y) = 0$ for $|x - y| \geq 1$.
	
	Regarding the term $\Sigma_{3}(x)$, for the part of the integral over $|y| \leq 6r$, we employ the Lipschitz estimate $|\varphi_{r}(x)  - \varphi_{r}(y)| \lesssim r^{-1}|x - y|$. For $|y| > 6r$, we insted use $|\varphi_{r}(x) - \varphi_{y}(y)| \leq 1$, and $|x - y| \sim |y|$. Combining these ingredients results in
	\begin{displaymath} |\Sigma_{3}(x)| \lesssim_{\alpha}  r^{-1} \int_{|y| \leq 6r} \frac{dy}{|y - x|^{1 - \alpha}} + \int_{|y| \geq 6r} \frac{dy}{|y|^{2 - \alpha}} \lesssim_{\alpha} r^{\alpha - 1} \lesssim \min\{r^{\alpha - 1},r \cdot |x|^{\alpha - 2}\}.  \end{displaymath}
	This completes the proof of \eqref{form70} for all $j \in \{1,2,3\}$, and all $x \in \R$.
	
	Using \eqref{form70}, we finally derive the desired estimate for $\|\Delta^{(1 - \alpha)/2}\varphi_{r}\|_{L^{p'}}$:
	\begin{align*} \|\Delta^{(1 - \alpha)/2}\varphi_{r}\|_{L^{p'}} & \leq \|\Delta^{(1 - \alpha)/2}\varphi_{r}\|_{L^{p'}(B(r))} + \|\Delta^{(1 - \alpha)/2}\varphi_{r}\|_{L^{p'}(\R \, \setminus \, B(r))}\\
		& \lesssim r^{\alpha - 1 + 1/p'} + \left(\int_{\R \, \setminus \, B(r)} (r \cdot |x|^{\alpha - 2})^{p'} \right)^{1/p'} \sim_{\alpha} r^{\alpha - 1 + 1/p'}. \end{align*} 
	Combined with \eqref{form71}, this completes the proof of Proposition \ref{frostmanProp}. \end{proof} 

\subsection{Fractional Laplacians acting on various function spaces}\label{s:fractionalLaplacians} For $\beta \in \C$ with $\Rea \beta > -1$, the fractional Laplacian $\Delta^{\beta/2}$ is formally the Fourier multiplier with symbol $|\xi|^{\beta}$. For $f \in \mathcal{S}(\R)$, the following pointwise definition is well-posed:
\begin{equation}\label{fracL} (\Delta^{\beta/2}f)(x) \stackrel{\mathrm{def.}}{=} \int_{\R} e^{2\pi i x \xi} |\xi|^{\beta}\hat{f}(\xi) \, d\xi.  \end{equation}
The integral is absolutely convergent thanks to the assumption $\Rea \beta > -1$. We will additionally need to make sense of $\Delta^{\beta/2}$ acting on $\dot{H}^{\beta}$, $L^{2}$, and the space of locally bounded functions with logarithmic growth at infinity, see Definitions \ref{def4}, \ref{L2FracL}, and \ref{def6}.  

\begin{definition}[Homogeneous Sobolev space $\dot{H}^{\beta}$]\label{def3} Let $\beta \in \C$ with $\Rea \beta > -1$. The homogeneous Sobolev space $\dot{H}^{\beta}$ consists of $\Lambda \in \mathcal{S}'(\R)$ such that $\hat{\Lambda} \in L^{1}_{\mathrm{loc}}(\R)$, and 
	\begin{displaymath} \xi \mapsto \hat{\Lambda}(\xi)|\xi|^{\beta} \in L^{2}(\R). \end{displaymath}
\end{definition} 

The fractional Laplacian $\Delta^{\beta/2}$ may be defined on $\dot{H}^{\beta}$ as an element of $L^{2}(\R)$:
\begin{definition}[$\Delta^{\beta/2}$ acting on $\dot{H}^{\beta}$]\label{def4} Let $\beta \in \C$ with $\Rea \beta > -1$, and $\Lambda \in \dot{H}^{\beta}$. Then $\xi \mapsto \hat{\Lambda}(\xi)|\xi|^{\beta} \in L^{2}(\R)$ is the Fourier transform of some $g \in L^{2}(\R)$. We define 
	\begin{displaymath} \Delta^{\beta/2}\Lambda \coloneqq g \in L^{2}(\R). \end{displaymath}   \end{definition}

For $\Rea \beta \geq 0$, we also need a definition of fractional Laplacians acting on $L^{2}(\R)$. If $f \in L^{2}(\R)$ and $\beta \in \C$ with $\Rea \beta \geq 0$, note that $\xi \mapsto |\xi|^{\beta}\hat{f}(\xi)$ defines a tempered distribution, indeed by Cauchy-Schwarz and Plancherel
\begin{displaymath} \left| \int |\xi|^{\beta}\hat{f}(\xi) \cdot \varphi(\xi) \, d\xi \right| \leq \|f\|_{L^{2}} \|(1 + |\xi|)^{2\ceil{\Rea \beta}}\varphi\|_{L^{2}}, \qquad \varphi \in \mathcal{S}(\R). \end{displaymath} 
\begin{definition}[$\Delta^{\beta/2}$ acting on $L^{2}(\R)$]\label{L2FracL} For $f \in L^{2}(\R)$ and $\beta \in \C$ with $\Rea \beta \geq 0$, we define $\Delta^{\beta/2}f \in \mathcal{S}'(\R)$ as the inverse (distributional) Fourier transform of $\xi \mapsto |\xi|^{\beta}\hat{f}(\xi)$. In other words, 
\begin{displaymath} (\Delta^{\beta/2}f)g := \int |\xi|^{\beta/2}f(\xi) \cdot \widecheck{g}(\xi) \, d\xi, \qquad g \in \mathcal{S}(\R). \end{displaymath} \end{definition} 

\subsubsection{Pointwise formulas and decay bounds} We next record some pointwise expressions for $\Delta^{\beta/2}f$, $f \in \mathcal{S}(\R)$. For $\Rea \beta \in (0,1)$, the negative order fractional Laplacian $\Delta^{-\beta/2}f$ coincides with convolution with a constant times the $(1 - \beta)$-dimensional Riesz kernel $k_{1 - \beta}$, where 
\begin{displaymath} k_{\beta}(x) = |x|^{-\beta}, \qquad x \in \R \, \setminus \, \{0\}. \end{displaymath} 
More precisely this constant is $c(\beta) = \pi^{\beta - 1/2}\Gamma((1 - \beta)/2)/\Gamma(\beta/2)$, thus
\begin{equation}\label{form161} (\Delta^{-\beta/2}f)(x) = c(\beta)\int f(y)|x - y|^{\beta - 1}, \qquad x \in \R, \, \Rea \beta \in (0,1). \end{equation}
For a proof, see \cite[Proposition 4.1]{MR2003254}. In particular, the formula \eqref{form161} allows one to extend the pointwise definition of $(\Delta^{-\beta/2}\mu)(x)$ to all positive Borel measures $\mu$ on $\R$ (although it is now possible that $(\Delta^{-\beta/2}\mu)(x) = +\infty$).

We will also need a pointwise expression for positive-order fractional Laplacians. In the terminology and notation of \cite[Section 2.4.3]{MR3243734}, we may  write
\begin{equation}\label{form52} \sigma(\beta)(\Delta^{\beta/2}f)(x) = u_{\beta}(e_{x}\hat{f}), \qquad \Rea \beta > -1, \, f \in \mathcal{S}(\R), \end{equation} 
where 
\begin{equation}\label{form157} \sigma(\beta) := \frac{\pi^{(\beta + 1)/2}}{\Gamma((\beta + 1)/2)}, \end{equation}
$u_{\beta}$ is the \emph{homogeneous distribution} of order $\beta$, and $e_{x}(\xi) \coloneqq e^{2\pi i x \cdot \xi}$ for $x,\xi \in \R$. The homogeneous distribution $u_{\beta}$ is a special tempered distribution defined for all $\beta \in \C$, and by \cite[Theorem 2.4.6]{MR3243734} its Fourier transform satisfies
\begin{displaymath} \widehat{u_{\beta}} = u_{-1 - \beta}, \qquad \beta \in \C, \end{displaymath}
In particular, continuing from \eqref{form52}, and noting that $e_{x}\hat{f}$ coincides with the Fourier transform of $y \mapsto f(x + y) =: (\tau_{x}f)(y)$, we find
\begin{displaymath} (\Delta^{\beta/2}f)(x) = \sigma(\beta)^{-1}\widehat{u_{\beta}}(\tau_{x}f) = \sigma(\beta)^{-1}u_{-1 - \beta}(\tau_{x}f). \end{displaymath} 
Now, assume that $\Rea \beta < 2$. Then $\Rea (-1 - \beta) > -3 = -N - n - 1$ with $N = n = 1$. Therefore \cite[(2.4.7)]{MR3243734} with the choice $N = 1$ provides the following expression for $u_{-1 - \beta}$:
\begin{align} \sigma(\beta)(\Delta^{\beta/2}f)(x) & = \int_{|y| \geq 1} \sigma(-1 - \beta)[\tau_{x}f](y) \cdot |y|^{-1 - \beta} \, dy \notag\\
	&\qquad + b(\beta,0)[\tau_{x}f](0) + b(\beta,1)[\tau_{x}f]'(0) \notag\\
	& \qquad + \int_{|y| < 1} \sigma(-1 - \beta) \left\{[\tau_{x}f](y) - [\tau_{x}f](0) - [\tau_{x}f]'(0)y \right\} \cdot |y|^{-1 - \beta} \, dy \notag\\
	&\label{form53} = \int_{|x - y| \geq 1} \frac{\sigma(-1 - \beta)f(y)}{|x - y|^{1 + \beta}} \, dy + b(\beta,0)f(x) + b(\beta,1)f'(x)\\
	&\label{form54}\qquad + \int_{|x - y| < 1} \frac{\sigma(-1 - \beta)[f(y) - f(x) - f'(x)(y - x)]}{|y - x|^{1 + \beta}} \, dy. \end{align}
The values of the constants $b(\beta,0)$ and $b(\beta,1)$ are computed above \cite[(2.4.7)]{MR3243734}. In fact $b(\beta,1) = 0$, and
\begin{displaymath} b(\beta,0) = \frac{2\sigma(-1 - \beta)}{-\beta} = \frac{2\pi^{-\beta/2}}{-\beta \cdot \Gamma\left(\frac{-\beta}{2} \right)}. \end{displaymath} 
The function $\beta \mapsto \Gamma(-\beta/2)$ has a simple pole at $\beta = 0$, so the function $\beta \mapsto b(\beta,0)$ stays bounded in a neighbourhood of $\beta = 0$.

Note that the integrals in \eqref{form53}-\eqref{form54} are absolutely convergent for $\Rea \beta < 2$, and for $f \in \mathcal{S}(\R)$. From the expressions \eqref{form53}-\eqref{form54}, we may deduce the following decay estimate:
\begin{lemma}\label{lemma3} Let $-1 < \Rea \beta < 2$. Then 
	\begin{equation}\label{form74} |\Delta^{\beta/2}f(x)| \lesssim_{\beta,f} (1 + |x|)^{-1 - \Rea \beta}, \qquad x \in \R, f \in \mathcal{S}(\R). \end{equation} 
	Moreover, the implicit constants remain uniformly bounded if $f \in \mathcal{S}(\R)$ is fixed, and $\beta$ ranges in a compact subset of the strip $-1 < \Rea \beta < 2$.
\end{lemma}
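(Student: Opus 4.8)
The plan is to read off the decay from the explicit pointwise formula \eqref{form53}--\eqref{form54}, handling separately the regime $|x| \le 2$ (where the claim is merely that $\Delta^{\beta/2}f$ is bounded, which is immediate) and the regime $|x| > 2$, where the genuine decay rate $(1 + |x|)^{-1 - \Rea\beta}$ must be extracted. For $|x| \le 2$ one observes that each of the three terms in \eqref{form53}--\eqref{form54} is bounded by a constant depending only on $\beta$ and the Schwartz seminorms of $f$: the first integral is dominated by $\|f\|_{L^1}$ times $\sup_{|z|\ge 1}|z|^{-1-\Rea\beta}$ which is finite since $\Rea\beta > -1$; the term $b(\beta,0)f(x)$ is bounded by $|b(\beta,0)|\,\|f\|_\infty$, and $b(\beta,1) = 0$; the last integral is controlled using the second-order Taylor estimate $|f(y) - f(x) - f'(x)(y-x)| \lesssim_f |y - x|^2$ on $|y - x| < 1$, giving a bound by a constant times $\int_{|z|<1}|z|^{1 - \Rea\beta}\,dz < \infty$ since $\Rea\beta < 2$.

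For $|x| > 2$ I would split each integral at the ball $B(x, |x|/2)$ and its complement. On $B(x,|x|/2)$, every $y$ satisfies $|y| \ge |x|/2 \ge 1$, so one uses Schwartz decay of $f$ in the form $|f(y)|, |f'(y)| \lesssim_{f,N} (1+|y|)^{-N} \lesssim_N |x|^{-N}$ with $N$ large; combined with the locally integrable weight $|y - x|^{-1-\Rea\beta}$ (integrable near $x$ precisely because $\Rea\beta < 2$ after using the Taylor cancellation, or directly because $-1-\Rea\beta < 1$) this piece is $O(|x|^{-N})$, better than required. On the complement $\R \setminus B(x,|x|/2)$ one has $|y - x| \gtrsim |x|$, hence $|y-x|^{-1-\Rea\beta} \lesssim |x|^{-1-\Rea\beta}$ (using $-1 - \Rea\beta < 0$); pulling this factor out, the remaining integrals $\int |f(y)|\,dy$, $|f(x)|$, $|f'(x)|$, and $\int_{|y-x|<1,\,|y-x|\ge|x|/2}(\cdots)$ (which is empty once $|x| > 2$) are all bounded by Schwartz norms of $f$. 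This yields $|\Delta^{\beta/2}f(x)| \lesssim_{\beta,f} |x|^{-1-\Rea\beta}$ for $|x| > 2$, and together with the bounded regime this gives \eqref{form74}.

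For the uniformity claim, I would note that the only $\beta$-dependence enters through the constants $\sigma(\pm 1 \mp \beta)$, $b(\beta,0)$, $b(\beta,1) = 0$, and through the exponents $-1-\Rea\beta$ appearing in the weights. The constants $\sigma(-1-\beta) = \pi^{-\beta/2}/\Gamma(-\beta/2)$ and $b(\beta,0) = 2\sigma(-1-\beta)/(-\beta)$ were already observed (above \eqref{form53}) to remain bounded near $\beta = 0$ — the pole of $\Gamma(-\beta/2)$ at $\beta = 0$ cancels — and they are obviously continuous, hence bounded, on any compact subset of $\{-1 < \Rea\beta < 2\}$. The exponent $-1 - \Rea\beta$ ranges in a compact subinterval of $(-3, 0)$, so the suprema and integrals $\sup_{|z|\ge 1}|z|^{-1-\Rea\beta}$, $\int_{|z|<1}|z|^{1-\Rea\beta}\,dz$, $\int_{\R}|f(y)|\,dy$, etc., are all uniformly bounded. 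Collecting these observations shows the implicit constant in \eqref{form74} can be taken uniform over $\beta$ in any such compact set, with $f$ fixed.

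The main obstacle — and it is a mild one — is bookkeeping: ensuring that the Taylor-cancellation term in \eqref{form54} is correctly exploited both to get integrability of the singular weight near $x$ (needed when $\Rea\beta \ge 1$) and to get the right power of $|x|$ in the far regime, and checking that the split at $B(x,|x|/2)$ interacts correctly with the split at $\{|y - x| \gtrless 1\}$ built into the formula (for $|x| > 2$ the inner region $|y - x| < 1$ lies entirely inside $B(x, |x|/2)$, which simplifies matters). No delicate analysis is needed beyond Schwartz decay and elementary integral estimates.
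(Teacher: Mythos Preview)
Your proposal is correct and follows essentially the same approach as the paper: work from the pointwise formula \eqref{form53}--\eqref{form54}, treat the local terms $b(\beta,0)f(x)$ and the Taylor-remainder integral by Schwartz decay of $f,f',f''$, and for the far integral $\int_{|x-y|\ge 1}|x-y|^{-1-\beta}f(y)\,dy$ split according to whether $y$ is near $x$ or far. The paper packages that last splitting into a separate ``convolution preserves decay'' lemma (Lemma~\ref{lemma5}) and obtains the boundedness for small $|x|$ more directly from the Fourier definition \eqref{fracL}, but otherwise the arguments coincide.
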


To prove this, we record an elementary lemma on how convolution preserves decay:
\begin{lemma}\label{lemma5} Let $\alpha > 0$, $C_{1},C_{2} \geq 1$, and $d \in \N$, and let $F,G \colon \R^{d} \to \C$ be functions satisfying
	\begin{displaymath} |F(x)| \leq C_{1}(1 + |x|)^{-\alpha} \quad \text{and} \quad |G(x)| \leq C_{2}(1 + |x|)^{-d - \alpha}, \qquad x \in \R^{d}. \end{displaymath}
	Then,
	\begin{displaymath} |(F \ast G)(x)| \lesssim C_{1}C_{2}(1 + |x|)^{-\alpha}, \qquad x \in \R^{d}. \end{displaymath}
\end{lemma}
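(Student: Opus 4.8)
The statement is a routine estimate on the decay of a convolution, and the plan is a two-region decomposition of the integral $(F\ast G)(x)=\int_{\R^{d}}F(x-y)G(y)\,dy$ according to whether $y$ lies close to $x$ or not. Fix $x\in\R^{d}$ and write $R:=|x|$.

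I would first dispose of the case $R\le 1$. Here $(1+R)^{-\alpha}\gtrsim_{\alpha}1$, so it suffices to show $|(F\ast G)(x)|\lesssim_{d,\alpha}C_{1}C_{2}$. Since $\alpha>0$ we have $\|F\|_{L^{\infty}}\le C_{1}$, and since $d+\alpha>d$ an integration in polar coordinates gives $\|G\|_{L^{1}}\le C_{2}\int_{\R^{d}}(1+|y|)^{-d-\alpha}\,dy\lesssim_{d,\alpha}C_{2}$; Young's inequality then yields $|(F\ast G)(x)|\le\|F\|_{L^{\infty}}\|G\|_{L^{1}}\lesssim_{d,\alpha}C_{1}C_{2}$.

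From now on assume $R\ge 1$ and split $(F\ast G)(x)=I_{1}+I_{2}$, where $I_{1}$ is the integral over $\{y:|x-y|\ge R/2\}$ and $I_{2}$ the integral over $\{y:|x-y|<R/2\}$. For $I_{1}$ I would use that $s\mapsto(1+s)^{-\alpha}$ is decreasing together with $1+R/2\ge(1+R)/2$ to get $|F(x-y)|\le C_{1}(1+|x-y|)^{-\alpha}\lesssim_{\alpha}C_{1}(1+R)^{-\alpha}$ on this region; pulling this factor out and using $\|G\|_{L^{1}}\lesssim_{d,\alpha}C_{2}$ gives $|I_{1}|\lesssim_{d,\alpha}C_{1}C_{2}(1+R)^{-\alpha}$. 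For $I_{2}$, on the region $|x-y|<R/2$ we have $|y|>R/2$, hence $|G(y)|\le C_{2}(1+|y|)^{-d-\alpha}\lesssim_{d,\alpha}C_{2}(1+R)^{-d-\alpha}$; pulling this factor out leaves $\int_{|z|<R/2}|F(z)|\,dz\le C_{1}\int_{|z|<R/2}(1+|z|)^{-\alpha}\,dz$, which by an elementary polar-coordinate computation is $\lesssim_{d,\alpha}(1+R)^{d-\alpha}$ if $\alpha<d$, $\lesssim_{d}\log(2+R)$ if $\alpha=d$, and $\lesssim_{d,\alpha}1$ if $\alpha>d$. Multiplying by the $(1+R)^{-d-\alpha}$ factor, in each case the resulting power of $(1+R)$ is at most $-\alpha$ (the logarithm in the borderline case $\alpha=d$ being harmless since $\log(2+R)\lesssim_{\alpha}(1+R)^{\alpha}$ and $-d-\alpha+\alpha=-d=-\alpha$), so $|I_{2}|\lesssim_{d,\alpha}C_{1}C_{2}(1+R)^{-\alpha}$. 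Adding the two estimates gives the claim.

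I do not anticipate any real difficulty here; the only place needing a moment's care is the borderline exponent $\alpha=d$, where the ``near'' integral $\int_{|z|<R/2}(1+|z|)^{-\alpha}\,dz$ grows logarithmically rather than polynomially in $R$ — but this is absorbed by the strictly faster decay of the $G$-factor, as indicated above. One should also note that all implicit constants depend only on $d$ and $\alpha$, which is clear since the hypotheses on $F$ and $G$ enter the estimates only through the linear factors $C_{1}$ and $C_{2}$.
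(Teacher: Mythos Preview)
Your proof is correct and follows essentially the same near/far decomposition as the paper. The only difference is cosmetic: the paper decomposes the integral over dyadic annuli $\{2^{j-1}<|x-y|\le 2^{j}\}$ and sums geometrically, which handles all $\alpha>0$ uniformly, whereas your two-region split forces the small case analysis $\alpha\lessgtr d$ in the estimate of $\int_{|z|<R/2}(1+|z|)^{-\alpha}\,dz$; both arguments are standard and equivalent in strength.
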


\begin{proof} Replacing $F$ by $F/C_{1}$ and $G$ by $G/C_{2}$, we may assume that $C_{1} = 1 = C_{2}$. In this proof, the "$\lesssim$" notation may hide constants depending on $\alpha,d$. Fix $x \in \R^{d}$. For $|x| < 2$, we use the trivial bound
	\begin{displaymath} |(F \ast G)(x)| \leq \|F\|_{L^{\infty}}\|G\|_{L^{1}} \lesssim 1. \end{displaymath} 
	Assume next that $|x| \geq 2$, and decompose
	\begin{displaymath} |(F \ast G)(x)| \leq \int |F(y)||G(x - y)| \, dy \lesssim \sum_{j \geq 0} 2^{-j(d + \alpha)} \int_{B(x,2^{j})} |F(y)| \, dy. \end{displaymath}
	For $2^{j} \leq |x|/2$ and $y \in B(x,2^{j})$, note that $|y| \geq |x|/2$. Therefore,
	\begin{displaymath} \int_{B(x,2^{j})} |F(y)| \, dy \lesssim 2^{jd}|x|^{-\alpha}. \end{displaymath} 
	and consequently
	\begin{displaymath} \sum_{1 \leq 2^{j} \leq |x|/2} 2^{-j(d - \alpha - \alpha)} \int_{B(x,2^{j})} |F(y)| \, dy \lesssim |x|^{-\alpha} \sum_{1 \leq 2^{j} \leq |x|/2} 2^{-\alpha j} \sim |x|^{-\alpha}. \end{displaymath}
	Next, for the "tail" terms $2^{j} > |x|/2$, we estimate $\int_{B(x,2^{j})} |F(y)| \, dy \lesssim 2^{jd}$. Consequently,
	\begin{displaymath} \sum_{2^{j} > |x|/2} 2^{-j(d + \alpha)} \int_{B(x,2^{j})} |F(y)| \, dy \lesssim \sum_{2^{j} > |x|/2} 2^{-j(d + \alpha)} \cdot 2^{jd} \lesssim |x|^{-\alpha}. \end{displaymath} 
	This concludes the proof of Lemma \ref{lemma5}. \end{proof} 

We then complete the proof of Lemma \ref{lemma3}:

\begin{proof}[Proof of Lemma \ref{lemma3}] The $L^{\infty}$-estimate $|\Delta^{\beta/2}f(x)| \lesssim_{\beta,f} 1$ is clear from the pointwise definition \eqref{fracL}:
	\begin{displaymath} |(\Delta^{\beta/2}f)(x)| \leq \||\cdot|^{\Rea \beta}\hat{f}\|_{L^{1}} < \infty. \end{displaymath}
	Let us then establish the decay estimate $|\Delta^{\beta/2}f(x)| \lesssim_{\beta,f} |x|^{-1 - \Rea \beta}$ for $|x| \geq 2$. Fix $x \in \R$ with $|x| \geq 2$. There are altogether 4 terms in \eqref{form53}-\eqref{form54}, and we establish the decay stated in \eqref{form74} for all of them individually. The terms $b(\beta,0)f(x)$ and $b(\beta,1)f'(x)$ are the easiest:
	\begin{displaymath} \max\{ |b(\beta,0)f(x)|,|b(\beta,1)f'(x)|\} \lesssim |x|^{-3} \leq |x|^{-1 - \Rea \beta}, \end{displaymath}
	since $f \in \mathcal{S}$, and $\Rea \beta < 2$.
	
	Next, we handle the integral term on line \eqref{form53}:
	\begin{equation}\label{form142} \left| \int_{|x - y| \geq 1} \frac{f(y)}{|x - y|^{1 + \beta}} \, dy \right| \leq (F \ast |f|)(x), \end{equation} 
	where $F(u) \coloneqq \min\{|y|^{-1 - \Rea \beta},1\}$. This term can be handled with Lemma \ref{lemma5} with $G \coloneqq |f|$ and $\alpha \coloneqq 1 + \Rea \beta$. Evidently $|G(x)| \lesssim (1 + |x|)^{-1 - \alpha}$, since $f \in \mathcal{S}(\R)$. Therefore, from Lemma \ref{lemma5} we deduce that $(F \ast |f|)(x) \lesssim |x|^{-1 - \Rea \beta}$.
	
	We finally estimate the integral term in \eqref{form54}, reproduced here: 
	\begin{displaymath} \int_{|x - y| < 1} \frac{\sigma(-1 - \beta)[f(y) - f(x) - f'(x)(y - x)]}{|y - x|^{1 + \beta}} \, dy. \end{displaymath} 
	Recall that $|x| \geq 2$. For $y \in \bar{B}(x,1)$, estimate as follows:
	\begin{displaymath} |f(y) - f(x) - f'(x)(y - x)| = \left| \int_{x}^{y} \int_{x}^{s} f''(r) \, dr \, ds \right| \leq \|f''\|_{L^{\infty}(\bar{B}(x,1))} |x - y|^{2} . \end{displaymath} 
	Since $f \in \mathcal{S}(\R)$, and we assumed $\Rea \beta < 2$, it holds $\|f''\|_{L^{\infty}(\bar{B}(x,1))} \lesssim_{f} |x|^{-3} \leq |x|^{-1 - \Rea \beta}$. This leads to 
	\begin{displaymath} \left| \int_{|x - y| < 1} \frac{\sigma(-1 - \beta)[f(y) - f(x) - f'(x)(y - x)]}{|y - x|^{1 + \beta}} \, dy \right| \lesssim_{f} |x|^{-1 - \Rea \beta} \int_{|x - y| < 1} \frac{|\sigma(-1 - \beta)|}{|y - x|^{\Rea \beta - 1}} \, dy. \end{displaymath}
	Again using the assumption $\Rea \beta < 2$, the integral on the right is a finite constant depending on $\beta$. This completes the proof of Lemma \ref{lemma3}.  \end{proof}

Finally, for $\beta \in \C$ with $\Rea \beta > 0$, Lemma \ref{lemma3} allows us to define fractional Laplacians $\Delta^{\beta/2}$ acting on locally bounded functions with logarithmic growth at infinity:
\begin{definition}[$\Delta^{\beta/2}$ acting on $(1 + \log_{+}) L^{\infty}(\R)$]\label{def6} We write $(1 + \log_{+}) L^{\infty}$ for the space of measurable functions $U \colon \R \to \R$ for which there exists a constant $C > 0$ such that
	\begin{displaymath} |f(x)| \leq C(1 + \log_{+}|x|), \qquad x \in \R, \end{displaymath}
	Here $\log_{+}(x) = \max\{0,\log x\}$ for $x > 0$. For $U \in (1 + \log_{+}) L^{\infty}$, and $\beta \in \C$ with $\Rea \beta > 0$, we define the tempered distribution $\Delta^{\beta/2}U$ by
	\begin{displaymath} (\Delta^{\beta/2}U)(\psi) := \int U \cdot \Delta^{\beta/2}\psi, \qquad \psi \in \mathcal{S}(\R). \end{displaymath}
	The integral on the right is absolutely convergent thanks to Lemma \ref{lemma3}. \end{definition}

\begin{ex} The prototypical example of a function $U \in (1 + \log_{+})L^{\infty}(\R)$ is given by $U = U^{\Gamma}\mu$, where $\mu \in \mathcal{M}(\R)$, recall Definition \ref{def5Intro}. \end{ex}

\subsubsection{Imaginary Laplacians and related operators}  For $\Rea \beta = 0$, the fractional Laplacian $\Delta^{\beta/2}$ extends to a bounded operator on $L^{p}(\R) \to L^{p}(\R)$, for every $p \in (1,\infty)$. This follows from Mihlin's multiplier theorem \cite[Theorem 5.2.7]{MR3243734}, which we state here:
\begin{thm}\label{t:mihlin} Let $A > 0$, and let $m \colon \R^{n} \, \setminus \, \{0\} \to \C$ be a function satisfying
	\begin{equation}\label{form105} |\partial^{\alpha}m(\xi)| \leq A|\xi|^{-\alpha}, \qquad |\alpha| \leq \ceil{n/2} + 1. \end{equation}
	Then the Fourier multiplier $T_{m}$ with symbol $m$ is bounded on $L^{p}(\R)$ for every $1 < p < \infty$. In fact,
	\begin{displaymath} \|T_{m}\|_{L^{p} \to L^{p}} \lesssim_{n} \max\{p,(p - 1)^{-1}\} \cdot (A + \|m\|_{L^{\infty}}). \end{displaymath}\end{thm}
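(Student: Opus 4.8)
The plan is to run the standard Calder\'on--Zygmund argument. The $L^2$ bound is free: since $\widehat{T_m f} = m\hat f$, Plancherel gives $\|T_m\|_{L^2 \to L^2} = \|m\|_{L^\infty} \le A + \|m\|_{L^\infty}$. It then suffices to establish the weak-type $(1,1)$ bound $\|T_m\|_{L^1 \to L^{1,\infty}} \lesssim_n A + \|m\|_{L^\infty}$: Marcinkiewicz interpolation against the $L^2$ bound yields $L^p$ boundedness for $1 < p \le 2$, and for $2 \le p < \infty$ one dualises, using $T_m^* = T_{\bar m}$ and the fact that $\bar m$ obeys \eqref{form105} with the same constant $A$. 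The weak-$(1,1)$ bound is in turn supplied by the Calder\'on--Zygmund theorem, once one checks that $T_m$ is a Calder\'on--Zygmund operator: its convolution kernel $K := \widecheck{m}$ (a priori only a tempered distribution) agrees on $\R^n \setminus \{0\}$ with a locally integrable function satisfying the H\"ormander regularity condition
\[
\sup_{y \ne 0} \int_{|x| \ge 2|y|} |K(x - y) - K(x)|\,dx \ \lesssim_n\ A,
\]
and that $T_m f = K * f$ for $f \in \mathcal{S}(\R^n)$.

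The heart of the matter is therefore this kernel estimate, and the tool is a Littlewood--Paley decomposition. Fix smooth $\psi$ supported in $\{\tfrac{1}{2} \le |\xi| \le 2\}$ with $\sum_{j \in \Z}\psi(2^{-j}\xi) = 1$ on $\R^n \setminus \{0\}$, put $m_j(\xi) := m(\xi)\psi(2^{-j}\xi)$ and $K_j := \widecheck{m_j} \in \mathcal{S}(\R^n)$, so that $K = \sum_j K_j$. From \eqref{form105} and the support and scaling of $\psi$ one gets $\|\partial^\alpha m_j\|_{L^\infty} \lesssim_n A\,2^{-j|\alpha|}$ for $|\alpha| \le s := \ceil{n/2}+1$, with $m_j$ supported in a set of measure $\lesssim 2^{jn}$. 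Cauchy--Schwarz on the spatial side together with Plancherel then yield $\|\nabla K_j\|_{L^1} \lesssim_n A\,2^j$ and $\int_{|x| \ge t}|K_j|\,dx \lesssim_n A\,(2^j t)^{(n - 2s)/2}$ (the latter a weighted Plancherel bound, convergent precisely because $s > n/2$), whence
\[
\int_{|x| \ge 2|y|} |K_j(x - y) - K_j(x)|\,dx \ \lesssim_n\ A\,\min\bigl\{\,2^j|y|,\ (2^j|y|)^{(n - 2s)/2}\,\bigr\}.
\]
Summing this geometric-type series over $j \in \Z$, with the two regimes meeting at $2^j \sim |y|^{-1}$, gives the H\"ormander condition with constant $\lesssim_n A$. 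I expect this summation — together with the bookkeeping that $\sum_j K_j$ converges on $\R^n \setminus \{0\}$ to a function representing $\widecheck{m}$ there, and that $T_m f = K*f$ on Schwartz functions — to be the main, if entirely routine, technical point.

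Finally, the explicit dependence $\max\{p,(p-1)^{-1}\}$ is obtained by tracking constants through the above: the weak-$(1,1)$ and $L^2$ bounds are both $\lesssim_n A + \|m\|_{L^\infty}$; Marcinkiewicz interpolation between them costs a factor $\lesssim (p-1)^{-1}$ as $p \to 1^+$; and duality transfers this to a factor $\lesssim p$ as $p \to \infty$ (since $p' - 1 = (p-1)^{-1}$). Assembling the pieces gives precisely $\|T_m\|_{L^p \to L^p} \lesssim_n \max\{p,(p-1)^{-1}\}\,(A + \|m\|_{L^\infty})$. All of this is carried out in detail in \cite[Theorem 5.2.7]{MR3243734}, which is the reference quoted here; in particular nothing new needs to be proved.
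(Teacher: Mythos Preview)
Your proposal is correct and matches the paper exactly: the paper does not prove this theorem at all but simply quotes it from \cite[Theorem 5.2.7]{MR3243734}, and your sketch is precisely the standard argument carried out in that reference, as you yourself note at the end.
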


We will only apply Mihlin's theorem to the following symbols:
\begin{cor}\label{cor3} Let $\beta \in \C$ with $\Rea \beta = 0$, 
	\begin{displaymath} m_{\beta}(\xi) := |\xi|^{\beta} \quad \text{and} \quad n_{\beta}(\xi) := |\xi|^{1 + \beta}/\xi, \end{displaymath}
	where $\xi \in \R \, \setminus \, \{0\}$. Then $m_{\beta},n_{\beta}$ define Fourier multipliers $T_{m_{\beta}},T_{n_{\beta}}$ satisfying
	\begin{displaymath} \max\{\|T_{m_{\beta}}\|_{L^{p} \to L^{p}},\|T_{n_{\beta}}\|_{L^{p} \to L^{p}}\} \lesssim \max\{p,(p - 1)^{-1}\} \cdot (1 + (\mathrm{Im\,} \beta)^{2}). \end{displaymath}  \end{cor}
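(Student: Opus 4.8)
The plan is to invoke Mihlin's multiplier theorem (Theorem~\ref{t:mihlin}) directly for each of the two symbols, in dimension $n = 1$. Write $t := \Im \beta \in \R$, so that $\beta = it$. Since $\Rea \beta = 0$, both $|m_{\beta}(\xi)| = |\xi|^{\Rea \beta} = 1$ and $|n_{\beta}(\xi)| = |\xi|^{\Rea \beta} = 1$ for every $\xi \neq 0$, hence $\|m_{\beta}\|_{L^{\infty}} = \|n_{\beta}\|_{L^{\infty}} = 1$, and it remains only to verify the derivative condition \eqref{form105}. Because $n = 1$, that condition involves derivatives of order $|\alpha| \leq \ceil{1/2} + 1 = 2$ only, so I need to control just $m_\beta, m_\beta', m_\beta''$ (and likewise for $n_\beta$).

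Next I would carry out this computation. On $\R \, \setminus \, \{0\}$ the symbol $m_\beta$ is smooth, and using $\partial_\xi \log|\xi| = \xi^{-1}$ one obtains $\partial_\xi m_{\beta}(\xi) = \beta \xi^{-1} m_{\beta}(\xi)$ and $\partial_\xi^{2} m_{\beta}(\xi) = \beta(\beta - 1)\xi^{-2} m_{\beta}(\xi)$. Since $|m_\beta| \equiv 1$, $|\beta| = |t|$, and $|\beta - 1| = |it - 1| = \sqrt{1 + t^{2}}$, this gives $|\partial_\xi^{j} m_{\beta}(\xi)| \leq A|\xi|^{-j}$ for $j \in \{0,1,2\}$, with $A := \max\{1,\, |t|,\, |t|\sqrt{1 + t^{2}}\} = \max\{1,\, |t|\sqrt{1 + t^{2}}\}$. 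For $n_\beta$ no separate work is required: $n_\beta(\xi) = |\xi|^{1 + \beta}/\xi = \sgn(\xi)\, m_\beta(\xi)$, and since $\sgn$ is locally constant away from the origin, $\partial_\xi^j n_\beta(\xi) = \sgn(\xi)\, \partial_\xi^j m_\beta(\xi)$, so the same bound $|\partial_\xi^j n_\beta(\xi)| \leq A|\xi|^{-j}$ holds for $j \in \{0,1,2\}$.

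Finally I would feed this into Theorem~\ref{t:mihlin}: with $n = 1$ fixed it yields $\max\{\|T_{m_{\beta}}\|_{L^{p} \to L^{p}},\, \|T_{n_{\beta}}\|_{L^{p} \to L^{p}}\} \lesssim \max\{p,(p-1)^{-1}\} \cdot (A + 1)$, and then I would note that by the arithmetic--geometric mean inequality $|t|\sqrt{1 + t^{2}} \leq \tfrac{1}{2}\big(t^{2} + (1 + t^{2})\big) \leq 1 + t^{2}$, so that $A + 1 \lesssim 1 + t^{2} = 1 + (\Im \beta)^{2}$, which is exactly the claimed estimate. I do not anticipate any genuine obstacle: the whole argument is the computation of two derivatives plus an application of Mihlin's theorem. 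The only minor points of care are differentiating $|\xi|^{\beta}$ correctly (the $\log|\xi|$ bookkeeping and the factor $\beta(\beta-1)$), the observation that $n_\beta$ reduces to $\sgn(\xi)\, m_\beta(\xi)$ so that it needs no new computation, and tracking the constants so that the final bound collapses to the advertised $1 + (\Im\beta)^{2}$.
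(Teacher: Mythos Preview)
Your proof is correct and follows essentially the same route as the paper: apply Mihlin's theorem in dimension $n=1$, compute the first two derivatives of $m_\beta$ to get $|m_\beta'(\xi)| = |\beta||\xi|^{-1}$ and $|m_\beta''(\xi)| = |\beta||\beta-1||\xi|^{-2}$, and note the same bounds hold for $n_\beta$. Your observation that $n_\beta = \sgn(\xi)\,m_\beta$ (so its derivatives coincide with those of $m_\beta$ up to sign) and your explicit AM--GM step $|t|\sqrt{1+t^2} \leq 1+t^2$ are slightly more detailed than the paper's version, but the argument is the same.
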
 

\begin{proof} We apply Theorem \ref{t:mihlin} with $n = 1$, so we only need to make sure that the derivative condition \eqref{form105} is valid for $\alpha \in \{0,1,2\}$. The case $\alpha = 0$ follows from $\Rea \beta = 0$, which implies $\|m_{\beta}\|_{L^{\infty}} = 1 = \|n_{\beta}\|_{L^{\infty}}$. For $\xi \neq 0$, the first and second derivatives satisfy
	\begin{displaymath} |m_{\beta}'(\xi)| = |\beta||\xi|^{-1} \quad \text{and} \quad |m_{\beta}''(\xi)| = |\beta||\beta - 1||\xi|^{-2},  \end{displaymath} 
	and the same formulae are valid for $|n_{\beta}'(\xi)|,|n_{\beta}''(\xi)|$. Therefore \eqref{form105} has been verified with constant $A \lesssim 1 + (\mathrm{Im\,} \beta)^{2}$, as claimed. \end{proof} 

\section{H\"older continuity of \texorpdfstring{$U\mu$}{U mu}}\label{s:holder}
The following result will be shown in Sections \ref{s:TGammaBeta}--\ref{s:limit operators}, see Proposition \ref{prop8-2}.

\begin{proposition}\label{prop8} Let $\beta \in [0,1)$ and $p \in (1/(1 - \beta),\infty)$. Assume that $\mathrm{Lip}(A)$ is sufficiently small depending on $d,p$.	
	Then, for any $\mu\in\mathcal{M}(\R)$
	\begin{displaymath} \|\Delta^{-\beta/2}\mu\|_{L^{p}} \lesssim_{p} \|\Delta^{(1 - \beta)/2}U^{\Gamma}\mu\|_{L^{p}}. \end{displaymath}
	Here $\Delta^{(1 - \beta)/2}U^{\Gamma}\mu \in \mathcal{S}'(\R)$ is the tempered distribution introduced in Definition \ref{def6}. \end{proposition}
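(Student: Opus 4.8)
The plan is to identify the estimate with the $L^{p}$-invertibility of a single zero-order operator, obtained by conjugating the graph potential with fractional Laplacians. Put $\gamma := 1 - \beta \in (0,1]$ and let
\begin{equation*}
T := \Delta^{\gamma/2}\,U^{\Gamma}\,\Delta^{(1-\gamma)/2} = \Delta^{(1-\beta)/2}\,U^{\Gamma}\,\Delta^{\beta/2},
\end{equation*}
the operator from the heuristic identity \eqref{form159}. We may assume $\Delta^{(1-\beta)/2}U^{\Gamma}\mu\in L^{p}(\R)$, since otherwise there is nothing to prove. Formally, writing $g:=\Delta^{-\beta/2}\mu$ for the $(1-\beta)$-Riesz potential of $\mu$ (see \eqref{form161}), one has $\mu=\Delta^{\beta/2}g$, and hence $\Delta^{(1-\beta)/2}U^{\Gamma}\mu = Tg$. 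Thus, once $T$ is known to be bounded and boundedly invertible on $L^{p}(\R)$ — this is a case of Theorem \ref{thm1}, valid when $\lip(A)$ is small enough depending on $d$ and $p$ — we conclude
\begin{equation*}
\|\Delta^{-\beta/2}\mu\|_{L^{p}} = \|g\|_{L^{p}} \leq \|T^{-1}\|_{L^{p}\to L^{p}}\,\|Tg\|_{L^{p}} = \|T^{-1}\|_{L^{p}\to L^{p}}\,\|\Delta^{(1-\beta)/2}U^{\Gamma}\mu\|_{L^{p}}.
\end{equation*}

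The heart of the matter — and the step I expect to be the main obstacle — is the invertibility of $T$, which I would prove by perturbing off the flat case. Since $|\Gamma(x)-\Gamma(y)|^{2}=|x-y|^{2}\bigl(1+q(x,y)^{2}\bigr)$ with $q(x,y):=(A(x)-A(y))/(x-y)$, we split
\begin{equation*}
U^{\Gamma}\mu(x) = U_{0}\mu(x) - \tfrac{1}{2}\int\log\bigl(1+q(x,y)^{2}\bigr)\,d\mu(y) =: U_{0}\mu(x) - \tfrac{1}{2}R\mu(x),
\end{equation*}
where $U_{0}$ is the one-dimensional logarithmic potential. Now $U_{0}$ is the endpoint ($\beta=1$) Riesz potential: its Fourier multiplier is a nonzero constant times (a regularized) $|\xi|^{-1}$, plus a multiple of $\delta_{0}$ that is annihilated upon multiplication by $|\xi|^{1-\gamma}$ (here $\gamma\leq1$). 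Hence $\Delta^{\gamma/2}U_{0}\Delta^{(1-\gamma)/2}$ is, on a suitable dense class, a nonzero scalar multiple of the identity, so invertible on every $L^{p}$, $1<p<\infty$. It then suffices to show that the graph remainder $\Delta^{\gamma/2}R\Delta^{(1-\gamma)/2}$ has small $L^{p}\to L^{p}$ norm when $\lip(A)$ is small, and run a Neumann series. For this one uses the pointwise formulas of Section \ref{s:fractionalLaplacians} to realize $\Delta^{\gamma/2}R\Delta^{(1-\gamma)/2}$ as an operator with a Calder\'on--Zygmund kernel whose constants are controlled by a power of $\lip(A)$; the crucial inputs are the bound $|q|\leq\lip(A)$ for the size of the kernel $\log(1+q^{2})$ of $R$, together with the identity $\partial_{x}q(x,y)=(A'(x)-q(x,y))/(x-y)$ (and its analogue in $y$), which yields $|\partial_{x}q|,|\partial_{y}q|\lesssim\lip(A)/|x-y|$. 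Notably only $\lip(A)$ — not the full $C^{1,\alpha}$-norm of $A$ — enters this smallness, consistent with the hypotheses of Theorem \ref{mainTechnical}. Making the kernel estimates for this conjugated, non-convolution operator rigorous is where the bulk of the technical work lies.

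Finally, the manipulations above were performed formally for a measure $\mu\in\mathcal{M}(\R)$, and must be justified; this is the r\^ole of the "limit operators" of Section \ref{s:limit operators}. One proves the invertibility estimate for $T$ on a dense class of nice functions, and then approximates: replacing $\mu$ by $\mu_{\epsilon}:=\phi_{\epsilon}*\mu\in\mathcal{M}(\R)$ (standard mollifier), one has $g_{\epsilon}:=\Delta^{-\beta/2}\mu_{\epsilon}\in L^{p}(\R)$ — and here the hypothesis $p>1/(1-\beta)$ is exactly what is used, since the $(1-\beta)$-Riesz potential of the bounded, compactly supported function $\mu_{\epsilon}$ behaves like $|x|^{\beta-1}$ at infinity, which lies in $L^{p}$ iff $p>1/(1-\beta)$ — so that $\mu_{\epsilon}=\Delta^{\beta/2}g_{\epsilon}$ and $\|g_{\epsilon}\|_{L^{p}}\lesssim\|\Delta^{(1-\beta)/2}U^{\Gamma}\mu_{\epsilon}\|_{L^{p}}$. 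Letting $\epsilon\to0$: on the left, $g_{\epsilon}\to\Delta^{-\beta/2}\mu$ in $L^{1}_{\mathrm{loc}}$, so lower semicontinuity of the $L^{p}$-norm gives $\|\Delta^{-\beta/2}\mu\|_{L^{p}}\leq\liminf_{\epsilon\to0}\|g_{\epsilon}\|_{L^{p}}$; on the right one checks, via the decay bounds of Lemma \ref{lemma3} and the continuity of $U^{\Gamma}$ on $\mathcal{M}(\R)$, that $\Delta^{(1-\beta)/2}U^{\Gamma}\mu_{\epsilon}\to\Delta^{(1-\beta)/2}U^{\Gamma}\mu$ with the $L^{p}$-norms controlled in the limit. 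Combining these three facts yields the proposition. (Via Proposition \ref{frostmanProp}, the restriction $p>1/(1-\beta)$ also converts the conclusion into a Frostman estimate $\mu(B(x,r))\lesssim r^{(1-\beta)-1/p}$ with a positive exponent, which is how it will be applied.)
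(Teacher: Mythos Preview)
Your high-level strategy is correct and matches the paper's: both reduce to the $L^{p}$-invertibility of $T_{\beta}^{\Gamma}=\Delta^{(1-\beta)/2}U^{\Gamma}\Delta^{\beta/2}$ (Theorem~\ref{thm1}), and both make the formal identity $T_{\beta}^{\Gamma}(\Delta^{-\beta/2}\mu)=\Delta^{(1-\beta)/2}U^{\Gamma}\mu$ rigorous by mollifying $\mu$ to $\mu_{\delta}=\varphi_{\delta}\ast\mu$, applying invertibility to $\Delta^{-\beta/2}\mu_{\delta}\in L^{p}$ (this is indeed where $p>1/(1-\beta)$ enters), and letting $\delta\to0$ via lower semicontinuity on the left and dominated convergence on the right --- your final paragraph is essentially the paper's Proposition~\ref{prop8-2}.

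The route to the invertibility itself differs. You propose the direct splitting $U^{\Gamma}=U_{0}-\tfrac{1}{2}R$ with $R$ having kernel $\log(1+q^{2})$, and then a direct Calder\'on--Zygmund kernel analysis of the conjugated remainder $\Delta^{(1-\beta)/2}R\,\Delta^{\beta/2}$. The paper instead introduces doubly truncated operators $T_{\beta}^{\Gamma,\epsilon,R}$, proves $L^{p}$-bounds at the endpoints $\Rea\beta\in\{0,1\}$ via CZ theory for $\partial U^{\Gamma,\epsilon,R}$ and $U^{\Gamma,\epsilon,R}\partial$ (Corollary~\ref{cor4}), passes to general $\beta$ by analyticity and the three-lines lemma (Proposition~\ref{prop5}, Corollary~\ref{cor6}), and obtains the smallness of the perturbation not from a single splitting but from the one-parameter family $\Gamma_{s}(x)=(x,sA(x))$ and a Lipschitz-in-$s$ estimate (Corollary~\ref{cor8}). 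The paper's detour through truncations and interpolation buys it the ability to avoid ever computing the kernel of the conjugated operator $\Delta^{(1-\beta)/2}R\,\Delta^{\beta/2}$; this is not a small advantage, because the kernel $\log(1+q(x,y)^{2})$ of $R$ has \emph{no} decay in $|x-y|$, so the conjugation must supply all of the $|x-y|^{-1}$ behaviour, and turning that into honest standard-kernel estimates would require a pseudodifferential-type composition argument that you correctly flag as ``the bulk of the technical work'' but do not carry out. Your direct approach is plausible but the paper's interpolation scheme is what actually closes the argument.
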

\begin{remark}
	Proposition \ref{prop8} says that Sobolev regularity of $U^{\Gamma}\mu$ implies certain (negative order) Sobolev regularity of $\mu$. This is similar to a classical result of Wallin \cite{wallin1966existence} which states that H\"older continuity of a Riesz potential $k_\alpha\ast\mu$ implies Frostman regularity of $\mu$.
\end{remark}
We will take this result for granted now. In Sections \ref{s:holder} and \ref{s:Lpfractional} we use Proposition \ref{prop8} to prove Theorem \ref{mainTechnical}. 

Taking $\beta=0$ and some $p\in (1,\infty)$, we see from Proposition \ref{prop8} that
\begin{equation*}
	\|\Delta^{1/2}U^\Gamma(\mu|_I)\|_{L^p}<\infty\quad\Rightarrow\quad \mu\in L^p(I),
\end{equation*}
which is the aim of Theorem \ref{mainTechnical}. Thus, assuming Proposition \ref{prop8}, the goal becomes proving $\|\Delta^{1/2}U^\Gamma(\mu|_I)\|_{L^p}<\infty$ for some $p\in (1,\infty)$. 

In this section we prove H\"older estimates for the potentials $U^\Gamma\mu$. The main result is Proposition \ref{prop17a}. In Section \ref{s:Lpfractional} these H\"older estimates are used to infer $L^p$-bounds for the fractional Laplacians of $U^\Gamma\mu$, see Corollary \ref{cor14}. Finally, in Subsection \ref{s:proofMainTechnical} we use these $L^p$-bounds together with Proposition \ref{prop8} to prove $\|\Delta^{1/2}U^\Gamma(\mu|_I)\|_{L^p}<\infty$. The proof involves a bootstrapping scheme which uses Proposition \ref{prop8} for many different $\beta\in [0,1)$.

\subsection{Kernel decomposition} The purpose of this section is to define the decomposition $U\mu = P\mu + R\mu$ into a "principal" potential and "remainder" potential, as we briefly discussed in Section \ref{s:outline}. Let $A \colon \R \to \R^{d - 1}$ be $\delta_{1}$-Lipschitz. Assume moreover that $\nabla A$ is $\alpha$-H\"older continuous with constant $\delta_{2} > 0$:
\begin{displaymath} |\nabla A(x) - \nabla A(y)| \leq \delta_{2} |x - y|^{\alpha}, \qquad x,y \in \R. \end{displaymath}
Let $U=U^\Gamma$ be the kernel
\begin{displaymath} U(x,y) \coloneqq \log|\Gamma(x)-\Gamma(y)|^{-1}= \log|(x - y,A(x) - A(y))|^{-1}, \qquad x,y \in \R, \, x \neq y. \end{displaymath}
(We will use the notation "$U$" for both the kernel and the operator $\mu \mapsto U\mu$.) 

In what follows we use the notation 
\begin{displaymath} B_{r}(x) \coloneqq (B \ast \psi_{r})(x), \qquad x \in \R, \end{displaymath}
where $\psi_{r}(y) = r^{-1}\psi(y/r)$ is a standard approximate identity, in particular $\spt \psi \subset [-1,1]$. 

Consider the following "principal" kernel $P$:
\begin{equation}\label{form25} P(x,y) \coloneqq \log|(x - y,\nabla A_{|x - y|}(x)(x - y) )|^{-1}, \qquad (x,y) \in \R, \, x \neq y. \end{equation}
Above $\nabla A_{|x - y|}(x) = (\nabla A)\ast \psi_{|x-y|}(x)\in\R^{d-1}$ with the convolution applied to each component of $\nabla A$ separately. 
\begin{remark} The kernel $P(x,y)$ can also be written as
	\begin{displaymath} P(x,y) = \log|(1,\nabla A_{|x - y|}(x))|^{-1} + \log|x - y|^{-1}. \end{displaymath} 
\end{remark} 

Finally, we define the "remainder" kernel
\begin{displaymath} R(x,y) \coloneqq U(x,y) - P(x,y), \end{displaymath}
so that $U = P + R$. The next two lemmas contain the main properties of $P$ and $R$:
\begin{lemma}\label{PLemma} Let $x,y \in \R$ with $x \neq y$. If the Lipschitz constant $\delta_{1} > 0$ is sufficiently small, then,
	\begin{displaymath} \partial_{x}^{2}P(x,y) \geq 0 \quad \text{and} \quad \partial_{y}^{2} P(x,y) \geq 0. \end{displaymath} \end{lemma}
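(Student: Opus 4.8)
The plan is to compute $\partial_x^2 P(x,y)$ directly from the explicit formula and show it is non-negative when $\lip(A)$ is small. Using the second form of the kernel, $P(x,y) = -\log|(1,\nabla A_{|x-y|}(x))| - \log|x-y|$, I would first dispose of the term $-\log|x-y|$: it is convex in $x$ (and in $y$), since $\partial_x^2(-\log|x-y|) = |x-y|^{-2} \geq 0$. So the entire issue is the first term, $Q(x,y) := -\log|(1,\nabla A_{|x-y|}(x))| = -\tfrac12 \log(1 + |v(x,y)|^2)$, where I abbreviate $v(x,y) := \nabla A_{|x-y|}(x) = (\nabla A \ast \psi_{|x-y|})(x) \in \R^{d-1}$. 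The point is that $Q$ is a \emph{small} perturbation of a constant: $|v| \leq \lip(A) = \delta_1$, and more importantly all $x$-derivatives of $v$ will be controlled by $\delta_1$ times negative powers of $|x-y|$, which is exactly the homogeneity needed to compete with the $|x-y|^{-2}$ from the good term.

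The key computation is to estimate $|\partial_x^2 Q(x,y)|$. Writing $g(x,y) = 1 + |v(x,y)|^2$, we have $Q = -\tfrac12 \log g$, so $\partial_x^2 Q = -\tfrac12\big(\partial_x^2 g / g - (\partial_x g)^2/g^2\big)$, and since $g \geq 1$ it suffices to bound $|\partial_x g|$ and $|\partial_x^2 g|$. Now $\partial_x g = 2 v \cdot \partial_x v$ and $\partial_x^2 g = 2|\partial_x v|^2 + 2 v\cdot \partial_x^2 v$, so everything reduces to estimating $\partial_x v$ and $\partial_x^2 v$. Here $v(x,y) = (\nabla A \ast \psi_{|x-y|})(x)$ depends on $x$ both through the evaluation point and through the mollification scale $r = |x-y|$; I would differentiate carefully using the chain rule, noting $\partial_x r = \sgn(x-y)$. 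The first mechanism contributes $(\partial \nabla A) \ast \psi_r$-type terms, but $\nabla A$ is only $C^{0,\alpha}$, so instead one uses that $\nabla A \ast \psi_r$ is smooth and $\partial_x(\nabla A \ast \psi_r)(x) = (\nabla A \ast \psi_r')(x)$; crucially, because $\int \psi_r' = 0$, one can subtract the constant $\nabla A(x)$ and use $\alpha$-Hölder continuity to get $|(\nabla A \ast \psi_r')(x)| \lesssim \delta_2 r^{\alpha - 1} \lesssim \delta_2$ for $r \lesssim 1$ — wait, but $r = |x-y|$ is unbounded, so for large $r$ one instead uses $|\psi_r'|$ has $L^1$-norm $\lesssim r^{-1}$ and $|\nabla A| \le \delta_1$ to get $\lesssim \delta_1 r^{-1}$. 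In all cases $|\partial_x v| \lesssim (\delta_1 + \delta_2)\min\{r^{-1}, r^{\alpha-1}\} \lesssim (\delta_1+\delta_2) r^{-1}$, and similarly $|\partial_x^2 v| \lesssim (\delta_1+\delta_2) r^{-2}$ using $\psi_r''$ (again exploiting $\int \psi_r' = \int \psi_r'' = 0$ to pull out constants). Plugging back, $|\partial_x^2 Q(x,y)| \lesssim (\delta_1+\delta_2)^2 |x-y|^{-2} + (\delta_1+\delta_2)|x-y|^{-2} \lesssim \delta_1 |x-y|^{-2}$ once $\delta_1, \delta_2$ are small (note $\delta_2$ can be absorbed: either $\delta_2$ is also assumed small, or one rescales — here I'd check the paper's conventions, but the statement only mentions $\delta_1$ small, so presumably $\delta_2$ is controlled or the estimate is stated with an implicit dependence; in any case the homogeneity in $|x-y|$ is what matters). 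Therefore
\begin{displaymath}
\partial_x^2 P(x,y) = \frac{1}{|x-y|^2} + \partial_x^2 Q(x,y) \geq \frac{1}{|x-y|^2} - C\delta_1\frac{1}{|x-y|^2} \geq 0
\end{displaymath}
for $\delta_1$ small enough. The argument for $\partial_y^2 P(x,y) \geq 0$ is symmetric: $v(x,y)$ depends on $y$ only through the scale $r = |x-y|$, so $\partial_y v = \partial_r v \cdot \sgn(y-x)$ and the same $r^{-1}, r^{-2}$ bounds on $\partial_r v, \partial_r^2 v$ apply (in fact this case is slightly easier since there is no "evaluation point" dependence).

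\emph{Main obstacle.} The delicate point is the bookkeeping of derivatives of the mollified gradient $\nabla A_{|x-y|}(x)$ with respect to $x$, since the mollification \emph{scale} itself depends on $x$: one must carefully apply the chain rule, and — this is the crux — one must at each order of differentiation exploit the vanishing moments $\int \psi_r^{(k)} = 0$ for $k \geq 1$ to subtract off the constant $\nabla A(x)$ and convert the mere $\alpha$-Hölder regularity of $\nabla A$ (together with the boundedness $|\nabla A| \le \delta_1$) into the scale-invariant bounds $|\partial_x^j v| \lesssim \delta_1 |x-y|^{-j}$. Getting these homogeneities right — and in particular ensuring the constant multiplying $|x-y|^{-2}$ genuinely scales with $\delta_1$ (not just with $\delta_1 + \delta_2$ with $\delta_2$ uncontrolled) — is where all the care goes; once the bounds are in hand, the convexity conclusion is immediate from the positive $|x-y|^{-2}$ coming from the $-\log|x-y|$ term.
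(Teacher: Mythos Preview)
Your overall strategy is exactly the paper's: split $P(x,y)=f(x)+g(x)$ with $g(x)=\log|x-y|^{-1}$ and $f(x)=-\log|(1,\nabla A_{|x-y|}(x))|$, observe $g''(x)=|x-y|^{-2}$, and show $|f''(x)|\lesssim \delta_1|x-y|^{-2}$.

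The one place you make life harder than necessary is in bounding $\partial_x v$ and $\partial_x^2 v$ (your $v=\nabla A_{|x-y|}(x)$). You invoke the vanishing moments $\int\psi_r'=\int\psi_r''=0$ and the $\alpha$-H\"older continuity of $\nabla A$, which brings in the constant $\delta_2$ and produces exactly the worry you flag in your ``main obstacle''. This detour is unnecessary: the paper simply writes $\partial_x v(x)=\int \nabla A(z)\,\partial_x[\psi_{|x-y|}(x-z)]\,dz$, checks by elementary calculus that $|\partial_x[\psi_{|x-y|}(x-z)]|\lesssim \mathbf{1}_{B(x,|x-y|)}(z)|x-y|^{-2}$ and $|\partial_x^2[\psi_{|x-y|}(x-z)]|\lesssim \mathbf{1}_{B(x,|x-y|)}(z)|x-y|^{-3}$ (these already account for the scale-through-$x$ dependence), and then uses nothing more than $|\nabla A(z)|\le\delta_1$ to get $|\dot\gamma|\lesssim\delta_1|x-y|^{-1}$ and $|\ddot\gamma|\lesssim\delta_1|x-y|^{-2}$. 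No H\"older regularity, no $\delta_2$ --- which is exactly why the lemma statement only asks for $\delta_1$ small (and why the paper remarks that this lemma does not use the H\"older continuity of $\nabla A$). Once you drop the H\"older argument your obstacle disappears and the proof is complete.
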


\begin{remark} In particular, this lemma does not use the H\"older continuity of $\nabla A$. \end{remark}

We will use Lemma \ref{PLemma} in the form of the next corollary:

\begin{proposition}\label{prop15} Let $\nu$ be a finite (positive) Borel measure on $[0,1]$, and let $I \subset \R \, \setminus \, \spt \nu$ be an open interval. Then $P\nu$ is convex on $I$.
\end{proposition}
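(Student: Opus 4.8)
The plan is to deduce the convexity of $P\nu$ on $I$ directly from the second-derivative bound in Lemma \ref{PLemma}. The strategy is to differentiate under the integral sign twice: for $x \in I$, one would like to conclude
\begin{displaymath} (P\nu)''(x) = \int \partial_x^2 P(x,y) \, d\nu(y) \geq 0, \end{displaymath}
where the nonnegativity of the integrand is exactly Lemma \ref{PLemma}, valid once $\delta_1$ is small. The point is that this identity is legitimate: since $I$ is an \emph{open} interval \emph{disjoint} from $\spt\nu \subset [0,1]$, there is $d_0 := \dist(I,\spt\nu) > 0$, and on any compact subinterval $J \Subset I$ the kernel $(x,y) \mapsto P(x,y)$ together with its first two $x$-derivatives is bounded uniformly for $(x,y) \in J \times \spt\nu$ (since $|x-y| \geq d_0$ keeps us away from the logarithmic singularity, and $\nabla A_{|x-y|}$ and its derivatives are controlled because $A$ is Lipschitz with a H\"older gradient). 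Hence the standard differentiation-under-the-integral-sign theorem applies on $J$, and since $J \Subset I$ was arbitrary, the identity holds throughout $I$.

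Concretely, the steps are as follows. First, fix a compact subinterval $J = [a,b] \Subset I$ and set $d_0 = \dist(J, \spt\nu) > 0$. Second, using the representation $P(x,y) = \log|(1,\nabla A_{|x-y|}(x))|^{-1} + \log|x-y|^{-1}$ from the Remark after \eqref{form25}, check that $x \mapsto P(x,y)$ is $C^2$ on $J$ for each fixed $y \in \spt\nu$, with $|\partial_x^j P(x,y)| \leq C(d_0, \lip(A), \|\nabla A\|_{C^{0,\alpha}})$ for $j \in \{0,1,2\}$ and $(x,y) \in J \times \spt\nu$; the only mild care needed is to differentiate the convolution term $\nabla A_{|x-y|}(x) = ((\nabla A)\ast\psi_{|x-y|})(x)$, but since $r = |x-y| \geq d_0$ stays bounded below and $\psi \in C_c^\infty$, all derivatives in $x$ land on smooth bounded expressions. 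Third, invoke dominated convergence / the differentiation-under-the-integral theorem (using that $\nu$ is a finite measure and the bounds above furnish an integrable — indeed bounded — dominating function) to justify $(P\nu)''(x) = \int \partial_x^2 P(x,y)\,d\nu(y)$ for $x \in J$. Fourth, apply Lemma \ref{PLemma} (which requires $\delta_1$ small, matching the hypothesis here carried over from the standing assumptions on $A$) to see $\partial_x^2 P(x,y) \geq 0$, whence $(P\nu)'' \geq 0$ on $J$. Since $J \Subset I$ was arbitrary, $P\nu$ is convex on all of $I$.

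I do not expect a serious obstacle here: the proposition is essentially a routine "differentiate under the integral" packaging of Lemma \ref{PLemma}. The only point requiring a little attention is verifying the uniform $C^2$-bounds on the kernel near $\spt\nu$ — in particular handling the $x$-dependence hidden in the mollification radius $|x-y|$ of $\nabla A_{|x-y|}(x)$ — but this is harmless precisely because $I$ is separated from $\spt\nu$, so we never approach either the logarithmic singularity or the scale $r = 0$ of the approximate identity.
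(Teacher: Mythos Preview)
Your proposal is correct and follows exactly the paper's approach: differentiate twice under the integral sign (justified because $\dist(x,\spt\nu)>0$ for $x\in I$) and then invoke Lemma~\ref{PLemma}. One small wording slip: $\dist(I,\spt\nu)$ need not be positive when $I$ abuts $\spt\nu$, but your actual argument correctly passes to a compact $J\Subset I$ where $\dist(J,\spt\nu)>0$, which is all that is needed.
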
 

\begin{proof} Fix $x \in I$, and note that $\dist(x,\spt \nu) > 0$. This enables us to differentiate (twice) under the integral sign, and conclude that
	\begin{displaymath} \partial_{x}^{2} (P\nu)(x) = \partial_{x}^{2} \int_{\spt \nu} P(x,y) \, d\nu(y) = \int_{\spt \nu} \partial_{x}^{2}P(x,y) \, d\nu(y) \geq 0. \end{displaymath}
	This concludes the proof. \end{proof} 

\begin{lemma}\label{RLemma} The following holds if the Lipschitz constant $\delta_{1} > 0$ is sufficiently small. Let $x,y \in \R$ with $x \neq y$. Then,
	\begin{equation}\label{size} R(x,y) \lesssim \min\{\delta_{1},\delta_{2}|x - y|^{\alpha}\}. \end{equation}
	Moreover, if $x' \in \R$ with $|x - x'| \leq \tfrac{1}{2}|x - y|$, then
	\begin{equation}\label{continuity} |R(x,y) - R(x',y)| \lesssim \frac{|x - x'| \cdot \min\{\delta_{1},\delta_{2}|x - y|^{\alpha}\}}{|x - y|}. \end{equation}
\end{lemma}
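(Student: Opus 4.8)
The starting point is to remove the logarithmic singularity from $R$. With $a(x,y):=\tfrac{A(x)-A(y)}{x-y}\in\R^{d-1}$ and $b(x,y):=\nabla A_{|x-y|}(x)\in\R^{d-1}$, one has $|(x-y,A(x)-A(y))|^{2}=(x-y)^{2}(1+|a(x,y)|^{2})$ and $|(x-y,(x-y)\nabla A_{|x-y|}(x))|^{2}=(x-y)^{2}(1+|b(x,y)|^{2})$, so the two $-\log|x-y|$ terms in $U$ and $P$ cancel and
\[ R(x,y)=G(b(x,y))-G(a(x,y)), \qquad G(v):=\tfrac{1}{2}\log(1+|v|^{2}). \]
I would record three elementary facts. (i) $|a(x,y)|\le\lip(A)=\delta_{1}$, and since $\psi\ge 0$ with $\int\psi=1$, also $|b(x,y)|\le\|\nabla A\|_{\infty}=\delta_{1}$. (ii) Both $a(x,y)$ and $b(x,y)$ are averages of $\nabla A$ over an interval of length $\sim|x-y|$ near $x$, so the $\alpha$-H\"older bound on $\nabla A$ gives $|a(x,y)-\nabla A(x)|\lesssim\delta_{2}|x-y|^{\alpha}$ and $|b(x,y)-\nabla A(x)|\lesssim\delta_{2}|x-y|^{\alpha}$; combined with the trivial bound $|a-b|\le 2\delta_{1}$ from (i) this yields $|a(x,y)-b(x,y)|\lesssim\min\{\delta_{1},\delta_{2}|x-y|^{\alpha}\}$. (iii) For $\phi:=\nabla G$, i.e.\ $\phi(v)=\tfrac{v}{1+|v|^{2}}$, one has $|\phi(v)|\le|v|$ and $\|D\phi\|_{\infty}\le 3$. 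The size bound \eqref{size} is then immediate: by $|\log(1+s)-\log(1+t)|\le|s-t|$ for $s,t\ge 0$ and the reverse triangle inequality, $|R(x,y)|\le\tfrac{1}{2}\big||b|^{2}-|a|^{2}\big|\le\delta_{1}|a-b|\lesssim\delta_{1}\min\{\delta_{1},\delta_{2}|x-y|^{\alpha}\}\le\min\{\delta_{1},\delta_{2}|x-y|^{\alpha}\}$, the last step using $\delta_{1}\le 1$.

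For \eqref{continuity}, write $r:=|x-y|$. Every $t$ on the segment joining $x'$ to $x$ satisfies $|t-x|\le|x-x'|\le\tfrac{1}{2}r$, hence $\tfrac{1}{2}r\le|t-y|\le\tfrac{3}{2}r$; in particular $t\ne y$, and $R(\cdot,y)$ is $C^{1}$ there (for $b(\cdot,y)$, differentiation under the integral is legitimate since $\psi\in C^{\infty}_{c}$ and $\nabla A$ is continuous), so by the fundamental theorem of calculus it suffices to bound $|\partial_{t}R(t,y)|$ for such $t$. The crux is the pair of estimates $|\partial_{t}a(t,y)|,|\partial_{t}b(t,y)|\lesssim\min\{\delta_{1},\delta_{2}|t-y|^{\alpha}\}/|t-y|$: for $a$ one uses $\partial_{t}a(t,y)=\tfrac{\nabla A(t)}{t-y}-\tfrac{A(t)-A(y)}{(t-y)^{2}}$ (modulus $\le 2\delta_{1}/|t-y|$) together with $\nabla A(t)(t-y)-(A(t)-A(y))=\int_{y}^{t}(\nabla A(t)-\nabla A(s))\,ds$ and H\"older (giving $\lesssim\delta_{2}|t-y|^{\alpha-1}$); for $b(t,y)=\int\nabla A(w)\psi_{|t-y|}(t-w)\,dw$ one uses that $\partial_{t}$ of the kernel $w\mapsto\psi_{|t-y|}(t-w)$ has $L^{1}_{w}$-norm $\lesssim|t-y|^{-1}$ and integrates to $0$ (because $\int\psi_{\rho}=1$ for every $\rho>0$), so after subtracting $\nabla A(t)$ inside the integral, $|\partial_{t}b(t,y)|\lesssim|t-y|^{-1}\sup_{|w-t|\le|t-y|}|\nabla A(w)-\nabla A(t)|\lesssim\min\{\delta_{1},\delta_{2}|t-y|^{\alpha}\}/|t-y|$. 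Granting these, with $a:=a(t,y)$, $b:=b(t,y)$, the decomposition
\[ \partial_{t}R(t,y)=\phi(b)\cdot\partial_{t}b-\phi(a)\cdot\partial_{t}a=\big(\phi(b)-\phi(a)\big)\cdot\partial_{t}b+\phi(a)\cdot(\partial_{t}b-\partial_{t}a) \]
gives $|\partial_{t}R(t,y)|\le 3|b-a|\,|\partial_{t}b|+|a|\,(|\partial_{t}b|+|\partial_{t}a|)\lesssim\min\{\delta_{1},\delta_{2}r^{\alpha}\}/r$ by (i)--(iii), the derivative bounds, $|t-y|\sim r$, and $\delta_{1}\le 1$; integrating in $t$ from $x'$ to $x$ yields \eqref{continuity}.

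The size estimate is routine; the real content is \eqref{continuity}. The main obstacle is that one cannot bound the variations of $G(a(\cdot,y))$ and $G(b(\cdot,y))$ separately — for small $|x-y|$ each of them changes by substantially more than the target $|x-x'|\min\{\delta_{1},\delta_{2}|x-y|^{\alpha}\}/|x-y|$ — so genuine cancellation between the $U$-part and the $P$-part must be exploited. This is precisely what forces one to differentiate $R$, and hence to control $\partial_{t}b(t,y)$ even though the mollification scale $|t-y|$ itself varies with $t$; the feature that makes this feasible is that $\psi_{\rho}$ has mass $1$ for every $\rho>0$, so the offending kernel-derivative is mean-zero and the $\alpha$-H\"older regularity of $\nabla A$ can be brought to bear.
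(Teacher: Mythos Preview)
Your proof is correct, and it takes a genuinely different route from the paper's.

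The key observation you make --- factoring out $|x-y|$ to write $R(x,y)=G(b(x,y))-G(a(x,y))$ with $G(v)=\tfrac12\log(1+|v|^{2})$ --- removes the logarithmic singularity at the outset and reduces everything to a smooth bounded function of bounded arguments. From there you differentiate $t\mapsto R(t,y)$ directly and bound the derivative; the cancellation you need is built into the decomposition $(\phi(b)-\phi(a))\cdot\partial_t b + \phi(a)\cdot(\partial_t b-\partial_t a)$, together with the bounds $|a-b|\lesssim\min\{\delta_1,\delta_2|t-y|^{\alpha}\}$ and $|\phi(a)|\le|a|\le\delta_1$.

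The paper does not factor or differentiate. It keeps the full vectors $a,b,c,d\in\R^{d}$ coming from $(x,y)$ and $(x',y)$, writes $R(x,y)-R(x',y)$ as a second-order difference of $f=\log|\cdot|^{-1}$, and invokes a general lemma (Lemma~\ref{lemma2}) bounding $|(f(b)-f(a))-(f(d)-f(c))|$ in terms of the side lengths $m,M$ of the quadrilateral $\mathrm{conv}(a,b,c,d)$, the ``parallelogram defect'' $\epsilon=|(b-a)-(d-c)|$, and $\|\nabla f\|_{\infty},\|\nabla^{2}f\|_{\infty}$ on that hull. The work then goes into estimating $m,M,\epsilon$ (this is where \eqref{form44} appears, the analogue of your mean-zero kernel argument for $\partial_t b$).

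Your approach is more elementary for this specific kernel: the factoring trick is tailored to the logarithm, and differentiating avoids the need for the abstract second-order lemma. The paper's approach is more ``kernel-agnostic'' --- Lemma~\ref{lemma2} would apply equally to any $C^{2}$ function of $\Gamma(x)-\Gamma(y)$ --- at the cost of more bookkeeping. Both proofs ultimately hinge on the same phenomenon: the $t$-variation of $\nabla A_{|t-y|}(t)$ is controlled by $\min\{\delta_1,\delta_2|t-y|^{\alpha}\}/|t-y|$, which you extract from the mean-zero property of $\partial_t\psi_{|t-y|}(t-\cdot)$ and the paper extracts via the explicit estimate \eqref{form44}.
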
 

\begin{remark} If $\delta_{1}$ is not assumed small, the only change in the statement would be that the condition $|x - x'| \leq \tfrac{1}{2}|x - y|$ needs to be replaced by $|x - x'| \leq c(\delta_{1})|x - y|$. \end{remark} 

We prove the lemmas.
\begin{proof}[Proof of Lemma \ref{PLemma}] We give the details for the lower bound $\partial_{x}^{2}P(x,y) \geq 0$, and indicate during the argument where the proof differs slightly for the estimate $\partial_{y}^{2}P(x,y) \geq 0$. Fix $x,y \in \R$ with $x \neq y$, and write 
	\begin{displaymath} P(x,y)  = \log|(1,\nabla A_{|x - y|}(x))|^{-1} + \log|x - y|^{-1} =: f(x) + g(x). \end{displaymath} 
	With this notation,
	\begin{displaymath} \partial_{x}^{2} P(x,y) = f''(x)+g''(x). \end{displaymath} 
	The $g''(x)$-term is strictly positive, while it will turn out that $f''(x)$ can be made "small enough" by choosing $0<\delta_{1} <1$ sufficiently small. Clearly
	\begin{displaymath} g''(x) =  \frac{1}{|x - y|^{2}}. \end{displaymath}
	
	Regarding $f''(x)$, we claim
	\begin{equation}\label{form33} |f''(x)| \lesssim \frac{\delta_{1}}{|x - y|^{2}}. \end{equation}
	To verify this, one needs to check by elementary computation that
	\begin{equation}\label{form51} |\partial_{x} \psi_{|x - y|}(x - z)| \lesssim \frac{\mathbf{1}_{B(x,|x - y|)}(z)}{|x - y|^{2}} \quad \text{and} \quad |\partial_{x}^{2} \psi_{|x - y|}(x - z)| \lesssim \frac{\mathbf{1}_{B(x,|x - y|)}(z)}{|x - y|^{3}}. \end{equation}
	for $x,y,z \in \R$ with $x \neq y$. In the proof for $\partial_{y}^{2}P(x,y) \geq 0$, we would instead apply similar upper bounds for the $\partial_{y}$ and $\partial_{y}^{2}$ derivatives of $\psi_{|x - y|}(x - z)$.
	
	We then start the proof of \eqref{form33}.	Denoting
	\begin{displaymath} \gamma(x) \coloneqq \gamma_{y}(x) \coloneqq (1,\nabla A_{|x - y|}(x)) \end{displaymath} 
	we have
	\begin{displaymath} f'(x) = - \frac{\gamma(x) \cdot \dot{\gamma}(x)}{|\gamma(x)|^{2}}. \end{displaymath} 
	and
	\begin{equation}\label{eq:f''}
		f''(x) = 2 \cdot \frac{(\gamma(x) \cdot \dot{\gamma}(x))^{2}}{|\gamma(x)|^{4}} - \frac{|\dot{\gamma}(x)|^{2}}{|\gamma(x)|^{2}} -  \frac{\gamma(x) \cdot \ddot{\gamma}(x)}{|\gamma(x)|^{ 2}}.  
	\end{equation}
	Since $|\gamma(x)| \geq 1$, the first and second terms above are bounded in absolute value by $\lesssim |\dot{\gamma}(x)|^{2}.$ Noting that $\dot{\gamma}(x) = (0,\partial_{x}\nabla A_{|x - y|}(x))$,
	\begin{displaymath} |\dot{\gamma}(x)| \leq \int |\nabla A(z)||\partial_{x}\psi_{|x - y|}(x - z)| \, dz \stackrel{\eqref{form51}}{\lesssim} \delta_{1} \int_{B(x,|x - y|)} \frac{dz}{|x - y|^{2}} \lesssim \frac{\delta_{1}}{|x - y|}. \end{displaymath}
	For the third term in \eqref{eq:f''} we obtain a similar estimate using \eqref{form51}:
	\begin{displaymath} |\ddot{\gamma}(x)| \leq \int |\nabla A(z)||\partial_{xx}\psi_{|x - y|}(x - z)| \, dz \stackrel{\eqref{form51}}{\lesssim} \delta_{1} \int_{B(x,|x - y|)} \frac{dz}{|x - y|^{3}} \lesssim \frac{\delta_{1}}{|x - y|^{2}}. \end{displaymath}
	All in all, $|f''(x)| \lesssim (\delta_{1}^2 + \delta_1)/|x - y|^{2}\lesssim  \delta_1/|x - y|^{2}$. This completes the proof of \eqref{form33}, and the lemma. \end{proof}

We move to the proof of Lemma \ref{RLemma}. The H\"older continuity estimate \eqref{continuity} will be based on the following general lemma, which charts how much smooth maps may increase "second order differences": 

\begin{lemma}\label{lemma2} Let $a,b,c,d \in \R^{d}$ be distinct. Write
	\begin{displaymath} M \coloneqq \min\{|b - a|,|d - c|\}, \, m \coloneqq \max\{|c - a|,|d - b|\}, \quad \text{and} \quad \epsilon \coloneqq |(b - a) - (d - c)|. \end{displaymath}
	Assume that $f \in C^{2}(\bar{\Omega})$, where where $\bar{\Omega} \coloneqq \mathrm{conv}(a,b,c,d)$ (the closed convex hull of the points $a,b,c,d$). Then,
	\begin{displaymath} |(f(b) - f(a)) - (f(d) - f(c))| \leq m \cdot M \cdot \|\nabla^{2}f\|_{L^{\infty}(\bar{\Omega})} + \epsilon \cdot \|\nabla f\|_{L^{\infty}(\bar{\Omega})}. \end{displaymath} \end{lemma}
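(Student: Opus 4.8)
The plan is to reduce everything to the fundamental theorem of calculus along the two segments $[a,b]$ and $[c,d]$, both of which lie in the convex set $\bar\Omega$. Writing $P_t := (1-t)a + tb$ and $Q_t := (1-t)c + td$ for $t \in [0,1]$, we have the identity
\[
(f(b) - f(a)) - (f(d) - f(c)) = \int_0^1 \big( \nabla f(P_t) \cdot (b - a) - \nabla f(Q_t) \cdot (d - c) \big)\, dt ,
\]
so it suffices to bound the integrand uniformly in $t$ by $m \cdot M \cdot \|\nabla^2 f\|_{L^\infty(\bar\Omega)} + \epsilon \cdot \|\nabla f\|_{L^\infty(\bar\Omega)}$ and then integrate.

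First I would note that the left-hand side of the asserted inequality, as well as the three quantities $m$, $M$ and $\epsilon$, are all unchanged when the ordered pair $(a,b)$ is interchanged with $(c,d)$; hence we may assume without loss of generality that $|b - a| \le |d - c|$, i.e.\ $M = |b - a|$. Then, for fixed $t$, I would split the integrand as
\[
\nabla f(P_t) \cdot (b-a) - \nabla f(Q_t) \cdot (d-c) = \big( \nabla f(P_t) - \nabla f(Q_t) \big) \cdot (b - a) + \nabla f(Q_t) \cdot \big( (b-a) - (d-c) \big).
\]
The second summand is immediately bounded by $\|\nabla f\|_{L^\infty(\bar\Omega)} \cdot \epsilon$, since $|(b-a)-(d-c)| = \epsilon$ and $Q_t \in [c,d] \subset \bar\Omega$. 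For the first summand, $\bar\Omega$ is convex and contains both $P_t \in [a,b]$ and $Q_t \in [c,d]$, so the mean value inequality (applied along the segment $[Q_t,P_t]\subset\bar\Omega$) gives $|\nabla f(P_t) - \nabla f(Q_t)| \le \|\nabla^2 f\|_{L^\infty(\bar\Omega)}\,|P_t - Q_t|$. The key elementary observation is that
\[
P_t - Q_t = (1-t)(a - c) + t(b - d)
\]
is a convex combination of $a - c$ and $b - d$, whence $|P_t - Q_t| \le \max\{|a-c|,|b-d|\} = m$. Thus the first summand is at most $\|\nabla^2 f\|_{L^\infty(\bar\Omega)} \cdot m \cdot |b-a| = \|\nabla^2 f\|_{L^\infty(\bar\Omega)} \cdot m \cdot M$.

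Adding the two bounds and integrating over $t\in[0,1]$ (the bounds being uniform in $t$) yields the lemma. There is no real obstacle here; the only points that require care are the symmetrisation that makes the \emph{minimum} $M = \min\{|b-a|,|d-c|\}$ appear — one factors out whichever of $b-a$, $d-c$ is shorter — and the recognition that $P_t - Q_t$ is a convex combination of $a-c$ and $b-d$, which is exactly what produces the \emph{maximum} $m = \max\{|c-a|,|d-b|\}$ rather than a sum.
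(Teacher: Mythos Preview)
Your proof is correct and follows essentially the same approach as the paper's own argument: parametrise the segments $[a,b]$ and $[c,d]$, apply the fundamental theorem of calculus, split the integrand into a $\nabla f$-difference term and a direction-difference term, and bound each using the mean value inequality together with the convex-combination identity $P_t - Q_t = (1-t)(a-c) + t(b-d)$. The only cosmetic difference is that the paper parametrises over $[0,|b-a|]$ with unit-speed $\dot\gamma$ rather than over $[0,1]$, but the computations are identical.
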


\begin{remark} The geometric intuition is that $\bar{\Omega}$ is a quadrilateral with "long" sides having length roughly $M$ and "short" sides having length roughly $m$. The factor $m \cdot M$ is like the area of $\bar{\Omega}$, whereas the quantity $\epsilon$ measures how far $\bar{\Omega}$ is from being a parallelogram.  \end{remark}

\begin{proof}[Proof of Lemma \ref{lemma2}] Assume for example that $M = |b - a| > 0$. Let $\gamma,\eta \colon [0,|b - a|] \to \R^{d}$ be the following paths parametrising the segments $[a,b]$ and $[c,d]$:
	\begin{displaymath} \gamma(\theta) \coloneqq \frac{\theta b}{|b - a|} + \frac{(|b - a| - \theta)a}{|b - a|}, \qquad \theta \in [0,|b - a|], \end{displaymath} 
	and
	\begin{displaymath} \eta(\theta) \coloneqq \frac{\theta d}{|b - a|} + \frac{(|b - a| - \theta)c}{|b - a|}, \qquad \theta \in [0,|b - a|]. \end{displaymath}
	With this notation, and since $f(b) = (f \circ \gamma)(|b - a|)$ etc, we may write
	\begin{align} (f(b) - f(a)) - (f(d) - f(c)) & = \int_{0}^{|b - a|} (\nabla f(\gamma(\theta)) \cdot \dot{\gamma}(\theta) - \nabla f(\eta(\theta)) \cdot \dot{\eta}(\theta)) \, d\theta \notag\\
		&\label{form37} = \int_{0}^{|b - a|} (\nabla f(\gamma(\theta)) - \nabla f(\eta(\theta))) \cdot \dot{\gamma}(\theta) \, d\theta\\
		&\label{form38} \qquad + \int_{0}^{|b - a|} \nabla f(\eta(\theta)) \cdot [\dot{\gamma}(\theta) - \dot{\eta}(\theta)] \, d\theta. \end{align}
	To estimate further, we note that
	\begin{displaymath} |\gamma(\theta) - \eta(\theta)| \leq \frac{\theta|d - b|}{|b - a|} + \frac{([|b - a| - \theta)|c - a|}{|b - a|} \leq \max\{|c - a|,|d - b|\} = m, \end{displaymath} 
	and $\dot{\gamma}(\theta) = (b - a)/|b - a|$ (in particular $|\dot{\gamma}(\theta)| \equiv 1$), and
	\begin{displaymath} |\dot{\gamma}(\theta) - \dot{\eta}(\theta)| = \frac{|(b - a) - (d - c)|}{|b - a|} = \frac{\epsilon}{|b - a|}. \end{displaymath}
	Moreover, the segment $[\gamma(\theta),\eta(\theta)] \subset \bar{\Omega}$ lies in the domain of $f$, so we may estimate
	\begin{displaymath} |\nabla f(\gamma(\theta)) - \nabla f(\eta(\theta))| \leq m \cdot \|\nabla^{2}f\|_{L^{\infty}(\bar{\Omega})}. \end{displaymath}
	Plugging these bounds back into \eqref{form37}-\eqref{form38}, we deduce
	\begin{displaymath} |(f(b) - f(a)) - (f(d) - f(c))| \leq |b - a| \cdot m \cdot \|\nabla^{2}f\|_{L^{\infty}(\bar{\Omega})} + \epsilon \cdot \|\nabla f\|_{L^{\infty}(\bar{\Omega})}. \end{displaymath}
	Since $M = |b - a|$, this is what was claimed. \end{proof}

\begin{proof}[Proof of Lemma \ref{RLemma}] We start with the size estimate \eqref{size}. Note that
	\begin{displaymath} R(x,y) = U(x,y) - P(x,y) = \log|(x - y,A(x) - A(y))|^{-1} - \log|(x - y,\nabla A_{|x - y|}(x)(x - y)|^{-1}. \end{displaymath}
	Here, for $x \neq y$, 
	\begin{displaymath} A(x) - A(y) - \nabla A_{|x - y|}(x)(x - y) = \int_{x}^{y} \left( \nabla A(s) - \nabla A_{|x - y|}(x) \right) \, ds, \end{displaymath}
	and further, for $s \in [x,y]$,
	\begin{equation}\label{form45} |\nabla A(s) - \nabla A_{|x - y|}(x)| \leq \int_{B(x,|x - y|)} |\nabla A(s) - \nabla A(z)| \psi_{|x - y|}(x - z) \, dz \lesssim \delta_{2}|x- y|^{\alpha}. \end{equation}
	Evidently also $|\nabla A(s) - \nabla A_{|x - y|}(x)| \leq 2\delta_{1}$. Therefore, 
	\begin{equation}\label{form43} |A(x) - A(y) - \nabla A_{|x - y|}(x)(x - y)| \lesssim |x - y| \cdot \min\{\delta_{1},\delta_{2}|x - y|^{\alpha}\}. \end{equation}
	Now, using $\nabla_{z} \log|z|^{-1} = - z/|z|^{2}$, and noting that all points $z$ on the segment
	\begin{displaymath} I = [(x - y,A(x) - A(y)),(x - y,\nabla A_{|x - y|}(x)(x - y)] \end{displaymath}
	have norm $|z| \geq |x - y|$, we may estimate the difference $U(x,y) - P(x,y)$ by the mean value theorem:
	\begin{multline*}
		|U(x,y) - P(x,y)| \le \mathcal{H}^1(I)\cdot \sup_{z\in I} |\nabla_{z} \log|z|^{-1}|\\
		\lesssim \frac{|x -y| \cdot \min\{\delta_{1},\delta_{2}|x - y|^{\alpha}\}}{|x - y|} = \min\{\delta_{1},\delta_{2}|x - y|^{\alpha}\}.
	\end{multline*} 
	This proves \eqref{size}. 
	
	We move on to the proof of the H\"older continuity estimate \eqref{continuity}. Fix $x,x',y$ with $|x - x'| \leq \tfrac{1}{2}|x - y|$. We may write
	\begin{align*} R(x,y) - R(x',y) & = [U(x,y) - P(x,y)] - [U(x',y) - P(x',y)]\\
		&= \left( \log|a|^{-1} -\log|b|^{-1} \right) - \left( \log|c|^{-1} -\log|d|^{-1} \right), \end{align*}
	where
	\begin{displaymath} \begin{cases} a \coloneqq (x - y,A(x) - A(y)), \\ 
			b \coloneqq (x - y,\nabla A_{|x - y|}(x)(x - y)), \\
			c \coloneqq (x' - y,A(x') - A(y)),\\
			d \coloneqq (x' - y,\nabla A_{|x' - y|}(x')(x' - y)). \end{cases} \end{displaymath} 
	This invites us to apply Lemma \ref{lemma2} to the points $a,b,c,d$ listed above, and the map $f = \log|\cdot|^{-1}$. To apply Lemma \ref{lemma2}, we need good estimates for the quantities
	\begin{displaymath} m = \max\{|c - a|,|d - b|\}, \, M = \min\{|b - a|,|d - c|\}, \quad \text{and} \quad \epsilon = |(a - b) - (c - d)|. \end{displaymath}
	We claim the following bounds (below $C > 0$ is an absolute constant):
	\begin{equation}\label{form41} m \leq (1 + C\delta_{1})|x - x'| \quad \text{and} \quad M \lesssim |x - y| \cdot \min\{\delta_{1},\delta_{2}|x - y|^{\alpha}\}, \end{equation}
	and
	\begin{equation}\label{form41b} \epsilon \lesssim |x - x'| \cdot \min\{\delta_{1},\delta_{2}|x - y|^{\alpha}\}. \end{equation} 
	The bound in \eqref{form41} for $M$ follows directly from \eqref{form43}, noting that 
	\begin{displaymath} |b - a| = |A(x) - A(y) - \nabla A_{|x - y|}(x)(x - y)|. \end{displaymath}
	
	We next estimate $m$. First, note that 
	\begin{align*} |c - a| & \leq |x - x'| + |A(x) - A(y) - (A(x') - A(y))|\\
		& = |x - x'| + |A(x) - A(x')| \leq (1 + \delta_{1})|x - x'|. \end{align*}
	The difference $|d - b|$ satisfies a similar bound, but the proof is more technical. We first write
	\begin{multline*} \nabla A_{|x - y|}(x)(x - y) - \nabla A_{|x' - y|}(x')(x' - y)  = \nabla A_{|x - y|}(x)[(x - y) - (x' - y)]\\
		+ (\nabla A_{|x - y|}(x) - \nabla A_{|x' - y|}(x'))(x' - y).  \end{multline*} 
	Since $A$ is $\delta_{1}$-Lipschitz, the first term is bounded in absolute value by $\delta_{1} |x - x'|$. Since $|x' - y| \sim |x - y|$ by the hypothesis $|x' - x| \leq \tfrac{1}{2}|x - y|$, it suffices to check that
	\begin{equation}\label{form44} |\nabla A_{|x - y|}(x) - \nabla A_{|x' - y|}(x')| \lesssim \frac{|x - x'| \cdot \min\{\delta_{1},\delta_{2}|x - y|^{\alpha}\}}{|x - y|}. \end{equation}
	(In fact, only the bound $\delta_{1}|x - x'|/|x - y|$ is needed for estimating $m$, but \eqref{form44} as stated will be useful in proving \eqref{form41}.)
	Writing out the definitions, and using the fact that both $z \mapsto \psi_{|x - y|}(x - z)$ and $z \mapsto \psi_{|x' - y|}(x' - z)$ have integral one,
	\begin{align} |\nabla A_{|x - y|}(x) - \nabla A_{|x' - y|}(x')| & \leq \int |\nabla A(z) - \nabla A(y)||\psi_{|x - y|}(x - z) - \psi_{|x' - y|}(x' - z)| \, dz \notag\\
		&\label{form40} = \int |\nabla A(z) - \nabla A(y)|\left| \int_{x}^{x'} \partial_{s} \psi_{|s - y|}(s - z) \, ds \right| \, dz. \end{align} 
	Recall from \eqref{form51} that
	\begin{equation}\label{form39} |\partial_{s} \psi_{|s - y|}(s - z)| \lesssim \mathbf{1}_{B(s,|y - s|)} \cdot \frac{1}{|s - y|^{2}}. \end{equation}
	In \eqref{form40}, we have $|s - y| \sim |x - y|$ for all $s \in [x,x']$, and $B(s,|y - s|) \subset B(y,2|x - y|)$. Thus, 
	\begin{align*} |\nabla A_{|x - y|}(x) - \nabla A_{|x' - y|}(x')| & \lesssim |x - x'| \int_{B(y,2|x - y|)} \frac{|\nabla A(z) - \nabla A(y)|}{|x - y|^{2}} \, dz\\
		& \lesssim \frac{|x - x'| \cdot \min\{\delta_{1},\delta_{2}|x - y|^{\alpha}\}}{|x - y|}. \end{align*}
	This verifies \eqref{form41} for $m$.
	
	Finally, we turn to the proof of \eqref{form41b}. Write $a = (a_{1},a_{2})$ etc, and $\epsilon_{1} = |(a_{1} - b_{1}) - (c_{1} - d_{1})|$ and $\epsilon_{2} = |(a_{2} - b_{2}) - (c_{2} - d_{2})|$. In fact, $\epsilon_{1} = 0$, so it suffices to estimate $\epsilon_{2}$. Writing $A(x) - A(y)$ and $A(x') - A(y)$ in terms of $\nabla A$, we find
	\begin{align*} |(a_{2} - b_{2}) - (c_{2} - d_{2})| = \left| \int_{y}^{x} \left(\nabla  A(s) - \nabla A_{|x - y|}(x) \right) \, ds - \int_{y}^{x'} \left( \nabla A(s) - \nabla A_{|x' - y|}(x') \right) \, ds \right|. \end{align*}
	Adding and subtracting the cross term $(x' - y)\nabla A_{|x - y|}(x)$, we end up estimating separately
	\begin{displaymath} I_{1} \coloneqq \left| \int_{y}^{x} \left(\nabla A(s) - \nabla A_{|x - y|}(x) \right) \, ds - \int_{y}^{x'} \left(\nabla A(s) - \nabla A_{|x - y|}(x) \right) \, ds \right|  \end{displaymath} 
	and $I_{2} \coloneqq |y - x'||\nabla A_{|x' - y|}(x') - \nabla A_{|x - y|}(x)|$. Noting that $|y - x'| \sim |y - x|$, we already established at \eqref{form44} that $I_{2} \lesssim |x - x'| \cdot \min\{\delta_{1},\delta_{2}|x - y|^{\alpha}\}$. Finally, also
	\begin{displaymath} I_{1} \leq \int_{x}^{x'} |\nabla A(s) - \nabla A_{|x - y|}(x)| \, ds \stackrel{\eqref{form45}}{\lesssim} |x - x'| \cdot \min\{\delta_{1},\delta_{2}|x - y|^{\alpha}\},  \end{displaymath} 
	concluding the proof of \eqref{form41b}.
	
	We are now in a position to apply Lemma \ref{lemma2}. Let $\bar{\Omega} \coloneqq \mathrm{conv}(a,b,c,d)$, and recall that $f(z)=\log|z|^{-1}$. We claim that
	\begin{displaymath} \|\nabla f\|_{L^{\infty}(\bar{\Omega})} \lesssim \frac{1}{|x - y|} \quad \text{and} \quad \|\nabla^{2} f\|_{L^{\infty}(\bar{\Omega})} \lesssim \frac{1}{|x - y|^{2}}. \end{displaymath}
	Indeed, for example $|a| \geq |x - y|$, and since $|x - x'| \leq \tfrac{1}{2}|x - y|$, it follows from the estimates for $m,M$ (assuming $\delta_{1} > 0$ sufficiently small) that $\rho \coloneqq \max\{|b - a|,|c - a|,|d - a|\} \leq \tfrac{1}{2}|a|$. Since $\bar{\Omega} \subset \bar{B}(a,\rho)$, this shows that $|z| \gtrsim |x - y|$ for all $z \in \bar{\Omega}$. The bounds for $\nabla f$ and $\nabla^{2} f$ follow.  We may finally deduce from \eqref{form41}-\eqref{form41b} and Lemma \ref{lemma2} that
	\begin{align*} |R(x,y) - R(x',y)| & \lesssim m \cdot M \cdot \|\nabla^{2}f\|_{L^{\infty}(\bar{\Omega})} + \epsilon \cdot \|\nabla f\|_{L^{\infty}(\bar{\Omega})}\\
		& \lesssim \frac{|x - x'| \cdot \min\{\delta_{1},\delta_{2}|x - y|^{\alpha}\}}{|x - y|}. \end{align*} 
	This completes the proof of Lemma \ref{RLemma}. \end{proof}

\subsection{H\"older estimates for $U\mu$} In this section, $\Gamma$ refers to a $C^{1,\alpha}$-regular graph. The Lipschitz constant of $\Gamma$ is assumed so small that the conclusions of Lemmas \ref{PLemma} and \ref{RLemma} hold. The symbol $\mu$ generally refers to an element of $\mathcal{M}(\R)$ as in Theorem \ref{mainTechnical}.

We abbreviate
\begin{displaymath} \{U,P,R\} := \{U^{\Gamma},P^{\Gamma},R^{\Gamma}\}. \end{displaymath}
The goal of the section is to prove that if $\mu$ is restricted to any interval $I \subset [0,1]$ where $\mu$ satisfies a $1$-dimensional Frostman condition at both endpoints, then $U(\mu|_{I})$ is H\"older continuous on $\R$. The H\"older continuity exponent depends on $\alpha$ (the H\"older continuity exponent of the graph $\Gamma$) and the Frostman exponent of $\mu|_{I}$. 

In fact, if the Frostman exponent of $\mu$ is sufficiently close to one, then $U(\mu|_{I})$ is Lipschitz continuous outside endpoints of $I$, and the "blow-up" of the Lipschitz constants at the endpoints is so mild that $\partial U(\mu|_{I}) \in L^{p}(\R)$ for $p \in [1,\infty)$. These results are formalised in Corollaries \ref{cor11} and \ref{cor12}.

The potential $U\mu$ has an inconveniently heavy "tail". Since the H\"older and Lipschitz continuity questions discussed here are anyway local, we prefer to cut off this tail immediately, and concentrate on the "local" part. To accomplish this, we use a tool which is already available to us -- Lemma \ref{truncationLemma} -- and write
\begin{displaymath} \log|x|^{-1} = \log_{1}(x) + \log_{2}(x). \end{displaymath} 
Here $\log_{1}$ is the truncation of the kernel $x \mapsto \log |x|^{-1}$ defined in Lemma \ref{truncationLemma} with parameter $\epsilon = 1$, see also Remark \ref{rem2}. Repeating Remark \ref{rem2}, we know that $\log_{1} \in C^{4}(\R \, \setminus \, \{0\})\cap\lip(\R)$, $\log_{1}(x) = \log |x|^{-1}$ for $|x| \geq 1$, and $\|\log_{1}\|_{L^{\infty}([-1,1]) }\lesssim 1$. We then set 
\begin{equation}\label{form133} U_{j}\nu(x) := \int \log_{j} |\Gamma(x) - \Gamma(y)| \, d\nu(y), \qquad x \in \R, \, j \in \{1,2\}, \end{equation}
whenever $\nu \in \mathcal{M}([-1,1])$ (see Definition \ref{def5Intro}). In fact, $U_{1}\nu(x)$ is pointwise well-defined whenever $\nu$ is a finite measure with compact support. In a typical application, $\nu$ is the restriction of the equilibrium measure to some interval. The moral is that $U_{1}\nu$ is the "global" part of $U\nu$ with a (possibly) heavy tail, but good local behaviour. In contrast, the local behaviour (H\"older and Lipschitz continuity of $U\nu$) is determined by $U_{2}\nu$. On the bright side, $U_{2}\nu$ has no tail at all, since $\spt \log_{2} \subset [-1,1]$, which implies $\spt U_{2}\nu \subset [-2,2]$.

We will use the following elementary proposition to deal with $U_{1}\nu$:
\begin{proposition}\label{prop:lap-potential-Lp} Let $\nu$ be a Borel measure with $\spt \nu \subset [-1,1]$ and $\|\nu\| \leq 1$, and let $\Gamma(x) = (x,A(x))$, $x \in \R$, where $A \colon \R \to \R^{d-1}$ is Lipschitz. For $\alpha \in (0,1]$, let $\Delta^{\alpha/2}U_{1}\nu$ be the tempered distribution defined by
	\begin{displaymath} \int \Delta^{\alpha/2}U_{1}\nu \cdot g \coloneqq \int U_{1}\nu \cdot \Delta^{\alpha/2}g, \qquad g \in \mathcal{S}(\R). \end{displaymath} 
	(The right hand side is well-defined by Lemma \ref{lemma3}, and since $U_{1}\nu$ has logarithmic growth.) Then $U_1\nu\in \mathrm{Lip}(\R)$ and $\Delta^{\alpha/2}U_{1}\nu \in L^{p}(\R)$ for every $p \in (\alpha^{-1},\infty)$, with $\|\Delta^{\alpha/2}U_{1}\nu\|_{L^{p}} \lesssim_{p,\lip(A)} 1$.
\end{proposition}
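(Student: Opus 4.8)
The Lipschitz assertion is immediate and I would dispatch it first. Since $\log_1\in\lip(\R)$ and $\Gamma=(\mathrm{id},A)$ is Lipschitz with constant $\lesssim_{\lip(A)}1$, for each fixed $y$ the map $x\mapsto\log_1|\Gamma(x)-\Gamma(y)|$ is Lipschitz with constant $\lesssim_{\lip(A)}1$; integrating against $\nu$ and using $\|\nu\|\le1$ gives $U_1\nu\in\lip(\R)$. In the same step I would record the sharper pointwise estimate $|(U_1\nu)'(x)|\lesssim_{\lip(A)}(1+\dist(x,[-1,1]))^{-1}$, obtained by differentiating (almost everywhere) under the integral sign: when $|x-y|\ge1$ one has $\log_1|\Gamma(x)-\Gamma(y)|=-\log|\Gamma(x)-\Gamma(y)|$ and $|\partial_x|\Gamma(x)-\Gamma(y)||\le|\Gamma'(x)|\lesssim_{\lip(A)}1$, so the $x$-derivative of the integrand is $\lesssim_{\lip(A)}|\Gamma(x)-\Gamma(y)|^{-1}\le|x-y|^{-1}$, while on the bounded neighbourhood $\{\dist(x,[-1,1])\le1\}$ it is merely bounded. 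The consequence is that $(U_1\nu)'\in L^q(\R)$ for every $q\in(1,\infty]$, with $\|(U_1\nu)'\|_{L^q}\lesssim_{q,\lip(A)}1$; this integrability drives the rest of the argument.

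For the statement about $\Delta^{\alpha/2}U_1\nu$, the plan is to express this (a priori only distributionally defined) object through the one-dimensional operators in terms of which it is computable, via the identity $\Delta^{\alpha/2}U_1\nu=c_\alpha\,H\bigl(k_\alpha\ast(U_1\nu)'\bigr)$ for $\alpha\in(0,1)$, and $\Delta^{1/2}U_1\nu=c_1\,H\bigl((U_1\nu)'\bigr)$ for $\alpha=1$. Here $H$ is the Hilbert transform and $k_\alpha(x)=|x|^{-\alpha}$ is the Riesz kernel, so that $k_\alpha\ast g$ is a constant multiple of $\Delta^{(\alpha-1)/2}g$ by \eqref{form161}, and $c_\alpha,c_1$ are dimensional constants; formally the identity is just the factorisation of the Fourier multiplier of $\Delta^{\alpha/2}$ as $H\circ\Delta^{(\alpha-1)/2}\circ\partial$. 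Granting the identity, the conclusion follows quickly. For $\alpha=1$: $(U_1\nu)'\in L^p(\R)$ for $p\in(1,\infty)$ and $H$ is bounded on $L^p(\R)$, so $\Delta^{1/2}U_1\nu\in L^p$ with norm $\lesssim_{p,\lip(A)}1$. For $\alpha\in(0,1)$ and $p\in(\alpha^{-1},\infty)$: choosing $q$ with $\tfrac1q=\tfrac1p+(1-\alpha)$ puts $q$ in the range $(1,(1-\alpha)^{-1})$, whence the Hardy--Littlewood--Sobolev inequality gives $k_\alpha\ast(U_1\nu)'\in L^p(\R)$ with $\|k_\alpha\ast(U_1\nu)'\|_{L^p}\lesssim_{\alpha,p}\|(U_1\nu)'\|_{L^q}\lesssim_{\alpha,p,\lip(A)}1$, and a further application of the $L^p$-boundedness of $H$ completes the proof.

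The main obstacle is the rigorous justification of the operator identity, the difficulty being that $U_1\nu$ is merely Lipschitz with logarithmic growth rather than Schwartz. I would prove it by duality against $\psi\in\mathcal{S}(\R)$: starting from $\langle\Delta^{\alpha/2}U_1\nu,\psi\rangle=\langle U_1\nu,\Delta^{\alpha/2}\psi\rangle$ (Definition \ref{def6}), insert the singular-integral representation \eqref{form53}--\eqref{form54} of $\Delta^{\alpha/2}\psi$, transfer the singular integral from $\psi$ onto $U_1\nu$ by Fubini --- each term of the representation decays rapidly in $x$ since $\psi\in\mathcal{S}(\R)$, so the double integrals converge absolutely against the logarithmic growth of $U_1\nu$ --- reassemble to identify $\langle U_1\nu,\Delta^{\alpha/2}\psi\rangle$ with the analogous singular-integral pairing involving $U_1\nu$, and finally integrate by parts once to convert it into $c_\alpha\langle H(k_\alpha\ast(U_1\nu)'),\psi\rangle$. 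The boundary terms vanish because $(U_1\nu)(x)\cdot(k_\alpha\ast H\psi)(x)\to0$ as $|x|\to\infty$, the logarithmic growth of $U_1\nu$ being dominated by the $|x|^{-\alpha}$-type decay of $k_\alpha\ast H\psi$ (with Lemma \ref{lemma3} providing the relevant test-function decay). If this bookkeeping turns out awkward, a cosmetically cleaner route is to first split $U_1\nu=\log_1\ast\nu+E$ with $E(x):=\int(\log_1|\Gamma(x)-\Gamma(y)|-\log_1(x-y))\,d\nu(y)$; the same elementary estimates show that $E$ is a \emph{bounded} Lipschitz function with $E'\in L^q(\R)$, so the identity (and hence the $L^p$ bound) for $E$ requires no attention to growth, while for $\log_1\ast\nu$ translation invariance reduces everything to the single explicit function $\log_1$. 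All remaining ingredients --- the Hardy--Littlewood--Sobolev inequality, the $L^p$-boundedness of the Hilbert transform, and the elementary derivative estimates --- are standard.
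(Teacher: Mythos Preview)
Your approach is correct and essentially the same as the paper's: both use the pointwise bound $|(U_1\nu)'(x)|\lesssim_{\lip(A)}(1+|x|)^{-1}$ to place $(U_1\nu)'$ in every $L^q$, $q\in(1,\infty]$, and then invoke the factorisation $\Delta^{\alpha/2}=c\cdot H\circ\Delta^{(\alpha-1)/2}\circ\partial$ together with Hardy--Littlewood--Sobolev. The paper's execution is marginally more direct --- it stays in duality throughout, integrating by parts once to write $\langle U_1\nu,\Delta^{\alpha/2}g\rangle=-\langle(U_1\nu)',\partial^{-1}\Delta^{\alpha/2}g\rangle$ and then applying H\"older and HLS on the test-function side (with the dual exponent $q'=p'/(1-(1-\alpha)p')$), which sidesteps the Fubini/reassembly step you flag as the main obstacle.
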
 

\begin{proof} Note that $U_{1}\nu \in \mathrm{Lip}(\R)$ with derivative
	\begin{displaymath} \partial U_{1}\nu(x) = \int k_{1} (|\Gamma(x) - \Gamma(y)|) \cdot \frac{(x - y) + \nabla A(x)\cdot (A(x) - A(y))}{|\Gamma(x) - \Gamma(y)|} \, d\nu(y). \end{displaymath}
	Here $k_{1} = \partial \log_{1}$. Since $k_{1} \in L^{\infty}(\R)$ has no singularity, and $\nu$ is a finite measure, the integral on the right is absolutely convergent, and indeed $\|\partial U_{1}\nu\|_{L^{\infty}} \lesssim 1$. From the decay $|k_{1}(x)| \lesssim |x|^{-1}$, combined with $\spt \nu \subset [-1,1]$ and $\|\nu\| \le 1$, we also obtain
	\begin{equation}\label{form84} |\partial U_{1}\nu(x)| \lesssim_{\lip(A)} \min\{1,|x|^{-1}\}, \qquad x \in \R. \end{equation}
	Consequently $\|\partial U_{1}\nu\|_{L^{q}} \lesssim_{\lip(A),p} 1$ for all $q \in (1,\infty]$.
	
	We now fix $\alpha^{-1} < p < \infty$, and start verifying that $\|\Delta^{\alpha/2}U_{1}\nu\|_{L^{p}} \lesssim_{\mathrm{Lip}(A),p} 1$. Fix $g \in \mathcal{S}(\R)$ with $\|g\|_{L^{p'}} \leq 1$. Note that $1 < p' < 1/(1 - \alpha)$. We integrate by parts:
	\begin{displaymath} \int \Delta^{\alpha/2}U_{1}\nu \cdot g \stackrel{\mathrm{def.}}{=} \int U_{1}\nu \cdot \Delta^{\alpha/2}g = \Big/_{-\infty}^{\infty}  U_{1}\nu \cdot \partial^{-1}\Delta^{\alpha/2}g - \int \partial U_{1}\nu \cdot \partial^{-1}\Delta^{\alpha/2}g,  \end{displaymath} 
	where $\partial^{-1}\Delta^{\alpha/2}g \in L^{\infty}(\R)$ stands for
	\begin{displaymath} \partial^{-1}\Delta^{\alpha/2}g(x) := \int_{-\infty}^{x} \Delta^{\alpha/2}g(y) \, dy, \qquad x \in \R. \end{displaymath}
	The integration by parts is legitimate, since $U_{1}\nu$ is Lipschitz, and $\Delta^{\alpha/2}g \in L^{1}(\R)$. Further,
	\begin{displaymath} \Big/_{-\infty}^{\infty}  U_{1}\nu \cdot \partial^{-1}\Delta^{\alpha/2}g = 0, \end{displaymath}
	because $|\partial^{-1}\Delta^{\alpha/2}g(x)| \lesssim_{\alpha,g} (1 + |x|)^{-\alpha}$. This follows from the zero mean of $\Delta^{\alpha/2}g$, and the decay $|\Delta^{\alpha/2}g| \lesssim_{\alpha,g} (1 + |x|)^{-1 - \alpha}$ shown in Lemma \ref{lemma3}. 
	
	Therefore, using  H\"older's inequality with exponent $q$, whose dual exponent $q'$ equals
	\begin{displaymath} q' \coloneqq \frac{p'}{1 - (1 - \alpha)p'} \in [p',\infty) \subset (1,\infty), \end{displaymath}
	we find
	\begin{displaymath} \left| \int \Delta^{\alpha/2}U_{1}\nu \cdot g \right| \leq \|\partial U_{1}\nu\|_{L^{q}}\|\partial^{-1}\Delta^{\alpha/2}g\|_{L^{q'}} \lesssim_{\lip(A),q'} \|\Delta^{(\alpha - 1)/2}g\|_{L^{q'}}. \end{displaymath} 
	In the final inequality, we also used that the Fourier transforms of $\partial^{-1}\Delta^{\alpha/2}g$ and $\Delta^{(\alpha - 1)/2}g$ only differ by $\sgn(\xi)$, so the two functions have comparable $L^{q'}$-norms by Corollary \ref{cor3} (note that $\sgn(\xi) = n_{0}(\xi)$). Moreover, by the Hardy-Littlewood-Sobolev theorem \cite[Chapter V \S1.2]{MR290095}
	\begin{displaymath} \|\Delta^{(\alpha - 1)/2}g\|_{L^{q'}} \lesssim \|g\|_{L^{p'}} \lesssim 1. \end{displaymath}
	This completes the proof. \end{proof}

We then turn our attention to the "local" part $U_{2}\mu$. Here is the main result:
\begin{proposition}\label{prop17a} Let $\alpha \in (0,1]$. Let $\Gamma(x) = (x,A(x))$, where $A \in \mathrm{Lip}(\R;\R^{d-1}) \cap C^{1,\alpha}(\R;\R^{d-1})$, and $\mathrm{Lip}(A)$ is so small that the conclusions of Lemmas \ref{PLemma} and \ref{RLemma} hold.
	
	Assume that $\mu \in \mathcal{M}(\R)$ satisfies the hypotheses of Theorem \ref{mainTechnical} on some compact interval $I_{0} \subset \R$. Thus, for some Lipschitz function $\mathcal{L} \colon \R \to \R$,
	\begin{itemize}
		\item $U^{\Gamma}\mu(x) = \mathcal{L}(x)$ for all $x \in \spt \mu \cap I_{0}$,
		\item $U^{\Gamma}\mu(x) \geq \mathcal{L}(x)$ for all $x \in I_{0}$.
	\end{itemize}
	Let further $I = [x_{0},y_{0}] \subset I_{0}$ be an interval such that
	\begin{displaymath} \mu(B(z_{0},r)) \lesssim r, \qquad z_{0} \in \{x_{0},y_{0}\}, \, r > 0. \end{displaymath}
	Assume finally that $\mu(B(x,r)) \lesssim r^{\gamma_{0}}$ for all $x \in I$ and $r > 0$, where $\gamma_{0} \in [0,1]$.
	
	Then $U_{2}(\mu|_{I})$ is $\zeta$-H\"older continuous on $\R$ for all $\zeta \in [0,\min\{1,\alpha + \gamma_{0}\})$. Furthermore, if $\alpha + \gamma_{0} > 1$, then the distributional derivative of $U_{2}(\mu|_{I})$ lies in $L^{p}(\R)$ for every $p \in [1,\infty)$.
\end{proposition}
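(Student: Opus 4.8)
\emph{Reduction.} Write $\nu:=\mu|_{I}$; then $\nu\in\mathcal{M}(\R)$ by Lemma~\ref{lemma12}. From $\log|\cdot|^{-1}=\log_{1}+\log_{2}$ we get $U_{2}\nu=U\nu-U_{1}\nu$ (with $U=U^{\Gamma}$), and $U_{1}\nu\in\mathrm{Lip}(\R)$ by Proposition~\ref{prop:lap-potential-Lp}. Since $\log_{2}$ is supported on $[-1,1]$ and $|\Gamma(x)-\Gamma(y)|\ge|x-y|$, the function $U_{2}\nu$ vanishes off the compact set $K:=\{x:\dist(x,\spt\nu)\le1\}$; as a continuous, compactly supported function that is $\zeta$-H\"older on a fixed bounded neighbourhood of $K$ with a uniform constant is globally $\zeta$-H\"older, it suffices to prove that $U\nu=P\nu+R\nu$ is $\zeta$-H\"older there, which I will do by treating $P\nu$ and $R\nu$ separately. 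For the $L^{p}$-claim it likewise suffices, using the compact support of $U_{2}\nu$, to bound $\partial(P\nu)$, $\partial(R\nu)$, $\partial(U_{1}\nu)$ in $L^{p}$ on a bounded interval.

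\emph{The remainder potential.} Fix $x,x'$, put $h:=|x-x'|\le1$, and split $R\nu(x)-R\nu(x')=\int[R(x,y)-R(x',y)]\,d\nu(y)$ into $|x-y|\le2h$ and $|x-y|>2h$. On the first piece \eqref{size} gives $|R(x,y)-R(x',y)|\lesssim\delta_{2}h^{\alpha}$, while $\nu(B(x,2h))\lesssim h^{\gamma_{0}}$ by the Frostman hypothesis on $I$ (which extends to $\nu(B(z,r))\lesssim r^{\gamma_{0}}$ for all $z\in\R$, since a ball meeting $I$ is contained in a comparable ball centred in $I$), so this piece contributes $\lesssim h^{\alpha+\gamma_{0}}$. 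On the second piece \eqref{continuity} and a dyadic splitting $|x-y|\sim2^{j}h$ give a contribution $\lesssim h\sum_{j\ge1}(2^{j}h)^{\alpha-1}(2^{j}h)^{\gamma_{0}}=h^{\alpha+\gamma_{0}}\sum_{j}2^{j(\alpha+\gamma_{0}-1)}$; this is $\lesssim h^{\alpha+\gamma_{0}}$ when $\alpha+\gamma_{0}<1$, loses only a $\log(1/h)$ factor when $\alpha+\gamma_{0}=1$, and is $\lesssim h$ when $\alpha+\gamma_{0}>1$ (the last term, at scale $\sim1$, dominates). Hence $R\nu$ is globally $\zeta$-H\"older for $\zeta<\min\{1,\alpha+\gamma_{0}\}$, and Lipschitz (with $\partial(R\nu)\in L^{\infty}$) if $\alpha+\gamma_{0}>1$.

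\emph{The principal potential.} The inputs are: (i) by Proposition~\ref{prop15}, $P\nu$ is convex on each connected component of $\R\setminus\spt\nu$ (recall $\spt\nu\subseteq I$); (ii) writing $\mu=\nu+\mu'$ with $\mu':=\mu|_{\R\setminus I}$, the hypotheses give the identity $P\nu=\tilde{\Psi}$ on $\spt\mu\cap I_{0}\supseteq\spt\nu$ and the inequality $P\nu\ge\tilde{\Psi}$ on $I_{0}$, where $\tilde{\Psi}:=\mathcal{L}-U\mu'-R\nu$ and $\mu'\in\mathcal{M}(\R)$ by Lemma~\ref{lemma12}; (iii) $\tilde{\Psi}$ is $\zeta$-H\"older on $I$, because $\mathcal{L}$ is Lipschitz, $R\nu$ is $\zeta$-H\"older by the previous step, and $U\mu'$ is locally Lipschitz on $(x_{0},y_{0})$ (as $\spt\mu'\subseteq(-\infty,x_{0}]\cup[y_{0},\infty)$) with at worst a $|x-z_{0}|\log(1/|x-z_{0}|)$ modulus as $x\to z_{0}\in\{x_{0},y_{0}\}$ from inside, controlled by the endpoint hypothesis $\mu(B(z_{0},r))\lesssim r$. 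Given (i)--(iii) one runs an obstacle-problem argument: on each bounded gap $(a,b)$ of $\spt\nu$ convexity and the endpoint values $P\nu(a)=\tilde{\Psi}(a)$, $P\nu(b)=\tilde{\Psi}(b)$ give $\tilde{\Psi}\le P\nu\le\ell_{a,b}$, where $\ell_{a,b}$ is the affine interpolant of these values — which is precisely the chord of $\tilde{\Psi}$; since $\tilde{\Psi}$ is $\zeta$-H\"older and $(b-a)^{\zeta-1}|x-a|\le|x-a|^{\zeta}$, this yields $|P\nu(x)-P\nu(a)|\lesssim|x-a|^{\zeta}$ and the symmetric bound at $b$, hence $|P\nu-\tilde{\Psi}|\lesssim\dist(\cdot,\spt\nu)^{\zeta}$ on the gap. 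On the unbounded components $(-\infty,\min\spt\nu)$ and $(\max\spt\nu,\infty)$ and on $K\setminus I$, where no two-sided chord is available, one estimates $P\nu$ directly: comparing $P\nu(x)$ with $P\nu$ at the nearest endpoint of $I$ via $|\partial_{x}P(x,y)|\lesssim|x-y|^{-1}$ (as in the proof of Lemma~\ref{PLemma}), the separation $|x-y|\ge\dist(x,\spt\nu)$ (valid since $\spt\nu$ does not extend beyond $x_{0}$ or $y_{0}$), and the endpoint Frostman bound, one obtains a modulus $\lesssim|x-z_{0}|\log(1/|x-z_{0}|)$. Together with the $\zeta$-H\"older continuity of $\tilde{\Psi}$ on $\spt\nu$, this gives $\zeta$-H\"older continuity of $P\nu$ near $K$ for every $\zeta<\min\{1,\alpha+\gamma_{0}\}$, completing the first assertion.

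\emph{The $L^{p}$-claim and the main difficulty.} When $\alpha+\gamma_{0}>1$ the scheme sharpens: $R\nu$ is Lipschitz, $U\mu'$ is locally Lipschitz on $(x_{0},y_{0})$ and log-Lipschitz near $x_{0},y_{0}$, so $\tilde{\Psi}$ is Lipschitz away from $\{x_{0},y_{0}\}$ and log-Lipschitz near them; the chord argument then forces $P\nu$ to be Lipschitz on each bounded gap and on $\spt\nu$ away from $\{x_{0},y_{0}\}$, while near $x_{0},y_{0}$ the direct estimate gives $|\partial(P\nu)(x)|\lesssim\log(1/\dist(x,\spt\nu))$. Hence $\partial(U_{2}\nu)=\partial(P\nu)+\partial(R\nu)-\partial(U_{1}\nu)$ is bounded on $\R$ away from $\{x_{0},y_{0}\}$ and dominated by a local logarithm near them; since $U_{2}\nu$ is compactly supported, $\partial(U_{2}\nu)\in L^{p}(\R)$ for every $p\in[1,\infty)$. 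I expect the genuinely delicate point to be items (i)/(iii) near the endpoints of $I$ and on the unbounded components of $\R\setminus\spt\nu$: the clean chord bound applies only strictly between two support points, so controlling $P\nu$ at and just outside $x_{0},y_{0}$ — and in the case $\min\spt\nu>x_{0}$ or $\max\spt\nu<y_{0}$ — forces one to use both the endpoint hypothesis $\mu(B(z_{0},r))\lesssim r$ and the constraint $U\mu\ge\mathcal{L}$ (which is exactly what prevents $P\nu$ from dropping too steeply upon leaving $\spt\nu$), and to keep careful track of the logarithmic and geometric-series losses so that they all remain below $\min\{1,\alpha+\gamma_{0}\}$.
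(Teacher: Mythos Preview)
Your approach is essentially the paper's: both combine the convexity of $P$ (Proposition~\ref{prop15}), the H\"older bounds on $R$ (Lemma~\ref{lemma10}), a chord/obstacle argument (the paper packages this as the abstract Proposition~\ref{prop16}), and a log-Lipschitz localisation estimate near the endpoints $x_0,y_0$ (Lemma~\ref{prop14}). The only difference is organisational---you fold $U\mu'$ into the obstacle function $\tilde\Psi$ and run the chord argument for $P\nu$, whereas the paper runs the obstacle argument for $U\mu$ against $\mathcal{L}$ itself (so its obstacle is genuinely Lipschitz rather than log-Lipschitz) and only afterwards localises via the decomposition $U_2(\mu|_I)=(U_2\mu)\mathbf{1}_I+L$.
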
 	

The two conclusions (about H\"older continuity and $L^{p}$-derivatives) are proved separately in Corollaries \ref{cor11} and \ref{cor12}. Before that, we need a few auxiliary results. We first use Lemma \ref{RLemma} to prove the H\"older continuity of the potential $R\nu(x)$. Recall that for $\gamma \in (0,1]$ the $\gamma$-dimensional Riesz kernel is denoted by $k_\gamma(x) = |x|^{-\gamma}$.

\begin{lemma}\label{lemma10} Assume that $\Gamma(x) = (x,A(x))$, where $A \in \mathrm{Lip}(\R;\R^{d-1})$ and the Lipschitz constant of $A$ is so small that the conclusion of Lemma \ref{RLemma} holds. Let $\nu$ be a finite Radon measure on $\R$. Then	
	\begin{equation}\label{eq:holder0}
		|R\nu(x)-R\nu(y)|\lesssim |x-y|^\alpha \nu(\R).
	\end{equation}
	Moreover, for any $\gamma \in (0,1-\alpha]$ and $x\neq y\in \R$
	\begin{equation}\label{eq:holder}
		|R\nu(x)-R\nu(y)|\lesssim |x-y|^{\alpha+\gamma}\left(k_\gamma\ast \nu(x)+k_\gamma\ast\nu(y)\right).
	\end{equation}
	In particular, if $\nu(B(x,r))\lesssim r^{\gamma_0}$ for all $x \in \R$, $r \in (0,1)$, and some $\gamma_0>\gamma$, then $R\nu$ is $(\alpha + \gamma)$-H\"older continuous in $\R$.
\end{lemma}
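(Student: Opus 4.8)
The plan is to control $R\nu(x)-R\nu(x')$ by integrating the two estimates of Lemma \ref{RLemma} against $\nu$, after splitting the integration domain at the scale $h:=|x-x'|$. First note that $R\nu$ is well defined: by \eqref{size} one has $|R(x,z)|\lesssim\delta_1$ uniformly, so $R\nu(x)=\int R(x,z)\,d\nu(z)$ converges absolutely and $R\nu(x)-R\nu(x')=\int(R(x,z)-R(x',z))\,d\nu(z)$. Fix $x\ne x'$ and split this integral over the near ball $B:=B(x,2h)$ and over the far set $\R\setminus B$.

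On $B$ the refined difference estimate \eqref{continuity} is not available, so I would only use the size bound \eqref{size}: for $z\in B$ one has $|x-z|<2h$ and $|x'-z|<3h$, hence $|R(x,z)|+|R(x',z)|\lesssim\delta_2 h^\alpha$, and the near contribution is $\lesssim\delta_2 h^\alpha\,\nu(B(x,2h))$. For \eqref{eq:holder0} simply bound $\nu(B(x,2h))\le\nu(\R)$; for \eqref{eq:holder} instead note $\nu(B(x,2h))\le(2h)^\gamma\int_{B(x,2h)}|x-z|^{-\gamma}\,d\nu(z)\le(2h)^\gamma\,k_\gamma\!\ast\!\nu(x)$, turning the near term into $\lesssim h^{\alpha+\gamma}k_\gamma\!\ast\!\nu(x)$.

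On $\R\setminus B$ we have $|x-x'|=h\le\tfrac12|x-z|$, so \eqref{continuity} applies and gives $|R(x,z)-R(x',z)|\lesssim h\,\min\{\delta_1,\delta_2|x-z|^\alpha\}/|x-z|\le\delta_2\,h\,|x-z|^{\alpha-1}$. Since $\alpha\le1$ we may bound $|x-z|^{\alpha-1}\le(2h)^{\alpha-1}$ on $\R\setminus B$, so the far contribution is $\lesssim\delta_2\,h(2h)^{\alpha-1}\nu(\R)\lesssim h^\alpha\nu(\R)$; combined with the near estimate this proves \eqref{eq:holder0}. For \eqref{eq:holder}, instead write $|x-z|^{\alpha-1}=|x-z|^{\alpha-1+\gamma}|x-z|^{-\gamma}$: the hypothesis $\gamma\le1-\alpha$ makes the exponent $\alpha-1+\gamma\le0$, so $|x-z|^{\alpha-1+\gamma}\le(2h)^{\alpha-1+\gamma}$ on $\R\setminus B$, and integrating the remaining $|x-z|^{-\gamma}$ against $\nu$ yields a far contribution $\lesssim h(2h)^{\alpha-1+\gamma}k_\gamma\!\ast\!\nu(x)\lesssim h^{\alpha+\gamma}k_\gamma\!\ast\!\nu(x)$. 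Adding the near and far bounds gives \eqref{eq:holder} (in fact with $k_\gamma\!\ast\!\nu(x)$ alone, which implies the stated symmetric form).

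Finally, for the ``in particular'' statement: the Frostman condition $\nu(B(x,r))\lesssim r^{\gamma_0}$ with $\gamma_0>\gamma>0$ forces $\nu$ to be atomless, and a dyadic decomposition of $k_\gamma\!\ast\!\nu(x)=\int|x-z|^{-\gamma}\,d\nu(z)$ into the annuli $\{2^{-j-1}\le|x-z|<2^{-j}\}$, $j\ge0$, together with the exterior piece $\{|x-z|\ge1\}$, gives $\|k_\gamma\!\ast\!\nu\|_{L^\infty}\lesssim_{\gamma,\gamma_0}1+\nu(\R)$ because $\sum_{j\ge0}2^{(j+1)\gamma}2^{-j\gamma_0}<\infty$. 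Substituting this into \eqref{eq:holder} yields $|R\nu(x)-R\nu(x')|\lesssim|x-x'|^{\alpha+\gamma}$, and $\alpha+\gamma\le1$. I expect the only slightly delicate points to be the bookkeeping at the near/far interface --- where one must retreat from the sharp estimate \eqref{continuity} to the cruder \eqref{size} --- and keeping track that $\alpha-1+\gamma\le0$ in the far region, which is exactly what the assumption $\gamma\le1-\alpha$ secures and which makes the tail integral converge uniformly in $x$ even when $\nu$ is not compactly supported.
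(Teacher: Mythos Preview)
Your proof is correct and follows essentially the same approach as the paper's: split the integral at scale $2|x-x'|$, use the size bound \eqref{size} on the near region and the continuity bound \eqref{continuity} on the far region, then exploit $\alpha-1+\gamma\le0$ to extract the factor $|x-x'|^{\alpha+\gamma}$ in the far part. The only cosmetic difference is that in the near region you bound $\nu(B(x,2h))$ directly by $(2h)^\gamma k_\gamma\!\ast\!\nu(x)$, obtaining the estimate with $k_\gamma\!\ast\!\nu(x)$ alone, whereas the paper keeps both $|x-z|^\alpha$ and $|y-z|^\alpha$ separately and ends up with the symmetric $k_\gamma\!\ast\!\nu(x)+k_\gamma\!\ast\!\nu(y)$.
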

\begin{proof}
	We estimate
	\begin{multline}\label{eq:decomp}
		|R\nu(x)-R\nu(y)|=\left|\int (R(x,z)-R(y,z))\, d\nu(z)\right|\\
		\le\left|\int_{|x-z|\ge 2|x-y|} (R(x,z)-R(y,z))\, d\nu(z)\right| + \left|\int_{|x-z|\le 2|x-y|} (R(x,z)-R(y,z))\, d\nu(z)\right|.
	\end{multline}
	To bound the first term we use \eqref{continuity}
	\begin{multline*}
		\left|\int_{|x-z|\ge 2|x-y|} (R(x,z)-R(y,z))\, d\nu(z)\right|\lesssim \int_{|x-z|\ge 2|x-y|} \frac{|x-y|}{|x-z|^{1-\alpha}}\, d\nu(z)\\
		\lesssim\int_{|x-z|\ge 2|x-y|} |x-y|^{\alpha}\, d\nu(z)\le |x-y|^{\alpha}\nu(\R),
	\end{multline*}
	which corresponds to \eqref{eq:holder0}, and
	\begin{multline*}
		\left|\int_{|x-z|\ge 2|x-y|} (R(x,z)-R(y,z))\, d\nu(z)\right|\lesssim \int_{|x-z|\ge 2|x-y|} \frac{|x-y|}{|x-z|^{1-\alpha}}\, d\nu(z)\\
		= |x-y|\int_{|x-z|\ge 2|x-y|} \frac{|x-z|^{\alpha+\gamma-1}}{|x-z|^{\gamma}}\, d\nu(z)\lesssim |x-y|^{\alpha+\gamma} k_\gamma\ast\nu(x),
	\end{multline*}
	which corresponds to \eqref{eq:holder}. 
	
	To estimate the second term from \eqref{eq:decomp} we use \eqref{size}. We get
	\begin{multline*}
		\left|\int_{|x-z|\le 2|x-y|} (R(x,z)-R(y,z))\, d\nu(z)\right|\lesssim \int_{|x-z|\le 2|x-y|} |x-z|^{\alpha}+{|y-z|^{\alpha}}\, d\nu(z)\\
		\lesssim |x-y|^{\alpha}\nu(\R)
	\end{multline*}
	and
	\begin{multline*}
		\left|\int_{|x-z|\le 2|x-y|} (R(x,z)-R(y,z))\, d\nu(z)\right|\lesssim \int_{|x-z|\le 2|x-y|} |x-z|^{\alpha}+{|y-z|^{\alpha}}\, d\nu(z)\\
		=\int_{|x-z|\le 2|x-y|} \frac{|x-z|^{\alpha+\gamma}}{|x-z|^{\gamma}}+\frac{|y-z|^{\alpha+\gamma}}{|y-z|^{\gamma}}\, d\nu(z)
		\lesssim |x-y|^{\alpha+\gamma}(k_\gamma\ast\nu(x)+k_\gamma\ast\nu(y)).
	\end{multline*}
	Collecting the estimates gives \eqref{eq:holder0} and \eqref{eq:holder}.
\end{proof}

To prove the H\"older continuity of $U\mu$ (as claimed in Proposition \ref{prop17a}), the main idea is to use the relation $U\mu = P\mu + R\mu$, the convexity of $P\mu$, and the H\"older continuity of $R\mu$ (as given by Lemma \ref{lemma10}). To combine the information, we state the following "abstract" proposition, which does not use the special form of the functions $U\mu,P\mu$ or $R\mu$:

\begin{proposition}\label{prop16} Let $I = [a,b] \subset \R$ be an interval. Let $U,P,R \colon I \to \R$ be functions with the following properties:
	\begin{enumerate}
		\item $U = P + R$,
		\item $P$ is convex,
		\item $R$ is $\alpha$-H\"older continuous with constant $C$. 
	\end{enumerate}
	Then,
	\begin{equation}\label{form124} U(x) - U(a) \leq (x - a) \frac{|U(b) - U(a)|}{b - a} + 2C|x - a|^{\alpha}, \end{equation}
	and
	\begin{equation}\label{form125} U(x) - U(b) \leq (b - x)\frac{|U(b) - U(a)|}{b - a} + 2C |b - x|^{\alpha}. \end{equation}
\end{proposition}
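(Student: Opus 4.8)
The plan is to use convexity of $P$ in its most direct form: the graph of $P$ lies below the chord joining $(a,P(a))$ and $(b,P(b))$. Concretely, for $x\in[a,b]$ set $t=(x-a)/(b-a)\in[0,1]$ (we may assume $a<b$, otherwise there is nothing to prove), so that $x=(1-t)a+tb$ and convexity gives $P(x)\le(1-t)P(a)+tP(b)$. Substituting $P=U-R$ and then subtracting $U(a)$, I would arrive at
\[
U(x)-U(a)\le t\big(U(b)-U(a)\big)+(1-t)\big(R(x)-R(a)\big)+t\big(R(x)-R(b)\big).
\]

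Next I would estimate the three terms on the right. The first is at most $t|U(b)-U(a)|=(x-a)|U(b)-U(a)|/(b-a)$, which is the first term in \eqref{form124}. For the other two, the $\alpha$-H\"older bound on $R$ gives $|R(x)-R(a)|\le C|x-a|^{\alpha}$ and $|R(x)-R(b)|\le C(b-x)^{\alpha}$, so it suffices to verify the elementary inequalities $(1-t)|x-a|^{\alpha}\le|x-a|^{\alpha}$ and $t(b-x)^{\alpha}\le|x-a|^{\alpha}$. The first is immediate since $1-t\le1$. For the second, writing $u=x-a$ and $L=b-a$, one has $\tfrac{u}{L}(L-u)^{\alpha}=u^{\alpha}\cdot\tfrac{u^{1-\alpha}(L-u)^{\alpha}}{L}\le u^{\alpha}$, because $u^{1-\alpha}(L-u)^{\alpha}\le L^{1-\alpha}L^{\alpha}=L$ (here $0\le u\le L$ and $0<\alpha\le1$). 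Adding these two contributions yields at most $2C|x-a|^{\alpha}$, which completes the proof of \eqref{form124}.

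The inequality \eqref{form125} follows by the symmetric argument: write $x=(1-s)b+sa$ with $s=(b-x)/(b-a)$, apply convexity in the form $P(x)\le(1-s)P(b)+sP(a)$, and repeat the computation with the roles of $a$ and $b$ interchanged. I do not expect any genuine obstacle in this proof; the only step requiring a moment's attention is the interpolation estimate $t(b-x)^{\alpha}\le(x-a)^{\alpha}$, which is exactly what allows the H\"older error to be expressed purely in terms of $|x-a|$ (resp.\ $|b-x|$) with the clean constant $2C$.
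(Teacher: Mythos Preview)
Your proof is correct and follows essentially the same approach as the paper: both use the chord inequality for the convex function $P$, substitute $P=U-R$, and arrive at the same error term $(1-t)(R(x)-R(a))+t(R(x)-R(b))$, which is then bounded by $2C|x-a|^{\alpha}$ via the same elementary interpolation estimate you isolated. The only cosmetic difference is that the paper first writes this error as $C\tfrac{b-x}{b-a}(x-a)^{\alpha}+C\tfrac{x-a}{b-a}(b-x)^{\alpha}$ and then estimates $x-a\le(x-a)^{\alpha}(b-a)^{1-\alpha}$, which is the same inequality as your $u^{1-\alpha}(L-u)^{\alpha}\le L$ in disguise.
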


\begin{remark} Proposition \ref{prop16} claims no lower bounds for $U(x) - U(a)$ and $U(x) - U(b)$, and indeed (comparable) lower bounds would be false under the stated hypotheses. \end{remark}

\begin{proof}[Proof of Proposition \ref{prop16}] Write
	\begin{displaymath} U((1 - \theta)a + \theta b) = P((1 - \theta)a + \theta b) + R((1 - \theta)a + \theta b), \qquad \theta \in [0,1]. \end{displaymath}
	The convexity of $P$, and $P(a) = U(a) - R(a)$ and $P(b) = U(b) - R(b)$, next give the relation
	\begin{displaymath} P((1 - \theta)a + \theta b) \leq (1 - \theta)P(a) + \theta P(b) = [(1 - \theta)U(a) + \theta U(b)] - (1 -\theta)R(a) - \theta R(b). \end{displaymath}
	Thus,
	\begin{align*} U((1 - \theta)a + \theta b) & \leq [(1 - \theta)U(a) + \theta U(b)]\\
		& \quad + R((1 - \theta)a + \theta b) - (1 - \theta)R(a) - \theta R(b), \qquad \theta \in [0,1]. \end{align*}
	This yields
	\begin{align} U((1 - \theta)a + \theta b) - U(a) & \leq \theta |U(b) - U(a)| \notag\\
		&\label{form122} \quad + |R((1 - \theta)a + \theta b) - (1 - \theta)R(a) - \theta R(b)|, \end{align}
	and similarly
	\begin{align} U((1 - \theta)a + \theta b) - U(b) & \leq (1 - \theta)|U(a) - U(b)| \notag\\
		&\label{form123} \quad + |R((1 - \theta)a + \theta b) - (1 - \theta)R(a) - \theta R(b)|. \end{align}
	Note that for $x = (1 - \theta)a + \theta b \in I$, 
	\begin{displaymath} \theta = \frac{x - a}{b - a} \quad \text{and} \quad 1 - \theta = \frac{b - x}{b - a}. \end{displaymath}
	Therefore, the H\"older continuity of $R$ yields (after adding and subtracting $(1 - \theta)R((1 - \theta)a + \theta b)$),
	\begin{align*} & |R((1 - \theta)a + \theta b) - (1 - \theta)R(a) - \theta R(b)|\\
		& \quad \leq (1 - \theta)|R((1 - \theta)a + \theta b) - R(a)| + \theta|R((1 - \theta)a + \theta b) - R(b)|\\ 
		& \quad \leq C(1 - \theta)|\theta(b - a)|^{\alpha} + C\theta |(1 - \theta)(b - a)|^{\alpha}\\
		& \quad = C\frac{b - x}{b - a}(x - a)^{\alpha} + C\frac{x - a}{b - a}(b - x)^{\alpha}. \end{align*}
	Substituting these bounds to \eqref{form122}-\eqref{form123} yields
	\begin{displaymath} U(x) - U(a) \leq \frac{x - a}{b - a}|U(b) - U(a)| + C\frac{b - x}{b - a}(x - a)^{\alpha} + C\frac{x - a}{b - a}(b - x)^{\alpha}, \end{displaymath} 
	and similarly
	\begin{displaymath} U(x) - U(b) \leq \frac{b - x}{b - a}|U(a) - U(b)| + C\frac{b - x}{b - a}(x - a)^{\alpha} + C\frac{x - a}{b - a}(b - x)^{\alpha}.  \end{displaymath} 
	This is what was claimed. These bounds yield \eqref{form124}-\eqref{form125} immediately. To derive \eqref{form124}, estimate $b - x \leq b - a$ and $x - a \leq (x - a)^{\alpha}(b - a)^{1 - \alpha}$. To derive \eqref{form125}, estimate instead $x - a \leq b - a$ and $b - x \leq (b - x)^{\alpha}(b - a)^{1 - \alpha}$. The proof is complete. \end{proof}

The next "localisation lemma" shows that if $\nu$ is a finite measure on $[0,1]$ which vanishes outside some interval $I$, and has one-dimensional decay at the endpoints of $I$, then $U_{2}\nu$ is locally Lipschitz continuous on the interior of $I$, and the derivative of $U_{2}\nu$ only has a logarithmic singularity at $\partial I$. This lemma is needed, because in Proposition \ref{prop17a} we claim something about the logarithmic potentials of restrictions $\mu|_{I}$. The idea is that $\nu = \mu - \mu|_{I}$ satisfies the hypotheses of Lemma \ref{prop14}, and therefore $U_{2}\nu$ is "almost" Lipschitz continuous on $I$. In short, the H\"older regularity of $U_2(\mu|_{I})$ does not essentially depend on the behaviour of $\mu$ outside $I$ (provided that \eqref{form116} holds).

\begin{lemma}[Localisation lemma]\label{prop14} Let $\mathbf{C} > 0$, and let $\nu$ be a finite Borel measure on $[0,1]$. Let $[x_{0},y_{0}] \subset \R$ be an interval such that $\spt \nu \cap (x_{0},y_{0}) = \emptyset$, and
	\begin{equation}\label{form116} \nu(B(z_{0},r)) \leq \mathbf{C}r \text{ for } z_{0} \in \{x_{0},y_{0}\} \text{ and }r > 0. \quad \end{equation}
	We also allow the cases $x_{0} = -\infty$ or $y_{0} = \infty$ with the interpretation that the requirement \eqref{form116} is only posed for the finite endpoint of the interval. Then, $U_{2}\nu \in \mathrm{Lip}_{\mathrm{loc}}(x_{0},y_{0})$, and
	\begin{equation}\label{form136} |\partial U_{2}\nu(x)| \lesssim \mathbf{C}(1 + \mathrm{Lip}(A)) \left( \log \tfrac{100}{x - x_{0}} + \log \tfrac{100}{y_{0} - x} \right) \mathbf{1}_{[-2,2]}(x) , \qquad \text{a.e. } x \in (x_{0},y_{0}). \end{equation} 
\end{lemma}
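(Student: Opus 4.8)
The plan is to differentiate $U_{2}\nu$ directly under the integral sign on $(x_{0},y_{0})$, and then to bound the resulting kernel by a dyadic decomposition fed by the Frostman hypothesis \eqref{form116} at the two endpoints. I would begin by recording the relevant properties of the truncated kernel $\log_{2} = \log|\cdot|^{-1} - \log_{1}$. From $\log_{1} \in C^{4}(\R \setminus \{0\}) \cap \mathrm{Lip}(\R)$ and $\log_{1}(t) = \log|t|^{-1}$ for $|t| \geq 1$ one reads off that $\spt \log_{2} \subset [-1,1]$, that $\log_{2} \in C^{4}(\R \setminus \{0\})$, and that $k_{2} := \partial \log_{2}$ satisfies $k_{2}(t) = 0$ for $|t| \geq 1$ and $|k_{2}(t)| \lesssim |t|^{-1}$ for $0 < |t| \leq 1$ (here the Lipschitz bound on $\log_{1}$ handles the non-singular part).

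Next I would fix a compact subinterval $[a,b] \subset (x_{0},y_{0})$ and justify differentiation under the integral. Since $\spt \nu \cap (x_{0},y_{0}) = \emptyset$ we have $\spt \nu \subset (-\infty,x_{0}] \cup [y_{0},\infty)$, hence $\rho := \dist([a,b],\spt\nu) \geq \min\{a - x_{0},\, y_{0} - b\} > 0$. For $x \in [a,b]$ and $y \in \spt\nu$ the argument $|\Gamma(x) - \Gamma(y)| \geq |x - y| \geq \rho$ stays away from the singularity of $\log_{2}$; since $\Gamma \in C^{1}$ and $k_{2}$ vanishes for $|t| > 1$, the integrand $x \mapsto \log_{2}|\Gamma(x) - \Gamma(y)|$ is $C^{1}$ on $[a,b]$ with $x$-derivative bounded by $\lesssim (1 + \mathrm{Lip}(A))/\rho$ uniformly in $y \in \spt \nu$. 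As $\nu$ is finite, dominated convergence yields $U_{2}\nu \in C^{1}(x_{0},y_{0}) \subset \mathrm{Lip}_{\mathrm{loc}}(x_{0},y_{0})$ together with
\[ \partial U_{2}\nu(x) = \int k_{2}\big(|\Gamma(x) - \Gamma(y)|\big)\, \frac{(\Gamma(x) - \Gamma(y)) \cdot \dot{\Gamma}(x)}{|\Gamma(x) - \Gamma(y)|} \, d\nu(y), \qquad x \in (x_{0},y_{0}). \]

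For the quantitative bound I would use $|\dot{\Gamma}(x)| = |(1,\nabla A(x))| \leq 1 + \mathrm{Lip}(A)$, $|\Gamma(x) - \Gamma(y)| \geq |x - y|$, and the first step to get $|\partial U_{2}\nu(x)| \lesssim (1 + \mathrm{Lip}(A)) \int_{|x - y| \leq 1} |x - y|^{-1}\, d\nu(y)$, and then split the integral into $y \leq x_{0}$ and $y \geq y_{0}$. For the first piece, decompose $\{y \leq x_{0} : |x - y| \leq 1\}$ into dyadic shells $S_{j} = \{y \leq x_{0} : 2^{-j-1} < x - y \leq 2^{-j}\}$, $j \geq 0$: if $y \in S_{j}$ then $x_{0} - y \leq x - y \leq 2^{-j}$, so $S_{j} \subset B(x_{0},2^{-j})$ and $\nu(S_{j}) \leq \mathbf{C}2^{-j}$ by \eqref{form116}, while $S_{j} \neq \emptyset$ forces $2^{-j} \geq x - x_{0}$; summing,
\[ \int_{y \leq x_{0},\, |x - y| \leq 1} \frac{d\nu(y)}{x - y} \leq \sum_{\substack{j \geq 0 \\ 2^{-j} \geq x - x_{0}}} 2\mathbf{C} \leq 2\mathbf{C}\, \#\{j \geq 0 : 2^{-j} \geq x - x_{0}\} \lesssim \mathbf{C} \log \tfrac{100}{x - x_{0}}, \]
the final inequality being invoked only when the index set is non-empty (equivalently $x - x_{0} \leq 1$), and when it is empty the left-hand side is $0$. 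The symmetric estimate with $y_{0}$ in place of $x_{0}$ handles the piece $y \geq y_{0}$, and adding the two gives $|\partial U_{2}\nu(x)| \lesssim \mathbf{C}(1 + \mathrm{Lip}(A))(\log\tfrac{100}{x - x_{0}} + \log\tfrac{100}{y_{0} - x})$. Finally, since $\spt \log_{2} \subset [-1,1]$ and $\spt\nu \subset [0,1]$, one has $U_{2}\nu(x) = 0$ unless $|x - y| < 1$ for some $y \in [0,1]$, i.e.\ unless $x \in (-1,2)$; hence $\partial U_{2}\nu \equiv 0$ off $[-2,2]$, which supplies the factor $\mathbf{1}_{[-2,2]}(x)$. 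The cases $x_{0} = -\infty$ or $y_{0} = \infty$ merely make the corresponding shell family empty (there is no mass of $\nu$ on that side), so the associated logarithmic term is omitted.

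The argument is largely bookkeeping; the only point requiring genuine care is the justification of differentiation under the integral in the second step, which rests on $\dist([a,b],\spt\nu) > 0$ — a consequence of $\spt\nu \cap (x_{0},y_{0}) = \emptyset$ — together with the explicit decay $|k_{2}(t)| \lesssim |t|^{-1}$ near the origin. The dyadic counting is the only quantitative ingredient, and it is routine given \eqref{form116}.
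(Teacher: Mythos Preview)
Your proof is correct and follows essentially the same approach as the paper: differentiate under the integral using $\dist([a,b],\spt\nu)>0$, bound the kernel by $(1+\mathrm{Lip}(A))\mathbf{1}_{|x-y|<1}/|x-y|$, split into the two sides $y\le x_0$ and $y\ge y_0$, and sum a dyadic decomposition against the endpoint Frostman bound \eqref{form116}. The paper centres its dyadic annuli at $x$ (starting at scale $4$ to cover $\spt\nu\subset[0,1]$ from $|x|\le 2$) and then uses $B(x,2^{-j})\subset B(x_0,2^{-j+1})$, whereas you keep the restriction $|x-y|\le 1$ explicit and observe directly that each shell lies in $B(x_0,2^{-j})$; these are equivalent bookkeeping choices. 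One small remark: you invoke ``$\Gamma\in C^1$'' to get $U_2\nu\in C^1(x_0,y_0)$, which is fine under the standing $C^{1,\alpha}$ assumption of the section, but the paper phrases things with ``a.e.\ $x$'' so that the argument also covers merely Lipschitz $A$.
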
 

\begin{proof} Since $(x_{0},y_{0}) \cap \spt \nu = \emptyset$, the following differentiation under the integral sign is easily justified whenever $\log_{2} |\Gamma(\cdot) - \Gamma(y)|$ is differentiable at $x \in (x_{0},y_{0})$ (which is true for a.e. $x \in (x_{0},y_{0})$):
	\begin{displaymath} \partial U_{2}\nu(x) = \int \partial_{x} (x \mapsto \log_{2} |\Gamma(x) - \Gamma(y)|) \, d\nu(y). \end{displaymath} 
	To proceed, we note that
	\begin{displaymath} |\partial_{x}(x \mapsto \log_{2} |\Gamma(x) - \Gamma(y)|)| \lesssim (1 + \mathrm{Lip}(A)) \frac{\mathbf{1}_{B(x,1)}(y)}{|x - y|},  \end{displaymath} 
	which follows readily (or see \eqref{form134}) by recalling that $\log_{2}(x) = \log |x|^{-1} - \log_{1}(x)$, and that $\partial \log_{1} = k_{1}$ is the standard kernel from Lemma \ref{truncationLemma}. Therefore,
	\begin{displaymath} |\partial U_{2}\nu(x)| \lesssim \int_{[0,x_{0}] \cap B(x,1)} \frac{d\nu(y)}{|x - y|} + \int_{[y_{0},1] \cap B(x,1)} \frac{d\nu(y)}{|x - y|} =: I_{[0,x_{0}]}(x) + I_{[y_{0},1]}(x), \end{displaymath}
	where the implicit constants hide the factor $1 + \mathrm{Lip}(A)$. The estimates for $I_{[0,x_{0}]}(x)$ and $I_{[y_{0},1]}(x)$ are similar, so we only include the details for $I_{[0,x_{0}]}(x)$. First, if $|x| > 2$, then $B(x,1) \cap \spt \nu = \emptyset$, and therefore 
	\begin{displaymath} |\partial U_{2}\nu(x)| = 0, \qquad |x| > 2. \end{displaymath}
	Let us then assume that $|x| \leq 2$. In this case we may perform the following estimate by splitting into dyadic annuli around $x$:
	\begin{equation}\label{form135} \int_{0}^{x_{0}} \frac{d\nu(y)}{|x - y|} \leq 2 \sum_{j = -2}^{\infty} \nu([0,x_{0}] \cap B(x,2^{-j})) \cdot 2^{j}. \end{equation} 
	(The hypothesis $|x| \leq 2$ was used to ensure that $\spt \nu \cap B(x,2^{-j}) \, \setminus \, B(x,2^{-j - 1}) = \emptyset$ for $j \leq -3$.) In \eqref{form135}, note that $\nu([0,x_{0}] \cap B(x,2^{-j})) = 0$ for all $2^{-j} \leq x - x_{0}$. On the other hand, for $2^{-j} \geq x - x_{0}$, 
	\begin{displaymath} \nu(B(x,2^{-j})) \leq \nu(B(x_{0},2^{-j + 1})) \leq \mathbf{C}2^{-j + 1} \end{displaymath}
	by the hypothesis \eqref{form116}. Therefore,
	\begin{displaymath} \int_{0}^{x_{0}} \frac{d\nu(y)}{|x - y|} \leq 4\mathbf{C} \cdot |\{j \geq -2 : 2^{-j} \geq x - x_{0}\}| \lesssim \mathbf{C} \log \frac{100}{x - x_{0}}, \quad x \in (x_{0},y_{0}) \cap [-2,2]. \end{displaymath} 
	This (combined with a similar estimate for $I_{[y_{0},1]}(x)$) proves \eqref{form136}.  \end{proof} 


We are prepared to prove Proposition \ref{prop17a}. We prove a slightly more technical result, below, and then derive the conclusions of Proposition \ref{prop17a} in Corollaries \ref{cor11} and \ref{cor12}.

\begin{proposition}\label{prop17} Let $\mathbf{C} \geq 1$ and $\alpha \in (0,1]$. Let $\Gamma(x) = (x,A(x))$, where $A \in \mathrm{Lip}(\R;\R^{d-1}) \cap C^{1,\alpha}(\R;\R^{d-1})$, and $\mathrm{Lip}(A)$ is so small that the conclusions of Lemmas \ref{PLemma} and \ref{RLemma} hold.
	
	Assume that $\mu \in \mathcal{M}(\R)$ satisfies the hypotheses of Theorem \ref{mainTechnical} on some compact interval $I_{0} \subset \R$. Thus, for some Lipschitz function $\mathcal{L} \colon \R \to \R$,
	\begin{itemize}
		\item $U^{\Gamma}\mu(x) = \mathcal{L}(x)$ for all $x \in \spt \mu \cap I_{0}$,
		\item $U^{\Gamma}\mu(x) \geq \mathcal{L}(x)$ for all $x \in I_{0}$.
	\end{itemize}
	Let further $I = [x_{0},y_{0}] \subset I_{0}$ be an interval such that
	\begin{equation}\label{form121} \mu(B(z_{0},r)) \leq \mathbf{C}r, \qquad z_{0} \in \{x_{0},y_{0}\}, \, r > 0. \end{equation}
	Assume finally that $\mu(B(x,r)) \leq \mathbf{C}r^{\gamma_{0}}$ for all $x \in I$ and $r > 0$, where $\gamma_{0} \in [0,1]$. Then 
	\begin{equation}\label{form137} U_{2}(\mu|_{I}) = H + L, \end{equation}
	where 
	\begin{itemize}
		\item[(a)] $H$ is $\min\{1,\alpha + \gamma\}$-H\"older continuous on $I$ for every $-\alpha \leq \gamma < \gamma_{0}$, and
		\item[(b)] $L \in \mathrm{Lip}_{\mathrm{loc}}(\R \, \setminus \{x_{0},y_{0}\})$, and 
		\begin{displaymath} |\partial L(x)| \lesssim \mathbf{C}(1 + \mathrm{Lip}(A))\left(\log \tfrac{100}{|x - x_{0}|} + \log \tfrac{100}{|x - y_{0}|} \right)\mathbf{1}_{[-2,2]}(x), \qquad \text{a.e. } x \in \R \, \setminus \, \{x_{0},y_{0}\}. \end{displaymath}
	\end{itemize}
	In particular, if $\alpha + \gamma_{0} > 1$, then $H \in \mathrm{Lip}(I)$.
\end{proposition}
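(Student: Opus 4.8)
The plan is to reduce everything to the potential of $\mu|_{I}$, to strip off a ``skeleton'' $L$ carrying all the genuinely Lipschitz-with-logarithmic-blow-up behaviour, and to show that the remainder $H := U_{2}(\mu|_{I}) - L$ has only $\zeta$-H\"older oscillation on $I$, where $\zeta := \min\{1,\alpha+\gamma\}$. First I would fix the decomposition $\mu = \mu|_{I} + \nu$ with $\nu := \mu - \mu|_{I}$, so that $\spt \nu \cap (x_{0},y_{0}) = \emptyset$ and, by \eqref{form121}, $\nu$ satisfies \eqref{form116} with constant $\mathbf{C}$. By Lemma \ref{lemma12}, both $\mu|_{I}$ and $\nu$ lie in $\mathcal{M}(\R)$, so $U^{\Gamma}(\mu|_{I})$ and $U^{\Gamma}\nu$ are continuous on $\R$; this is used freely. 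Recall $U = P + R$ (Lemmas \ref{PLemma}, \ref{RLemma}), hence $U_{2}(\mu|_{I}) = P(\mu|_{I}) + R(\mu|_{I}) - U_{1}(\mu|_{I})$. (As is implicit in this section, we may assume $\spt\mu\subseteq[0,1]$.)

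Three ingredients feed into the argument. (i) \emph{The far field.} By Proposition \ref{prop:lap-potential-Lp}, $U_{1}(\mu|_{I}),U_{1}\nu \in \mathrm{Lip}(\R)$; by the Localisation Lemma \ref{prop14} applied to $\nu$, $U_{2}\nu \in \mathrm{Lip}_{\mathrm{loc}}(x_{0},y_{0})$ with $|\partial U_{2}\nu(x)| \lesssim \mathbf{C}(1 + \mathrm{Lip}(A))(\log\tfrac{100}{x - x_{0}} + \log\tfrac{100}{y_{0} - x})\mathbf{1}_{[-2,2]}(x)$. Repeating the same dyadic-annulus estimate (now using the $1$-Frostman bound \eqref{form121} at the endpoints) shows that for $x \notin I$, $x \neq x_{0},y_{0}$, the function $U_{2}(\mu|_{I})$ is smooth near $x$ and obeys the same derivative bound, and that it vanishes off $[-2,2]$. (ii) \emph{The H\"older piece.} The restriction $\mu|_{I}$ satisfies the \emph{global} bound $\mu|_{I}(B(x,r)) \lesssim \mathbf{C}r^{\gamma_{0}}$ for $x \in \R$, $r \in (0,1)$: for $x \in I$ this is assumed, and for $x \notin I$ one compares $B(x,r) \cap I$ with a ball at the nearer endpoint and uses $\mu(B(z_{0},r)) \leq \mathbf{C}r \leq \mathbf{C}r^{\gamma_{0}}$. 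Lemma \ref{lemma10} then gives that $R(\mu|_{I})$ is $\min\{1,\alpha+\gamma\}$-H\"older on $\R$ for every $-\alpha \leq \gamma < \gamma_{0}$ (by \eqref{eq:holder0} for $\gamma \leq 0$, \eqref{eq:holder} for $0 < \gamma \leq 1 - \alpha$, and $\gamma = 1 - \alpha$ to obtain a Lipschitz $R(\mu|_{I})$ when $\gamma_{0} > 1 - \alpha$). (iii) \emph{Convexity on gaps.} On each complementary interval $(a,b)$ of $\spt(\mu|_{I})$ inside $[x_{0},y_{0}]$, $P(\mu|_{I})$ is convex (Proposition \ref{prop15}), while the hypotheses of Theorem \ref{mainTechnical} give $U(\mu|_{I})(z) = \mathcal{L}(z) - U\nu(z)$ at any $z \in \{a,b\} \cap \spt\mu$ and $U(\mu|_{I})(z) \geq \mathcal{L}(z) - U\nu(z)$ at the remaining (outer) endpoints. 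Using this, the Lipschitz continuity of $\mathcal{L}$, and (i) to bound $|U\nu(b) - U\nu(a)|$, one estimates the chord increment $|U(\mu|_{I})(b) - U(\mu|_{I})(a)|$ and feeds it into Proposition \ref{prop16}; the outcome is that on $[a,b]$ the function $U(\mu|_{I})$ stays within $O_{\zeta}(|x - a|^{\zeta} + |x - b|^{\zeta})$ of the affine interpolation of its endpoint values, the matching lower bound coming from $U^{\Gamma}\mu \geq \mathcal{L}$ together with $|R(\mu|_{I})(x) - R(\mu|_{I})(a)| \lesssim |x-a|^{\zeta}$.

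To assemble, I would let $L$ agree with $U_{2}(\mu|_{I})$ on $(\{x_{0},y_{0}\} \cup \spt(\mu|_{I})) \cup (\R \setminus (x_{0},y_{0}))$ and be affine on each component of $(x_{0},y_{0}) \setminus \spt(\mu|_{I})$, and set $H := U_{2}(\mu|_{I}) - L$. Then $H$ vanishes on $\spt(\mu|_{I}) \cup (\R \setminus I)$, hence is supported on $I$; on $\spt(\mu|_{I}) \cap I$ one has $U_{2}(\mu|_{I}) = \mathcal{L} - U_{1}\nu - U_{2}\nu - U_{1}(\mu|_{I})$, which by (i) is locally Lipschitz with the required logarithmic derivative bound, the affine pieces of $L$ have slopes controlled by the chord estimates of (iii), and off $I$ we use (i) directly; hence $L \in \mathrm{Lip}_{\mathrm{loc}}(\R \setminus \{x_{0},y_{0}\})$ with the bound in (b). On a gap $(a,b)$, $H = (U(\mu|_{I}) - \text{affine}) - (R(\mu|_{I}) - \text{affine}) + (U_{1}(\mu|_{I}) - \text{affine})$, and by (iii) together with the $\zeta$-H\"older continuity of $R(\mu|_{I})$ from (ii) both the size of $H$ and its oscillation are $\lesssim |x-a|^{\zeta} + |x-b|^{\zeta}$ there; combined with $H \equiv 0$ on $\spt(\mu|_{I})$ this yields that $H$ is $\zeta$-H\"older on all of $I$. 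This gives \eqref{form137} and (a)--(b); taking $\gamma \in (1-\alpha,\gamma_{0})$ when $\alpha + \gamma_{0} > 1$ makes $\zeta = 1$, so $H \in \mathrm{Lip}(I)$.

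The main obstacle is the bookkeeping at the endpoints $x_{0},y_{0}$ in ingredient (iii) and in the assembly: there $\partial U\nu$ is only logarithmically integrable rather than bounded, the outer endpoints need not lie in $\spt\mu$ (so only the one-sided bound $U^{\Gamma}\mu \geq \mathcal{L}$ is available, and one must also use finiteness/continuity of $U(\mu|_{I})$ there), and one must check that the logarithmic losses coming from $U_{2}\nu$ and from chords over long gaps still combine into a function $L$ satisfying \emph{precisely} the stated derivative bound and an $H$ which is genuinely $\zeta$-H\"older on $I$ rather than merely on each gap. A secondary technical point is that one needs a uniform modulus-of-continuity estimate for $H$ \emph{within} each gap, not just near its endpoints, which requires combining the convexity of $P(\mu|_{I})$ with the $\zeta$-H\"older control just obtained.
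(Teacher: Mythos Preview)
Your ingredients (i)--(iii) are exactly the right ones and match the paper's. The difference is in the \emph{choice} of $H$ and $L$, and the paper's choice is considerably simpler. The paper sets
\[
H := (U_{2}\mu)\,\mathbf{1}_{I}, \qquad L := -\,U_{2}\nu\,\mathbf{1}_{I} + U_{2}(\mu|_{I})\,\mathbf{1}_{\R \setminus I},
\]
so that (b) follows immediately from the Localisation Lemma applied to $\nu$ on $I$ and to $\mu|_{I}$ on $\R\setminus I$. For (a) it then suffices to show that $U\mu$ itself is $\zeta$-H\"older on $I$ (since $U_{1}\mu$ is Lipschitz). The advantage is that on $\spt\mu\cap I$ one has $U\mu = \mathcal{L}$ \emph{directly}, so in Proposition~\ref{prop16} the chord slope $|U\mu(b)-U\mu(a)|/(b-a)$ over a gap $(a,b)$ is bounded by $\mathrm{Lip}(\mathcal{L})$; the measure $\nu$ never enters the H\"older argument. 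Your approach instead works with $U(\mu|_{I}) = U\mu - U\nu$, so the chord slopes pick up an extra $[U\nu(b)-U\nu(a)]/(b-a)$, and the logarithmic singularity of $\partial U_{2}\nu$ near $x_{0},y_{0}$ contaminates every subsequent estimate.

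This creates a genuine (if probably fixable) gap in your verification of (b). The slope of your piecewise-affine $L$ on a gap $(a,b)$ equals the chord of $U_{2}(\mu|_{I})$ there, which, up to Lipschitz terms, is $[U_{2}\nu(a)-U_{2}\nu(b)]/(b-a)$. By the mean value theorem this equals $\partial U_{2}\nu(\xi)$ for \emph{some} $\xi\in(a,b)$, hence is $\lesssim \log\tfrac{100}{\xi-x_{0}}$. But (b) demands $|\partial L(x)|\lesssim \log\tfrac{100}{x-x_{0}}$ for \emph{every} $x\in(a,b)$; if the gap is long with $a$ close to $x_{0}$ and $b$ far from it, then for $x$ near $b$ the required bound $\log\tfrac{100}{x-x_{0}}$ can be much smaller than $\log\tfrac{100}{\xi-x_{0}}$. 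Your last paragraph correctly flags this, but the resolution is not to patch the piecewise-affine $L$: it is to choose $L := -U_{2}\nu$ on $I$ (equivalently $H := U_{2}\mu$ on $I$), which gives (b) for free and lets you run (iii) with $U\mu$ rather than $U(\mu|_{I})$.
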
 

\begin{remark}\label{rem4} The H\"older (or Lipschitz) continuity constant of $H$ depends on $\mathbf{C},\gamma,\mathrm{Lip}(\mathcal{L})$, and the length of $I$, but there are additional dependencies, explained below: 
	\begin{itemize}
		\item If $\spt \mu \cap I = \emptyset$, then the constant may depend on $\dist(\spt \mu,I)$ and $\mu(I)$.
		\item If $\{x_{0},y_{0}\} \subset \spt \mu$, then the constant depends only on $\mathbf{C},\gamma$, and $\mathrm{Lip}(\mathcal{L})$.
		\item If $\dist(x_{0},\spt \mu) > 0$, the constant may depend on this distance, and $\mu(I),\|U\|_{L^{\infty}(I)}$.
		\item If $\dist(y_{0},\spt \mu) > 0$, the constant may depend on this distance, and $\mu(I),\|U\|_{L^{\infty}(I)}$
\end{itemize} \end{remark} 

We record two useful (and simpler-to-read) corollaries before the proof:

\begin{cor}\label{cor11} Under the hypotheses of Proposition \ref{prop17}, $U_{2}(\mu|_{I})$ is $\zeta$-H\"older continuous on $\R$ for all $\zeta \in [0,\min\{1,\alpha + \gamma_{0}\})$.	
\end{cor}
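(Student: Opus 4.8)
The plan is to prove that $U_{2}(\mu|_{I})$ is $\zeta$-H\"older on each of the three closed intervals $(-\infty,x_{0}]$, $I=[x_{0},y_{0}]$ and $[y_{0},\infty)$, with uniform constants, and then to glue these estimates along the common endpoints. Fix $\zeta\in[0,\min\{1,\alpha+\gamma_{0}\})$. First I would choose an auxiliary exponent $\gamma\in[-\alpha,\gamma_{0})$ with $\min\{1,\alpha+\gamma\}\ge\zeta$; this is possible because $-\alpha<0\le\gamma_{0}$, the map $\gamma\mapsto\min\{1,\alpha+\gamma\}$ is continuous and non-decreasing, and $\sup_{\gamma<\gamma_{0}}\min\{1,\alpha+\gamma\}=\min\{1,\alpha+\gamma_{0}\}>\zeta$. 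I will also use throughout that $U_{2}(\mu|_{I})\in C(\R)$: indeed $\mu|_{I}\in\mathcal{M}(\R)$ by Lemma \ref{lemma12}, so $U^{\Gamma}(\mu|_{I})\in C(\R)$ by Definition \ref{defGraphPotential}, and $U_{1}(\mu|_{I})\in\mathrm{Lip}(\R)$ by Proposition \ref{prop:lap-potential-Lp}, whence $U_{2}(\mu|_{I})=U^{\Gamma}(\mu|_{I})-U_{1}(\mu|_{I})\in C(\R)$.

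For the interval $I$ I would invoke Proposition \ref{prop17} with the exponent $\gamma$ selected above, writing $U_{2}(\mu|_{I})=H+L$ where $H$ is $\min\{1,\alpha+\gamma\}$-H\"older on $I$ and $L$ obeys $|\partial L(x)|\lesssim\mathbf{C}(1+\mathrm{Lip}(A))(\log\tfrac{100}{|x-x_{0}|}+\log\tfrac{100}{|x-y_{0}|})\mathbf{1}_{[-2,2]}(x)$ for a.e.\ $x$. Since $I$ is bounded and $\zeta\le\min\{1,\alpha+\gamma\}$, $H$ is $\zeta$-H\"older on $I$. For $L$ I would use the elementary bound $\int_{a}^{b}\log\tfrac{100}{|t-c|}\,dt\lesssim|b-a|\bigl(1+\log\tfrac{100}{|b-a|}\bigr)$ (valid for any $c\in\R$ and $0<|b-a|\le 100$) together with $|b-a|\bigl(1+\log\tfrac{100}{|b-a|}\bigr)\lesssim_{\zeta}|b-a|^{\zeta}$, which holds since $\zeta<1$; this shows $L$ is $\zeta$-H\"older on all of $\R$. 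Hence $U_{2}(\mu|_{I})=H+L$ is $\zeta$-H\"older on $I$.

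For the two half-lines I would apply the localisation lemma, Lemma \ref{prop14}, to the measure $\nu:=\mu|_{I}$: its support lies in $[x_{0},y_{0}]$ and is therefore disjoint from $(-\infty,x_{0})$, while $\nu(B(x_{0},r))\le\mu(B(x_{0},r))\le\mathbf{C}r$ by \eqref{form121}. Lemma \ref{prop14}, applied on $(-\infty,x_{0})$ (the case "$x_{0}=-\infty$" there, with the term attached to the infinite endpoint omitted), then yields $U_{2}(\mu|_{I})\in\mathrm{Lip}_{\mathrm{loc}}(-\infty,x_{0})$ with $|\partial U_{2}(\mu|_{I})(x)|\lesssim\mathbf{C}(1+\mathrm{Lip}(A))\log\tfrac{100}{x_{0}-x}\,\mathbf{1}_{[-2,2]}(x)$ for a.e.\ $x<x_{0}$. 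The same log-integrability estimate as above shows $U_{2}(\mu|_{I})$ is $\zeta$-H\"older on $(-\infty,x_{0})$, and since $U_{2}(\mu|_{I})$ is continuous at $x_{0}$ this extends, with the same constant, to the closed half-line $(-\infty,x_{0}]$. The interval $[y_{0},\infty)$ is handled symmetrically, using the decay of $\mu$ at $y_{0}$.

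Finally I would glue: given $x<y$ in $\R$ that do not both lie in a common one of the three intervals, there is an endpoint $z_{0}\in\{x_{0},y_{0}\}$ lying weakly between $x$ and $y$, and then $|U_{2}(\mu|_{I})(x)-U_{2}(\mu|_{I})(y)|\le|U_{2}(\mu|_{I})(x)-U_{2}(\mu|_{I})(z_{0})|+|U_{2}(\mu|_{I})(z_{0})-U_{2}(\mu|_{I})(y)|\lesssim_{\zeta}|x-z_{0}|^{\zeta}+|z_{0}-y|^{\zeta}\le 2|x-y|^{\zeta}$ (if $x$ and $y$ straddle both endpoints, insert $x_{0}$ and $y_{0}$ and use also the bound on $I$). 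I do not expect a serious obstacle here: the substance is entirely in Proposition \ref{prop17} and Lemma \ref{prop14}, and what remains is the assembly over the three regions together with two routine points --- the selection of $\gamma$ (a one-line limiting argument) and the elementary implication "derivative $\lesssim\log\tfrac{1}{\mathrm{dist}(\cdot,\{x_{0},y_{0}\})}$ on a bounded set $\Rightarrow$ $\zeta$-H\"older for every $\zeta<1$".
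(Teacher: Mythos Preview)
Your proof is correct and follows essentially the same approach as the paper: invoke Proposition \ref{prop17} to get the $H+L$ decomposition, use the logarithmic derivative bound to show $L$ (and hence $U_2(\mu|_I)$) is $\zeta$-H\"older away from $\{x_0,y_0\}$, and pass to the endpoints via the continuity of $U_2(\mu|_I)$. The paper is terser --- it handles the half-lines through the same $L$ and closes with the remark that a continuous function which is $\zeta$-H\"older on a dense set is $\zeta$-H\"older everywhere --- whereas you treat the half-lines via a direct appeal to Lemma \ref{prop14} and glue explicitly; these are equivalent. One small overstatement: ``$L$ is $\zeta$-H\"older on all of $\R$'' is not literally true, since $L$ may jump at $x_0,y_0$; what the log-integral bound gives is $\zeta$-H\"older continuity of $L$ on each component of $\R\setminus\{x_0,y_0\}$, and you recover the estimate on the closed interval $I$ from the continuity of $U_2(\mu|_I)$ (equivalently, of $L=U_2(\mu|_I)-H$) there --- which is exactly what your gluing step already does.
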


\begin{remark} A weakness of Corollary \ref{cor11} is that even in the case $\alpha + \gamma_{0} > 1$, it does not conclude that $U_{2}(\mu|_{I}) \in \mathrm{Lip}(\R)$. A slightly weaker (but sufficient) conclusion will be contained in Corollary \ref{cor12} below.  \end{remark} 

\begin{proof}[Proof of Corollary \ref{cor11}] Fix $\zeta \in [0,\min\{1,\alpha + \gamma_{0}\})$. Then $\zeta$ may be expressed as $\zeta = \min\{1,\alpha + \gamma\}$ for some $-\alpha \leq \gamma < \gamma_{0}$ (namely $\gamma = \zeta - \alpha$), so Proposition \ref{prop17}(a) tells us that $H$ is $\zeta$-H\"older continuous on $I$. From the estimates for $\partial L$ in Proposition \ref{prop17}(b), we may also easily deduce that $L$ is $\zeta$-H\"older continuous on $\R \, \setminus \, \{x_{0},y_{0}\}$, therefore $U_{2}(\mu|_{I}) = H + L$ is $\zeta$-H\"older continuous on $\R \, \setminus \, \{x_{0},y_{0}\}$.	

On the other hand, $U_{2}(\mu|_{I}) \in C(\R)$, since $\mu|_{I} \in \mathcal{M}(\R)$ by Lemma \ref{lemma12}. Now it suffices to note, in general, that if $f \in C(\R)$ is $\zeta$-H\"older on a dense subset of $\R$, then in fact $f$ is $\zeta$-H\"older continuous on $\R$.  \end{proof}

\begin{cor}\label{cor12} Under the hypotheses of Proposition \ref{prop17}, assume moreover that $\alpha + \gamma_{0} > 1$. Then the distributional derivative of $U_{2}(\mu|_{I})$ lies in $L^{p}(\R)$ for every $p \in [1,\infty)$.
\end{cor}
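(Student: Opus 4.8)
The plan is to prove that $U_{2}(\mu|_{I})$ is continuous on $\R$, supported in a compact interval, and \emph{locally absolutely continuous} on all of $\R$, with an a.e.-defined derivative dominated by a function of the form $\bigl(1 + \log\tfrac{100}{|x - x_{0}|} + \log\tfrac{100}{|x - y_{0}|}\bigr)\mathbf{1}_{K}(x)$ for a compact interval $K$. Since such a function lies in $L^{p}(\R)$ for every $p \in [1,\infty)$, this will give the claim; the two exceptional points are the endpoints $x_{0},y_{0}$ of $I$, and the role of the local absolute continuity is precisely to rule out a singular (Dirac) contribution to the distributional derivative there.

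First I would record two preliminary facts about $\nu := \mu|_{I}$. By Lemma \ref{lemma12}, $\nu \in \mathcal{M}(\R)$, so $U_{2}\nu \in C(\R)$, exactly as in the proof of Corollary \ref{cor11}. Moreover $\log_{2}$ vanishes outside $[-1,1]$ while $|\Gamma(x) - \Gamma(y)| \geq |x - y|$, so $U_{2}\nu(x) = 0$ whenever $\dist(x,I) \geq 1$; thus $U_{2}\nu$ is supported in the compact interval $K := \{x : \dist(x,I) \leq 1\}$, and in particular any a.e.-defined derivative of $U_{2}\nu$ vanishes outside $K$.

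Next I would assemble a pointwise bound for the classical derivative of $U_{2}\nu$ on $\R \setminus \{x_{0},y_{0}\}$ from two inputs. On $\mathrm{int}(I) = (x_{0},y_{0})$, Proposition \ref{prop17} together with the standing assumption $\alpha + \gamma_{0} > 1$ gives $U_{2}\nu = H + L$ with $H \in \mathrm{Lip}(I)$ and $|\partial L(x)| \lesssim \mathbf{C}(1 + \mathrm{Lip}(A))\bigl(\log\tfrac{100}{|x - x_{0}|} + \log\tfrac{100}{|x - y_{0}|}\bigr)$; hence $U_{2}\nu \in \mathrm{Lip}_{\mathrm{loc}}(x_{0},y_{0})$ with $|\partial U_{2}\nu| \leq \mathrm{Lip}(H) + |\partial L|$ there. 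On each of the half-lines $(-\infty,x_{0})$ and $(y_{0},\infty)$ I would instead invoke the localisation lemma, Lemma \ref{prop14}, applied to $\nu$ with the half-line having $x_{0}$ (respectively $y_{0}$) as its finite endpoint: indeed $\spt \nu \subset [x_{0},y_{0}]$ misses the corresponding open half-line, and $\nu(B(z_{0},r)) \leq \mathbf{C}r$ at the finite endpoint $z_{0}\in\{x_{0},y_{0}\}$ by \eqref{form121}, so Lemma \ref{prop14} yields $U_{2}\nu \in \mathrm{Lip}_{\mathrm{loc}}$ off $z_{0}$ with $|\partial U_{2}\nu(x)| \lesssim \mathbf{C}(1 + \mathrm{Lip}(A))\log\tfrac{100}{|x - z_{0}|}$ near $z_{0}$ (the proof of Lemma \ref{prop14} is local in $\spt\nu$ and applies verbatim to any finite compactly supported measure; outside $K$ the derivative vanishes by the previous paragraph, so the exact form of the support cutoff is irrelevant). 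Combining the three regions, $U_{2}\nu \in \mathrm{Lip}_{\mathrm{loc}}(\R \setminus \{x_{0},y_{0}\})$ and, for a.e.\ $x \in \R$,
\[ |\partial U_{2}\nu(x)| \lesssim_{\mathbf{C},\,\mathrm{Lip}(A),\,\mathrm{Lip}(H)} \Bigl(1 + \log\tfrac{100}{|x - x_{0}|} + \log\tfrac{100}{|x - y_{0}|}\Bigr)\mathbf{1}_{K}(x). \]
Since $t \mapsto \log\tfrac{100}{t}$ belongs to $L^{p}((0,1])$ for every $p < \infty$, and $K$ is bounded, the right-hand side lies in $L^{p}(\R)$ for every $p \in [1,\infty)$.

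Finally I would bridge from the classical to the distributional derivative. The classical derivative of $U_{2}\nu$ is integrable near $x_{0}$ and near $y_{0}$ (a logarithmic singularity is locally integrable), and $U_{2}\nu \in C(\R)$. So, for $x$ slightly to the right of $x_{0}$ and $\epsilon \in (0,x - x_{0})$, applying the fundamental theorem of calculus on $[x_{0} + \epsilon, x] \subset (x_{0},y_{0})$ (where $U_{2}\nu$ is Lipschitz) and then letting $\epsilon \downarrow 0$ — using continuity of $U_{2}\nu$ on the left-hand side and dominated convergence on the right — gives $U_{2}\nu(x) - U_{2}\nu(x_{0}) = \int_{x_{0}}^{x} \partial U_{2}\nu$. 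The same identity holds approaching $x_{0}$ from the left, and likewise at $y_{0}$, so $U_{2}\nu$ is locally absolutely continuous on all of $\R$. Hence its distributional derivative equals the a.e.-defined classical derivative, which by the previous paragraph lies in $L^{p}(\R)$ for every $p \in [1,\infty)$. The hard part will be exactly this last step: one must make sure that no Dirac mass is hidden at $x_{0}$ or $y_{0}$ in the distributional derivative, and this is precisely what the combination of continuity of $U_{2}\nu$ with the local integrability of its classical derivative at those points guarantees; everything else is routine bookkeeping with Proposition \ref{prop17} and Lemma \ref{prop14}.
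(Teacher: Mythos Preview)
Your proposal is correct and follows essentially the same approach as the paper. The only cosmetic differences are organizational: the paper uses the decomposition $U_{2}(\mu|_{I}) = H + L$ from Proposition \ref{prop17} globally (the bound in part (b) already covers all of $\R \setminus \{x_{0},y_{0}\}$, so your separate invocation of Lemma \ref{prop14} on the half-lines is redundant), and it phrases the distributional-equals-classical step via an explicit integration-by-parts limiting argument rather than local absolute continuity; these are equivalent formulations of the same idea.
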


\begin{proof} Since $\alpha + \gamma_{0} > 1$, we deduce from Proposition \ref{prop17}(a) that $H \in \mathrm{Lip}(I)$. Therefore the decomposition \eqref{form137} reveals that $U_{2}(\mu|_{I}) \in \mathrm{Lip}_{\mathrm{loc}}(\R \, \setminus \, \{x_{0},y_{0}\})$, and the derivative (which exists for a.e. $x \in \R \, \setminus \, \{x_{0},y_{0}\}$) is bounded by 
	\begin{displaymath} |\partial U_{2}(\mu|_{I})(x)| \leq |\partial L(x)| + \mathrm{Lip}(H). \end{displaymath}
	Here $\mathrm{Lip}(H) < \infty$ (although the bounds are a little complicated, see Remark \ref{rem4}), and $|\partial L(x)|$ enjoys the upper bound stated in Proposition \ref{prop17}(b). Moreover, we know that $U_{2}(\mu|_{I}) \in C(\R)$, since $\mu|_{I} \in \mathcal{M}(\R)$ by Lemma \ref{lemma12}, and $U_{2} = U - U_{1}$, 
	
	Now, we note, in general, that if the following two conditions hold, then the distributional derivative agrees with the classical derivative:
	\begin{itemize}
		\item $f \in \mathrm{Lip}_{\mathrm{loc}}(\R \, \setminus \, \{x_{0},y_{0}\}) \cap C(\R)$
		\item $f' \in L^{1}(\R \, \setminus \, \{x_{0},y_{0}\})$
	\end{itemize}
	Let us justify this in the case where the "exceptional" set only contains one point $x_{0} = 0$ (which contains all the ideas). Fixing $g \in \mathcal{C}_{c}^{\infty}(\R)$, we decompose
	\begin{align*} \int f \cdot g' = \lim_{\epsilon \to 0} \left( \int_{-\infty}^{- \epsilon} f \cdot g' + \int_{-\epsilon}^{\epsilon} f \cdot g' + \int_{\epsilon}^{\infty} f \cdot g'\right) \end{align*} 
	The middle term tends to zero since $f$ is bounded on $[-1,1]$. For the other two terms, we use the Lipschitz continuity of $f$ on $\spt g$ to integrate by parts, for example
	\begin{displaymath} \int_{-\infty}^{-\epsilon} f \cdot g' = \Big/_{-\infty}^{-\epsilon} f \cdot g -\int_{-\infty}^{-\epsilon} f' \cdot g = f(-\epsilon)g(-\epsilon) - \int_{-\infty}^{-\epsilon} f' \cdot g. \end{displaymath}
	After doing this, note that
	\begin{displaymath} \lim_{\epsilon \to 0} f(-\epsilon)g(-\epsilon) - f(\epsilon)g(\epsilon) = 0 \end{displaymath}
	by the continuity of $f$, and on the other hand
	\begin{displaymath} \lim_{\epsilon \to 0} -\int_{(-\infty,-\epsilon) \cup (\epsilon,\infty)} f' \cdot g = \int_{\R} f' \cdot g \end{displaymath}
	by they hypothesis $f' \in L^{1}(\R)$. This shows that $\int f \cdot g' = -\int f' \cdot g$, as required.
	
	Now that we know that the distributional derivative agrees with the classical derivative of $U_{2}(\mu|_{I})$, the rest is easy: the bounds in Proposition \ref{prop17}(b) (together with $H \in \mathrm{Lip}(I)$) clearly imply that $\partial U_{2}(\mu|_{I}) \in L^{p}(\R)$ for every $p \in [1,\infty)$.  \end{proof} 

We then turn to the proof of Proposition \ref{prop17}.

\begin{proof}[Proof of Proposition \ref{prop17}] Fix $\gamma \in [-\alpha,\gamma_{0})$. In this proof, the "$\lesssim$" notation is allowed to hide constants which depend on $\mathbf{C}$, $\gamma < \gamma_{0}$, $\mathrm{Lip}(\mathcal{L}),I,\spt \mu$, and $\mu(I)$ in the ways described in Remark \ref{rem4}. Abbreviate $\bar{\mu} := \mu|_{I}$ and $U := U^{\Gamma}$. We may assume without loss of generality that 
	\begin{equation}\label{form130} I \cap \spt \mu \neq \emptyset. \end{equation}
	Otherwise $U_{2}(\mu|_{I})$ is Lipschitz continuous with constant depending on $\dist(\spt \mu,I)$, so we may simply define $H := U_{2}(\mu|_{I})$ and $L := 0$.

	In general, the decomposition alluded to in \eqref{form137} is given by $U_{2}\bar{\mu} = H + L$, where 
	\begin{displaymath} H = (U_{2}\mu)\mathbf{1}_{I} \quad \text{and} \quad L := - U_{2}(\mu - \bar{\mu})\mathbf{1}_{I}  + (U_{2}\bar{\mu})\mathbf{1}_{\R \, \setminus \, I}. \end{displaymath}
	Since 
	\begin{displaymath} \spt (\mu - \bar{\mu}) \cap \mathrm{int\,} I = \emptyset = \spt \bar{\mu} \cap (\R \, \setminus \, I), \end{displaymath}
	the properties of $L$ stated in (b) follow immediately from Lemma \ref{prop14}.
	
	We next claim that $H$ is $\min\{1,\alpha + \gamma\}$-H\"older continuous on $I$, as in Proposition \ref{prop17}(a). Since $U_{1}\mu$ is Lipschitz continuous by Proposition \ref{prop:lap-potential-Lp}, it suffices to show that
	\begin{displaymath} U\mu \text{ is $\min\{1,\alpha + \gamma\}$-H\"older continuous on $I$.} \end{displaymath}	
	Let $x,y \in I$. We claim that $|U\mu(x) - U\mu(y)| \lesssim |x - y|^{\min\{1,\alpha + \gamma\}}$. There are three cases:
	\begin{enumerate}
		\item $x,y \in \spt \mu \cap I \subset \spt \mu \cap I_{0}$, or
		\item $x,y$ lie in the closure of a single connected component of $I \, \setminus \, \spt \mu$, or
		\item $x,y$ lie in the closures of distinct connected components of $I \, \setminus \, \spt \mu$.
	\end{enumerate}	
	
	The case (1) follows from the hypothesis that $U\mu$ agrees with $\mathcal{L}$ on $\spt \mu \cap I_{0}$:
	\begin{equation}\label{form126} |U\mu(x) - U\mu(y)| = |\mathcal{L}(x) - \mathcal{L}(y)| \lesssim |x - y| \lesssim |x - y|^{\min\{1,\alpha + \gamma\}}. \end{equation}
	Let us move to case (2). Assume that $x,y \in [a,b]$, where $(a,b) \subset I \, \setminus \, \spt \mu$ is a component of $I \, \setminus \, \spt \mu$. Let us, additionally, assume for the moment that $a,b \in \spt \mu \cap I$. The opposite case is where either $a = x_{0} \notin \spt \mu$ or $b = y_{0} \notin \spt \mu$. In this case the component actually has the form $[x_{0},b)$ or $(a,y_{0}]$. We will treat these special cases separately in a moment.

	Without loss of generality assume $x<y$, and set $h=y-x$. Note first that
	\begin{equation}\label{form127} |U\mu(x) - U\mu(x + h)| \leq |P\mu(x) - P\mu(x + h)| + |R\mu(x) - R\mu(x + h)|. \end{equation}
	Here $R\mu$ is $\min\{1,\alpha + \gamma\}$-H\"older continuous on $I$ by Lemma \ref{lemma10}, so it suffices to show
	\begin{displaymath} |P\mu(x) - P\mu(x + h)| \lesssim |h|^{\min\{1,\alpha + \gamma\}} = |x - y|^{\min\{1,\alpha + \gamma\}}. \end{displaymath}
	By the convexity of $P\mu$ on $[a,b]$ (see Proposition \ref{prop15} applied to $\nu = \mu$, and note also that $P\mu = U\mu - R\mu \in C(\R)$),
	\begin{align*} 
		{|P\mu(x)-P\mu(x+h)|} &\leq |P\mu(a)-P\mu(a+h)| + |P\mu(b-h)-P\mu(b)| \\
		& \le |U\mu(a)-U\mu(a+h)|+|R\mu(a)-R\mu(a+h)| \notag\\
		& \quad +|U\mu(b-h)-U\mu(b)|+|R\mu(b-h)-R\mu(b)|. \notag\end{align*}
	The second and fourth terms are bounded from above by $\lesssim |x - y|^{\min\{1,\alpha + \gamma\}}$ by Lemma \ref{lemma10}, so it suffices to show that 
	\begin{displaymath} |U\mu(a) - U\mu(a + h)| \lesssim |x - y|^{\min\{1,\alpha + \gamma\}} \quad \text{and} \quad |U\mu(b - h) - U\mu(b)| \lesssim |x - y|^{\min\{1,\alpha + \gamma\}}. \end{displaymath}
	For this purpose, we apply Proposition \ref{prop16} to the functions $U = U\mu$, $P = P\mu$, and $R = R\mu$ on the interval $[a,b]$, with exponent $\min\{1,\alpha + \gamma\}$ in place of $\alpha$. Let us discuss the hypotheses of the proposition.
	
	First, we need that $P$ is convex on $[a,b]$. This follows from Proposition \ref{prop15} applied with $\nu = \mu$. Second, we need that $R$ is $\min\{1,\alpha + \gamma\}$-H\"older continuous on $[a,b]$. This follows from $[a,b] \subset I$, and Lemma \ref{lemma10}.
	
	Since the hypotheses of Proposition \ref{prop16} are met, and $U(a) = \mathcal{L}(a)$, $U(b) = \mathcal{L}(b)$, and $U(a + h) \geq \mathcal{L}(a + h)$ by the hypotheses of the current proposition, we may deduce that
	\begin{displaymath} \mathcal{L}(a + h) - \mathcal{L}(a) \leq U(a + h) - U(a) \leq |x - y| \cdot \frac{|\mathcal{L}(b) - \mathcal{L}(a)|}{b - a} + C|x - y|^{\min\{1,\alpha + \gamma\}}.  \end{displaymath} 
	Here $C \lesssim 1$ is twice the $\min\{1,\alpha + \gamma\}$-H\"older continuity constant of $R\mu$. This shows that
	\begin{displaymath} |U(a + h) - U(a)| \lesssim_{I} (\mathrm{Lip}(\mathcal{L}) + C)|x - y|^{\min\{1,\alpha + \gamma\}}. \end{displaymath}
	(The implicit constant is absolute if $\alpha + \gamma \geq 1$.) A similar argument gives
	\begin{displaymath} |U(b - h) - U(b)| \lesssim (C + \mathrm{Lip}(\mathcal{L}))|x - y|^{\min\{1,\alpha + \gamma\}}. \end{displaymath}
	This is what we needed to conclude in the case $a,b \in \spt \mu$.
	
	Let us then discuss the two special cases, where either 
	\begin{equation}\label{form131} a = x_{0} \notin \spt \mu \quad \text{and} \quad b \in \spt \mu \cap I, \end{equation}
	or 
	\begin{equation}\label{form132} a \in \spt \mu \cap I \quad \text{and} \quad b = y_{0} \notin \spt \mu. \end{equation}
	(It is impossible that both $a = x_{0} \notin \spt \mu$ and $b = y_{0} \notin \spt \mu$ thanks to \eqref{form130}). The cases \eqref{form131}-\eqref{form132} are similar, so let us assume we are in case \eqref{form131}. The only piece of information missing from the previous argument (the case $a,b \in \spt \mu$) is that $U(a) = \mathcal{L}(a)$. Indeed, we only know that $\mathcal{L}(a) \leq U(a) \leq \|U\|_{L^{\infty}(I)}$. Regardless, since $U(b) = \mathcal{L}(b)$ and $U(x) \geq \mathcal{L}(x)$ for $x \in [x_{0},b]$, Proposition \ref{prop16} gives
	\begin{displaymath} \mathcal{L}(b - h)  - \mathcal{L}(b) \leq U(b - h) - U(b) \leq |x - y| \cdot \frac{2\|U\|_{L^{\infty}(I)}}{b - x_{0}} + C|x - y|^{\min\{1,\alpha + \gamma\}}. \end{displaymath}
	Here $C$ is again twice the $\zeta$-H\"older continuity constant of $R\mu$ on $I$. Since $\|U\|_{L^{\infty}(I)} < \infty$ and $b - x_{0} = \dist(\spt \mu,x_{0}) > 0$, the estimate above implies the following (rather qualitative) H\"older continuity estimate at $b$:
	\begin{displaymath} |U(b - h) - U(b)| \lesssim |x - y|^{\min\{1,\alpha + \gamma\}}. \end{displaymath} 
	The same argument yields no useful reasonable bound for $|U(x_{0}) - U(x_{0} + h)|$, since we do not know that $U(x_{0}) = \mathcal{L}(x_{0})$. Instead, we rely on a very crude estimate, based on $U\mu \in \mathrm{Lip}_{\mathrm{loc}}(\R \, \setminus \, \spt \mu)$. Since $x_{0} \notin \spt \mu$, there is a constant $C' \geq 1$, depending only on $\dist(x_{0},\spt \mu)$ and $\mu(I)$, such that 
	\begin{displaymath} |U(x_{0} + h) - U(x_{0})| \leq C'|x - y| \lesssim |x - y|^{\min\{1,\alpha + \gamma\}}, \quad |x - y| = |h| \leq \tfrac{1}{2}\dist(x_{0},\spt \mu). \end{displaymath}
	For $\tfrac{1}{2}\dist(x_{0},\spt \mu) \leq |h| = |x - y| \leq \dist(x_{0},\spt \mu)$, we estimate trivially
	\begin{displaymath} |U(x_{0} + h) - U(x_{0})| \leq 2\|U\|_{L^{\infty}(I)} \lesssim \frac{|x - y|}{\dist(x_{0},\spt \mu)} \lesssim |x - y|^{\min\{1,\alpha + \gamma\}}. \end{displaymath}
	
	Combining all the cases above, and recalling \eqref{form127}, we have now shown that 
	\begin{equation}\label{form129} |U\mu(x) - U\mu(y)| \lesssim |x - y|^{\min\{1,\alpha + \gamma\}}, \qquad x,y \in [a,b], \end{equation} 
	and this concludes the estimate in case (2).
	
	Finally, as in case (3), assume that $x\in \overline{I_{x}}$ and  $y\in \overline{I_{y}}$, where $x < y$, and $I_x,I_y$ are distinct connected components of $I \, \setminus \, \spt \mu$. Let $b_x$ be the right endpoint of $I_x$, and $a_y$ the left endpoint of $I_y$. Then,
	\begin{displaymath}
		|U\mu(x)- U\mu(y)|\le |U\mu(x) - U\mu(b_x)|+|U\mu(b_x)-U\mu(a_y)|+|U\mu(a_y)-U\mu(y)|.
	\end{displaymath}
	The middle term is estimated as in \eqref{form126}, and the other two terms have the bound $\lesssim |x - y|^{\min\{1,\alpha + \gamma\}}$ thanks to \eqref{form129}. This completes the proof.	
\end{proof} 

\section{$L^{p}$ bounds for fractional Laplacians of $U\mu$}\label{s:Lpfractional} Let $\mu \in \mathcal{M}(\R)$ as in Theorem \ref{mainTechnical}. The main goal of this section is to prove Corollary \ref{cor14}, which shows (roughy speaking) that certain fractional Laplacians of $U\mu$ are $L^{p}$-functions. Most of the results in this section are stated for a a general measure $\nu\in\mathcal{M}(\R)$ (recall Definition \ref{def5Intro}). The special hypotheses of Theorem \ref{mainTechnical} will only be needed in Corollary \ref{cor14}. The graph in the definition of $U^{\Gamma}$ is only required to be Lipschitz up to Corollary \ref{cor14}, where we also need a $C^{1,\alpha}$-hypothesis. From this point on in the section, we abbreviate $U := U^{\Gamma}$, and we will also use the decomposition $U = U_{1} + U_{2}$ familiar from \eqref{form133}.

We start by recording a (well-known) lemma, stating that H\"older continuous functions have bounded fractional Laplacians:

\begin{proposition}\label{prop18} Let $h \colon \R \to \R$ be $\zeta$-H\"older continuous for some $0<\zeta\le 1$, and assume that $\spt h \subset [-C,C]$ for some $C \geq 1$. Then, for every $0<\zeta'<\zeta$, we have $h \in \dot{H}^{\zeta'}$, so $\Delta^{\zeta'/2}h \in L^{2}(\R)$ (see Definition \ref{def4}). Moreover, $\Delta^{\zeta'/2}h \in L^{\infty}(\R)$, with
	\begin{equation}\label{form138} \|\Delta^{\zeta'/2}h\|_{L^{\infty}} \lesssim_{C,\zeta'} \|h\|_{\Lambda_{\zeta}}, \end{equation}
	and moreover
	\begin{equation}\label{form143} |\Delta^{\zeta'/2}h(x)| \lesssim_{C,\zeta'} \|h\|_{\Lambda^{\zeta}}(1 + |x|)^{-1 - \zeta'}, \qquad x \in \R. \end{equation}
	Here $\|h\|_{\Lambda_{\zeta}} := \sup \{|h(x) - h(y)|/|x - y|^{\zeta} : x,y \in \R, \, x \neq y\}$.
\end{proposition}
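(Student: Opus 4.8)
The statement is Proposition \ref{prop18}, claiming that a compactly supported $\zeta$-Hölder function $h$ has $\Delta^{\zeta'/2}h \in L^2 \cap L^\infty$ with the pointwise decay bound \eqref{form143}, for any $\zeta' < \zeta$. I would split this into two parts: (i) the $\dot H^{\zeta'}$ (hence $L^2$) membership, which is a Fourier-side computation, and (ii) the pointwise $L^\infty$ bound with decay, which is best done using the real-variable pointwise formula \eqref{form53}-\eqref{form54} for $\Delta^{\zeta'/2}$ with $N = 0$ (the same formula used in the proof of Proposition \ref{frostmanProp}, since $\zeta' \in (0,1) \subset (0,2)$).

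\textbf{Part (i): $\dot H^{\zeta'}$ membership.} The plan is to show $\xi \mapsto |\xi|^{\zeta'}\hat h(\xi) \in L^2(\R)$. Split the integral into $|\xi| \le 1$ and $|\xi| > 1$. For $|\xi| \le 1$: since $h \in L^2$ (bounded and compactly supported), $\hat h \in L^2$, and $|\xi|^{\zeta'} \le 1$, so $\int_{|\xi|\le 1}|\xi|^{2\zeta'}|\hat h(\xi)|^2\,d\xi \le \|h\|_{L^2}^2 < \infty$. For $|\xi| > 1$: here I would use the standard fact that a compactly supported $\zeta$-Hölder function satisfies the Fourier decay estimate $\int_{\R}|\hat h(\xi)|^2 |\xi|^{2s}\,d\xi < \infty$ for every $s < \zeta$; concretely, one writes $|\hat h(\xi)|^2 \lesssim |\xi|^{-1}\int |h(x + 1/(2\xi)) - h(x)|\,|h(x)|\,dx$-type bounds, or more cleanly uses the Littlewood--Paley/difference characterization: $\|h\|_{\dot H^s}^2 \sim \int\!\int \frac{|h(x) - h(y)|^2}{|x-y|^{1+2s}}\,dx\,dy$, and this double integral converges when $2s < 2\zeta$ because $|h(x) - h(y)| \lesssim \min\{\|h\|_{\Lambda_\zeta}|x-y|^\zeta,\ \|h\|_{L^\infty}\}$ and $h$ has compact support (the large-$|x-y|$ part is controlled by compact support plus $L^\infty$). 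This gives $h \in \dot H^{\zeta'}$, so by Definition \ref{def4}, $\Delta^{\zeta'/2}h \in L^2(\R)$.

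\textbf{Part (ii): pointwise $L^\infty$ bound and decay.} Here I would apply the pointwise formula \cite[(2.4.7)]{MR3243734} with $N = 0$ (as displayed in the proof of Proposition \ref{frostmanProp}), which writes $c\,\Delta^{\zeta'/2}h(x)$ as a sum of three terms: a far part $\int_{|x-y|\ge 1}\frac{c_1 h(y)}{|y-x|^{1+\zeta'}}\,dy$, a local term $c_2 h(x)$, and a local singular integral $\int_{|x-y|<1}\frac{c_1(h(y) - h(x))}{|y-x|^{1+\zeta'}}\,dy$. The local term is bounded by $|c_2|\,\|h\|_{L^\infty} \lesssim \|h\|_{\Lambda_\zeta}$ (using $\spt h \subset [-C,C]$ and $\zeta$-Hölder with a fixed reference point to bound $\|h\|_{L^\infty}$ by $\|h\|_{\Lambda_\zeta}$ up to a constant depending on $C$). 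For the local singular integral, the $\zeta$-Hölder estimate $|h(y) - h(x)| \le \|h\|_{\Lambda_\zeta}|y-x|^\zeta$ gives integrand $\lesssim \|h\|_{\Lambda_\zeta}|y-x|^{\zeta - 1 - \zeta'}$, which is integrable over $|x-y| < 1$ precisely because $\zeta' < \zeta$; this yields the $L^\infty$ bound \eqref{form138}. For the far part, $|h(y)| \le \|h\|_{L^\infty}\mathbf{1}_{[-C,C]}(y) \lesssim \|h\|_{\Lambda_\zeta}\mathbf{1}_{[-C,C]}(y)$, so the far integral is $\lesssim \|h\|_{\Lambda_\zeta}\int_{[-C,C]}\min\{1,|x-y|^{-1-\zeta'}\}\,dy$; this is $\lesssim_{C} \|h\|_{\Lambda_\zeta}$ always, and $\lesssim_C \|h\|_{\Lambda_\zeta}|x|^{-1-\zeta'}$ for $|x| \ge 2C$, which combined with the $L^\infty$ bound on the other two terms (both of which also vanish for $|x| > C$, since $h(x) = 0$ and $h(y) = 0$ for $|x-y|<1$ there) gives the decay \eqref{form143}. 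This mirrors precisely the $\Sigma_1,\Sigma_2,\Sigma_3$ analysis in the proof of Proposition \ref{frostmanProp}.

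\textbf{Main obstacle.} The only genuinely delicate point is Part (i): one must be careful that $h \in \dot H^{\zeta'}$ is claimed for \emph{every} $\zeta' < \zeta$ but not necessarily for $\zeta' = \zeta$ (e.g. $h$ merely Lipschitz need not be in $\dot H^1$ without extra care — though compact support saves it there too; still, the clean statement is for $\zeta' < \zeta$). I would handle this via the Gagliardo seminorm double-integral and split $|x - y| < 1$ versus $|x-y| \ge 1$, using $\zeta$-Hölder on the first and compact support plus boundedness on the second; the convergence on the first piece needs $1 + 2\zeta' < 1 + 2\zeta$, i.e. $\zeta' < \zeta$, which is exactly the hypothesis. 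Everything else is routine and parallels computations already carried out in the excerpt (notably the proof of Proposition \ref{frostmanProp} and Lemma \ref{lemma3}), so I would keep those parts brief and cite the analogy.
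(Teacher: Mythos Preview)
Your approach is essentially the same as the paper's: both use a Besov/Gagliardo-type characterization for the $\dot H^{\zeta'}$ membership (the paper cites Stein's $\Lambda_{\alpha}^{2,2} = \mathcal{L}^2_\alpha$ theorem, which is the same double-integral you write down), and both use the $N=0$ version of the pointwise formula \cite[(2.4.7)]{MR3243734} for the $L^\infty$ bound and decay.

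There is one step you skip that the paper treats carefully: the pointwise formula \eqref{form53}--\eqref{form54} (and its $N=0$ variant) is established in the paper only for $f \in \mathcal{S}(\R)$, whereas $h$ is merely H\"older. In the proof of Proposition \ref{frostmanProp} this was not an issue because $\varphi_r \in C^\infty_c(\R)$, but here it is. The paper closes this gap by mollifying: set $h_\delta = h \ast \varphi_\delta \in C^\infty_c(\R)$, use Plancherel and the $L^2$ definition of $\Delta^{\zeta'/2}h$ to see that $\int \Delta^{\zeta'/2}h \cdot \psi = \lim_{\delta \to 0} \int \Delta^{\zeta'/2}h_\delta \cdot \psi$ for $\psi \in C^\infty_c(\R)$, and then check that each of the three terms in the pointwise formula for $h_\delta$ converges to the corresponding term for $h$ (dominated convergence, using the uniform H\"older bound and compact support). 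Only then does the $L^2$-defined $\Delta^{\zeta'/2}h$ agree a.e.\ with the pointwise expression you estimate. Your ``main obstacle'' paragraph misidentifies the delicate point: Part (i) is routine, but this consistency step in Part (ii) is where the paper spends its effort.
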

\begin{proof} Let $\alpha > 0$. By \cite[Chapter 5, Theorem 5]{MR290095}, we have $\Lambda_{\alpha}^{2,2}(\R) = \mathcal{L}^{2}_{\alpha}(\R)$ with equivalent norms. Here $\Lambda_{\zeta}^{2,2}(\R)$ consists of functions $f \in L^{2}(\R)$ such that
	\begin{displaymath} \|f\|_{\Lambda_{\alpha}^{2,2}} := \|f\|_{2} + \left(\int_{\R} \frac{\|f(x + t) - f(x)\|_{L^{2}}^{2}}{|t|^{1 + 2\alpha}} \, dt \right)^{1/2} < \infty, \end{displaymath}
	and $\mathcal{L}^{2}_{\alpha}(\R)$ consists of functions $f \in L^{2}(\R)$ such that $\xi \mapsto (1 + |\xi|^{2})^{\alpha/2}\hat{f}(\xi) \in L^{2}(\R)$. Now, our hypothesis that $h$ is $\zeta$-H\"older continuous and compactly supported easily implies $h \in \Lambda_{\zeta'}^{2,2}(\R)$ for every $\zeta' \in (0,\zeta)$, and therefore also $h \in \mathcal{L}^{2}_{\zeta'}(\R) \subset \dot{H}^{\zeta'}$ (the latter inclusion being obvious from Definition \ref{def3}). We have now demonstrated that $\Delta^{\zeta'/2}h \in L^{2}(\R)$.
	
	To upgrade this conclusion to $\Delta^{\zeta'/2}U_{2}\mu \in L^{\infty}(\R)$, we employ a pointwise definition of the fractional Laplacian. Fix $\zeta' \in (0,\zeta)$, in particular $\zeta < 1$. Then, repeating the arguments below \eqref{form52}, we may apply \cite[(2.4.7)]{MR3243734} with the choice $N = 0$ to obtain the formula
	\begin{align}\label{form140} \sigma(\zeta'/2)(\Delta^{\zeta'/2}f)(x) & = \int_{|x - y| \geq 1} \frac{\sigma(-1 - \zeta')f(y)}{|x - y|^{1 + \zeta'}} \, dy + b(\zeta',0)f(x)\\
		& \qquad + \int_{|x - y| < 1} \frac{\sigma(-1 - \zeta')[f(y) - f(x)]}{|y - x|^{1 + \zeta'}} \, dy, \qquad f \in \mathcal{S}(\R), \, x \in \R. \notag \end{align} 
	We will check that the same formula remains valid for $f = h$. Let us first note that the pointwise formula is well-defined, since $\|h\|_{L^{\infty}} < \infty$, and the integral in the third term is absolutely convergent thanks to the $\zeta$-H\"older continuity of $h$. These facts also imply that if the pointwise formula holds for $\Delta^{\zeta'/2}h$, then
	\begin{displaymath} \|\Delta^{\zeta'/2}h\|_{L^{\infty}} \lesssim_{\zeta'} \|h\|_{L^{\infty}} + \|h\|_{\Lambda_{\zeta}} \lesssim_{C} \|h\|_{\Lambda_{\zeta}}, \end{displaymath}
	as desired in \eqref{form138} (the $L^{\infty}$-norm is bounded by the H\"older norm thanks to our bounded support hypothesis). Still assuming that the pointwise formula holds for $h$, the decay \eqref{form143} at infinity is determined by the term on line \eqref{form140}, since the two other terms vanish for $|x| \geq C + 1$. Now \eqref{form143} follows from Lemma \ref{lemma5} applied to $F(y) = \min\{|y|^{-1 - \zeta'},1\}$ and $G = h$ (the necessary decay of $G$ follows trivially from the compact support).
	
	Let us finally justify that the pointwise formula holds. Let $\{\varphi_{\delta}\}_{\delta > 0}$ be a standard approximate identity, and abbreviate $h_{\delta} := h \ast \varphi_{\delta} \in C^{\infty}_{c}(\R)$. Then, using the $L^{2}$-definition of $\Delta^{\zeta'/2}h$ and Plancherel, it is easy to see that
	\begin{equation}\label{form139} \int \Delta^{\zeta'/2}h \cdot \psi = \lim_{\delta \to 0} \int \Delta^{\zeta'/2}h_{\delta} \cdot \psi, \qquad \psi \in C_{c}^{\infty}(\R).  \end{equation}
	Since $h \in C_{c}^{\infty}(\R)$, the fractional Laplacian $\Delta^{\zeta'/2}h$ has the pointwise expression in \eqref{form140}. It remains to check that 
	\begin{align*} \lim_{\delta \to 0} & \int_{|x - y| \geq 1} \frac{\sigma(-1 - \zeta)h_{\delta}(y)}{|x - y|^{1 + \zeta}} \, dy + b(\zeta,0)h_{\delta}(x) + \int_{|x - y| < 1} \frac{\sigma(-1 - \zeta)[h_{\delta}(y) - h_{\delta}(x)]}{|y - x|^{1 + \zeta}} \, dy\\
		& = \int_{|x - y| \geq 1} \frac{\sigma(-1 - \zeta)h(y)}{|x - y|^{1 + \zeta}} \, dy + b(\zeta,0)h(x) + \int_{|x - y| < 1} \frac{\sigma(-1 - \zeta)[h(y) - h(x)]}{|y - x|^{1 + \zeta}} \, dy \end{align*}
	for all $x \in \R$, and then apply the dominated convergence theorem in \eqref{form139}. At this point, we will have shown that the $L^{2}$-function $\Delta^{\zeta'/2}h$ coincides a.e. with the $L^{\infty}$-representative given by the pointwise formula. We leave the straightforward if tedious details of the convergence argument to the reader. \end{proof}

The following corollary is immediate.

\begin{cor}\label{cor9} Let $\nu \in \mathcal{M}(\R)$ with $\spt \nu \subset [-1,1]$, and assume that $U_{2}\nu$ is $\zeta$-H\"older continuous on $\R$ for some $0<\zeta\le 1$. Then, for every $0<\zeta'<\zeta$, we have $\Delta^{\zeta'/2}U_{2}\nu \in L^{\infty}(\R)$, with
	\begin{displaymath} \|\Delta^{\zeta'/2}U_{2}\nu\|_{L^{\infty}} \lesssim_{\zeta'} \|U_{2}\nu\|_{\Lambda_{\zeta}}. \end{displaymath}
	Moreover,
	\begin{displaymath} |\Delta^{\zeta'/2}U_{2}\nu(x)| \lesssim_{\zeta'} \|U_{2}\nu\|_{\Lambda_{\zeta}}(1 + |x|)^{-1 - \zeta'}, \qquad x \in \R. \end{displaymath} \end{cor}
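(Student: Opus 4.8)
The plan is to apply Proposition \ref{prop18} verbatim to the function $h := U_{2}\nu$. The only point that needs a word of justification is that $h$ has compact support contained in $[-2,2]$, so that the hypothesis ``$\spt h \subset [-C,C]$ for some $C \geq 1$'' of Proposition \ref{prop18} is met with $C = 2$. Recall from the discussion around \eqref{form133} that $\log_{2}(z) = \log|z|^{-1} - \log_{1}(z)$, where $\log_{1}(z) = \log|z|^{-1}$ for $|z| \geq 1$; in particular $\spt \log_{2} \subset [-1,1]$. Now, if $|x| > 2$ and $y \in \spt \nu \subset [-1,1]$, then
\begin{displaymath} |\Gamma(x) - \Gamma(y)| \geq |x - y| \geq |x| - |y| > 1, \end{displaymath}
and hence $\log_{2} |\Gamma(x) - \Gamma(y)| = 0$. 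Integrating in $y$ shows $U_{2}\nu(x) = 0$ for all $|x| > 2$, i.e. $\spt U_{2}\nu \subset [-2,2]$.

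With this observation in hand, Proposition \ref{prop18} applied with $h = U_{2}\nu$, the given H\"older exponent $\zeta \in (0,1]$, and $C = 2$, yields $\Delta^{\zeta'/2}U_{2}\nu \in L^{2}(\R) \cap L^{\infty}(\R)$ for every $\zeta' \in (0,\zeta)$, together with the estimates
\begin{displaymath} \|\Delta^{\zeta'/2}U_{2}\nu\|_{L^{\infty}} \lesssim_{\zeta'} \|U_{2}\nu\|_{\Lambda_{\zeta}} \quad \text{and} \quad |\Delta^{\zeta'/2}U_{2}\nu(x)| \lesssim_{\zeta'} \|U_{2}\nu\|_{\Lambda_{\zeta}}(1 + |x|)^{-1 - \zeta'}, \end{displaymath}
which are exactly the two claimed bounds (the dependence on $C$ in Proposition \ref{prop18} is absorbed into an absolute constant, since here $C = 2$). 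There is no real obstacle: all of the analytic content sits in Proposition \ref{prop18}, and the corollary simply records its conclusion for the particular compactly supported, H\"older continuous function $U_{2}\nu$.
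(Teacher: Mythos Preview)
Your proof is correct and matches the paper's approach exactly: the paper simply states that the corollary is immediate from Proposition \ref{prop18}, and your argument spells out precisely this, including the observation (already noted in the paper above \eqref{form133}) that $\spt U_{2}\nu \subset [-2,2]$.
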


We then record another corollary (of both Corollary \ref{cor9} and Proposition \ref{prop:lap-potential-Lp}) which yields $L^{p}$-bounds for the fractional Laplacians of the whole logarithmic potential $U\nu$. This result is needed to apply Proposition \ref{prop8}.

\begin{cor}\label{cor10} Let $\nu \in \mathcal{M}(\R)$ with $\spt \nu \subset [-1,1]$ and $\|\nu\|=1$, and assume that $U_{2}\nu$ is $\zeta$-H\"older continuous on $\R$ for some $0 < \zeta \leq 1$. Then, 
	\begin{equation}\label{form144} \Delta^{\zeta'/2}U\nu \in L^{p}(\R), \qquad \zeta' \in (0,\zeta), \, p \in (1/\zeta',\infty), \end{equation}
	with $\|\Delta^{\zeta'/2}U\nu\|_{L^{p}} \lesssim_{\zeta',p} \|U_{2}\nu\|_{\Lambda_{\zeta}} + 1$. Here $\Delta^{\zeta'/2}U\nu$ is \emph{a priori} the tempered distribution familiar from Definition \ref{def6}.
\end{cor}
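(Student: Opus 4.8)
The plan is to reduce everything to the decomposition $U\nu = U_{1}\nu + U_{2}\nu$ from \eqref{form133} together with linearity of $\Delta^{\zeta'/2}$. By Proposition \ref{prop:lap-potential-Lp} the "global" part $U_{1}\nu$ is Lipschitz with logarithmic growth, and the "local" part $U_{2}\nu = U\nu - U_{1}\nu$ is bounded with $\spt U_{2}\nu \subset [-2,2]$; hence both summands lie in $(1+\log_{+})L^{\infty}$, so $\Delta^{\zeta'/2}$ acts on each of them in the sense of Definition \ref{def6}. Since the pairing $\psi \mapsto \int U \cdot \Delta^{\zeta'/2}\psi$ is linear in $U$, we get $\Delta^{\zeta'/2}U\nu = \Delta^{\zeta'/2}U_{1}\nu + \Delta^{\zeta'/2}U_{2}\nu$ as tempered distributions, and it suffices to estimate the two terms separately in $L^{p}(\R)$ for each $p \in (1/\zeta',\infty)$.

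For the global term I would invoke Proposition \ref{prop:lap-potential-Lp} with $\alpha := \zeta' \in (0,1)$ applied to the measure $\nu$ itself (it satisfies $\|\nu\| = 1 \le 1$ and $\spt\nu \subset [-1,1]$): this gives $\Delta^{\zeta'/2}U_{1}\nu \in L^{p}(\R)$ with $\|\Delta^{\zeta'/2}U_{1}\nu\|_{L^{p}} \lesssim_{p} 1$ for every $p > 1/\zeta'$, and the distributional object it produces is literally the one from Definition \ref{def6}. For the local term I would apply Corollary \ref{cor9} to $\nu$ — its hypotheses ($\nu \in \mathcal{M}(\R)$, $\spt\nu \subset [-1,1]$, $U_{2}\nu$ $\zeta$-H\"older) are exactly the standing assumptions — obtaining $\Delta^{\zeta'/2}U_{2}\nu \in L^{\infty}(\R)$ with $\|\Delta^{\zeta'/2}U_{2}\nu\|_{L^{\infty}} \lesssim_{\zeta'} \|U_{2}\nu\|_{\Lambda_{\zeta}}$, together with the decay $|\Delta^{\zeta'/2}U_{2}\nu(x)| \lesssim_{\zeta'} \|U_{2}\nu\|_{\Lambda_{\zeta}}(1+|x|)^{-1-\zeta'}$. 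Since $(1+|x|)^{-1-\zeta'} \in L^{p}(\R)$ for every $p \ge 1$, the decay bound promotes the $L^{\infty}$-estimate to $\|\Delta^{\zeta'/2}U_{2}\nu\|_{L^{p}} \lesssim_{\zeta',p} \|U_{2}\nu\|_{\Lambda_{\zeta}}$ for all $p \in [1,\infty)$, in particular for $p > 1/\zeta'$. Adding the two estimates through the triangle inequality yields \eqref{form144} with the claimed bound $\|\Delta^{\zeta'/2}U\nu\|_{L^{p}} \lesssim_{\zeta',p} 1 + \|U_{2}\nu\|_{\Lambda_{\zeta}}$.

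The only point that really needs care is the compatibility of the several definitions of $\Delta^{\zeta'/2}$ in play: Proposition \ref{prop:lap-potential-Lp} defines $\Delta^{\zeta'/2}U_{1}\nu$ by duality against $\Delta^{\zeta'/2}g$ (which matches Definition \ref{def6} verbatim), whereas Corollary \ref{cor9}, via Proposition \ref{prop18}, first produces $\Delta^{\zeta'/2}U_{2}\nu$ as an element of $L^{2}$ through the $\dot{H}^{\zeta'}$-calculus of Definition \ref{def4} and then identifies it with its pointwise representative. One should therefore record that for a compactly supported $\zeta$-H\"older function $h$ the $\dot{H}^{\zeta'}$-definition of $\Delta^{\zeta'/2}h$ agrees with the distributional definition $\psi \mapsto \int h \cdot \Delta^{\zeta'/2}\psi$; this is routine by Plancherel and the Fourier-side formula \eqref{fracL}, using that $\hat{h}$ is continuous (as $h \in L^{1}$) and that $|\xi|^{\zeta'}\hat{h} \in L^{2}$. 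Once this identification is in place the argument is purely the triangle inequality, and I do not anticipate any further obstacle.
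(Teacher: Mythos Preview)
Your proof is correct and follows essentially the same approach as the paper: decompose $U\nu = U_{1}\nu + U_{2}\nu$, apply Proposition~\ref{prop:lap-potential-Lp} to the global part and Corollary~\ref{cor9} to the local part, then combine via the triangle inequality. Your explicit discussion of the compatibility between the $\dot{H}^{\zeta'}$-definition and the distributional Definition~\ref{def6} for $\Delta^{\zeta'/2}U_{2}\nu$ is a welcome elaboration of what the paper leaves as ``easy to check.''
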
  		

\begin{proof}[Proof of Corollary \ref{cor10}] Fix $0 < \zeta' < \zeta$. It is easy to check that 
	\begin{displaymath} \Delta^{\zeta'/2}U\nu = \Delta^{\zeta'/2}U_{1}\nu + \Delta^{\zeta'/2}U_{2}\nu \end{displaymath}
	as distributions, where $\Delta^{\zeta'/2}U_{1}\nu$ was defined inside Proposition \ref{prop:lap-potential-Lp}, and the main result of that proposition was that $\Delta^{\zeta'/2}U_{1}\nu \in L^{p}(\R)$ for all $p \in (\zeta',\infty)$, with $\|\Delta^{\zeta'/2}U_{1}\nu\|_{L^{p}} \lesssim_{p} 1$. On the other hand, Corollary \ref{cor9} shows that $\Delta^{\zeta'/2}U_{2}\nu \in L^{1}(\R) \cap L^{\infty}(\R)$, so in particular $\Delta^{\zeta'/2}U_{2}\nu \in L^{p}(\R)$ for $p \in (\zeta',\infty)$. This concludes the proof.  \end{proof}

The hypothesis of Corollary \ref{cor10} corresponds to the conclusion of Corollary \ref{cor11}. We will next record a variant of Corollary \ref{cor10} where the hypothesis rather corresponds to the conclusion of Corollary \ref{cor12}. We start with a (well-known) result of general nature.

\begin{lemma}\label{lemma11} Let $f \in L^{1}(\R) \cap L^{2}(\R)$, and assume that the distributional derivative $\partial f \in L^{p}(\R)$ for some $p \in (1,\infty)$. Define the tempered distribution $\Delta^{1/2}f \in \mathcal{S}'(\R)$ by
	\begin{equation}\label{form146} \int \Delta^{1/2}f \cdot \psi := \int f \cdot \Delta^{1/2}\psi, \qquad \psi \in \mathcal{S}(\R). \end{equation}
	(This is well-defined since $f \in L^{1}(\R)$.) Then $\Delta^{1/2}f \in L^{p}(\R)$, and $\|\Delta^{1/2}f\|_{L^{p}} \lesssim_{p} \|\partial f\|_{L^{p}}$.
\end{lemma}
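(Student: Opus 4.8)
The plan is to recognise $\Delta^{1/2}$ as $\tfrac{1}{2\pi i}$ times the composition of $\partial$ with the Fourier multiplier $T_{n_{0}}$ of symbol $n_{0}(\xi) = \sgn(\xi)$, and then to read off the $L^{p}$-bound directly from Corollary \ref{cor3}. On the Fourier side this is nothing but the elementary factorisation $|\xi| = \sgn(\xi)\cdot \xi$, i.e. $\widehat{\Delta^{1/2}f}(\xi) = |\xi|\hat f(\xi) = \tfrac{1}{2\pi i}\,\sgn(\xi)\cdot 2\pi i\xi\,\hat f(\xi)$.

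Concretely, first note that since $\partial f \in L^{p}(\R)$ with $p \in (1,\infty)$ and $T_{n_{0}} = T_{n_{\beta}}|_{\beta = 0}$ is bounded on $L^{p}(\R)$ by Corollary \ref{cor3}, the function
\begin{displaymath} g := \tfrac{1}{2\pi i}\,T_{n_{0}}(\partial f) \end{displaymath}
belongs to $L^{p}(\R)$ with $\|g\|_{L^{p}} \lesssim_{p} \|\partial f\|_{L^{p}}$. It then suffices to verify that $g$ coincides, as a tempered distribution, with $\Delta^{1/2}f$ as defined by \eqref{form146}. I would do this by comparing Fourier transforms. Since $f \in L^{1}(\R)$ we have $\hat f \in C_{0}(\R)$, so $\xi \mapsto |\xi|\hat f(\xi)$ is a tempered distribution; starting from \eqref{form146}, using the elementary identity $\Delta^{1/2}\hat\varphi = \widehat{|\cdot|\varphi}$ for $\varphi \in \mathcal{S}(\R)$ (immediate from \eqref{fracL} together with $\widehat{\hat\varphi}(\xi) = \varphi(-\xi)$) and the multiplication formula $\int f\,\hat h = \int \hat f\,h$, one obtains $\widehat{\Delta^{1/2}f} = |\xi|\hat f(\xi)$. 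On the other hand the distributional derivative satisfies $\widehat{\partial f}(\xi) = 2\pi i\xi\,\hat f(\xi)$, and since $T_{n_{0}}$ acts on the Fourier side as multiplication by $\sgn(\xi)$, we get $\hat g(\xi) = \tfrac{1}{2\pi i}\,\sgn(\xi)\cdot 2\pi i\xi\,\hat f(\xi) = |\xi|\hat f(\xi)$. Hence $\widehat{\Delta^{1/2}f} = \hat g$, so $\Delta^{1/2}f = g \in L^{p}(\R)$ with the asserted bound.

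An equivalent route, avoiding distributional Fourier transforms, is to pair $\Delta^{1/2}f$ against $\psi \in \mathcal{S}(\R)$, write $\Delta^{1/2}\psi = \tfrac{1}{2\pi i}\,\partial(T_{n_{0}}\psi)$ (a pointwise identity of continuous functions, checked on the Fourier side), integrate by parts to move the derivative onto $f$ — legitimate because $f$, having $\partial f \in L^{1}_{\mathrm{loc}}$ and $f \in L^{1}$, admits a continuous representative with $f(x) \to 0$ as $|x| \to \infty$, while $T_{n_{0}}\psi$ is bounded — and then transfer $T_{n_{0}}$ onto $\partial f$ via its bilinear adjoint $T_{n_{0}}^{\ast} = -T_{n_{0}}$, using the boundedness on $L^{p}$ and $L^{p'}$. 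Either way, the only point requiring genuine care is the bookkeeping of the identities $\widehat{\Delta^{1/2}f} = |\xi|\hat f(\xi)$, $\widehat{\partial f}(\xi) = 2\pi i\xi\,\hat f(\xi)$ and $\widehat{T_{n_{0}}h} = \sgn(\xi)\hat h$ for $h \in L^{p}(\R)$ at the level of tempered distributions; this is routine, and all of the quantitative content is already packaged into Corollary \ref{cor3}.
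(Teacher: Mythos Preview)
Your argument is correct and rests on the same idea as the paper's: the factorisation $|\xi| = \sgn(\xi)\cdot\xi$, i.e.\ $\Delta^{1/2} \cong (\text{Hilbert transform})\circ\partial$, together with the $L^{p}$-boundedness of the multiplier $\sgn(\xi)=n_{0}(\xi)$ from Corollary~\ref{cor3}. The only difference is in packaging. The paper does not apply $T_{n_{0}}$ directly to $\partial f\in L^{p}$; instead it inserts a smooth cutoff $\varphi_{\epsilon}$ vanishing near $\xi=0$, so that the multiplier $\tfrac{|\xi|}{2\pi i\xi}\varphi_{\epsilon}(\xi)$ maps $\mathcal{S}$ to $\mathcal{S}$, applies Mihlin uniformly in $\epsilon$, and lets $\epsilon\to 0$. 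This keeps every pairing inside $\mathcal{S}\times\mathcal{S}'$ and sidesteps the distributional bookkeeping you flag as ``routine'' (e.g.\ justifying $\widehat{T_{n_{0}}h}=\sgn\cdot\hat h$ for $h\in L^{p}$, or your integration-by-parts boundary terms). Your second route is slicker once the bookkeeping is done; note that your claim $f(x)\to 0$ is indeed correct, since $\partial f\in L^{p}$ makes $f$ globally H\"older (hence uniformly continuous), and a uniformly continuous $L^{1}$ function vanishes at infinity.
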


\begin{proof} By hypothesis $\partial f \in L^{p}(\R)$, so $\widehat{\partial f} \in \mathcal{S}'(\R)$, and therefore for $\psi \in \mathcal{S}(\R)$,
	\begin{equation}\label{form145} \int \partial f \cdot \psi = \widehat{\partial f}(\widecheck{\psi}) = \hat{f}([\xi \mapsto 2\pi i \xi \cdot \widecheck{\psi}(\xi)]) = 2\pi i \int \hat{f}(\xi) \cdot \xi\widecheck{\psi}(\xi) \, d\xi.  \end{equation} 
	The final equation makes sense by $f \in L^{1}(\R)$. Now, we want to apply the relation above to functions $\psi \in \mathcal{S}(\R)$ of specific form, defined as follows. Let $\varphi \in C^{\infty}(\R)$ be an auxiliary function satisfying $\mathbf{1}_{\R \, \setminus \, [-1,1]} \leq \varphi \leq \mathbf{1}_{\R \, \setminus \, [-1/2,1/2]}$, and write $\varphi_{\epsilon}(\xi) := \varphi(\xi/\epsilon)$. Thus $\varphi_{\epsilon}(\xi) \to 1$ for every $\xi \in \R \, \setminus \, \{0\}$, as $\epsilon \to 0$. Moreover,
	\begin{equation}\label{form147} |\varphi_{\epsilon}^{(j)}(\xi)| \lesssim |\xi|^{-j}, \qquad \xi \in \R \, \setminus \, \{0\}, \, j \in \{0,1,2\}, \end{equation}
	Now, for $\eta \in \mathcal{S}(\R)$ and $\epsilon > 0$ fixed, note that 
	\begin{displaymath} \xi \mapsto \frac{|\xi| }{2\pi i \xi}\varphi_{\epsilon}(\xi)\widecheck{\eta}(\xi) \in \mathcal{S}(\R). \end{displaymath}
	Therefore we are allowed to apply \eqref{form145} to $\psi_{\epsilon} \in \mathcal{S}(\R)$, namely the Fourier transform of the above function. The result is 
	\begin{equation}\label{form148} \int \partial f \cdot \psi_{\epsilon} = \int \hat{f}(\xi) \cdot |\xi|\varphi_{\epsilon}(\xi) \widecheck{\eta}(\xi) \, d\xi. \end{equation}
	Dominated convergence and Plancherel (recall $f \in L^{2}(\R)$) shows 
	\begin{displaymath} \lim_{\epsilon \to 0} \int \hat{f}(\xi) \cdot |\xi|\varphi_{\epsilon}(\xi) \widecheck{\eta}(\xi) =  \int \hat{f}(\xi) |\xi|\widecheck{\eta}(\xi) \, d\xi \stackrel{\eqref{form146}}{=} \int \Delta^{1/2}f \cdot \eta. \end{displaymath} 
	On the other hand, we may write $\psi_{\epsilon} = T_{\epsilon}\eta$, where $T_{\epsilon}$ is the Fourier multiplier with symbol $m(\xi) = (|\xi|/ 2\pi i \xi)\varphi_{\epsilon}(\xi)$. Thanks to \eqref{form147} and the product rule, the symbol $m$ satisfies the hypothesis \eqref{form105} of Mihlin's multiplier theorem with constants independent of $\epsilon > 0$. Consequently $\|\psi_{\epsilon}\|_{L^{p'}} \lesssim \|\eta\|_{L^{p'}}$. Combining this information with \eqref{form148} shows that
	\begin{displaymath} \left| \int \Delta^{1/2}f \cdot \eta \right| \leq \limsup_{\epsilon \to 0} \left| \int \partial f \cdot \psi_{\epsilon} \right| \lesssim \|\partial f\|_{L^{p}}\|\eta\|_{L^{p'}}, \qquad \eta \in \mathcal{S}(\R). \end{displaymath}
	Thus $\Delta^{1/2}f \in L^{p}(\R)$ with $\|\Delta^{1/2}f\|_{L^{p}} \lesssim \|\partial f\|_{L^{p}}$. This concludes the proof. \end{proof}

\begin{cor}\label{cor13} Let $\nu \in \mathcal{M}(\R)$ with $\spt \nu \subset [-1,1]$ and $\|\nu\|=1$. Assume that the distributional derivative $\partial U_{2}\nu \in L^{p}(\R)$ for some $p \in (1,\infty)$. Then, $\Delta^{1/2}U\nu \in L^{p}(\R)$ for every $p \in (1,\infty)$, and indeed
	\begin{displaymath} \|\Delta^{1/2}U\nu\|_{L^{p}} \lesssim_{p} \|\partial U_{2}\nu\|_{L^{p}} + 1, \qquad p \in (1,\infty). \end{displaymath}	
\end{cor}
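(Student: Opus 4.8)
The plan is to use the splitting $U\nu = U_1\nu + U_2\nu$ from \eqref{form133}, and to treat the two pieces with results already established: Proposition \ref{prop:lap-potential-Lp} handles $U_1\nu$ with room to spare, while Lemma \ref{lemma11} handles $U_2\nu$. Throughout, $U$ abbreviates $U^{\Gamma}$.

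First I would record the distributional identity $\Delta^{1/2}U\nu = \Delta^{1/2}U_1\nu + \Delta^{1/2}U_2\nu$. By Definition \ref{defGraphPotential} the function $U\nu$ is continuous, and it has logarithmic growth (it is the prototypical element of $(1 + \log_{+})L^{\infty}$, cf. the example after Definition \ref{def6}), so $\Delta^{1/2}U\nu$ is the tempered distribution from Definition \ref{def6}, acting by $\psi \mapsto \int U\nu \cdot \Delta^{1/2}\psi$. Splitting $U\nu = U_1\nu + U_2\nu$ under this integral, one recognises $\int U_1\nu \cdot \Delta^{1/2}\psi$ as the pairing defining $\Delta^{1/2}U_1\nu$ in Proposition \ref{prop:lap-potential-Lp} and $\int U_2\nu \cdot \Delta^{1/2}\psi$ as the pairing \eqref{form146} defining $\Delta^{1/2}U_2\nu$ in Lemma \ref{lemma11}; all three integrals converge absolutely (by Lemma \ref{lemma3} together with the logarithmic growth of $U_1\nu$, respectively the boundedness of $U_2\nu$ established below), which gives the identity.

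For the $U_1\nu$ term, Proposition \ref{prop:lap-potential-Lp} applied with $\alpha = 1$ gives $\Delta^{1/2}U_1\nu \in L^{p}(\R)$ with $\|\Delta^{1/2}U_1\nu\|_{L^{p}} \lesssim_{p,\mathrm{Lip}(A)} 1$ for every $p \in (1,\infty)$. For the $U_2\nu$ term I would first observe that $U_2\nu$ is bounded with $\spt U_2\nu \subset [-2,2]$: since $\log_2 = \log|\cdot|^{-1} - \log_1$ is supported in $[-1,1]$ and $|\Gamma(x) - \Gamma(y)| \geq |x - y|$, we have $\log_2|\Gamma(x) - \Gamma(y)| = 0$ whenever $|x - y| \geq 1$, so $U_2\nu(x) = 0$ once $\dist(x,\spt\nu) \geq 1$; and $U_2\nu = U\nu - U_1\nu$ is continuous by Definition \ref{defGraphPotential} and Proposition \ref{prop:lap-potential-Lp}. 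A continuous compactly supported function lies in $L^{1}(\R) \cap L^{2}(\R)$, so if $\partial U_2\nu \in L^{p}(\R)$ (if not, the claimed bound is vacuous for that $p$), Lemma \ref{lemma11} applies to $f = U_2\nu$ and yields $\Delta^{1/2}U_2\nu \in L^{p}(\R)$ with $\|\Delta^{1/2}U_2\nu\|_{L^{p}} \lesssim_{p} \|\partial U_2\nu\|_{L^{p}}$. Summing the two estimates gives $\|\Delta^{1/2}U\nu\|_{L^{p}} \leq \|\Delta^{1/2}U_1\nu\|_{L^{p}} + \|\Delta^{1/2}U_2\nu\|_{L^{p}} \lesssim_{p} \|\partial U_2\nu\|_{L^{p}} + 1$.

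There is no serious obstacle: the corollary is a bookkeeping combination of Proposition \ref{prop:lap-potential-Lp} and Lemma \ref{lemma11}. The only point deserving a line of care is the consistency of the several a priori different definitions of $\Delta^{1/2}$ in play — on $(1 + \log_{+})L^{\infty}$ (Definition \ref{def6}), the ad hoc one inside Proposition \ref{prop:lap-potential-Lp}, and the $L^{1}$-based one in \eqref{form146} — but each is given by the same duality pairing against $\Delta^{1/2}\psi$, $\psi \in \mathcal{S}(\R)$, so they agree on the functions $U\nu, U_1\nu, U_2\nu$ considered here.
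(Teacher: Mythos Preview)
Your proof is correct and follows essentially the same approach as the paper: split $\Delta^{1/2}U\nu = \Delta^{1/2}U_1\nu + \Delta^{1/2}U_2\nu$, handle the first piece with Proposition \ref{prop:lap-potential-Lp} (case $\alpha=1$), observe that $U_2\nu$ is bounded and compactly supported hence in $L^1\cap L^2$, and apply Lemma \ref{lemma11} to the second piece. Your version is in fact slightly more detailed than the paper's, in that you spell out why $U_2\nu$ is bounded and supported in $[-2,2]$ and why the several distributional definitions of $\Delta^{1/2}$ in play agree here.
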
 

\begin{proof} As in the proof of Corollary \ref{cor10}, the idea is to split
	\begin{displaymath} \Delta^{1/2}U\nu = \Delta^{1/2}U_{1}\nu + \Delta^{1/2}U_{2}\nu, \end{displaymath}
	and verify that both terms on the right are $L^{p}$-functions. In fact, Proposition \ref{prop:lap-potential-Lp} tells us that $\Delta^{1/2}U_{1}\nu \in L^{p}(\R)$ for all $p \in (1,\infty)$ with $\|\Delta^{1/2}U_{1}\nu\|_{L^{p}} \lesssim_{p} 1$. On the other hand, $U_{2}\nu$ is a bounded a compactly supported function, so certainly $U_{2}\nu \in L^{1}(\R) \cap L^{2}(\R)$. Therefore Lemma \ref{lemma11} implies that $\Delta^{1/2}U_{2}\nu \in L^{p}(\R)$ with $\|\Delta^{1/2}U_{2}\nu\|_{L^{p}} \lesssim_{p} \|\partial U_{2}\nu\|_{L^{p}}$. This completes the proof. \end{proof}

Corollaries \ref{cor10} and \ref{cor13} deal with general probability measures $\nu \in \mathcal{M}(\R)$, and show that \textbf{if} $U_{2}\nu$ is either H\"older continuous or suitably differentiable, then certain fractional Laplacians of $U\nu$ lie in $L^{p}$. We now specialise the discussion to the case where $\nu$ satisfies the hypotheses of Theorem \ref{mainTechnical}. Now the hypotheses of Corollaries \ref{cor10} and \ref{cor13} are satisfied thanks to Corollaries \ref{cor11} and \ref{cor12}. We put the pieces together in the following master corollary:

\begin{cor}\label{cor14} Let $\alpha \in (0,1]$. Let $\Gamma(x) = (x,A(x))$, where $A \in \mathrm{Lip}(\R) \cap C^{1,\alpha}(\R)$, and the Lipschitz constant of $A$ is so small that the conclusions of Lemmas \ref{PLemma} and \ref{RLemma} hold.
	
	Let $\gamma \in [0,1]$. Let $\mu \in \mathcal{M}(\R)$, and assume that $\mu$ satisfies the hypotheses of Theorem \ref{mainTechnical} on some compact interval $I_{0} \subset \R$. Thus, for some Lipschitz function $\mathcal{L} \colon \R \to \R$,
	\begin{itemize}
		\item $U^{\Gamma}\mu(x) = \mathcal{L}(x)$ for all $x \in \spt \mu \cap I_{0}$,
		\item $U^{\Gamma}\mu(x) \geq \mathcal{L}(x)$ for all $x \in I_{0}$.
	\end{itemize}
	Let $I = [x_{0},y_{0}] \subset I_{0}$ be an interval such that 
	\begin{itemize}
		\item[(a)] $\mu(B(z_{0},r)) \lesssim r$ for both $z_{0} \in \{x_{0},y_{0}\}$, and for all $r > 0$, and
		\item[(b)] $\mu(B(x,r)) \lesssim r^{\gamma}$ for all $x \in I$ and $r \in (0,1)$.
	\end{itemize}
	Then
	\begin{itemize}
		\item[(1)] $\Delta^{\zeta/2}U^{\Gamma}(\mu|_{I}) \in L^{p}(\R)$ for all $\zeta \in (0,\min\{1,\alpha + \gamma\})$ and all $p \in (1/\zeta,\infty)$.
		\item[(2)] If $\alpha + \gamma > 1$, then $\Delta^{1/2}U^{\Gamma}(\mu|_{I}) \in L^{p}(\R)$ for all $p \in (1,\infty)$.
	\end{itemize}
	Here $\Delta^{\zeta/2}U^{\Gamma}(\mu|_{I})$ is \emph{a priori} the tempered distribution from Definition \ref{def6}. \end{cor}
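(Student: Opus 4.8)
The plan is to synthesise the two strands built so far: Section~\ref{s:holder} (culminating in Proposition~\ref{prop17} and Corollaries~\ref{cor11}--\ref{cor12}) controls the Hölder, resp.\ Lipschitz, regularity of the local potential $U_2(\mu|_I)$, while Corollaries~\ref{cor10} and~\ref{cor13} of the present section turn such regularity into $L^p$-membership of the fractional Laplacians of the full potential $U^\Gamma(\mu|_I)$.

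First I would normalise the configuration. A dilation of $\R$ preserves $\alpha$ and $\mathrm{Lip}(A)$ (and all hypotheses of the corollary, only altering the Lipschitz function $\mathcal{L}$ and various implicit constants), so we may assume $\spt\mu \subset [-1,1]$; also we may discard the trivial case $\mu|_I = 0$. The hypotheses of Corollary~\ref{cor14} then coincide with those of Proposition~\ref{prop17}, with $\gamma_0 := \gamma$ and $\mathbf{C}$ the implicit constant in~(a)--(b). Hence Corollary~\ref{cor11} gives that $U_2(\mu|_I)$ is $\zeta_1$-Hölder continuous on $\R$ for every $\zeta_1 \in [0,\min\{1,\alpha+\gamma\})$, and if $\alpha+\gamma > 1$ then Corollary~\ref{cor12} gives moreover that the distributional derivative $\partial U_2(\mu|_I) \in L^q(\R)$ for all $q \in [1,\infty)$.

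Next I would set $\nu := (\mu|_I)/\|\mu|_{I}\|$, a probability measure in $\mathcal{M}(\R)$ (by Lemma~\ref{lemma12}) with $\spt\nu \subset [-1,1]$. Since $U^\Gamma$ and the distributions $\Delta^{\zeta/2}(\cdot)$ of Definition~\ref{def6} are linear in the measure, $U_2\nu = \|\mu|_{I}\|^{-1}U_2(\mu|_I)$ inherits the regularity above, and $\Delta^{\zeta/2}U^\Gamma(\mu|_I) = \|\mu|_{I}\|\,\Delta^{\zeta/2}U^\Gamma\nu$; thus it suffices to prove (1)--(2) for $\nu$. For part~(1), fix $\zeta \in (0,\min\{1,\alpha+\gamma\})$ and $p \in (1/\zeta,\infty)$, and choose $\zeta_1$ with $\zeta < \zeta_1 < \min\{1,\alpha+\gamma\}$ (possible since $\zeta < \min\{1,\alpha+\gamma\}$); then $U_2\nu$ is $\zeta_1$-Hölder continuous, and Corollary~\ref{cor10} --- with its parameter "$\zeta$" equal to $\zeta_1$ and "$\zeta'$" equal to $\zeta$, which is legitimate because $\zeta\in(0,\zeta_1)$ and $p\in(1/\zeta,\infty)$ --- yields $\Delta^{\zeta/2}U^\Gamma\nu \in L^p(\R)$. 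For part~(2), when $\alpha+\gamma>1$, we have $\partial U_2\nu \in L^2(\R)$ from Corollary~\ref{cor12}, so Corollary~\ref{cor13} gives $\Delta^{1/2}U^\Gamma\nu \in L^p(\R)$ for every $p\in(1,\infty)$. Undoing the normalisation proves both claims.

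There is no genuine obstacle here: the corollary is a bookkeeping synthesis of earlier results. The only points requiring attention are the nested choice of exponents $\zeta<\zeta_1<\min\{1,\alpha+\gamma\}$ matched against the precise ranges in Corollaries~\ref{cor10}--\ref{cor11}, and securing the normalisations $\|\nu\|=1$ and $\spt\nu\subset[-1,1]$ needed to invoke Corollaries~\ref{cor10} and~\ref{cor13}.
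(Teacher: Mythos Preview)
Your proof is correct and follows essentially the same route as the paper: use Corollary~\ref{cor11} to obtain $\zeta_1$-H\"older continuity of $U_2(\mu|_I)$ and feed this into Corollary~\ref{cor10} for part~(1), and use Corollary~\ref{cor12} to get $\partial U_2(\mu|_I)\in L^p$ and feed this into Corollary~\ref{cor13} for part~(2). Your proposal is in fact more careful than the paper's terse version, since you explicitly attend to the normalisations $\|\nu\|=1$ and $\spt\nu\subset[-1,1]$ required by Corollaries~\ref{cor10} and~\ref{cor13}.
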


\begin{proof} To prove (1), fix $\zeta \in (0,\min\{1,\alpha + \gamma\})$ and $\zeta < \zeta' < \min\{1,\alpha + \gamma\}$. Then $U^{\Gamma}_{2}(\mu|_{I})$ is $\zeta'$-H\"older continuous on $\R$ by Corollary \ref{cor11}. Now (1) follows from Corollary \ref{cor10}. 
	
	To prove (2), fix $p \in (1,\infty)$, and note that $\partial U^{\Gamma}_{2}(\mu|_{I}) \in L^{p}(\R)$ by Corollary \ref{cor12}. Therefore also $\Delta^{1/2}U^{\Gamma}(\mu|_{I}) \in L^{p}(\R)$ by Corollary \ref{cor13}. \end{proof} 

\subsection{Proof of Theorem \ref{mainTechnical}}\label{s:proofMainTechnical}
Having Corollary \ref{cor14} at hand we are ready to prove the main technical result of this article, Theorem \ref{mainTechnical}. For the rest of this subsection we assume that $p\in (1,\infty)$, $A\in C^{1,\alpha}(\R;\R^{d-1})$, $\mathcal{L}\in\lip(\R)$, $\mu\in\mathcal{M}(\R)$ and $I_0\subset\R$ are as in Theorem \ref{mainTechnical}.

We start by choosing a sub-interval $I=[x_0,y_0]\subset I_0$ such that
\begin{equation}\label{eq:good-endpoints}
	\mu(B(z_{0},r)) \lesssim r \quad\text{for both $z_{0} \in \{x_{0},y_{0}\}$, and for all $r > 0$}.
\end{equation}
Let $\mathcal{M}\mu$ denote the centered Hardy-Littlewood maximal operator applied to $\mu$, so that
\begin{equation*}
	\mathcal{M}\mu(x)=\sup_{r>0}\frac{\mu(B(x,r))}{r}.
\end{equation*}
The classical weak-type estimates for $\mathcal{M}$ (see e.g. \cite[Theorem 2.5]{tolsa2014analytic}) imply that $\mathcal{M}\mu(x)<\infty$ for $\mathcal{H}^1$-a.e. $x\in\R$. We choose $x_0,y_0\in I_0$ such that $\mathcal{M}\mu(z_0)<\infty$ for $z_0\in \{x_{0},y_{0}\}$, so that \eqref{eq:good-endpoints} is satisfied. Note that we are free to choose $x_0$ and $y_0$ arbitrarily close to the endpoints of $I_0$. To prove Theorem \ref{mainTechnical} we need to show that $\mu\in L^p(I)$.

We first prove the following intermediate result. Recall that $\lip(A)\le\delta=\delta(p,\alpha,d)$.
\begin{lemma}\label{lem:int-Frost}
	If $\delta(\alpha,d)$ is chosen small enough, then there exists $\gamma\in (1-\alpha,1)$ and $C_\gamma>1$ such that
	\begin{equation}\label{eq:Frost-gamma}\tag{Frost$_\gamma$}
		\mu(B(x,r)) \le C_{\gamma} r^{\gamma} \quad\text{for all $x \in I$ and $r \in (0,1)$.}
	\end{equation}
\end{lemma}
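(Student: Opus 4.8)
The plan is to implement the bootstrapping scheme from Section~\ref{s:outline}: starting from the trivial Frostman exponent $\gamma_{0}=0$, each pass through Corollary~\ref{cor14}, Proposition~\ref{prop8}, and Proposition~\ref{frostmanProp} produces a strictly larger exponent, and after a number of passes depending only on $\alpha$ the exponent exceeds $1-\alpha$. First I would set $N:=\lceil 1/\alpha\rceil$ and fix a nested family of compact intervals $I_{0}\supseteq J_{0}\supsetneq J_{1}\supsetneq\cdots\supsetneq J_{N-1}\supsetneq J_{N}=I$, arranged so that $I\subset\mathrm{int}\,J_{k}$ for $k<N$ and so that \emph{both} endpoints of every $J_{k}$ are points $z$ with $\mathcal{M}\mu(z)<\infty$, i.e.\ $\mu(B(z,r))\lesssim r$. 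This is possible because $\{\mathcal{M}\mu<\infty\}$ is of full Lebesgue measure, hence dense, and because $x_{0},y_{0}$ may be taken strictly inside $I_{0}$. I would also fix a sequence $\eta_{k}:=\alpha/2^{k+2}$, so that $2\sum_{k\ge 1}\eta_{k}=\alpha/2$.

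Next I would prove by induction on $k=0,1,\dots,N$ that there are exponents $0=\gamma_{0}<\gamma_{1}<\cdots<\gamma_{N}$ and constants $C_{k}<\infty$ with
\begin{equation*} \mu(B(x,r))\le C_{k}\,r^{\gamma_{k}}, \qquad x\in J_{k},\ r\in(0,1), \end{equation*}
and with $\gamma_{k+1}\ge\min\{1,\alpha+\gamma_{k}\}-2\eta_{k+1}$. The base case $k=0$ is immediate, since $\mu$ is a finite measure. For the inductive step, the interval $J_{k}$ and the exponent $\gamma=\gamma_{k}$ satisfy the hypotheses of Corollary~\ref{cor14} — hypothesis (a) because the endpoints of $J_{k}$ were chosen good, hypothesis (b) by the inductive assumption — so Corollary~\ref{cor14}(1) gives $\Delta^{\zeta/2}U^{\Gamma}(\mu|_{J_{k}})\in L^{q}(\R)$ for every $\zeta\in(0,\min\{1,\alpha+\gamma_{k}\})$ and every $q\in(1/\zeta,\infty)$. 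I would then take $\zeta=\zeta_{k}:=\min\{1,\alpha+\gamma_{k}\}-\eta_{k+1}\in(0,1)$ and a finite $q=q_{k}:=\max\{1/\zeta_{k}+1,\,1/\eta_{k+1}\}$, which depends on $\alpha$ only, and apply Proposition~\ref{prop8} with $\beta=1-\zeta_{k}\in(0,1)$ to $\mu|_{J_{k}}\in\mathcal{M}(\R)$ (this membership being Lemma~\ref{lemma12}); since $q_{k}$ depends only on $\alpha$, the smallness of $\mathrm{Lip}(A)$ required here depends only on $\alpha,d$. This yields $\|\Delta^{(\zeta_{k}-1)/2}(\mu|_{J_{k}})\|_{L^{q_{k}}}\lesssim\|\Delta^{\zeta_{k}/2}U^{\Gamma}(\mu|_{J_{k}})\|_{L^{q_{k}}}<\infty$, whence Proposition~\ref{frostmanProp} (applied with exponents $\zeta_{k}$ and $q_{k}$) gives $\mu|_{J_{k}}(B(x,r))\lesssim r^{\zeta_{k}-1/q_{k}}$ for all $x\in\R$, $r\in(0,1]$. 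Setting $\gamma_{k+1}:=\zeta_{k}-1/q_{k}\ge\min\{1,\alpha+\gamma_{k}\}-2\eta_{k+1}$, and noting that $B(x,r)\subset J_{k}$ whenever $x\in J_{k+1}$ and $r<\mathrm{dist}(J_{k+1},\R\setminus J_{k})=:d_{k}>0$ — so that $\mu(B(x,r))=\mu|_{J_{k}}(B(x,r))$ there, while for $r\ge d_{k}$ we use the trivial bound $\mu(B(x,r))\le\|\mu\|$ — one obtains the level-$(k+1)$ estimate on $J_{k+1}$, with $C_{k+1}$ finite. Unrolling the recursion, an elementary induction using $\alpha+\min\{1,k\alpha\}\ge\min\{1,(k+1)\alpha\}$ gives $\gamma_{N}\ge\min\{1,N\alpha\}-2\sum_{k=1}^{N}\eta_{k}=1-\alpha/2$, and $\gamma_{N}<\zeta_{N-1}<1$. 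Since $J_{N}=I$, this is exactly \eqref{eq:Frost-gamma} with $\gamma:=\gamma_{N}\in(1-\alpha,1)$ and $C_{\gamma}:=C_{N}$.

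The routine points — differentiation under the integral sign, the localisation from $\mu|_{J_{k}}$ to $\mu$ near the centres, and the book-keeping of constants — are all contained in the cited results. The one place that needs genuine care is the \emph{scheduling}: one must fix the finitely many auxiliary exponents $q_{0},\dots,q_{N-1}$ in terms of $\alpha$ alone, so that a single threshold $\delta=\delta(\alpha,d)$ (the minimum of the thresholds from Lemmas~\ref{PLemma}--\ref{RLemma} and from the $N$ applications of Proposition~\ref{prop8}) is enough for all the invocations; and one must keep the accumulated loss $2\sum_{k\ge1}\eta_{k}$ strictly below $\alpha$, so that the exponent genuinely crosses $1-\alpha$ after $N=\lceil1/\alpha\rceil$ steps while the interval has been shrunk only finitely often and still contains $I$. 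This scheduling, rather than any individual estimate, is the main obstacle; everything else is a direct assembly of the machinery already developed in the paper.
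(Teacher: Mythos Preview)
Your proof is correct and follows the same bootstrapping strategy as the paper: induct on the Frostman exponent using Corollary~\ref{cor14}(1), Proposition~\ref{prop8}, and Proposition~\ref{frostmanProp}. The paper's execution is a bit tidier in two respects. First, the paper iterates on the \emph{single} interval $I$ throughout, rather than a shrinking chain $J_{0}\supsetneq\cdots\supsetneq J_{N}=I$: since $\mu(B(z_{0},r))\lesssim r$ already at both endpoints of $I$, the large-$r$ range at every step is handled by the observation $\mu(B(x,r))\le\mu(B(x,|x-x_{0}|))+\mu(B(x_{0},2r))\lesssim r^{\gamma+\alpha/2}+r$, so no nested family or extra good endpoints are needed. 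Second, the paper fixes one pair of parameters $\zeta=\gamma+\tfrac{3\alpha}{4}$, $q=4/\alpha$ for \emph{every} step, giving a clean gain of exactly $\alpha/2$ per iteration and making the ``single $\delta(\alpha,d)$'' requirement automatic (Proposition~\ref{prop8} is only ever invoked with $p=4/\alpha$). Your varying $q_{k}$'s and summable losses $\eta_{k}$ accomplish the same thing but with more bookkeeping; the nested intervals are not wrong, just unnecessary.
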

\begin{proof}
	We prove \eqref{eq:Frost-gamma} by induction on $\gamma$. The inductive assumption is that $\mu$ satisfies \eqref{eq:Frost-gamma} for some $\gamma\in [0,1-\alpha]$.	
	We will show that this implies \eqref{eq:Frost-gamma} for $\gamma+{\alpha}/{2}$, that is
	\begin{equation}\label{eq:Frost2}
		\mu(B(x,r)) \le C_{\gamma+\alpha/2}\, r^{\gamma+\alpha/2} \quad\text{for all $x \in I$ and $r \in (0,1)$.}
	\end{equation}
	Once this is done, we get \eqref{eq:Frost-gamma} for some $\gamma\in (1-\alpha,1)$ after $\sim \alpha^{-1}$ many application of the inductive step. Note that the iteration can start because \eqref{eq:Frost-gamma} is trivially satisfied for $\gamma=0$.
	
	We begin the proof of \eqref{eq:Frost2}. Using our inductive assumption we can apply  Corollary \ref{cor14}. Conclusion (1) of that corollary gives $\Delta^{\zeta/2}U^{\Gamma}(\mu|_{I}) \in L^{q}(\R)$ for all $\zeta \in (0,\gamma+\alpha)$ and all $q \in (1/\zeta,\infty)$. We choose $\zeta = \gamma + \frac{3\alpha}{4}$ and $q=4/\alpha$.
	
	Now we apply Proposition \ref{prop8} to $\mu|_I$ with $\beta=1-\zeta$ and $p=q=4/\alpha$. Assuming $\lip(A)\le\delta(\alpha,d)$ is small enough we may conclude $\Delta^{(\zeta-1)/2}(\mu|_I)\in L^q$ for all $q \in (1/\zeta,\infty)$. Using Proposition \ref{frostmanProp} we arrive at
	\begin{equation*}
		\mu(I\cap B(x,r))\lesssim_{\zeta} \|\Delta^{(\zeta-1)/2}(\mu|_I)\|_{L^q}\cdot r^{\zeta-1/q}\quad x\in\R, r\in (0,1),
	\end{equation*}
	where $\Delta^{(\zeta - 1)/2}(\mu|_{I})$ refers to a convolution with a Riesz kernel, as in \eqref{form161}. Recalling that $\zeta = \gamma + \frac{3\alpha}{4}$ and $q=4/\alpha$ we get the desired Frostman condition \eqref{eq:Frost2} for $x\in I$ and $r\in (0,\min(|x-x_0|, |x-y_0|))$. 
	
	To get the estimate for the full range of radii $r\in (0,1)$ we use \eqref{eq:good-endpoints}. Let $x\in I$ and $r> \min(|x-x_0|, |x-y_0|)$. Without loss of generality assume the minimum is equal to $|x-x_0|$. Then 
	\begin{equation*}
		\mu(B(x,r))\le \mu(B(x, |x-x_0|)) + \mu(B(x_0, r))\\
		\lesssim |x-x_0|^{\gamma+\alpha/2} + r\lesssim r^{\gamma+\alpha/2},
	\end{equation*}
	with the implicit constants depending on $\|\Delta^{(\zeta-1)/2}(\mu|_I)\|_{L^q}$ and \eqref{eq:good-endpoints}. This completes the proof of \eqref{eq:Frost2}.
\end{proof}

We are ready to complete the proof of Theorem \ref{mainTechnical}.
\begin{lemma}
	If $\delta(p,d)$ is chosen small enough, then $\mu\in L^p(I)$.
\end{lemma}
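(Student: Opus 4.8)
The plan is to feed the Frostman gain from Lemma~\ref{lem:int-Frost} into Corollary~\ref{cor14}(2), and then into Proposition~\ref{prop8} with $\beta = 0$.

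First I would invoke Lemma~\ref{lem:int-Frost}: assuming $\delta$ is below the threshold $\delta(\alpha,d)$ of that lemma, there exist $\gamma \in (1-\alpha,1)$ and $C_\gamma > 1$ with $\mu(B(x,r)) \le C_\gamma r^\gamma$ for all $x \in I$ and $r \in (0,1)$. The essential point is that the gained exponent satisfies $\alpha + \gamma > 1$, which is exactly the threshold appearing in part~(2) of Corollary~\ref{cor14}. Next I would check that $\mu$, the interval $I = [x_0,y_0] \subset I_0$, and this $\gamma$ meet all the hypotheses of Corollary~\ref{cor14}: the hypotheses of Theorem~\ref{mainTechnical} on $I_0$ are in force by assumption, hypothesis (a) of the corollary is the endpoint bound \eqref{eq:good-endpoints}, and hypothesis (b) is precisely \eqref{eq:Frost-gamma}. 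Since $\alpha + \gamma > 1$, conclusion~(2) of Corollary~\ref{cor14} yields $\Delta^{1/2}U^{\Gamma}(\mu|_I) \in L^q(\R)$ for every $q \in (1,\infty)$, in particular for the exponent $p$ fixed in Theorem~\ref{mainTechnical}.

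Finally I would apply Proposition~\ref{prop8} to the measure $\mu|_I$, which belongs to $\mathcal{M}(\R)$ by Lemma~\ref{lemma12}, with $\beta = 0$ and $p \in (1,\infty) = (1/(1-\beta),\infty)$. Provided $\lip(A)$ is below the threshold depending on $d$ and $p$ required by that proposition, we obtain $\|\mu|_I\|_{L^p} = \|\Delta^{-\beta/2}(\mu|_I)\|_{L^p} \lesssim_p \|\Delta^{1/2}U^{\Gamma}(\mu|_I)\|_{L^p} < \infty$. Hence $\mu|_I$ is absolutely continuous with density in $L^p(\R)$, i.e.\ $\mu \in L^p(I)$. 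To conclude, one takes $\delta = \delta(p,\alpha,d)$ to be the smallest of the thresholds demanded by Lemmas~\ref{PLemma}, \ref{RLemma}, \ref{lem:int-Frost} and Proposition~\ref{prop8}; all of these depend only on $p,\alpha,d$, never on $\mu$ or on $I$.

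There is no substantial obstacle remaining: the analytic work was carried out in Sections~\ref{s:holder}--\ref{s:Lpfractional} and in Lemma~\ref{lem:int-Frost}. The only care needed is bookkeeping — verifying that the several smallness requirements on $\lip(A)$ are mutually compatible and quantified purely in terms of $p,\alpha,d$ — together with the observation that the conclusion $\|\Delta^{-0/2}(\mu|_I)\|_{L^p} < \infty$ of Proposition~\ref{prop8} is exactly the statement that $\mu|_I$ is an $L^p$ function, which restricts to the asserted $\mu \in L^p(I)$.
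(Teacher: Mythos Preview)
Your proposal is correct and follows essentially the same route as the paper: invoke Lemma~\ref{lem:int-Frost} to get $\alpha+\gamma>1$, apply Corollary~\ref{cor14}(2) to obtain $\Delta^{1/2}U^{\Gamma}(\mu|_I)\in L^p(\R)$, then apply Proposition~\ref{prop8} with $\beta=0$. Your write-up is a bit more explicit about checking the hypotheses (e.g.\ citing Lemma~\ref{lemma12} for $\mu|_I\in\mathcal{M}(\R)$ and spelling out the endpoint condition), but the argument is identical to the paper's.
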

\begin{proof}
	Thanks to Lemma \ref{lem:int-Frost} we can apply Corollary \ref{cor14} in the case $\alpha+\gamma>1$. Thus, conclusion (2) of that corollary gives $\Delta^{1/2}U^{\Gamma}(\mu|_{I}) \in L^{p}(\R)$ for all $p \in (1,\infty)$.
	
	Now we apply Proposition \ref{prop8} with $\beta=0$. If $\lip(A)\le\delta(p,d)$ is small enough we conclude $\|\mu|_I\|_{L^p}\lesssim \|\Delta^{1/2}U^{\Gamma}(\mu|_{I})\|_{L^p}<\infty$.\end{proof}

\section{Truncated operators $T^{\Gamma,\epsilon,R}_{\beta}$}\label{s:TGammaBeta}
Let $A \colon \R \to \R^{d - 1}$ be a Lipschitz function, and $\Gamma(x) \coloneqq (x,A(x))$. The purpose of Sections \ref{s:TGammaBeta}--\ref{s:limit operators} is to prove Proposition \ref{prop8}, which says that if $p \in (1/(1 - \beta),\infty)$, and $\lip(A) \leq \delta(d,p)$, then
\begin{equation}\label{eq:goal}
	\|\Delta^{-\beta/2}\mu\|_{L^{p}} \lesssim \|\Delta^{(1 - \beta)}U^{\Gamma}\mu\|_{L^{p}}.
\end{equation}

\subsection{Outline of the proof of Proposition \ref{prop8}}\label{s:outlineTGammaBeta} 
The estimate \eqref{eq:goal} will be proved by studying a family operators $T_{\beta}^{\Gamma}$, for $\Rea \beta \in [0,1]$, which are formally defined by
\begin{equation}\label{form66} T_{\beta}^{\Gamma} = \Delta^{(1 - \beta)/2}U^{\Gamma}\Delta^{\beta/2}. \end{equation}
These operators were already mentioned in Section \ref{s:outline}. Here $\Delta^{(1 - \beta)/2}$ and $\Delta^{\beta/2}$ are fractional Laplacians, and $U^{\Gamma}$ is the logarithmic graph potential from Definition \ref{defGraphPotential}. It takes a lot of work to make rigorous sense of the formula \eqref{form66}, and to prove everything we need about the operators $T_{\beta}^{\Gamma}$. We now give an outline of the steps involved.
\begin{itemize}
\item[(a)] In Section \ref{s:truncations}, we define doubly truncated versions of the operators $T_{\beta}^{\Gamma}$, denoted $T^{\Gamma,\epsilon,R}_{\beta}$. These truncations are initially defined $\dot{H}^{\beta} \to \mathcal{S}'(\R)$. The main result of Section \ref{s:truncations} is Proposition \ref{prop5} which shows that the operator family 
\begin{displaymath} \{T^{\Gamma,\epsilon,R}_{\beta}\}_{\beta} \end{displaymath}
is analytic in the strip $\Rea \beta \in (0,1)$, and continuous in the strip $\Rea \beta \in [0,1]$.
\item[(b)] In Section \ref{s:FlatCase} we show that if $\Gamma$ is "flat" (that is: $A \equiv 0$), the operators $T_{\beta}^{\Gamma,\epsilon,R}$ converge weakly to a multiple of the identity as $\epsilon \to 0$ and $R \to \infty$. 
\end{itemize}
The previous points contained the definitions and some basic features of the operators $T_{\beta}^{\Gamma,\epsilon,R}$, but did not yet shed light on their mapping properties. As the next step we include Section \ref{s:CZOPreliminaries} where we recap notions and results from \emph{Calder\'on-Zygmund theory}.
\begin{itemize}
\item[(c)] In Section \ref{s:L2Bounds} we will use the theory from Section \ref{s:CZOPreliminaries} to prove $L^{p} \to L^{p}$ bounds for the operators $T^{\Gamma,\epsilon,R}_{\beta}$, which are uniform in $\epsilon,R$. This will allow (or would allow) us to define the operators 
\begin{displaymath} T^{\Gamma}_{\beta} \colon L^{p}(\R) \to L^{p}(\R) \end{displaymath}
as weak limits of the operators $T^{\Gamma,\epsilon,R}_{\beta}$ along subsequences $\epsilon_{j},R_{j}$. In particular, by (c), this limit is a multiple of the identity operator in the case where $\Gamma$ is "flat". For technical convenience, the definition of the operators $T_{\beta}^{\Gamma}$ is postponed a bit further, until Theorem \ref{thm1}.
\item[(d)] In Section \ref{s:LpDifferences}, we will prove that 
\begin{displaymath} \|T_{\beta}^{\Gamma_{t}} - T_{\beta}^{\Gamma_{s}}\|_{L^{p} \to L^{p}} \lesssim_{d,p} \mathrm{Lip}(A) \cdot |t - s|, \quad p \in (1,\infty), \, \beta \in [0,1], \, s,t \in [0,1]. \end{displaymath}
Here $\Gamma_{s}$ is the graph of the Lipschitz function $A(x) = sA(x)$, so in particular $\Gamma_{0}$ is the "flat" graph. By the last conclusion in (d), this will imply that $T_{\beta}^{\Gamma} = T_{\beta}^{\Gamma_{1}}$ is invertible on $L^{p}(\R)$ if $\mathrm{Lip}(A)$ is sufficiently small, depending on $d,p$.
\item[(e)] In Section \ref{s:limit operators} we finally derive \eqref{eq:goal} as the main corollary of the $L^{p}$-invertibility of the operators $T_{\beta}^{\Gamma}$: \eqref{eq:goal} holds whenever $\mu \in \mathcal{M}(\R)$, and $\mathrm{Lip}(A)$ is so small that $T_{\beta}^{\Gamma}$ is invertible on $L^{p}(\R)$. \end{itemize}

\subsection{The truncated operators $T^{\Gamma,\epsilon,R}_{\beta}$}\label{s:truncations} We now start in earnest the program of defining and studying the operators $T^{\Gamma}_{\beta}$. The first step is to define the truncated versions $T^{\Gamma,\epsilon,R}_{\beta}$, and for that purpose we need suitable truncations of the kernel $x \mapsto -\log |x|$. On seeing the complicated statement in Lemma \ref{truncationLemma}, the reader may wonder why we do not simply define $\log_{\epsilon,R}(x) := -\varphi_{\epsilon,R}(x)\log |x|$, where $\varphi_{\epsilon,R}$ is a standard cut-off function. The reason is a peculiarity of the logarithmic kernel (in contrast to e.g. positive-order Riesz kernels): with the definition above, the derivatives of $\log_{\epsilon,R}$ would not satisfy the correct uniform decay bounds stated in Lemma \ref{truncationLemma}(iv). 
\begin{lemma}\label{truncationLemma} For each $0 < \epsilon \leq 1 \leq R$, there exists a function 
\begin{displaymath} \log_{\epsilon,R} \in C^{4}(\R \, \setminus \, \{0\}) \cap \mathrm{Lip}(\R) \end{displaymath}
with $\spt \log_{\epsilon,R} \subset \bar{B}(2R)$, and a constant $C_{R} \in \R$, depending only on $R$, with $|C_{R}| \lesssim \log R$, with the following properties:
\begin{itemize}
\item[(i)] $\log_{\epsilon,R}(x) = \log |x|^{-1} + C_{R}$ for $\epsilon \leq |x| \leq R$,
\item[(ii)] The restriction of $x \mapsto \log_{\epsilon,R}(x)$ to $\R \, \setminus \, [-R,R]$ is independent of $\epsilon$, and satisfies
\begin{displaymath} \|\log_{\epsilon,R}\|_{L^{\infty}(\R \, \setminus \, [-R,R])} \lesssim \log R, \end{displaymath}
\item[(iii)] The restriction of $x \mapsto \log_{\epsilon,R}(x) - C_{R}$ to $[-\epsilon,\epsilon]$ is independent of $R$, and satisfies 
\begin{displaymath} \|\log_{\epsilon,R} - C_{R}\|_{L^{\infty}([-\epsilon,\epsilon])} \lesssim 1 + \log \epsilon^{-1}, \end{displaymath}
\item[(iv)] The derivative 
\begin{displaymath} x \mapsto k_{\epsilon,R}(x) \coloneqq \partial_{x} \log_{\epsilon,R}(x) \in C^{3}_{c}(\R \, \setminus \, \{0\})
\end{displaymath} is a bounded odd function satisfying $\spt k_{\epsilon,R} \subset \bar{B}(2R)$, $\|k_{\epsilon,R}\|_{L^{\infty}(\R)} \lesssim \epsilon^{-1}$, and 
\begin{displaymath} |\partial_{x}^{j} k_{\epsilon,R}(x)| \lesssim |x|^{-1 - j}, \qquad x \in \R \, \setminus \, \{0\}, \, j \in \{0,1,2,3\}. \end{displaymath}
\end{itemize}
All the implicit "$\lesssim$" constants above are absolute.
\end{lemma}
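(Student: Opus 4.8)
The plan is to construct the \emph{derivative} $k_{\epsilon,R} := \partial_x \log_{\epsilon,R}$ first, and then recover $\log_{\epsilon,R}$ by integration. This order is essential: one cannot simply take $\log_{\epsilon,R} = -\varphi_{\epsilon,R}\log|x|$ for a cutoff $\varphi_{\epsilon,R}$, because the derivative would contain the term $-(\partial_x\varphi_{\epsilon,R})\log|x|$, which near the inner scale $|x|\sim\epsilon$ has size $\sim\epsilon^{-1}\log\epsilon^{-1}$ and so violates the decay bound $|k_{\epsilon,R}(x)|\lesssim|x|^{-1}$ demanded in (iv). Constructing $k_{\epsilon,R}$ directly fixes this: I would make $k_{\epsilon,R}$ \emph{exactly} $-1/x$ on the "good" range $\epsilon\le|x|\le R$, and interpolate by rescaled fixed profiles on the two transition regions $|x|\le\epsilon$ and $R\le|x|\le 2R$, where the required bounds become scale invariant and hence automatic.

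Concretely, I would fix once and for all two functions $\phi\in C^3([0,1])$ and $\psi\in C^3([1,2])$ with the following properties: $\phi$ vanishes on a neighbourhood of $0$ (so that $\spt k_{\epsilon,R}$ will stay away from $0$); the $3$-jet of $\phi$ at $1$ and the $3$-jet of $\psi$ at $1$ both equal the $3$-jet of $t\mapsto -1/t$ at $t=1$, namely $(-1,1,-2,6)$; the $3$-jet of $\psi$ at $2$ vanishes; and $\int_1^2\psi(s)\,ds=0$ (a normalisation explained below). These are finitely many linear constraints, easily satisfied by a polynomial or spline, and the resulting $C^3$-norms of $\phi,\psi$ are absolute constants. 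Then set, for $t>0$,
\begin{equation*}
	k_{\epsilon,R}(t) := \begin{cases} \epsilon^{-1}\phi(t/\epsilon), & 0<t\le\epsilon,\\ -1/t, & \epsilon\le t\le R,\\ R^{-1}\psi(t/R), & R\le t\le 2R,\\ 0, & t\ge 2R, \end{cases}
\end{equation*}
and extend $k_{\epsilon,R}$ to an odd function on $\R\setminus\{0\}$. The jet-matching makes the four pieces glue $C^3$, so $k_{\epsilon,R}\in C^3_c(\R\setminus\{0\})$; it is odd, supported in $\bar B(2R)$, and $\|k_{\epsilon,R}\|_{L^\infty}\lesssim\epsilon^{-1}$ (using $R\ge 1\ge\epsilon$ on the third piece). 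The decay bounds in (iv) then fall out by scaling: on $[\epsilon,R]$ one has $|\partial^j k_{\epsilon,R}(t)|=j!\,t^{-1-j}$; on $(0,\epsilon]$, $|\partial^j k_{\epsilon,R}(t)|=\epsilon^{-1-j}|\phi^{(j)}(t/\epsilon)|\lesssim\epsilon^{-1-j}\le t^{-1-j}$ since $t\le\epsilon$; and on $[R,2R]$, $|\partial^j k_{\epsilon,R}(t)|=R^{-1-j}|\psi^{(j)}(t/R)|\lesssim R^{-1-j}\sim t^{-1-j}$ since $t\sim R$.

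Next I would \emph{define} $\log_{\epsilon,R}$ to be the even function given for $x\ge0$ by $\log_{\epsilon,R}(x):=-\int_x^{\infty}k_{\epsilon,R}(t)\,dt=-\int_x^{2R}k_{\epsilon,R}(t)\,dt$. By construction $\partial_x\log_{\epsilon,R}=k_{\epsilon,R}$ on $\R\setminus\{0\}$, so $\log_{\epsilon,R}\in C^4(\R\setminus\{0\})$; since $k_{\epsilon,R}$ vanishes near $0$, $\log_{\epsilon,R}$ is constant near $0$ and in particular lies in $\mathrm{Lip}(\R)$; and $\spt\log_{\epsilon,R}\subset\bar B(2R)$. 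Properties (i)--(iii) are then read off by elementary integration. For $\epsilon\le x\le R$, splitting the integral at $R$ gives $\log_{\epsilon,R}(x)=\int_x^R t^{-1}\,dt-\int_1^2\psi(s)\,ds=\log|x|^{-1}+\log R$, which is (i) with $C_R:=\log R$ (here the normalisation $\int_1^2\psi=0$ is used, and $|C_R|=\log R$). For $R\le x\le 2R$, the substitution $s=t/R$ gives $\log_{\epsilon,R}(x)=-\int_{x/R}^2\psi(s)\,ds$, depending on $x$ only through $x/R$, independent of $\epsilon$, and bounded by $\|\psi\|_{L^1([1,2])}\lesssim 1$, which is (ii). For $0\le x\le\epsilon$, the substitution $u=t/\epsilon$ gives $\log_{\epsilon,R}(x)-C_R=\log\epsilon^{-1}-\int_{x/\epsilon}^1\phi(u)\,du$, depending on $x$ only through $x/\epsilon$, independent of $R$, and bounded by $1+\log\epsilon^{-1}$, which is (iii).

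I do not expect a serious obstacle: the lemma is a careful packaging of elementary cutoff constructions, and all the work sits in choosing the building blocks $\phi,\psi$ so that the pieces glue and scale correctly. The two points needing genuine care are precisely those the statement is designed to record: first, that the $j=0$ case of (iv) forces us to prescribe $k_{\epsilon,R}$ directly rather than differentiate a cutoff of $\log|x|$, so that the transition contributions are manufactured to obey $|k_{\epsilon,R}|\lesssim|x|^{-1}$; and second, the bookkeeping of $C_R$, which is pinned to $\log R$ by the condition $\int_1^2\psi=0$ (for $R$ in a bounded range the constants appearing in (ii) are absolute, consistent with the stated $\lesssim\log R$).
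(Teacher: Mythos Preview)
Your proof is correct and follows essentially the same strategy as the paper: a piecewise construction with $C^{4}$ (equivalently, $C^{3}$ for the derivative) jet-matching at the transition points $|x|=\epsilon$ and $|x|=R,2R$, exploiting that the required bounds in (iv) are scale-invariant on the transition annuli. The execution differs in presentation. The paper constructs $\log_{\epsilon,R}$ directly, writing down explicit polynomial and rational profiles $\mathfrak{p},\mathfrak{h}$ with numerical coefficients (found by computer algebra) that match the $4$-jet of $\log|x|^{-1}$ at the junctions; the derivative bounds (iv) are then checked by hand from the explicit formulae. You instead build the derivative $k_{\epsilon,R}$ first from abstract rescaled $C^{3}$ profiles $\phi,\psi$ satisfying finitely many jet and moment conditions, and recover $\log_{\epsilon,R}$ by integration from $2R$. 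Your route makes the scaling mechanism behind (iv) transparent and avoids any explicit constants; the additional normalisation $\int_{1}^{2}\psi=0$ even yields $C_{R}=\log R$ exactly, which is cleaner than the paper's $C_{R}=\log R+\mathfrak{h}(R)-\mathfrak{h}(2R)$. The paper's version has the minor advantage of being fully explicit. Both are valid realisations of the same idea.
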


\begin{remark}\label{rem2} The construction in Lemma \ref{truncationLemma} is a "double truncation". Sometimes it is sufficient to truncate only from the "$\epsilon$-side". In this case, the construction below yields a (\textbf{not} compactly supported) function $\log_{\epsilon} \in C^{4}(\R \, \setminus \, \{0\}) \cap \mathrm{Lip}(\R)$ with the property that $\log_{\epsilon}(x) = \log |x|^{-1}$ for $|x| \geq \epsilon$, $\|\log_{\epsilon}\|_{L^{\infty}([-\epsilon,\epsilon])} \lesssim 1 + \log \epsilon^{-1}$, and such that the derivative $k_{\epsilon} = \partial \log_{\epsilon}$ satisfies all the properties of (iv), except for the claim about compact support.  \end{remark} 

\begin{proof}[Proof of Lemma \ref{truncationLemma}] We first define the following coefficients (found with the kind assistance of \emph{Maxima}):
\begin{displaymath} 
	\begin{cases} a_1 \coloneqq -\tfrac{4}{\epsilon}, \\ a_2 \coloneqq \tfrac{3}{\epsilon^{2}}, \\ a_3 \coloneqq -\tfrac{4}{3\epsilon^{3}},\\ a_4\coloneqq \tfrac{1}{4\epsilon^4}
	\end{cases} \quad \text{and} \quad  
	\begin{cases} b_0 \coloneqq -209, \\ b_1 \coloneqq -2240 \cdot R, \\ b_2 \coloneqq 5040 \cdot R^{2}, \\ b_3 \coloneqq -\tfrac{24640}{3} \cdot R^{3}, \\ b_4 \coloneqq 8820 \cdot R^{4}, \\ b_5 \coloneqq -\tfrac{29568}{5} \cdot R^{5},\\ b_6\coloneqq 2240\cdot R^6,\\ b_7\coloneqq-\tfrac{2560}{7}\cdot R^7
\end{cases} 
\end{displaymath} 
Next, we define the auxiliary functions
\begin{displaymath} \mathfrak{p}(x) \coloneqq a_1|x| + a_2|x|^{2} + a_3|x|^{3} + a_4|x|^4, \qquad x \in \R, \end{displaymath} 
and
\begin{displaymath} \mathfrak{h}(x) \coloneqq b_0 \cdot \log |x| + \sum_{i=1}^7 \frac{b_i}{|x|^i}, \qquad x \in \R \, \setminus \, \{0\}.\end{displaymath}
We define the coefficient 
\begin{displaymath} C_{R} \coloneqq \log R + \mathfrak{h}(R) - \mathfrak{h}(2R), \end{displaymath}
and finally the function $\log_{\epsilon,R} \colon \R \to \R$:
\begin{displaymath} \log_{\epsilon,R}(x) \coloneqq \begin{cases} \mathfrak{p}(x) + \log \epsilon^{-1} + C_{R} - \mathfrak{p}(\epsilon), & |x| \leq \epsilon, \\ \log |x|^{-1} + C_{R}, & \epsilon \leq |x| \leq R, \\ \mathfrak{h}(x) - \mathfrak{h}(2R), & R \leq |x| \leq 2R, \\ 0, & |x| \geq 2R. \end{cases} \end{displaymath} 
The coefficients $a_i$ were chosen so that the $1^{st}$, $2^{nd}$, $3^{rd}$, and $4^{th}$ derivatives of $\mathfrak{p}$ coincide with the respective derivatives of $x \mapsto \log |x|^{-1}$ at the points $x \in \{-\epsilon,\epsilon\}$. Similarly, the coefficients $b_i$ were chosen so that the $1^{st}$, $2^{nd}$, $3^{rd}$, and $4^{th}$ derivatives of $\mathfrak{h}$:
\begin{itemize}
\item[(a)] coincide with the respective derivatives of $x \mapsto \log |x|^{-1}$ at the points $x \in \{R,-R\}$.
\item[(b)] vanish at $x \in \{2R,-2R\}$.
\end{itemize}
Since furthermore $\log_{\epsilon,R}$ is (thanks to all the constants in the definition) is continuous at $|x| \in \{\epsilon,R,2R\}$, we conclude that
\begin{displaymath} \log_{\epsilon,R} \in C^{4}(\R \, \setminus \, \{0\}) \quad \text{and} \quad \spt \log_{\epsilon,R} \subset \bar{B}(2R). \end{displaymath}
Evidently $\log_{\epsilon,R} \in \mathrm{Lip}(\R)$, since $\mathfrak{p}$ is locally Lipschitz continuous at $0$. The points (i)-(iii) in the statement of Lemma \ref{truncationLemma} now follow by straightforward inspection of the definition of $\log_{\epsilon,R}$, and the definitions of the coefficients $a_i, b_i$.

Regarding part (iv), write $k_{\epsilon,R}(x) \coloneqq \partial_{x} \log_{\epsilon,R}(x)$ for $x \in \R \, \setminus \, \{0\}$, thus
\begin{displaymath}  k_{\epsilon,R}(x) = \begin{cases} \tfrac{a_1x}{|x|} + 2a_2x + \tfrac{3a_3x^{3}}{|x|} + 4a_4 x^3, & 0 < |x| \leq \epsilon, \\
 -\frac{1}{x}, & 0 < |x| \leq R, \\ \frac{b_0}{x} + \sum_{i=1}^7 \frac{i b_i \sgn(x)}{|x|^{i+1}}, & R \leq |x| \leq 2R, \\ 0, & |x| \geq 2R. \end{cases} \end{displaymath} 
The derivative of an even function is always odd, so $k_{\epsilon,R}$ is an odd function (this is also apparent from the formula). As discussed around (a)-(b) above, the coefficients $a_i, b_i$ were chosen in such a way that $k_{\epsilon,R} \in C^{3}(\R \, \setminus \, \{0\})$. Furthermore, it follows from the definition of the coefficients $a_i, b_i$ that
\begin{displaymath} |\partial_{x}^{j}k_{\epsilon,R}(x)| \lesssim |x|^{-1 - j}, \qquad x \in \R \, \setminus \, \{0\}, \, j \in \{0,1,2\}. \end{displaymath}
Just to give an example of the relevant computations, note that for $x \in [-\epsilon,\epsilon] \, \setminus \, \{0\}$,
\begin{displaymath} |k_{\epsilon,R}(x)| \leq |a_1| + 2|a_2|\epsilon + 3|a_3|\epsilon^{2} +4|a_4|\epsilon^{3} \lesssim \epsilon^{-1} \leq |x|^{-1}, \end{displaymath}
and $|k_{\epsilon,R}'(x)| \leq 2|a_2| + 6|a_3|\epsilon +12|a_4|\epsilon^2 \lesssim \epsilon^{-2} \leq |x|^{-2}$. Further similar computations complete the proof of Lemma \ref{truncationLemma}. \end{proof}

We now define truncated a version of the logarithmic potential $U^{\Gamma}$ (Definition \ref{defGraphPotential}):

\begin{definition} Let $A \colon \R \to \R^{d - 1}$ be Lipschitz, and write $\Gamma(x) = (x,A(x))$. For $0 < \epsilon \leq 1 \leq R < \infty$, let $U^{\Gamma,\epsilon,R}$ be the operator defined by
\begin{equation}\label{form55} U^{\Gamma,\epsilon,R}f(x) \coloneqq \int f(y) \log_{\epsilon,R} |\Gamma(x) - \Gamma(y)| \, dy, \qquad f \in L^{p}(\R), \, 1 \leq p \leq \infty. \end{equation} 
Here $\log_{\epsilon,R} \in C^{4}(\R \, \setminus \, \{0\})$ is the function defined in Lemma \ref{truncationLemma}. \end{definition}
The definition also makes sense if $f$ is replaced by any finite measure $\mu$, for example $\mu \in \mathcal{M}(\R)$. Since the kernel $\log_{\epsilon,R} |\Gamma(x) - \Gamma(y)|$ is uniformly integrable (with bound depending only on $R$) in both the $x$ and $y$ variables, Schur's test implies that $U^{\Gamma,\epsilon,R}$ defines a bounded operator on $L^{p}(\R)$ with $\|U^{\Gamma,\epsilon,R}f\|_{L^{p}} \lesssim_{\epsilon,R} \|f\|_{L^{p}}$, $1 \leq p \leq \infty$.

We next aim to define carefully the operators $T^{\Gamma,\epsilon,R}_{\beta} = \Delta^{(1 - \beta)/2}U^{\Gamma,\epsilon,R}\Delta^{\beta/2}$. For this purpose, recall Definitions \ref{def3}-\ref{def4}. We first define $U^{\Gamma,\epsilon,R}\Delta^{\beta/2}$.

\begin{definition}[$U^{\Gamma,\epsilon,R}\Delta^{\beta/2}$ acting on $\dot{H}^{\beta}$]\label{def1} Let $\beta \in \C$ with $\Rea \beta > -1$. Fix $\Lambda \in \dot{H}^{\beta}$, so $f = \Delta^{\beta/2}\Lambda \in L^{2}(\R)$. Thus $U^{\Gamma,\epsilon,R}f$ may be pointwise defined by \eqref{form55}, and we set 
\begin{displaymath} (U^{\Gamma,\epsilon,R} \Delta^{\beta/2})\Lambda \coloneqq U^{\Gamma,\epsilon,R}f. \end{displaymath}
Note that $(U^{\Gamma,\epsilon,R}\Delta^{\beta/2})\Lambda \in L^{2}$, since $U^{\Gamma,\epsilon,R}$ is bounded on $L^{2}$.  \end{definition}  

We then define the operators $T_{\beta}^{\Gamma,\epsilon,R} = \Delta^{(1 - \beta)/2}U^{\Gamma,\epsilon,R}\Delta^{\beta/2}$ rigorously.

\begin{definition}[$T^{\Gamma,\epsilon,R}_{\beta}$ acting on $\dot{H}^{\beta}$]\label{def2} Let $\beta \in \C$ with $\Rea \beta \in (-1,1]$. Fix $\Lambda \in \dot{H}^{\beta}$, so $g \coloneqq (U^{\Gamma,\epsilon,R}\Delta^{\beta/2})\Lambda \in L^{2}(\R)$. We define 
\begin{displaymath} T^{\Gamma,\epsilon,R}_{\beta}\Lambda \coloneqq \Delta^{(1 - \beta)/2}g \in \mathcal{S}'(\R), \end{displaymath}
where the right hand side refers to Definition \ref{L2FracL} (note that $\Rea (1 - \beta) \geq 0$). \end{definition}

In summary, $T_{\beta}^{\Gamma,\epsilon,R}$ \emph{a priori} maps $\dot{H}^{\beta}$ to the space of tempered distributions. In Corollary \ref{cor6} we will see that $T_{\beta}^{\Gamma,\epsilon,R}$ extends to a bounded operator on $L^{p}(\R)$, indeed with operator norm independent of $\epsilon$ and $R$.

\begin{notation} We use the integral notation
\begin{displaymath} (T^{\Gamma,\epsilon,R}_{\beta}\Lambda)(g) =: \int T^{\Gamma,\epsilon,R}_{\beta}\Lambda \cdot g, \qquad g \in \mathcal{S}(\R). \end{displaymath} \end{notation} 

\begin{remark}\label{rem1} Let us spell out the meaning of Definition \ref{def2}. Fix $\Lambda \in \dot{H}^{\beta}$ and $g \in \mathcal{S}(\R)$. Then, writing "$\mathcal{F}$" for the $L^{2}$-Fourier transform,
\begin{align} \int T_{\beta}^{\Gamma,\epsilon,R}\Lambda \cdot g & \stackrel{\mathrm{Def. \ref{L2FracL}}}{=} \int |\xi|^{1 - \beta} \mathcal{F}[U^{\Gamma,\epsilon,R}(\Delta^{\beta/2}\Lambda)](\xi) \cdot \widecheck{g}(\xi) \, d\xi \notag\\
&\quad = \int \mathcal{F}[U^{\Gamma,\epsilon,R}(\Delta^{\beta/2}\Lambda)](\xi) \cdot |\xi|^{1 - \beta}\widecheck{g}(\xi) \, d\xi \notag\\
&\label{form56}\quad = \int U^{\Gamma,\epsilon,R}(\Delta^{\beta/2}\Lambda)(x) \cdot (\Delta^{(1 - \beta)/2}g)(x) \, dx, \end{align}
where the final equation follows from Plancherel. \end{remark} 

Lemma \ref{lemma3} implies a decay estimate for $U^{\Gamma,\epsilon,R}(\Delta^{\beta/2}f)$, when $f \in \mathcal{S}(\R)$:

\begin{lemma}\label{lemma4} Let $\Rea \beta \in (-1,2)$, $0 < \epsilon \leq 1 \leq R$, and $f \in \mathcal{S}(\R)$. Then,
\begin{displaymath} |U^{\Gamma,\epsilon,R}(\Delta^{\beta/2}f)(x)| \lesssim_{\beta,f,R} \min\{1,|x|^{-1 - \Rea \beta}\}, \qquad x \in \R. \end{displaymath}
Moreover, the implicit constants remain uniformly bounded if $f \in \mathcal{S}(\R)$ and $\epsilon,R$ are fixed, while $\beta$ ranges in a compact subset of the strip $-1 < \Rea \beta < 2$.
\end{lemma}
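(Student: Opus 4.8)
The plan is to combine the pointwise decay of $g \coloneqq \Delta^{\beta/2}f$ furnished by Lemma \ref{lemma3} with a uniform-in-$\epsilon$ integrability bound for the truncated kernel. Recall that Lemma \ref{lemma3} gives $|g(y)| \lesssim_{\beta,f}(1+|y|)^{-1-\Rea\beta}$ for all $y\in\R$, with the implicit constant uniform when $f$ is fixed and $\beta$ ranges over a compact subset of $\{-1<\Rea\beta<2\}$; since $\Rea\beta>-1$, the exponent $-1-\Rea\beta$ is negative, so this upper bound is nonincreasing in $|y|$. Also recall $\spt\log_{\epsilon,R}\subset\bar{B}(2R)$ and $|x-y|\le|\Gamma(x)-\Gamma(y)|\le(1+\lip A)|x-y|$, so the integrand defining $U^{\Gamma,\epsilon,R}(\Delta^{\beta/2}f)(x)$ is supported in $y\in B(x,2R)$.

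The first step I would carry out is the elementary kernel estimate
\begin{equation*} |\log_{\epsilon,R}(z)| \lesssim_{R} 1 + \bigl|\log|z|\bigr|, \qquad 0<|z|\le 2R, \end{equation*}
\emph{uniformly in $\epsilon$}. For $|z|\ge\epsilon$ this is immediate from Lemma \ref{truncationLemma}(i)--(ii) together with $|C_R|\lesssim\log R$; for $|z|<\epsilon$ one uses Lemma \ref{truncationLemma}(iii), the explicit form of $\mathfrak p$ (each monomial $a_j|z|^j$ is $O(1)$ on $[-\epsilon,\epsilon]$), and the key inequality $\log\epsilon^{-1}\le\log|z|^{-1}$ valid there. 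This is exactly where the $\log\epsilon^{-1}$ blow-up of $\|\log_{\epsilon,R}\|_{L^\infty}$ gets absorbed, and it is the one point requiring care. Combining this with the bi-Lipschitz comparison of $|\Gamma(x)-\Gamma(y)|$ and $|x-y|$ and the local integrability of $z\mapsto\log|z|^{-1}$ then gives
\begin{equation*} \int_{\R}\bigl|\log_{\epsilon,R}|\Gamma(x)-\Gamma(y)|\bigr|\,dy=\int_{B(x,2R)}\bigl|\log_{\epsilon,R}|\Gamma(x)-\Gamma(y)|\bigr|\,dy\lesssim_{R,\lip A}1, \end{equation*}
uniformly in $x$ and $\epsilon$.

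Finally I would split into two ranges of $x$. If $|x|\le 4R$, bound $|U^{\Gamma,\epsilon,R}(\Delta^{\beta/2}f)(x)|\le\|g\|_{L^\infty}\int_{B(x,2R)}|\log_{\epsilon,R}|\Gamma(x)-\Gamma(y)||\,dy\lesssim_{\beta,f,R}1$, and note $\min\{1,|x|^{-1-\Rea\beta}\}\gtrsim_R 1$ on this range (with a bound uniform over compact $\beta$-sets, since $\Rea\beta$ stays in a compact set and $4R\ge1$). If $|x|>4R$, then every $y$ in the effective domain $B(x,2R)$ satisfies $|y|\ge|x|/2$, so the monotonicity of the decay bound gives $|g(y)|\lesssim_{\beta,f}|x|^{-1-\Rea\beta}$; pulling this constant out of the integral and applying the kernel bound again yields $|U^{\Gamma,\epsilon,R}(\Delta^{\beta/2}f)(x)|\lesssim_{\beta,f,R}|x|^{-1-\Rea\beta}=\min\{1,|x|^{-1-\Rea\beta}\}$, because $|x|>4R\ge1$ and $\Rea\beta>-1$. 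Putting the two cases together proves the claimed bound; the uniformity over compact subsets of the strip follows from the corresponding uniformity in Lemma \ref{lemma3} together with the continuous (hence locally bounded) dependence on $\Rea\beta$ of the few $R$-dependent constants appearing above. The only genuine obstacle is the $\epsilon$-uniform kernel estimate in the second paragraph; everything else is routine bookkeeping.
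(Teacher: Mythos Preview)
Your proof is correct and follows essentially the same route as the paper: both arguments feed the decay estimate for $g=\Delta^{\beta/2}f$ from Lemma~\ref{lemma3} through the compactly supported kernel $\log_{\epsilon,R}$ to obtain the claimed decay for $U^{\Gamma,\epsilon,R}g$. The paper's version is terser---it simply writes $\int|g(y)||\log_{\epsilon,R}|\Gamma(x)-\Gamma(y)||\,dy\lesssim_R\|g\|_{L^\infty(B(x,2R))}$ and then invokes $|y|\sim_R|x|$ for $|x|\ge 3R$---whereas you spell out the $\epsilon$-uniform pointwise bound $|\log_{\epsilon,R}(z)|\lesssim_R 1+|\log|z||$ that justifies the $\lesssim_R$ step; this extra care is warranted since the statement asserts no $\epsilon$-dependence in the implicit constant.
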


\begin{proof} The $L^{\infty}$-estimate was recorded in Definition \ref{def1}. Regarding the decay estimate, write $g \coloneqq \Delta^{\beta/2}f$, so
\begin{displaymath} |U^{\Gamma,\epsilon,R}(\Delta^{\beta/2}f)(x)| \leq \int |g(y)| |\log_{\epsilon, R} |\Gamma(x) - \Gamma(y)| | \, dy \lesssim_{R} \|g\|_{L^{\infty}(B(x,2R))},  \end{displaymath} 
using that $|\Gamma(x) - \Gamma(y)| \geq |x - y|$, and $\spt \log_{\epsilon,R} \subset \bar{B}(2R)$. But since all points $y \in B(x,2R)$ satisfy $|y| \sim_{R} |x|$ for $|x| \geq 3R$, the bound $|U^{\Gamma,\epsilon,R}g(x)| \lesssim_{\beta,f,R} |x|^{-1 - \Rea \beta}$ now follows directly from Lemma \ref{lemma3}.
\end{proof} 
We then prove that $\beta \mapsto T_{\beta}^{\Gamma,\epsilon,R}$ is analytic in the following sense:

\begin{proposition}\label{prop5} For $f,g \in \mathcal{S}(\R)$, the map
\begin{displaymath} \beta \mapsto \int T^{\Gamma,\epsilon,R}_{\beta}f \cdot g \end{displaymath}
is continuous on $\Rea \beta \in [0,1]$ and analytic on $\Rea \beta \in (0,1)$. \end{proposition}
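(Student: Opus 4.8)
The plan is to reduce the statement to an explicit formula for $\int T_\beta^{\Gamma,\epsilon,R} f \cdot g$ in which the dependence on $\beta$ is visibly holomorphic, and then differentiate under an integral sign. Starting from Remark \ref{rem1}, for $f,g \in \mathcal{S}(\R)$ we have
\begin{equation*}
\int T_\beta^{\Gamma,\epsilon,R} f \cdot g = \int U^{\Gamma,\epsilon,R}(\Delta^{\beta/2}f)(x) \cdot (\Delta^{(1-\beta)/2}g)(x)\,dx,
\end{equation*}
where $\Delta^{\beta/2}f$ and $\Delta^{(1-\beta)/2}g$ are given by the absolutely convergent pointwise formula \eqref{fracL} (valid since $\Rea\beta > -1$ and $\Rea(1-\beta) > -1$ throughout the closed strip $\Rea\beta \in [0,1]$). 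First I would expand $U^{\Gamma,\epsilon,R}(\Delta^{\beta/2}f)(x) = \int (\Delta^{\beta/2}f)(y) \log_{\epsilon,R}|\Gamma(x)-\Gamma(y)|\,dy$, using \eqref{form55}, and insert \eqref{fracL} for both fractional Laplacians, obtaining a triple integral
\begin{equation*}
\int T_\beta^{\Gamma,\epsilon,R} f \cdot g = \iiint e^{2\pi i y\xi}|\xi|^{\beta}\hat f(\xi)\,\log_{\epsilon,R}|\Gamma(x)-\Gamma(y)|\, e^{2\pi i x\eta}|\eta|^{1-\beta}\hat g(\eta)\;d\xi\,d\eta\,dx\,dy,
\end{equation*}
where the $y$-integral is over $\bar B(x,2R)$ because $\spt\log_{\epsilon,R}\subset\bar B(2R)$ and $|\Gamma(x)-\Gamma(y)|\ge|x-y|$.

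The next step is to justify that this multiple integral converges absolutely and uniformly for $\beta$ in a compact subset $K$ of the strip $-1<\Rea\beta<2$ (in particular of $[0,1]$). The integrands in $\xi$ and $\eta$ are dominated by $|\xi|^{\Rea\beta}|\hat f(\xi)|$ and $|\eta|^{1-\Rea\beta}|\hat g(\eta)|$, both in $L^1$ with bounds uniform over $\beta\in K$ since $f,g\in\mathcal{S}$; the $y$-integral contributes a factor $\lesssim_R 1$ from the integrability of $\log_{\epsilon,R}$; and the $x$-integral converges thanks to the decay $|U^{\Gamma,\epsilon,R}(\Delta^{\beta/2}f)(x)|\lesssim_{\beta,f,R}\min\{1,|x|^{-1-\Rea\beta}\}$ from Lemma \ref{lemma4} together with $|\Delta^{(1-\beta)/2}g(x)|\lesssim_{\beta,g}(1+|x|)^{-2+\Rea\beta}$ from Lemma \ref{lemma3} — and both lemmas explicitly assert that their constants stay bounded as $\beta$ ranges over a compact subset of the strip. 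Actually it is cleanest to avoid the triple integral form and argue in two layers: first use Lemma \ref{lemma3} to see that $\beta\mapsto(\Delta^{\beta/2}f)(y)$ and $\beta\mapsto(\Delta^{(1-\beta)/2}g)(x)$ are entire in $\beta$ for each fixed $x,y$ (differentiate \eqref{fracL} under the integral sign, legitimate because $|\xi|^{\beta}\log|\xi|\,\hat f(\xi)$ is still integrable and locally bounded in $\beta$), with the decay estimates of Lemmas \ref{lemma3}–\ref{lemma4} providing a $\beta$-uniform integrable majorant on the product space; then conclude by Morera's theorem and Fubini.

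Concretely, I would verify analyticity of $\beta\mapsto\int T_\beta^{\Gamma,\epsilon,R}f\cdot g$ on $\Rea\beta\in(0,1)$ via Morera: for any closed triangle $\Delta$ inside the open strip, $\oint_{\partial\Delta}\big(\int T_\beta^{\Gamma,\epsilon,R}f\cdot g\big)\,d\beta = 0$ follows from Fubini (swapping $\oint_{\partial\Delta}$ with the $dx\,dy\,d\xi\,d\eta$ integration, justified by the uniform absolute convergence just discussed, since $\partial\Delta$ is compact) together with the fact that for each fixed $x$ the integrand $U^{\Gamma,\epsilon,R}(\Delta^{\beta/2}f)(x)\cdot(\Delta^{(1-\beta)/2}g)(x)$ is holomorphic in $\beta$ (a composition of the entire maps above), so its contour integral over $\partial\Delta$ vanishes by Cauchy's theorem. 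Continuity on the closed strip $\Rea\beta\in[0,1]$ follows from the same dominated-convergence argument: if $\beta_n\to\beta$ in $[0,1]$, then $(\Delta^{\beta_n/2}f)(y)\to(\Delta^{\beta/2}f)(y)$ and $(\Delta^{(1-\beta_n)/2}g)(x)\to(\Delta^{(1-\beta)/2}g)(x)$ pointwise by continuity of \eqref{fracL} in $\beta$, while the product is dominated by a fixed integrable function by Lemmas \ref{lemma3}–\ref{lemma4} (their constants being uniform over the compact set $\{\beta_n\}\cup\{\beta\}$).

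The main obstacle I anticipate is purely bookkeeping: assembling a single integrable majorant on $\R^2_{x,y}\times\R^2_{\xi,\eta}$ valid uniformly for $\beta$ in a compact subset of the strip, so that both Morera (via Fubini) and the continuity argument (via dominated convergence) go through. This is where Lemmas \ref{lemma3} and \ref{lemma4} do the real work — their explicit claim of $\beta$-uniform constants over compact subsets of $-1<\Rea\beta<2$ is exactly what is needed, and the only care required is to choose the order of integration so that the $\log_{\epsilon,R}$-singularity near $x=y$ (integrable, with $R$-dependent but $\beta$-independent bound) and the polynomial decay in $|x|$ are handled in separate layers rather than all at once.
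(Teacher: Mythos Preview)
Your proposal is correct and uses the same ingredients as the paper (the formula from Remark \ref{rem1} and the uniform decay bounds of Lemmas \ref{lemma3}--\ref{lemma4}), but packages them differently in both halves. For continuity the paper argues via $L^2$: Plancherel gives $\Delta^{\beta'/2}f \to \Delta^{\beta/2}f$ in $L^2$ as $\beta'\to\beta$, and then the $L^2$-boundedness of $U^{\Gamma,\epsilon,R}$ finishes immediately; your pointwise-plus-dominated-convergence route also works (the product of the bounds in Lemmas \ref{lemma3} and \ref{lemma4} decays like $|x|^{-3}$ uniformly over $\Rea\beta\in[0,1]$), but the $L^2$ argument is a line shorter. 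For analyticity the paper computes the difference quotients explicitly and shows they converge to integrals involving the symbol $\log|\xi|\cdot|\xi|^\beta$, which requires producing $h$-independent majorants for the quotients themselves; your Morera--Fubini argument is cleaner here, since holomorphy of the integrand for fixed $x$ plus a $\beta$-uniform integrable majorant in $x$ (both supplied by Lemmas \ref{lemma3}--\ref{lemma4} on compact subsets of the open strip) suffice without ever writing down a derivative. One small correction: $\beta\mapsto(\Delta^{\beta/2}f)(y)$ is not entire but only holomorphic on $\Rea\beta>-1$, since the defining integral \eqref{fracL} diverges at $\xi=0$ otherwise; this still comfortably covers a neighbourhood of the closed strip, so the argument is unaffected.
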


\begin{proof} Fix $\beta \in \C$ with $\Rea \beta \in [0,1]$, and recall from \eqref{form56} that

\begin{equation}\label{form155} \int T^{\Gamma,\epsilon,R}_{\beta}f \cdot g = \int U^{\Gamma,\epsilon,R}(\Delta^{\beta/2}f)(x) \cdot (\Delta^{(1 - \beta)/2}g)(x) \, dx. \end{equation} 
For $f,g \in \mathcal{S}(\R)$, as we assume here, it is easy to check using Plancherel that
\begin{displaymath} \Delta^{\beta'/2}f \stackrel{L^{2}}{\to} \Delta^{\beta/2} f \quad \text{and} \quad \Delta^{(1 - \beta')/2}g \stackrel{L^{2}}{\to} \Delta^{(1 - \beta)/2}g, \end{displaymath}
as $\beta' \to \beta$. Since $U^{\Gamma,\epsilon,R}$ is bounded on $L^{2}(\R)$, the continuity of $\beta \mapsto \int T_{\beta}^{\Gamma,\epsilon,R}f \cdot g$ for $\Rea \beta \in [0,1]$ now follows from \eqref{form155}.

We next study the analyticity for $\Rea \beta \in (0,1)$ by directly examining the convergence of the difference quotients
\begin{align} \int & \frac{U^{\Gamma,\epsilon,R}(\Delta^{(\beta + h)/2}f)(x) \cdot \Delta^{(1 - (\beta + h))/2}g(x) - U^{\Gamma,\epsilon,R}(\Delta^{\beta/2}f)(x) \cdot \Delta^{(1 - \beta)/2}g(x)}{h} \, dx \notag\\
&\label{form57} = \int \frac{[U^{\Gamma,\epsilon,R}(\Delta^{(\beta + h)/2}f)(x) - U^{\Gamma,\epsilon,R}(\Delta^{\beta/2} f)(x)]}{h} \cdot (\Delta^{(1 - (\beta + h))/2}g)(x) \, dx\\
&\label{form58} \quad + \int U^{\Gamma,\epsilon,R}(\Delta^{\beta/2}f)(x) \cdot \frac{[\Delta^{(1 - (\beta + h))/2}g(x) - \Delta^{(1 - \beta)/2}g(x)]}{h} \, dx. \end{align} 
Here we assume that $|h|$ is so small that $\Rea (\beta + h) \in [0,1]$, so the expressions above are well-defined. As $h \to 0$, it will become evident from the proof below that the difference quotients visible inside the two integrals \eqref{form57}-\eqref{form58} (for $x \in \R$ fixed) converge to 
\begin{equation}\label{form156} U^{\Gamma,\epsilon,R}[\log \Delta \cdot \Delta^{\beta/2} f](x) \quad \text{and} \quad (\log \Delta \cdot \Delta^{(1 - \beta)/2}g)(x). \end{equation}
Here $\log \Delta \cdot \Delta^{z/2}$ refers to Fourier multiplication with symbol $\log |\xi| \cdot |\xi|^{z}$. Since $\Rea \beta \geq 0$, $\Rea (1 - \beta) \geq 0$, and $f,g \in \mathcal{S}(\R)$, the expressions in \eqref{form156} are defined pointwise as in \eqref{fracL}, and are elements of $L^{\infty}(\R)$. In particular $U^{\Gamma,\epsilon,R}[\log \Delta \cdot \Delta^{\beta/2}f](x)$ is defined by \eqref{form55}. 

To apply dominated convergence, it remains to find a $h$-independent integrable upper bound for the two integrands in \eqref{form57}-\eqref{form58}. This can be accomplished for $\Rea \beta \in (0,1)$ fixed, and provided that 
\begin{displaymath} |h| \leq \tfrac{1}{2}\dist(\Rea \beta,\{0,1\}). \end{displaymath}
Fix $\beta,h$ with these properties.

The most complicated term is
\begin{align*} & \frac{[U^{\Gamma,\epsilon,R}(\Delta^{(\beta + h)/2}f)(x) - U^{\Gamma,\epsilon,R}(\Delta^{\beta/2} f)(x)]}{h}\\
& \quad = \int \left(\int e^{2\pi i y \xi} \frac{|\xi|^{\beta + h} - |\xi|^{\beta}}{h} \hat{f}(\xi) \, d\xi \right) \log_{\epsilon,R} |\Gamma(x) - \Gamma(y)| \, dy. \end{align*} 
Since $\partial_{\beta} [\beta \mapsto |\xi|^{\beta}] = \log |\xi| \cdot |\xi|^{\beta}$, we have (write $|\xi|^{\beta + h} - |\xi|^{\beta} = \log |\xi| \int_{[\beta,\beta + h]} |\xi|^{z} \, dz$ as a complex path integral, and estimate $||\xi|^{z}| \leq (1 + |\xi|)$ for $\Rea z \in [0,1]$))
\begin{displaymath} \left| \frac{|\xi|^{\beta + h} - |\xi|^{\beta}}{h} \right| \leq |\log |\xi||(1 + |\xi|), \qquad \Rea \beta \in (0,1), \, |h| \leq \dist(\Rea \beta,\{0,1\}).  \end{displaymath}
Consequently,
\begin{align} & \left| \frac{[U^{\Gamma,\epsilon,R}(\Delta^{(\beta + h)/2}f)(x) - U^{\Gamma,\epsilon,R}(\Delta^{\beta/2} f)(x)]}{h} \right| \notag\\
&\label{form59} \qquad \leq \int \int |\log |\xi|| \cdot (1 + |\xi|) \cdot |\hat{f}(\xi)| \, d\xi  |\log_{\epsilon,R} |\Gamma(x) - \Gamma(y)|| \, dy \lesssim_{f,R} 1. \end{align}
On the other hand, since $|h| \leq \tfrac{1}{2}\dist(\Rea \beta,1)$, we have $\Rea (1 - (\beta + h)) \geq \tfrac{1}{2}\Rea \beta$, and it follows from Lemma \ref{lemma3} that 
\begin{displaymath} |\Delta^{(1 - (\beta + h))/2}g(x)| \lesssim_{\beta,g} \min\{1,|x|^{-1 - \tfrac{1}{2}\Rea \beta}\}, \qquad x \in \R. \end{displaymath}
A combination of this estimate, and \eqref{form59}, shows that the absolute value of the integrand in \eqref{form57} is bounded from above by
\begin{displaymath} \lesssim_{\beta,f,g,R} \min\{1,|x|^{-1 - \tfrac{1}{2}\Rea \beta}\}. \end{displaymath}
This upper bound is integrable and independent of $h$, so dominated convergence yields
\begin{align*} \lim_{h \to 0} & \int \frac{[U^{\Gamma,\epsilon,R}(\Delta^{(\beta + h)/2}f)(x) - U^{\Gamma,\epsilon,R}(\Delta^{\beta/2} f)(x)]}{h} \cdot \Delta^{(1 - (\beta + h))/2}g(x) \, dx\\
&\quad = \int U^{\Gamma,\epsilon,R}[\log \Delta \cdot \Delta^{\beta/2}f](x) \cdot (\Delta^{(1 - \beta)/2}g)(x) \, dx.   \end{align*}

A similar but slightly simpler argument works for the integral in \eqref{form58}. One checks that the difference quotients 
\begin{displaymath} x \mapsto \frac{[\Delta^{(1 - (\beta + h))/2}g(x) - \Delta^{(1 - \beta)/2}g(x)]}{h}  \end{displaymath}
are uniformly in $L^{\infty}(\R)$. On the other hand Lemma \ref{lemma4} guarantees that $U^{\Gamma,\epsilon,R}(\Delta^{\beta/2}f)$ has an integrable upper bound for $\Rea \beta > 0$. Therefore, 
\begin{align*} \lim_{h \to 0} & \int U^{\Gamma,\epsilon,R}(\Delta^{\beta/2}f)(x) \cdot \frac{[\Delta^{(1 - (\beta + h))/2}g(x) - \Delta^{(1 - \beta)/2}g(x)]}{h} \, dx\\
&\quad = \int U^{\Gamma,\epsilon,R}(\Delta^{\beta/2}f)(x) \cdot (\log \Delta \cdot \Delta^{(1 - \beta)/2}g)(x) \, dx \end{align*}
by the dominated convergence theorem.\end{proof}

\subsection{The operators $T^{\Gamma}_{\beta}$ in the case $A \equiv 0$}\label{s:FlatCase} In this section we show that that in the "flat case" $A \equiv 0$ the operators $T_{\beta}^{\Gamma,\epsilon,R}$ converge weakly to a multiple of the identity as $\epsilon \to 0$ and $R \to \infty$. The main result is Proposition \ref{prop4}.

Recall from Definition \ref{def2} that 
\begin{displaymath} T^{\Gamma,\epsilon,R}_{\beta}f = \Delta^{(1 - \beta)/2}[U^{\Gamma,\epsilon,R}(\Delta^{\beta/2}f)] \in \mathcal{S}'(\R), \qquad f \in \mathcal{S}(\R). \end{displaymath}
In this section, we consider the case where $\beta \in [0,1]$ is real, and $A \equiv 0$, or in other words 
\begin{displaymath} U^{\epsilon,R}f(x) \coloneqq U^{\Gamma,\epsilon,R}f(x) = \int f(y) \log_{\epsilon,R} |x - y| \, dy, \qquad x \in \R, \end{displaymath} 
is a convolution operator on $\R$. We will abbreviate 
\begin{displaymath} T_{\beta}^{\Gamma,\epsilon,R} =: T^{\epsilon,R}_{\beta} \end{displaymath}
in the case $A \equiv 0$. Proposition \ref{prop4} below shows that the operators $T^{\epsilon,R}_{\beta}$ converge to a multiple of the identity as $\epsilon \to 0$ and $R \to \infty$.
\begin{proposition}\label{prop4} Let $\beta \in [0,1]$ and $f \in \mathcal{S}(\R)$. Then, 
\begin{equation}\label{form79} \mathop{\lim_{\epsilon \to 0}}_{R \to \infty} T^{\epsilon,R}_{\beta}f = \tfrac{1}{2}f, \end{equation}
where the limit is a limit in $\mathcal{S}'(\R)$. \end{proposition}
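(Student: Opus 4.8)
The plan is to compute everything on the Fourier side, where all three operators $\Delta^{\beta/2}$, $U^{\epsilon,R}$, $\Delta^{(1-\beta)/2}$ become Fourier multipliers (the middle one because $A\equiv 0$, so $U^{\epsilon,R}$ is convolution with $\log_{\epsilon,R}$). Fix $f,g\in\mathcal S(\R)$. Using Remark \ref{rem1} (equation \eqref{form56}), we have
\begin{displaymath} \int T^{\epsilon,R}_\beta f\cdot g = \int U^{\epsilon,R}(\Delta^{\beta/2}f)(x)\cdot(\Delta^{(1-\beta)/2}g)(x)\,dx. \end{displaymath}
Since $U^{\epsilon,R}$ is convolution with $\log_{\epsilon,R}\in L^1(\R)$ (it is compactly supported and Lipschitz, hence integrable) and $\Delta^{\beta/2}f\in L^\infty\cap L^1$ with integrable decay by Lemma \ref{lemma3}, Plancherel applies and gives
\begin{displaymath} \int T^{\epsilon,R}_\beta f\cdot g = \int \widehat{\log_{\epsilon,R}}(\xi)\,|\xi|^{\beta}\hat f(\xi)\cdot|\xi|^{1-\beta}\widecheck g(\xi)\,d\xi = \int \widehat{\log_{\epsilon,R}}(\xi)\,|\xi|\,\hat f(\xi)\,\widecheck g(\xi)\,d\xi. \end{displaymath}
So the entire $\beta$-dependence disappears, and the claim reduces to the single statement that $\widehat{\log_{\epsilon,R}}(\xi)\cdot|\xi|\to\tfrac12$ in an appropriate weak sense as $\epsilon\to0$, $R\to\infty$, i.e. that $\int\widehat{\log_{\epsilon,R}}(\xi)|\xi|\,\hat f(\xi)\widecheck g(\xi)\,d\xi\to\tfrac12\int\hat f\widecheck g = \tfrac12\int f g$.

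The key input is the classical Fourier transform identity for the logarithmic kernel: in one dimension $\widehat{-\log|\cdot|}$ equals, in the sense of distributions, $\tfrac12|\xi|^{-1}$ plus a constant multiple of $\delta_0$ (equivalently, $\widehat{\log|x|^{-1}} = \tfrac{1}{2}\,\mathrm{p.v.}|\xi|^{-1} + c\,\delta_0$ for an explicit $c$; this is, e.g., the $\alpha\to 0$ limit of the Riesz-kernel transform, or can be read off from \eqref{form161} with $\beta\to1$). Concretely I would argue: write $\log_{\epsilon,R}(x) = \log|x|^{-1} + C_R$ on $\epsilon\le|x|\le R$ by Lemma \ref{truncationLemma}(i), and control the three "error regions" $|x|\le\epsilon$, $|x|\ge R$, and the modification near $x=\pm R$. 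The constant $C_R$ contributes $C_R\widehat{\mathbf 1}$-type terms which, after pairing against $|\xi|\hat f(\xi)\widecheck g(\xi)$ (a Schwartz function vanishing at $\xi=0$), disappear in the limit since the pairing localizes near $\xi = 0$ and $|\xi|\hat f\widecheck g$ is integrable with the needed decay. The piece $|x|\le\epsilon$ has $L^1$-norm $\lesssim \epsilon(1+\log\epsilon^{-1})\to0$ by Lemma \ref{truncationLemma}(iii), so its Fourier transform tends to $0$ uniformly. The pieces on $|x|\ge R$ (including the transition layer on $R\le|x|\le 2R$) are handled using Lemma \ref{truncationLemma}(ii),(iv): there $\log_{\epsilon,R}$ is $\epsilon$-independent with $|\log_{\epsilon,R}|\lesssim\log R$ on $[-2R,2R]\setminus[-R,R]$ and derivative bounds $|k_{\epsilon,R}(x)|\lesssim|x|^{-1}$; integrating by parts twice against $e^{2\pi i x\xi}$ on that region yields a bound $\lesssim (\log R)/( R\xi^2)$ or so, which after pairing with the fixed Schwartz function $|\xi|\hat f(\xi)\widecheck g(\xi)$ (bounded near $\xi=0$, rapidly decaying) tends to $0$ as $R\to\infty$. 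What survives is exactly the principal-value part of $\widehat{\log|\cdot|}$, whose pairing with $|\xi|\hat f(\xi)\widecheck g(\xi)$ equals $\tfrac12\int \hat f(\xi)\widecheck g(\xi)\,d\xi = \tfrac12\int fg$.

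The main obstacle I anticipate is making the "error region" estimates fully rigorous in the distributional topology — in particular, being careful that $\widehat{\log_{\epsilon,R}}$ is only a tempered distribution, not a function, so one must pair against $|\xi|\hat f(\xi)\widecheck g(\xi)$ throughout rather than estimate pointwise, and track that this test function (though Schwartz away from $\xi=0$ and vanishing at $0$) does not decay so fast near $0$ as to trivialize the $\mathrm{p.v.}$ pairing; the cancellation in the principal value near $\xi=0$ must be used. A clean way to organize this is to replace $\log_{\epsilon,R}$ by $\log_{\epsilon,R} - C_R$, note that the added constant is harmless as above, split $\log_{\epsilon,R}-C_R = (\log|x|^{-1})\mathbf 1_{[\epsilon,R]} + (\text{near-}0\text{ piece}) + (\text{near-}R,2R\text{ piece})$, and for the main term compute $\widehat{(\log|x|^{-1})\mathbf 1_{[\epsilon,R]}}$ directly via the incomplete-Gamma / cosine-integral representation, letting $\epsilon\to 0$ and $R\to\infty$ by dominated convergence after the pairing. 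Everything else is routine; the only genuine content is the identity $\widehat{\log|x|^{-1}} = \tfrac12\,\mathrm{p.v.}\,|\xi|^{-1} + c\delta_0$ and the fact that the doubly-truncated kernel approximates it in $\mathcal S'$.
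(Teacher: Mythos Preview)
Your reduction to the Fourier side is exactly the paper's starting point: after Plancherel the $\beta$-dependence vanishes and everything reduces to showing $\int \widehat{\log_{\epsilon,R}}(\xi)\,|\xi|\hat f(\xi)\widecheck g(\xi)\,d\xi \to \tfrac12\int fg$. The underlying identity you invoke, $\widehat{\log|x|^{-1}} = \tfrac12\,\mathrm{p.v.}\,|\xi|^{-1} + c\,\delta_0$, is essentially what the paper establishes as its appendix Lemma~\ref{appLemma1}.

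There is one concrete gap in your $\xi$-side error analysis. The ``near-$R,2R$ piece'' of $\log_{\epsilon,R}-C_R$ has boundary values of order $\log R$ at $|x|=R$ and $|x|=2R$ (indeed $\log_{\epsilon,R}(R)-C_R=-\log R$ and $\log_{\epsilon,R}(2R)-C_R=-C_R\sim-\log R$). Integration by parts therefore yields $\widehat{h_R}(\xi)=O(\log R/|\xi|)$ from the boundary terms, not $O((\log R)/(R\xi^2))$; paired against $|\xi|\hat f\widecheck g$ this gives $\log R\cdot\|\hat f\widecheck g\|_{L^1}$, which does \emph{not} vanish. Moreover your splitting of $\log_{\epsilon,R}-C_R$ is silent about the region $|x|>2R$, where the function equals the constant $-C_R$; this piece is not compactly supported and its Fourier transform is genuinely distributional.

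The fix --- which you yourself name at the end --- is to Fubini-swap to the $x$-side \emph{before} splitting. This is precisely how the paper proceeds: write $\int\widehat{\log_{\epsilon,R}}\cdot|\xi|\hat f\widecheck g = \int\log_{\epsilon,R}(x)\,H(x)\,dx$ with $H(x)=(\Delta^{1/2}f\ast g)(-x)$. The crucial properties are that $\int H=0$ (its Fourier transform vanishes at $0$) and $|H(x)|\lesssim(1+|x|)^{-2}$ (by Lemmas~\ref{lemma3} and~\ref{lemma5}). The zero mean lets you subtract $C_R$ for free; the quadratic decay makes the tail $\int_{|x|>R}O(\log R)\cdot|x|^{-2}\,dx\lesssim(\log R)/R\to 0$ transparent, with no boundary-term issues. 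The near-$0$ piece is handled exactly as you say, and dominated convergence on the middle region plus Lemma~\ref{appLemma1} finishes the proof. So your plan is correct once you abandon the direct $\xi$-side IBP in favour of the $x$-side pairing you mention last.
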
 

\begin{remark} The precise meaning of \eqref{form79} is the following. For every $\delta > 0$ and $f,g \in \mathcal{S}(\R)$, there exists a threshold $M = M(\delta,f,g) \geq 1$ such that
\begin{displaymath} \left| \int T^{\epsilon,R}_{\beta}f \cdot g - \tfrac{1}{2} \int f \cdot g \right| \leq \delta \end{displaymath}
for all $0 < \epsilon \leq M^{-1}$ and $R \geq M$. \end{remark} 

\begin{proof}[Proof of Proposition \ref{prop4}]  We need to show that
\begin{equation}\label{form65} \mathop{\lim_{\epsilon \to 0}}_{R \to \infty} \int T^{\epsilon,R}_{\beta}f \cdot g = \tfrac{1}{2}\int f \cdot g, \qquad g \in \mathcal{S}(\R), \end{equation}
where the (formal) integral on the left refers to the distribution $T_{\beta}^{\epsilon,R}f$ acting on $g$. By the definition of $T^{\epsilon,R}_{\beta}f$ (see Definition \ref{def2} and Remark \ref{rem1}), its action on $\mathcal{S}(\R)$ is given by
\begin{displaymath} \int T_{\beta}^{\epsilon,R}f \cdot g \stackrel{\mathrm{def.}}{=} \int |\xi|^{1 - \beta}\mathcal{F}(U^{\epsilon,R}(\Delta^{\beta/2}f))(\xi) \cdot \widecheck{g}(\xi). \end{displaymath}
For $\beta \in [0,1]$, we have $U^{\epsilon,R}(\Delta^{\beta/2}f) \in L^{1}$, and the Fourier transform can be defined pointwise as
\begin{displaymath} \mathcal{F}(U^{\epsilon,R}(\Delta^{\beta/2}f))(\xi) = \widehat{\log_{\epsilon,R}}(\xi)|\xi|^{\beta}\hat{f}(\xi), \qquad \xi \in \R. \end{displaymath}
Therefore,
\begin{align*} \int T^{\epsilon,R}_{\beta}f \cdot g & = \int \widehat{\log_{\epsilon,R}}(\xi)|\xi|^{1 - \beta}|\xi|^{\beta}\hat{f}(\xi) \cdot \widecheck{g}(\xi) \, d\xi = \int \widehat{\log_{\epsilon,R}}(\xi)  |\xi|\hat{f}(\xi) \cdot \widecheck{g}(\xi)\\
& = \int \left( \int e^{-2\pi ix \xi} \log_{\epsilon,R}(x) \, dx \right) |\xi|\hat{f}(\xi) \cdot \widecheck{g}(\xi) \, d\xi.  \end{align*} 
To proceed, note that 
\begin{displaymath} (x,\xi) \mapsto \log_{\epsilon,R}(x)|\xi|\hat{f}(\xi) \cdot \widecheck{g}(\xi) \in L^{1}(\R^{2}) \end{displaymath}
for $0 < \epsilon \leq 1 \leq R$ fixed, so we may apply Fubini's theorem to the integral above. This yields
\begin{equation}\label{form62} \mathop{\lim_{\epsilon \to 0}}_{R \to \infty} \int T^{\epsilon,R}_{\beta}f \cdot g = \mathop{\lim_{\epsilon \to 0}}_{R \to \infty} \int \log_{\epsilon,R}(x) \left( \int e^{-2\pi i x\xi}|\xi|\hat{f}(\xi)\widecheck{g}(\xi) d\xi \right) \, dx. \end{equation} 
Note next that the value of the integral in brackets is
\begin{equation}\label{form78} \int e^{-2\pi i x\xi}|\xi|\hat{f}(\xi)\widecheck{g}(\xi) \, d\xi = (\Delta^{1/2}f \ast g)(-x). \end{equation} 
Lemmas \ref{lemma3} and \ref{lemma5} imply that $|(\Delta^{1/2}f \ast g)(-x)| \lesssim_{f,g} (1 + |x|)^{-2}$, so $\Delta^{1/2}f \ast g \in L^{1}(\R)$. Since the Fourier transform of $\Delta^{1/2}f \ast g$ vanishes at $0$, we conclude that
\begin{displaymath} \int \int e^{-2\pi i x\xi}|\xi|\hat{f}(\xi)\widecheck{g}(\xi) \, d\xi \, dx = \int (\Delta^{1/2}f \ast g)(-x) \, dx = 0. \end{displaymath}
Therefore, the value of the limit in \eqref{form62} remains unchanged if we replace $\log_{\epsilon,R}(x)$ by $\log_{\epsilon,R}(x) - C_{R}$, where $C_{R} = O(\log R)$ is the constant from Lemma \ref{truncationLemma}. Recalling that $\log_{\epsilon,R}(x) - C_{R} = \log |x|^{-1}$ for $x \in B(R) \, \setminus \, B(\epsilon)$, we arrive at the following conclusion:
\begin{align}\label{form75} \mathop{\lim_{\epsilon \to 0}}_{R \to \infty}  \int T^{\epsilon,R}_{\beta}f \cdot g & = \mathop{\lim_{\epsilon \to 0}}_{R \to \infty} \int_{B(\epsilon)} (\log_{\epsilon,R}(x) - C_{R}) \left(\int \ldots d\xi \right) \, dx\\
&\label{form77} \qquad + \mathop{\lim_{\epsilon \to 0}}_{R \to \infty}  \int_{B(R) \, \setminus \, B(\epsilon)} \log |x|^{-1}\left( \int \ldots d\xi \right) \, dx\\
&\label{form76} \qquad + \mathop{\lim_{\epsilon \to 0}}_{R \to \infty}  \int_{\R \, \setminus \, B(R)} (\log_{\epsilon,R}(x) - C_{R}) \left( \int \ldots d\xi \right) \, dx. \end{align} 
The limit on line \eqref{form75} equals $0$, since $|\log_{\epsilon,R}(x) - C_{R}| \lesssim 1 + \log \epsilon^{-1}$ by Lemma \ref{truncationLemma}(iii), whereas the inner integral in brackets is a bounded function of $x$ (independently of $\epsilon,R$). It is worth noting that, by Lemma \ref{truncationLemma}(iii), the function $x \mapsto \log_{\epsilon,R}(x) - C_{R}$ is independent of $R$ for $x \in B(\epsilon)$. Therefore the integrals in \eqref{form75} actually only depend on $\epsilon$. 

The limit on line \eqref{form76} also equals $0$, since $|\log_{\epsilon,R}(x) - C_{R}| \lesssim \log R$ on $\R \, \setminus \, B(R)$ by Lemma \ref{truncationLemma}(ii), and since the integral in inner brackets is $O_{f,g}((1 + |x|)^{-2})$, as we noted below \eqref{form78}:
\begin{displaymath} \left| \int_{\R \, \setminus \, B(R)} (\log_{\epsilon,R}(x) - C_{R}) \left( \int \ldots  d\xi\right) \, dx \right| \lesssim \log R \int_{\R \, \setminus \, B(R)} |x|^{-2} \lesssim \frac{\log R}{R} \to 0. \end{displaymath} 
Again, it is worth noting that, by Lemma \ref{truncationLemma}(ii), the function $x \mapsto \log_{\epsilon,R}(x) - C_{R}$ is independent of $\epsilon$ on $\R \, \setminus \, B(R)$. Thus, the integrals in \eqref{form75} actually only depend on $R$.

Regarding the limit on line \eqref{form77}, the decay of the inner integral permits us to use the dominated convergence theorem:
\begin{displaymath} \mathop{\lim_{\epsilon \to 0}}_{R \to \infty} \int T^{\epsilon,R}_{\beta}f \cdot g =\int \log |x|^{-1} \left( \int e^{-2\pi i x\xi} |\xi|\hat{f}(\xi)\widecheck{g}(\xi) \right) \, dx. \end{displaymath} 
While the integral in \eqref{form77} now evidently depends on both $\epsilon,R$, the conclusion above is independent of the order of limits.

Finally, Lemma \ref{appLemma1} applied to $\hat{f}\widecheck{g} \in \mathcal{S}(\R)$ shows that
\begin{align*} \int \log |x|^{-1} \left( \int e^{-2\pi i x\xi} |\xi|\hat{f}(\xi)\widecheck{g}(\xi) \right) \, dx & = \tfrac{1}{2} \int \hat{f}(\xi)\widecheck{g}(\xi) \, d\xi = \tfrac{1}{2} \int f \cdot g. \end{align*}
This completes the proof of \eqref{form65}, and therefore the proof of the proposition. \end{proof} 

\section{$L^p$ estimates for the truncated operators}
\subsection{Preliminaries on Calder\'on-Zygmund operators}\label{s:CZOPreliminaries} In this section we define \emph{standard kernels} and \emph{Calder\'on-Zygmund operators}. The main technical result is Proposition \ref{prop9}, which follows by combining theorems due to Journ\'e and Tolsa: the proposition states that a class of standard kernels define integral operators bounded in $L^{2}$. All the kernels we encounter in the remainder of the paper will then have the form covered by Proposition \ref{prop9}. Our exposition mainly follows \cite[Chapter 4.1]{grafakos2014modern} and \cite[Part 2]{david1991wavelets}.

\begin{definition}[Standard kernels]\label{def:standardKernel}
	A function $K: (\R \times \R) \, \setminus \, \{(x,x) : x \in \R\} \to \C$ is called \emph{a standard kernel} if there exist constants $\delta, C_0\in (0,\infty)$ such that
	\begin{equation}\label{eq:SK-size}
		|K(x,y)|\le\frac{C_0}{|x-y|}, \qquad\quad\text{$x\neq y$,}
	\end{equation}
	and
	\begin{equation}\label{eq:SK-smooth}
		|K(x,y)-K(x',y)|+|K(y,x)-K(y,x')|\le C_0\frac{|x-x'|^\delta}{|x-y|^{1+\delta}}
	\end{equation}
	whenever $|x-x'|\le |x-y|/2$.
	
	We will also write that $\mathcal{K} \colon \R^{d} \, \setminus \, \{0\}$ is a standard kernel if $K(x,y) := \mathcal{K}(x - y)$ is a standard kernel in the sense above.
\end{definition}
It is well known that if a kernel $K \in \mathrm{Lip}(\R \times \R \, \setminus \, \{(x,x) : x \in \R\})$ satisfies
\begin{equation}\label{eq:SK-smooth2}
	|\partial_x K(x,y)| + |\partial_y K(x,y)|\le \frac{C}{|x-y|^2}, \qquad \text{a.e. $x\neq y$,}
\end{equation}
then it satisfies \eqref{eq:SK-smooth} with $\delta=1$ and $C_0\sim C$.

\begin{definition}[Operators associated to kernels]
	We will say that a continuous linear operator $T:\mathcal{S}(\R)\to\mathcal{S}'(\R)$ is \emph{associated} to a standard kernel $K$ if for all $f,g\in C_c^\infty(\R)$ with $\spt f\cap \spt g=\varnothing$,
	\begin{equation*}
		\langle Tf, g \rangle = \iint K(x,y) f(y) g(x)\, dy\, dx.
	\end{equation*}
\end{definition}

\begin{definition}[Calderón-Zygmund operator]
	If an operator $T$ associated to a standard kernel can be extended to a bounded operator $T:L^2(\R)\to L^2(\R)$, then we say that $T$ is a \emph{Calderón-Zygmund operator}.
\end{definition}

Below we state several results concerning Calderón-Zygmund operators we will use in the sequel. We start with the classical result of Calderón and Zygmund.
\begin{thm}[{\cite[Theorem 4.2.2]{grafakos2014modern}}]\label{LpExtension}
	If $T$ is a Calderón-Zygmund operator, then for any $1<p<\infty$ it can be extended to a bounded operator $T:L^p(\R)\to L^p(\R)$ with operator norm depending only on $p,$ its standard kernel constant $C_0$, and $\| T\|_{L^2\to L^2}$. 
\end{thm}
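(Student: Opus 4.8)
This is the classical Calderón--Zygmund $L^p$ extrapolation theorem, and I would prove it by the textbook route: first upgrade the given $L^2$ bound to a weak-type $(1,1)$ bound using only the kernel estimates \eqref{eq:SK-size}--\eqref{eq:SK-smooth}, then interpolate to reach the range $1<p<2$, and finally dualise to reach $2<p<\infty$. Throughout, one works first with $f$ in a dense class (say $L^1(\R)\cap L^2(\R)$) and extends by density at the end; the a priori definition $T\colon\mathcal{S}(\R)\to\mathcal{S}'(\R)$ together with the $L^2$ bound makes all the initial manipulations, and the identity $\langle Tf,g\rangle=\iint K f g$ for disjointly supported $f,g$, legitimate.

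For the weak-$(1,1)$ bound, fix $f\in L^1\cap L^2$ and $\lambda>0$ and apply the Calderón--Zygmund decomposition of $f$ at height $\lambda$: one obtains almost disjoint intervals $\{Q_j\}$ with $\sum_j|Q_j|\lesssim\lambda^{-1}\|f\|_{L^1}$ and a splitting $f=g+b$, $b=\sum_jb_j$, where $\|g\|_{L^\infty}\lesssim\lambda$, $\|g\|_{L^1}\le\|f\|_{L^1}$, each $b_j$ is supported in $Q_j$, has mean zero, and $\|b_j\|_{L^1}\lesssim\lambda|Q_j|$. Split $|\{|Tf|>\lambda\}|\le|\{|Tg|>\tfrac\lambda2\}|+|\{|Tb|>\tfrac\lambda2\}|$. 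The good term is handled by Chebyshev and the hypothesis: $|\{|Tg|>\tfrac\lambda2\}|\lesssim\lambda^{-2}\|T\|_{L^2\to L^2}^2\|g\|_{L^2}^2\lesssim\lambda^{-1}\|T\|_{L^2\to L^2}^2\|f\|_{L^1}$, using $\|g\|_{L^2}^2\le\|g\|_{L^\infty}\|g\|_{L^1}\lesssim\lambda\|f\|_{L^1}$. For the bad term, enlarge each $Q_j$ to a fixed dilate $Q_j^{*}$ and set $\Omega^{*}=\bigcup_jQ_j^{*}$, so $|\Omega^{*}|\lesssim\lambda^{-1}\|f\|_{L^1}$; on $\R\setminus Q_j^{*}$ one subtracts the mean, writing $Tb_j(x)=\int(K(x,y)-K(x,c_j))b_j(y)\,dy$ with $c_j$ the centre of $Q_j$, and applies \eqref{eq:SK-smooth} in the second variable to get $\int_{\R\setminus Q_j^{*}}|Tb_j|\lesssim C_0\|b_j\|_{L^1}$. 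Summing in $j$ and using Chebyshev gives $|\{x\notin\Omega^{*}:|Tb(x)|>\tfrac\lambda2\}|\lesssim\lambda^{-1}C_0\|f\|_{L^1}$. Hence $T$ is of weak type $(1,1)$ with constant depending only on $C_0$ and $\|T\|_{L^2\to L^2}$, and the Marcinkiewicz interpolation theorem applied to this bound together with the strong $L^2$ bound yields $\|T\|_{L^p\to L^p}\lesssim_p C_0+\|T\|_{L^2\to L^2}$ for $1<p<2$.

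The range $2<p<\infty$ then follows by duality, once one checks that the transpose $T^{t}$, defined by $\langle T^{t}f,g\rangle=\langle Tg,f\rangle$, is again a Calderón--Zygmund operator with the \emph{same} constants: its kernel is $K^{t}(x,y)=K(y,x)$, which satisfies \eqref{eq:SK-size}--\eqref{eq:SK-smooth} precisely because those conditions were stated symmetrically in the two variables, and $\|T^{t}\|_{L^2\to L^2}=\|T\|_{L^2\to L^2}$. By the previous step $T^{t}$ is bounded on $L^{p'}$ for $1<p'<2$, whence $\|Tf\|_{L^p}=\sup_{\|g\|_{L^{p'}}\le1}|\langle Tf,g\rangle|=\sup_{\|g\|_{L^{p'}}\le1}|\langle f,T^{t}g\rangle|\le\|T^{t}\|_{L^{p'}\to L^{p'}}\|f\|_{L^p}$ for $2<p<\infty$; the case $p=2$ is the hypothesis. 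The only genuinely delicate step is the bad-part estimate in the weak-$(1,1)$ bound, where one extracts the cancellation of each $b_j$ against the Hörmander-type smoothness of $K$; the remainder is careful but routine tracking of the dependence of constants on $C_0$ and $\|T\|_{L^2\to L^2}$, plus the density and duality bookkeeping.
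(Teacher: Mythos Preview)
Your proof is correct and follows the standard textbook route (Calder\'on--Zygmund decomposition $\Rightarrow$ weak-$(1,1)$ $\Rightarrow$ Marcinkiewicz interpolation $\Rightarrow$ duality). Note, however, that the paper does not give its own proof of this statement: it is simply quoted from \cite[Theorem 4.2.2]{grafakos2014modern} and used as a black box, so there is no ``paper's proof'' to compare against. What you have written is essentially the argument one finds in that reference.
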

The following result on ``stable kernels'' is a consequence of the $T1$ theorem, and it can be found in \cite[p. 52]{david1991wavelets}
\begin{thm}\label{thm:stable-kernels}
	Suppose that $T$ is a Calderón-Zygmund operator associated to an antisymmetric standard kernel $K(x,y)$, thus $K(x,y)=-K(y,x)$. Given a Lipschitz function $B:\R\to\R$ and a $m\in \N$, there exists a Calderón-Zygmund operator $T_{B,m}$ associated to the kernel
	\begin{equation*}
		K_{B,m}(x,y)=K(x,y)\cdot \left(\frac{B(x)-B(y)}{x-y}\right)^m.
	\end{equation*}
\end{thm}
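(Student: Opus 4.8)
The plan is to deduce the existence of $T_{B,m}$ from the $T1$ theorem of David and Journ\'e, following \cite[Part 2]{david1991wavelets}. There are three things to carry out: (i) check that $K_{B,m}$ is a standard kernel; (ii) regularise $K_{B,m}$ so that an associated operator makes sense, and verify the weak boundedness property uniformly; (iii) verify that $T_{B,m}\mathbf 1$ and $T_{B,m}^{*}\mathbf 1$ lie in $\mathrm{BMO}$, uniformly in the regularisation. Once (i)--(iii) are in place, the $T1$ theorem yields a uniform $L^{2}$-bound, a weak operator limit defines $T_{B,m}$, and Theorem \ref{LpExtension} upgrades $L^{2}$- to $L^{p}$-boundedness.

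For (i), write $\beta(x,y):=\tfrac{B(x)-B(y)}{x-y}$, which is \emph{symmetric} in $(x,y)$ and satisfies $|\beta(x,y)|\le\lip(B)$; hence the size bound \eqref{eq:SK-size} for $K_{B,m}=K\cdot\beta^{m}$ holds with $C_{0}$ replaced by $C_{0}\lip(B)^{m}$, and $K_{B,m}(y,x)=-K_{B,m}(x,y)$ (symmetry of $\beta$, antisymmetry of $K$), so $K_{B,m}$ is antisymmetric. For the smoothness \eqref{eq:SK-smooth}, fixing $|x-x'|\le|x-y|/2$ I would split
\[ K_{B,m}(x,y)-K_{B,m}(x',y)=[K(x,y)-K(x',y)]\beta(x,y)^{m}+K(x',y)\big[\beta(x,y)^{m}-\beta(x',y)^{m}\big]. \]
The first term is bounded by $C_{0}\lip(B)^{m}|x-x'|^{\delta}/|x-y|^{1+\delta}$ directly from \eqref{eq:SK-smooth} for $K$. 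For the second, $\partial_{t}\beta(t,y)=\tfrac{B'(t)}{t-y}-\tfrac{B(t)-B(y)}{(t-y)^{2}}$ has $|\partial_{t}\beta(t,y)|\le 2\lip(B)/|t-y|$, and along the segment from $x'$ to $x$ one has $|t-y|\ge|x-y|/2$, so $|\beta(x,y)-\beta(x',y)|\lesssim\lip(B)|x-x'|/|x-y|$; telescoping $\beta^{m}-\beta'^{m}=(\beta-\beta')\sum_{j=0}^{m-1}\beta^{j}\beta'^{m-1-j}$ with $|\beta|\le\lip(B)$, and using $|K(x',y)|\le 2C_{0}/|x-y|$, gives a bound $\lesssim C_{0}m\,\lip(B)^{m}|x-x'|/|x-y|^{2}$, which is $\le C_{0}m\,\lip(B)^{m}|x-x'|^{\delta}/|x-y|^{1+\delta}$ since $|x-x'|\le|x-y|$. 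The $y$-smoothness is identical by the symmetry of $\beta$ and antisymmetry of $K$. Thus $K_{B,m}$ is an antisymmetric standard kernel.

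For (ii), for $0<\epsilon\le1\le R$ let $T_{B,m}^{\epsilon,R}$ have kernel $K_{B,m}(x,y)\mathbf 1_{\{\epsilon\le|x-y|\le R\}}$; this is bounded on every $L^{p}$ by Schur's test (non-uniformly) and is associated to $K_{B,m}$ in the limit $\epsilon\to0$, $R\to\infty$. The weak boundedness property is elementary and uniform: for normalised bumps $\phi,\psi$ adapted to an interval of length $\ell$, antisymmetrising the bilinear form,
\[ \langle T_{B,m}^{\epsilon,R}\phi,\psi\rangle=\tfrac12\iint K_{B,m}^{\epsilon,R}(x,y)\big(\phi(y)\psi(x)-\phi(x)\psi(y)\big)\,dy\,dx, \]
and estimating $|\phi(y)\psi(x)-\phi(x)\psi(y)|\lesssim\ell^{-1}|x-y|$ on the support against $|K_{B,m}^{\epsilon,R}(x,y)|\lesssim|x-y|^{-1}$ gives $|\langle T_{B,m}^{\epsilon,R}\phi,\psi\rangle|\lesssim\ell$. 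By antisymmetry of $K_{B,m}^{\epsilon,R}$ one has $(T_{B,m}^{\epsilon,R})^{*}=-T_{B,m}^{\epsilon,R}$ (when $K$ is real; in the complex case one also conjugates $K$), so the two $\mathrm{BMO}$-conditions in (iii) coincide.

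The genuinely hard point is (iii): the uniform bound $\sup_{\epsilon,R}\|T_{B,m}^{\epsilon,R}\mathbf 1\|_{\mathrm{BMO}}<\infty$. This is exactly where the hypothesis that $T$ is a Calder\'on--Zygmund \emph{operator} (not merely a kernel) must be used: on an interval $Q$ of length $\ell$ the "local" piece $\int_{2Q}K_{B,m}^{\epsilon,R}(x,y)\,dy$ obeys only the divergent bound $\lip(B)^{m}\log(\ell/\epsilon)$, so no amount of standard-kernel bookkeeping suffices, and the missing cancellation has to come from $T$. The approach I would take is to expand $\beta(x,y)=\int_{0}^{1}B'(y+s(x-y))\,ds$, write $\beta^{m}$ as a multiple average of such factors, and thereby express $T_{B,m}^{\epsilon,R}$ as an average of operators in which part of the weight $B'$ can be transferred onto the test function; then run the Littlewood--Paley/almost-orthogonality argument underlying the $T1$ theorem, reducing matters to the $\mathrm{BMO}$-membership of $T\mathbf 1$ (available since $T$ is a CZO), standard paraproduct estimates, and lower-order instances of the same inequality. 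This is precisely the mechanism behind Calder\'on's commutator theorem; it can equivalently be organised as an induction on $m$, each step peeling off one factor $\beta$ and invoking the $m-1$ case for a related antisymmetric CZO. This last step is the main obstacle; everything else is routine. With the uniform $L^{2}$-bound in hand, a weak limit of $T_{B,m}^{\epsilon_{j},R_{j}}$ along suitable sequences $\epsilon_{j}\to0$, $R_{j}\to\infty$ gives an operator associated to $K_{B,m}$ and bounded on $L^{2}$, which is the claimed $T_{B,m}$.
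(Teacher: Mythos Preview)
The paper does not prove this theorem; it is quoted as a known result from \cite[p.~52]{david1991wavelets} (a consequence of the $T1$ theorem). So there is no ``paper's own proof'' to compare against, and your task was really to reproduce a proof that the authors took for granted.

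Your outline is the right one and matches the standard argument in David's book: check the standard kernel estimates, verify weak boundedness via antisymmetry, and then control $T_{B,m}\mathbf 1\in\mathrm{BMO}$ by an induction on $m$. Parts (i) and (ii) are carried out correctly (one small quibble: in (i) you differentiate $\beta(t,y)$ in $t$, but $B$ is only Lipschitz; the bound $|\beta(x,y)-\beta(x',y)|\lesssim\lip(B)\,|x-x'|/|x-y|$ is still true and is cleaner to obtain directly from the identity $\beta(x,y)-\beta(x',y)=\tfrac{1}{x-y}\bigl[(B(x)-B(x'))-(x-x')\beta(x',y)\bigr]$).

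However, your proposal has a genuine gap at exactly the point you yourself flag: step (iii). You describe a strategy---expand $\beta$ as an average of $B'$, transfer factors onto the test function, run an induction peeling off one $\beta$ at a time---but you do not execute it. The phrases ``the approach I would take'' and ``this last step is the main obstacle'' are honest, but they mean the proof is not complete. The missing content is substantial: making the induction step work requires showing that if $T$ is an antisymmetric CZO then so is the operator with kernel $K(x,y)\beta(x,y)$, and this is precisely the $m=1$ case of the theorem you are trying to prove. The standard way around this circularity (as in David's treatment) is to work with a quantitative version---tracking the $L^2$ bound as a function of $\lip(B)$ and the constants of $T$---and to use the ``good-$\lambda$''/rising-sun or stopping-time machinery behind the $T1$ theorem to close the induction. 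None of that is present here, so as written the proposal is an outline rather than a proof.
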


We will also need a result due to Tolsa \cite[Theorem 1.3]{tolsa2008uniform} concerning singular integrals on ``uniformly rectifiable sets''. We state a simplified version relevant to our application to $1$-dimensional Lipschitz graphs.
\begin{thm}\label{thm:Tolsa}
	Let $\mathcal{K} \in C^{2}(\R^d \, \setminus \, \{0\} )$ be an odd function satisfying  
	\begin{equation}\label{eq:K-standard}
		|\nabla^{j}\mathcal{K}(z)| \le C_\mathcal{K} |z|^{-1 - j}, \qquad j \in \{0,1,2\}. 
	\end{equation}
	Let $\Gamma(x)\coloneqq (x, A(x))$ for a Lipschitz function $A:\R\to\R^{d-1}$. Then, for all $\varepsilon>0$, the formula
	\begin{equation*}
		(T_\varepsilon f)(x) = \int_{|\Gamma(x)-\Gamma(y)|>\varepsilon} \mathcal{K}(\Gamma(x)-\Gamma(y)) f(y)\, dy, \qquad x \in \R,
	\end{equation*}
	defines a bounded operator $T_{\varepsilon}:L^2(\R)\to L^2(\R)$ with $\|T_\varepsilon\|_{L^2\to L^2} \lesssim_{C_\mathcal{K},\lip(A)} 1$.
\end{thm}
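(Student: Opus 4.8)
The plan is to deduce this simplified statement from Tolsa's Theorem 1.3 on singular integrals over uniformly rectifiable sets, by transferring the problem from the line $(\R,\mathcal{L}^{1})$ onto the arc-length measure of the graph. I would set $\Sigma := \Gamma(\R)$, write $L := \lip(A)$, and let $J(x) := \sqrt{1 + |\nabla A(x)|^{2}}$ be the a.e.-defined Jacobian of $\Gamma$, so that $1 \le J \le \sqrt{1 + L^{2}}$. Since $\Gamma$ is bi-Lipschitz, $|x-y| \le |\Gamma(x)-\Gamma(y)| \le \sqrt{1+L^{2}}|x-y|$, the set $\Sigma$ is a Lipschitz curve, hence a $1$-dimensional uniformly rectifiable set. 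The first step is to check — using these bi-Lipschitz bounds together with the area formula $\mathcal{H}^{1}(\Gamma(E)) = \int_{E} J$ for Borel $E \subset \R$ — that $\mu := \mathcal{H}^{1}|_{\Sigma}$ is Ahlfors $1$-regular with constant depending only on $L$, so that all the uniform rectifiability constants of $\mu$ are controlled by $L$ alone.

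Next, since $\mathcal{K}$ is odd on $\R^{d}\setminus\{0\}$ and satisfies the standard estimates \eqref{eq:K-standard}, Tolsa's theorem applies to the measure $\mu$ and produces, for each $\varepsilon>0$, a bound $\|T_{\mu,\varepsilon}\|_{L^{2}(\mu)\to L^{2}(\mu)} \lesssim_{C_{\mathcal{K}},L} 1$, independent of $\varepsilon$, where $T_{\mu,\varepsilon}g(p) := \int_{|p-q|>\varepsilon}\mathcal{K}(p-q)g(q)\,d\mu(q)$ for $p\in\Sigma$. Using the Ahlfors regularity of $\mu$, a dyadic decomposition of $\{|p-q|>\varepsilon\}$ together with Cauchy--Schwarz shows that this truncated integral converges absolutely for every $g\in L^{2}(\mu)$ and every $p$, so $T_{\mu,\varepsilon}$ is genuinely defined on all of $L^{2}(\mu)$ and no density argument is required.

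The final step is to transfer the estimate. Given $f\in L^{2}(\R)$, I would put $g(\Gamma(y)) := f(y)/J(y)$ (well-defined since $\Gamma$ is injective and $J\ge 1$). The area formula gives $\|g\|_{L^{2}(\mu)}^{2} = \int_{\R}|f(y)|^{2}/J(y)\,dy$, which is comparable to $\|f\|_{L^{2}(\R)}^{2}$ with constant depending only on $L$; and rewriting the $q$-integral defining $T_{\mu,\varepsilon}g(\Gamma(x))$ as a $y$-integral via $d\mu = J\,dy$ yields exactly $T_{\mu,\varepsilon}g(\Gamma(x)) = (T_{\varepsilon}f)(x)$, which in passing shows $T_{\varepsilon}f$ is pointwise well-defined for $f\in L^{2}(\R)$. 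Then, using $J\ge 1$ once more, $\|T_{\varepsilon}f\|_{L^{2}(\R)}^{2} \le \int_{\R}|T_{\mu,\varepsilon}g(\Gamma(x))|^{2}J(x)\,dx = \|T_{\mu,\varepsilon}g\|_{L^{2}(\mu)}^{2} \lesssim_{C_{\mathcal{K}},L}\|g\|_{L^{2}(\mu)}^{2} \lesssim_{L}\|f\|_{L^{2}(\R)}^{2}$, which is the claimed bound $\|T_{\varepsilon}\|_{L^{2}\to L^{2}}\lesssim_{C_{\mathcal{K}},\lip(A)}1$, the dimension $d$ being fixed throughout.

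I do not expect a genuine obstacle here; the only points needing care are bookkeeping ones. One is verifying that the Ahlfors-regularity (and hence uniform-rectifiability) constants of $\mathcal{H}^{1}|_{\Sigma}$ are quantitatively controlled by $\lip(A)$, so that Tolsa's constant comes out depending only on $C_{\mathcal{K}}$ and $\lip(A)$. The other is confirming that the smoothness we impose on $\mathcal{K}$ — $C^{2}$ with \eqref{eq:K-standard} for $j\le 2$ — suffices to invoke \cite[Theorem 1.3]{tolsa2008uniform} in the form quoted; for a $1$-dimensional set this derivative count is comfortably enough. Everything else is a change of variables.
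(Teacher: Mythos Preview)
The paper does not supply its own proof of this statement: it is quoted as a simplified specialisation of \cite[Theorem 1.3]{tolsa2008uniform}, and is used as a black box. Your deduction --- pushing forward to the arc-length measure on the graph, invoking Tolsa's theorem on that uniformly rectifiable measure, and pulling back via the Jacobian --- is correct and is exactly the routine verification one would carry out to justify the specialisation; there is nothing to compare against.
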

We use Theorems \ref{thm:stable-kernels} and \ref{thm:Tolsa} to prove the following.
\begin{proposition}\label{prop9}
	Let $\mathcal{K} \in C^{2}(\R^{d} \, \setminus \, \{0\})\cap L^\infty(\R^d)$ be an odd function satisfying \eqref{eq:K-standard}. Suppose $B_1,B_2 \colon \R \to \R$ are Lipschitz functions, $\Gamma(x)\coloneqq (x, A(x))$ for some Lipschitz function $A:\R\to\R^{d-1}$, and $m_1,m_2 \in \N$. Consider the kernel $K_{B,m} \colon (\R \times \R) \, \setminus \, \{(x,x) : x \in \R\} \to \C$ defined by
	\begin{equation}\label{form106} K_{B,m}(x,y) \coloneqq \mathcal{K}(\Gamma(x) - \Gamma(y)) \cdot \left(\frac{B_1(x) - B_1(y)}{x - y} \right)^{m_1}\left(\frac{B_2(x) - B_2(y)}{x - y} \right)^{m_2}\end{equation}
	for $x,y \in \R, \, x \neq y$. Then $K_{B,m}$ is a standard kernel, and the formula 
	\begin{equation*}
		(T_{B,m} f)(x) \coloneqq \int K_{B,m}(x,y)f(y)\, dy, \qquad f \in L^{2}(\R), \, x \in \R, 
	\end{equation*}
	defines a Calder\'on-Zygmund operator. The standard kernel constants and $\|T_{B,m}\|_{L^{2} \to L^{2}}$ only depend on $C_\mathcal{K}$, $\lip(A)$, $\lip(B_i)$ and $m_i$.
\end{proposition}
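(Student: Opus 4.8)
\textbf{Proof plan for Proposition \ref{prop9}.}

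The plan is to reduce everything to the two black-box theorems already available: Tolsa's theorem (Theorem \ref{thm:Tolsa}) handles the ``geometric'' part coming from $\mathcal{K}(\Gamma(x)-\Gamma(y))$, while the ``stable kernels'' theorem (Theorem \ref{thm:stable-kernels}) handles the multiplication by the difference-quotient factors $\big(\tfrac{B_i(x)-B_i(y)}{x-y}\big)^{m_i}$. The first thing I would do is verify that $K_{B,m}$ is a standard kernel in the sense of Definition \ref{def:standardKernel}. The size bound \eqref{eq:SK-size} is immediate: $|\mathcal{K}(\Gamma(x)-\Gamma(y))|\le C_{\mathcal K}|\Gamma(x)-\Gamma(y)|^{-1}\le C_{\mathcal K}|x-y|^{-1}$ by \eqref{eq:K-standard} and $|\Gamma(x)-\Gamma(y)|\ge|x-y|$, and each difference-quotient factor is bounded by $(1+\lip(B_i))^{m_i}$. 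For the smoothness bound \eqref{eq:SK-smooth}, I would check \eqref{eq:SK-smooth2} instead: away from the diagonal, $K_{B,m}$ is Lipschitz (being a product of the Lipschitz function $z\mapsto \mathcal{K}(\Gamma(x)-\Gamma(y))$ — Lipschitz because $\mathcal{K}\in C^1$ away from $0$, $\Gamma$ is Lipschitz, and $|\Gamma(x)-\Gamma(y)|\ge|x-y|$ — and the Lipschitz difference-quotient factors), and the product rule together with $|\nabla\mathcal K(z)|\le C_{\mathcal K}|z|^{-2}$ and $\big|\partial_x\tfrac{B_i(x)-B_i(y)}{x-y}\big|\lesssim_{\lip(B_i)}|x-y|^{-1}$ gives $|\partial_x K_{B,m}(x,y)|+|\partial_y K_{B,m}(x,y)|\lesssim |x-y|^{-2}$, with constants depending only on $C_{\mathcal K}$, $\lip(A)$, $\lip(B_i)$, $m_i$. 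By the remark after \eqref{eq:SK-smooth2}, this yields \eqref{eq:SK-smooth} with $\delta=1$.

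Next I would establish the $L^2$-boundedness. By Theorem \ref{thm:Tolsa}, the truncated operators
\begin{displaymath}
  T_\varepsilon f(x)=\int_{|\Gamma(x)-\Gamma(y)|>\varepsilon}\mathcal K(\Gamma(x)-\Gamma(y))f(y)\,dy
\end{displaymath}
are bounded on $L^2$ uniformly in $\varepsilon$, with norm controlled by $C_{\mathcal K}$ and $\lip(A)$; since $\mathcal K\in L^\infty$ and is odd (so the principal value makes sense), a standard limiting argument shows that $\mathcal K(\Gamma(\cdot)-\Gamma(\cdot))$ is the kernel of a Calderón–Zygmund operator $T_0$ on $\R$. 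Moreover $T_0$ is associated with an \emph{antisymmetric} kernel, because $\mathcal K$ is odd: $\mathcal K(\Gamma(y)-\Gamma(x))=\mathcal K(-(\Gamma(x)-\Gamma(y)))=-\mathcal K(\Gamma(x)-\Gamma(y))$. Now apply Theorem \ref{thm:stable-kernels} twice: first with the Lipschitz function $B_1$ and exponent $m_1$ to produce a Calderón–Zygmund operator $T_1$ with kernel $\mathcal K(\Gamma(x)-\Gamma(y))\big(\tfrac{B_1(x)-B_1(y)}{x-y}\big)^{m_1}$; this kernel is again antisymmetric (the difference-quotient factor is symmetric under $x\leftrightarrow y$, so the product inherits antisymmetry from $\mathcal K\circ\Gamma$), hence a second application with $B_2$ and $m_2$ produces a Calderón–Zygmund operator $T_2$ whose kernel is exactly $K_{B,m}$ from \eqref{form106}. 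This gives $\|T_2\|_{L^2\to L^2}<\infty$ with the stated dependence on the parameters. Finally, since $T_2$ is associated to the standard kernel $K_{B,m}$ and bounded on $L^2$, and since we have already checked that $K_{B,m}$ satisfies \eqref{eq:SK-size}–\eqref{eq:SK-smooth} with constants depending only on $C_{\mathcal K},\lip(A),\lip(B_i),m_i$, the operator $T_{B,m}$ defined pointwise by integration against $K_{B,m}$ agrees with $T_2$ on, say, functions in $L^2$ with compact support away from the diagonal issues, and extends the Calderón–Zygmund operator; this is the operator claimed in the statement.

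The one point requiring a little care — and the place I expect the ``main obstacle'' — is the bookkeeping in the two places where I invoke antisymmetry of the kernel before applying Theorem \ref{thm:stable-kernels}: one must be sure that at each stage the kernel is genuinely antisymmetric (not merely ``standard''), which relies on oddness of $\mathcal K$ propagating through composition with $\Gamma$ and multiplication by the manifestly symmetric factors $\big(\tfrac{B_i(x)-B_i(y)}{x-y}\big)^{m_i}$. A secondary technical point is the passage from the uniformly bounded truncations $T_\varepsilon$ in Theorem \ref{thm:Tolsa} to a genuine Calderón–Zygmund operator $T_0$ on $\mathcal S(\R)$: one takes a weak-$*$ limit of $T_{\varepsilon_j}$ along a subsequence, checks it is associated to the kernel $\mathcal K(\Gamma(x)-\Gamma(y))$ on disjoint supports (by dominated convergence, using the size bound), and notes the $L^2$-norm of the limit is $\le\liminf\|T_{\varepsilon_j}\|$. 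Everything else is the routine verification of standard-kernel estimates sketched in the first paragraph, and tracking that all implicit constants depend only on $C_{\mathcal K}$, $\lip(A)$, $\lip(B_i)$, and $m_i$.
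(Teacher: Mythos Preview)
Your overall strategy matches the paper's: check the standard-kernel estimates, invoke Tolsa for the base kernel $\mathcal{K}(\Gamma(x)-\Gamma(y))$, then apply the stable-kernels theorem twice using antisymmetry. However, there is a genuine gap in your final step. Theorem \ref{thm:stable-kernels} only asserts the existence of \emph{some} Calder\'on--Zygmund operator $T_2$ associated to the kernel $K_{B,m}$; it does not tell you that $T_2$ coincides with the pointwise-defined operator $T_{B,m}f(x)=\int K_{B,m}(x,y)f(y)\,dy$ that the proposition is about. Two operators associated to the same standard kernel need only agree on pairings $\langle Tf,g\rangle$ for $f,g$ with disjoint supports---they can differ by a pointwise multiplier---so your claim that ``$T_{B,m}$ agrees with $T_2$ on functions with compact support away from the diagonal'' does not yield $T_{B,m}=T_2$ as operators and hence does not deliver the $L^2$ bound for $T_{B,m}$ itself. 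Since the boundedness of $T_{B,m}$ is exactly what you are trying to prove, you cannot assume it to make the identification.

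The paper closes this gap with Cotlar's inequality. Once the abstract CZ operator $S_{B_1,m_1}$ from the stable-kernels theorem is known to be $L^2$-bounded, Cotlar's inequality gives uniform $L^2$ bounds for the \emph{truncated} operators $T_{B_1,m_1,\varepsilon}f(x)=\int_{|x-y|\ge\varepsilon}K_{B_1,m_1}(x,y)f(y)\,dy$ (which depend only on the kernel, not on the particular CZ operator). Then, because $K_{B_1,m_1}\in L^\infty$, a direct Cauchy--Schwarz estimate gives $\|T_{B_1,m_1}f-T_{B_1,m_1,\varepsilon}f\|_{L^2}^2\lesssim\varepsilon\|K_{B_1,m_1}\|_{L^\infty}^2\|f\|_{L^2}^2$, so the pointwise operator $T_{B_1,m_1}$ is itself bounded; the argument is then repeated after the second application of stable kernels. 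This same $L^\infty$-kernel trick also handles your ``secondary technical point'' about passing from Tolsa's truncations to the full operator---no weak-$*$ subsequence is needed, one simply shows $\|T_{\mathrm{id}}f-T_{\mathrm{id},\varepsilon}f\|_{L^2}\to 0$ directly. What you flagged as the main obstacle (antisymmetry bookkeeping) is, as you yourself observed, routine; the real work is in the identification step you glossed over.
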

\begin{remark}
	Note that $K_{B,m} \in L^\infty(\R^{2})$. This fact together with the decay estimate \eqref{eq:K-standard} ensure that the integral appearing in the definition of $T_{B,m}f(x)$ is absolutely convergent.
\end{remark}
\begin{proof}[Proof of Proposition \ref{prop9}]
	First we check that $K_{B,m}$ is a standard kernel. The size condition \eqref{eq:SK-size} follows immediately from \eqref{eq:K-standard} with $j = 0$:
	\begin{align*}
		|K_{B,m}(x,y)| & \leq C_{\mathcal{K}}\lip(B_1)^{m_1}\lip(B_2)^{m_2} |\Gamma(x)-\Gamma(y)|^{-1}\\
		& \le C_{\mathcal{K}}\lip(B_1)^{m_1}\lip(B_2)^{m_2}|x-y|^{-1} \quad\text{for } x\neq y. \end{align*}
	To check the smoothness condition \eqref{eq:SK-smooth} (with $\delta = 1$) it suffices to prove \eqref{eq:SK-smooth2}. This follows by a straightforward, if tedious, computation, which also reveals that the constant "$C$" in \eqref{eq:SK-smooth2} only depends on $C_{\mathcal{K}},\lip(A),\lip(B_{1}),\lip(B_{2}),m_{1},m_{2}$. 

	Finally we prove that $T_{B,m}$ is a Calderón-Zygmund operator. First we consider the case $B_1=B_2= \mathrm{id}$, in which case $K_{B,m}(x,y)=K_{\mathrm{id}}(x,y)=\mathcal{K}(\Gamma(x)-\Gamma(y))$. Since $\mathcal{K}$ is an odd standard kernel satisfying \eqref{eq:K-standard}, Theorem \ref{thm:Tolsa} implies that for every $\varepsilon>0$ the corresponding truncated operator
	\begin{equation*}
		T_{\mathrm{id},\varepsilon} f(x) \coloneqq \int_{|\Gamma(x)-\Gamma(y)|>\varepsilon} \mathcal{K}(\Gamma(x)-\Gamma(y)) f(y)\, dy, \qquad x \in \R,
	\end{equation*}
	is bounded on $L^2(\R)$ with $\|T_{\mathrm{id},\varepsilon}\|_{L^{2} \to L^{2}} \lesssim_{C_{\mathcal{K}},\lip(A)} 1$. We wish to show that
	\begin{equation*}
		T_\mathrm{id} f(x) \coloneqq \int \mathcal{K}(\Gamma(x)-\Gamma(y)) f(y)\, dy
	\end{equation*}
	is still $L^2$-bounded with the same estimates. 		
	We fix $f\in L^2(\R)$ and use the Cauchy-Schwarz inequality to estimate
	\begin{multline*}
		\|T_\mathrm{id}f-T_{\mathrm{id},\varepsilon} f\|_{L^2}^2 = \int_\R \left| \int_{|\Gamma(x)-\Gamma(y)|\le\varepsilon}\mathcal{K}(\Gamma(x)-\Gamma(y)) f(y)\, dy  \right|^2\, dx\\
		\le \|\mathcal{K}\|_{L^\infty}^2 \int_\R\int_{|x-y|\le C \varepsilon}|f(y)|^2\, dy\, dx
		= \|\mathcal{K}\|_{L^\infty}^2 \int_\R |f(y)|^2\int_{|x-y|\le C \varepsilon} dx\, dy \lesssim \varepsilon\|\mathcal{K}\|_{L^\infty}^2\|f\|_{L^2}^2,
	\end{multline*}
	so that
	\begin{equation}\label{eq:Tid-bdd}
		\|T_\mathrm{id}f\|_{L^2} \le \|T_{\mathrm{id},\varepsilon} f\|_{L^2}+ \|T_\mathrm{id}f-T_{\mathrm{id},\varepsilon} f\|_{L^2} \lesssim_{C_\mathcal{K},\lip(A)}\|f\|_{L^2} + \varepsilon^{1/2}\|\mathcal{K}\|_{L^\infty}\|f\|_{L^2}.
	\end{equation}
	Letting $\varepsilon\to 0$ we get that $T_\mathrm{id}$ is $L^2$-bounded with estimates depending only on $C_{\mathcal{K}}$ and $\lip(A)$. In particular, it is a Calderón-Zygmund operator.
	
	Now we assume that $B_1:\R\to\R$ is a general Lipschitz function, and $B_2=\mathrm{id}$. We denote the corresponding kernel and operator by $K_{B_1,m_1}$ and $T_{B_1,m_1}$. 
	
	Observe that the kernel $K_{\mathrm{id}}(x,y)$ studied before is antisymmetric, and the associated operator $T_\mathrm{id}$ is Calderón-Zygmund. This means we may use Theorem \ref{thm:stable-kernels} to conclude that there exists a Calderón-Zygmund operator $S_{B_1,m_1}$ associated to the kernel $K_{B_1,m_1}$, with $\|S_{B_1,m_1}\|_{L^2\to L^2}$ depending only on $\lip(B_1),$ $m_1$ and $\|T_\mathrm{id}\|_{L^2\to L^2}$.
	
	The $L^2$-boundedness of $S_{B_1,m_1}$ together with Cotlar's inequality \cite[Section 4.2.2]{grafakos2014modern} implies that all the truncated operators
	\begin{equation*}
		T_{B_1,m_1,\varepsilon}f(x)=\int_{|x-y|\ge\varepsilon} K_{B_1,m_1}(x,y)f(y)\, dy
	\end{equation*}
	are bounded on $L^2(\R)$ with estimates depending only on $C_\mathcal{K}$, $\lip(A)$, $\lip(B_1)$ and $m_1$. Now we may use the estimate $\|K_{B_1,m_1}\|_{L^\infty}\lesssim \|\mathcal{K}\|_{L^\infty}\lip(B_1)^{m_1}<\infty$ to argue similarly like we did for $T_{\mathrm{id}}$ above \eqref{eq:Tid-bdd} and conclude that $T_{B_1,m_1}$ is a bounded operator on $L^2(\R)$.
	
	Finally, to get that $T_{B,m}$ is a Calderón-Zygmund operator for general Lipschitz functions $B_1, B_2:\R\to\R$, we repeat the reasoning used above for $T_{B_1,m_1}$. The only difference is that we apply Theorem \ref{thm:stable-kernels} to the antisymmetric kernel $K=K_{B_1,m_1}$ instead of $K_{\mathrm{id}}$.
\end{proof}

\subsection{Uniform $L^{p} \to L^{p}$ bounds for the operators $T^{\Gamma,\epsilon,R}_{\beta}$}\label{s:L2Bounds} The purpose of this section is to show that the operators $T_{\beta}^{\Gamma,\epsilon,R}$ have bounded extensions to $L^{p}$, $p \in (1,\infty)$, with operator norms independent of the truncation parameters $\epsilon,R$. 

The steps are the following:
\begin{itemize}
\item[(i)] For $\Rea \beta \in \{0,1\}$, apply Proposition \ref{prop9} combined with the $L^{p}$-boundedness of purely imaginary fractional Laplacians $\Delta^{\beta}$, $\Rea \beta = 0$, see Corollary \ref{cor3}.
\item[(ii)] For $\Rea \beta \in (0,1)$, use complex interpolation, enabled by Proposition \ref{prop5}.
\end{itemize}

To begin with, fix $0 < \epsilon \leq 1 \leq R$, a Lipschitz function $A \colon \R \to \R^{d - 1}$ with graph map $\Gamma(x) = (x,A(x))$, and consider the following two operators $\partial U^{\Gamma,\epsilon,R}$ and $U^{\Gamma,\epsilon,R}\partial$:
\begin{equation}\label{partialU} (\partial U^{\Gamma,\epsilon,R})f(x) := \int \partial_{x}(x \mapsto \log_{\epsilon,R} |\Gamma(x) - \Gamma(y)|) f(y) \, dy, \qquad f \in \mathcal{S}(\R), \, x \in \R, \end{equation}
and
\begin{align} (U^{\Gamma,\epsilon,R} \partial)f(x) & := \int \log_{\epsilon,R} (|\Gamma(x) - \Gamma(y)|) f'(y) \, dy \notag\\
&\label{Upartial} = \int -\partial_{y}(y \mapsto \log_{\epsilon,R} |\Gamma(x) - \Gamma(y)|)f(y) \, dy, \qquad f \in \mathcal{S}(\R), \, x \in \R. \end{align} 
In both formulae, $\log_{\epsilon,R} \in C^{4}(\R \, \setminus \, \{0\}) \cap \mathrm{Lip}(\R)$ is the bounded compactly supported "truncation" of the logarithm provided by Lemma \ref{truncationLemma}. The integration by parts in \eqref{Upartial} is justified by these properties of $\log_{\epsilon,R}$, and since $\Gamma$ is Lipschitz. Thanks to the truncations, both integrals \eqref{partialU}-\eqref{Upartial} are absolutely convergent.

\begin{remark}\label{rem3} What is the connection between the operators $\partial U^{\Gamma,\epsilon,R},U^{\Gamma,\epsilon,R}\partial$ and the operators $T^{\Gamma,\epsilon,R}_{\beta}$? The answer is  that 
\begin{displaymath} \partial U^{\Gamma,\epsilon,R} \cong T_{0}^{\Gamma,\epsilon,R} \quad \text{and} \quad U^{\Gamma,\epsilon,R} \partial \cong T_{1}^{\Gamma,\epsilon,R}, \end{displaymath}
where "$\cong$" means "modulo bounded operators on $L^{p}$". For a more precise and general statement, see Proposition \ref{prop12}. To reach that point, however, we first need to establish the $L^{p}$-boundedness of $\partial U^{\Gamma,\epsilon,R}$ and $U^{\Gamma,\epsilon,R}\partial$. \end{remark}

Let us compute explicitly the $\partial_{x}$ and $\partial_{y}$ derivatives of $\log_{\epsilon,R} |\Gamma(x) - \Gamma(y)|$ appearing in \eqref{partialU}-\eqref{Upartial}. The answers are:
\begin{equation}\label{form134} K_{\epsilon,R}(x,y) := k_{\epsilon,R}(|\Gamma(x) - \Gamma(y)|) \cdot \frac{(x - y) + \nabla A(x) \cdot (A(x) - A(y))}{|\Gamma(x) - \Gamma(y)|}, \qquad x,y \in \R, \end{equation} 
and
\begin{displaymath} L_{\epsilon,R}(x,y) := k_{\epsilon,R}(|\Gamma(x) - \Gamma(y)|) \cdot \frac{(x - y) + \nabla A(y) \cdot (A(x) - A(y))}{|\Gamma(x) - \Gamma(y)|}, \qquad x,y \in \R. \end{displaymath}
(For accuracy: the kernel $L_{\epsilon,R}(x,y)$ also incorporates the minus sign in \eqref{Upartial}.) The function $k_{\epsilon,R} \in C^{3}(\R \, \setminus \, \{0\}) \cap L^{\infty}(\R)$ is the bounded compactly supported standard kernel introduced in Lemma \ref{truncationLemma}.

We plan to deduce the $L^{p}$-boundedness of the operators $\partial U^{\Gamma,\epsilon,R}$ and $U^{\Gamma,\epsilon,R}\partial$ directly from Proposition \ref{prop9}. To this end, it suffices to note that both $K_{\epsilon,R}$ and $L_{\epsilon,R}$ can be expressed (up to $L^{\infty}$-factors) as the sum of $d$ kernels of the form \eqref{prop9}. In fact,
\begin{displaymath} K_{\epsilon,R}(x,y) = \mathcal{K}(\Gamma(x) - \Gamma(y)) + \sum_{j = 1}^{d - 1} A_{j}'(x) \cdot \mathcal{K}(\Gamma(x) - \Gamma(y)) \cdot \frac{A_{j}(x) - A_{j}(y)}{x - y}, \end{displaymath} 
where $\mathcal{K}(z) = k_{\epsilon,R}(|z|) \cdot z_{1}/|z|$ for $z = (z_{1},\ldots,z_{d}) \in \R^{d}$, and $A = (A_{1},\ldots,A_{d - 1})$. Note that $\mathcal{K}$ is an \textbf{odd} and bounded standard kernel (with standard kernel constants independent of $\epsilon,R$). This is needed for applying Proposition \ref{prop9}. With the same notation,
\begin{displaymath} L_{\epsilon,R}(x,y) = \mathcal{K}(\Gamma(x) - \Gamma(y)) + \sum_{j = 1}^{d - 1} A_{j}'(y) \cdot \mathcal{K}(\Gamma(x) - \Gamma(y)) \cdot \frac{A_{j}(x) - A_{j}(y)}{x - y}. \end{displaymath} 
We now deduce the following corollary of Proposition \ref{prop9} (and Theorem \ref{LpExtension}):
\begin{cor}\label{cor4} The operators $\partial U^{\Gamma,\epsilon,R}$ and $U^{\Gamma,\epsilon,R}\partial$ are bounded on $L^{p}(\R)$ for every $p \in (1,\infty)$. The $L^{p}$-operator norms depend on $\mathrm{Lip}(A),d$ and $p$ but not on $\epsilon,R$.
\end{cor}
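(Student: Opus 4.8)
The plan is to obtain Corollary \ref{cor4} as an essentially immediate consequence of Proposition \ref{prop9} and Theorem \ref{LpExtension}, using the two kernel decompositions recorded just before the statement. Write $\mathcal{K}(z) \coloneqq k_{\epsilon,R}(|z|)\, z_{1}/|z|$ for $z = (z_{1},\ldots,z_{d}) \in \R^{d} \setminus \{0\}$, where $k_{\epsilon,R}$ is the truncated kernel from Lemma \ref{truncationLemma}. The first step is to check that $\mathcal{K}$ is an odd, bounded, $C^{2}$ function satisfying the decay bound \eqref{eq:K-standard} with a constant $C_{\mathcal{K}}$ that does \emph{not} depend on $\epsilon$ or $R$: oddness is clear since $k_{\epsilon,R}$ is odd and $z \mapsto z_{1}/|z|$ is odd, while the bounds $|\nabla^{j}\mathcal{K}(z)| \lesssim |z|^{-1-j}$ for $j \in \{0,1,2\}$ follow from the estimates $|\partial_{x}^{j} k_{\epsilon,R}(x)| \lesssim |x|^{-1-j}$ of Lemma \ref{truncationLemma}(iv), whose implicit constants are absolute. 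This is the one point at which the $\epsilon,R$-independence of all subsequent constants is secured.

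Next I would handle $\partial U^{\Gamma,\epsilon,R}$. Using the decomposition $K_{\epsilon,R}(x,y) = \mathcal{K}(\Gamma(x) - \Gamma(y)) + \sum_{j=1}^{d-1} A_{j}'(x)\, \mathcal{K}(\Gamma(x) - \Gamma(y))\, \tfrac{A_{j}(x) - A_{j}(y)}{x - y}$, one writes $\partial U^{\Gamma,\epsilon,R} = T_{\mathrm{id}} + \sum_{j} M_{A_{j}'} \circ T_{A_{j},1}$, where $T_{\mathrm{id}}$ has kernel $\mathcal{K}(\Gamma(x)-\Gamma(y))$, the operator $T_{A_{j},1}$ has kernel $\mathcal{K}(\Gamma(x)-\Gamma(y))\,(A_{j}(x)-A_{j}(y))/(x-y)$, and $M_{A_{j}'}$ is multiplication by $A_{j}' \in L^{\infty}(\R)$. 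Proposition \ref{prop9}, applied with $B_{2} = \mathrm{id}$, $m_{2} = 0$ and $(B_{1},m_{1})$ equal to $(\mathrm{id},0)$ or $(A_{j},1)$, shows that $T_{\mathrm{id}}$ and $T_{A_{j},1}$ are Calder\'on--Zygmund operators with standard kernel constants and $L^{2} \to L^{2}$ norms controlled by $C_{\mathcal{K}}$, $\lip(A)$ and $d$; Theorem \ref{LpExtension} then upgrades this to $L^{p} \to L^{p}$ bounds for every $p \in (1,\infty)$ with the same dependence together with $p$. Since $\|M_{A_{j}'}\|_{L^{p} \to L^{p}} \le \lip(A)$, the triangle inequality completes the bound for $\partial U^{\Gamma,\epsilon,R}$. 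The operator $U^{\Gamma,\epsilon,R}\partial$ is treated identically, the only change being that in the decomposition of $L_{\epsilon,R}$ the extra factor $A_{j}'$ is evaluated at $y$ rather than at $x$; one therefore factors $U^{\Gamma,\epsilon,R}\partial = T_{\mathrm{id}} + \sum_{j} T_{A_{j},1} \circ M_{A_{j}'}$, applying the $L^{\infty}$-multiplication before the Calder\'on--Zygmund operator instead of after it, and argues as before.

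I do not expect a genuine obstacle in this argument: all of the analytic content sits inside Proposition \ref{prop9} (which in turn rests on Tolsa's theorem and the $T1$-type result on stable kernels). The only point that requires some care is the bookkeeping needed to be sure that no constant hides a dependence on $\epsilon$ or $R$; as indicated above, this reduces entirely to the uniform derivative bounds for $k_{\epsilon,R}$ in Lemma \ref{truncationLemma}(iv), together with the trivial observation that $\lip(A_{j}) \le \lip(A)$ for each $j$. One should also remark explicitly that the absolute convergence of the integrals \eqref{partialU}--\eqref{Upartial} and the validity of the pointwise kernel formulas \eqref{form134} already follow from the boundedness and compact support of $k_{\epsilon,R}$ and the Lipschitz regularity of $\Gamma$, so that the operators to which Proposition \ref{prop9} is applied genuinely coincide with $\partial U^{\Gamma,\epsilon,R}$ and $U^{\Gamma,\epsilon,R}\partial$.
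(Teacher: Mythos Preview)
Your approach is essentially the same as the paper's: decompose the kernels $K_{\epsilon,R}$ and $L_{\epsilon,R}$ into $d$ pieces of the form covered by Proposition \ref{prop9} (up to $L^\infty$ factors $A_j'$), verify that $\mathcal{K}(z)=k_{\epsilon,R}(|z|)\,z_1/|z|$ satisfies the hypotheses of that proposition with constants independent of $\epsilon,R$ via Lemma \ref{truncationLemma}(iv), and conclude using Theorem \ref{LpExtension}. One small slip: your justification of the oddness of $\mathcal{K}$ is garbled --- the product of two odd functions is even, not odd. The correct reason is that $z\mapsto k_{\epsilon,R}(|z|)$ is \emph{even} (being radial) while $z\mapsto z_1/|z|$ is odd, so their product is odd; your conclusion is right but the stated reasoning should be fixed.
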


\begin{remark} Taking into account that the kernels in \eqref{partialU}-\eqref{Upartial} are bounded and compactly supported, the pointwise definitions appearing on these lines are well-posed for $f \in L^{p}(\R)$, $p \in [1,\infty]$. This is why there is no need to talk about $L^{p}$-\emph{extensions} of the operators appearing in Corollary \ref{cor4}.   \end{remark} 

The next step is to formalise the connection between the operators $T^{\Gamma,\epsilon,R}_{\beta}$ with $\Rea \beta \in \{0,1\}$ and $\partial U^{\Gamma,\epsilon,R},U^{\Gamma,\epsilon,R}\partial$, as hinted at in Remark \ref{rem3}. We start with an easy computation of the Fourier transform of $(\partial U^{\Gamma,\epsilon,R})f$:

\begin{lemma}\label{lemma9} Let $f \in L^{2}(\R)$. Then $(\partial U^{\Gamma,\epsilon,R})f \in L^{2}(\R)$ equals the distributional derivative of $U^{\Gamma,\epsilon,R}f \in L^{2}(\R)$. In particular, the ($L^{2}$-)Fourier transform of $(\partial U^{\Gamma,\epsilon,R})f$ is the function
\begin{displaymath} \xi \mapsto (2\pi i \xi) \widehat{U^{\Gamma,\epsilon,R}f}(\xi). \end{displaymath}   
Here $U^{\Gamma,\epsilon,R}f \in L^{2}(\R)$ is defined by \eqref{form55}.
\end{lemma}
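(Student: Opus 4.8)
The plan is to establish the distributional identity $\partial\big(U^{\Gamma,\epsilon,R}f\big) = \big(\partial U^{\Gamma,\epsilon,R}\big)f$ directly from the definitions, by testing against an arbitrary $g \in C_c^\infty(\R)$, using Fubini's theorem to bring the test function inside the $dy$-integral, integrating by parts in the $x$-variable on the kernel, and then applying Fubini once more. The Fourier assertion will then follow from the standard fact that for $h \in L^2(\R)$ with distributional derivative $\partial h \in L^2(\R)$, the $L^2$-Fourier transform satisfies $\widehat{\partial h}(\xi) = 2\pi i \xi\, \hat{h}(\xi)$ as an identity in $L^2(\R)$ (check it first against $\mathcal{S}(\R)$, where the $L^2$- and distributional Fourier transforms agree, then note both sides are genuine $L^2$-functions).

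First I would collect the integrability facts that license the manipulations. By Schur's test (noted below \eqref{form55}) $U^{\Gamma,\epsilon,R}$ is bounded on $L^2(\R)$, so $U^{\Gamma,\epsilon,R}f \in L^2(\R)$; by Corollary \ref{cor4} the operator $\partial U^{\Gamma,\epsilon,R}$ is bounded on $L^2(\R)$, so $\big(\partial U^{\Gamma,\epsilon,R}\big)f \in L^2(\R)$. Next, since $\log_{\epsilon,R}$ is bounded with $\spt \log_{\epsilon,R} \subset \bar{B}(2R)$ and $|\Gamma(x) - \Gamma(y)| \geq |x - y|$, the kernel $(x,y)\mapsto \log_{\epsilon,R}|\Gamma(x)-\Gamma(y)|$ is bounded (by a constant depending on $\epsilon,R$) and vanishes for $|x - y| > 2R$; likewise, from \eqref{form134}, Lemma \ref{truncationLemma}(iv) and $|\Gamma(x) - \Gamma(y)| \geq |x - y|$, the kernel $K_{\epsilon,R}(x,y)$ is bounded (by a constant depending on $\epsilon$ and $\mathrm{Lip}(A)$) and supported in $\{|x - y| \leq 2R\}$. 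Combined with $f \in L^2(\R) \subset L^1_{\mathrm{loc}}(\R)$ and the compact support of $g$, this makes every double integral below absolutely convergent.

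Then, for $g \in C_c^\infty(\R)$, Fubini gives
\[
\int U^{\Gamma,\epsilon,R}f \cdot g' = \int f(y)\Big( \int \log_{\epsilon,R}|\Gamma(x)-\Gamma(y)|\, g'(x)\, dx \Big)\, dy.
\]
For $y$ fixed, the map $x \mapsto \log_{\epsilon,R}|\Gamma(x) - \Gamma(y)|$ is Lipschitz on $\R$, being the composition of the globally Lipschitz function $\log_{\epsilon,R}$ (Lemma \ref{truncationLemma}) with the Lipschitz map $x \mapsto |\Gamma(x) - \Gamma(y)|$; hence it is absolutely continuous, and by the chain rule (valid a.e., since $\log_{\epsilon,R}$ fails to be differentiable only at $0$, whose preimage under $x\mapsto|\Gamma(x)-\Gamma(y)|$ is the single point $x=y$) its a.e.\ derivative is $K_{\epsilon,R}(x,y)$ as computed in \eqref{form134}. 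Integrating by parts against $g$ and applying Fubini a second time,
\[
\int U^{\Gamma,\epsilon,R}f \cdot g' = -\int g(x) \Big( \int K_{\epsilon,R}(x,y) f(y)\, dy \Big)\, dx = -\int g \cdot \big(\partial U^{\Gamma,\epsilon,R}\big)f,
\]
which is precisely the assertion that $\big(\partial U^{\Gamma,\epsilon,R}\big)f$ is the distributional derivative of $U^{\Gamma,\epsilon,R}f$. Taking $h = U^{\Gamma,\epsilon,R}f \in L^2(\R)$ in the Fourier fact quoted above then yields $\widehat{(\partial U^{\Gamma,\epsilon,R})f}(\xi) = 2\pi i \xi\, \widehat{U^{\Gamma,\epsilon,R}f}(\xi)$, as claimed.

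I do not expect a genuine obstacle. The one point that needs a little care is the integration by parts: one must know that $x \mapsto \log_{\epsilon,R}|\Gamma(x) - \Gamma(y)|$ is absolutely continuous with a.e.\ derivative $K_{\epsilon,R}(x,y)$ even across $x = y$, where $x \mapsto |\Gamma(x) - \Gamma(y)|$ is merely Lipschitz rather than smooth --- but this is exactly what the $\mathrm{Lip}(\R)$-regularity (not just $C^4(\R \setminus \{0\})$) of $\log_{\epsilon,R}$, built into Lemma \ref{truncationLemma}, is designed to provide.
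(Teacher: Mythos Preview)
Your proof is correct and follows essentially the same approach as the paper: verify the distributional identity by Fubini and integration by parts in $x$, then read off the Fourier transform. The only minor difference is that the paper first checks the identity for $f \in \mathcal{S}(\R)$ (where integrability is trivial) and then extends to $f \in L^2(\R)$ by density using the $L^2$-boundedness of both $U^{\Gamma,\epsilon,R}$ and $\partial U^{\Gamma,\epsilon,R}$, whereas you handle $f \in L^2(\R)$ directly by exploiting the bounded, band-limited kernel and the compact support of $g$ to justify Fubini.
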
 

\begin{proof} For $f \in \mathcal{S}(\R)$, the formula 
\begin{displaymath} \int \partial U^{\Gamma,\epsilon,R}f \cdot g = - \int U^{\Gamma,\epsilon,R}f \cdot g, \qquad g \in \mathcal{S}(\R), \end{displaymath}
is straightforward to check from the definition \eqref{partialU}. For $f \in L^{2}(\R)$, the formula stays valid because both operators $\partial U^{\Gamma,\epsilon,R}$ and $U^{\Gamma,\epsilon,R}$ are bounded on $L^{2}(\R)$. \end{proof} 

Regarding $(U^{\Gamma,\epsilon,R}\partial)f$, we need the following consistency result:

\begin{lemma}\label{lemma8} Let $f \in L^{2}(\R) \cap \dot{H}^{1}$. Then 
\begin{equation}\label{form101} (U^{\Gamma,\epsilon,R}\partial)f = U^{\Gamma,\epsilon,R}(\partial f). \end{equation}
In particular, the ($L^{2}$-)Fourier transforms of $(U^{\Gamma,\epsilon,R}\partial)f$ and $U^{\Gamma,\epsilon,R}(\partial f)$ coincide. \end{lemma}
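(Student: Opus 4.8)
\textbf{Proof plan for Lemma \ref{lemma8}.}

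The plan is to reduce the identity \eqref{form101} to the unambiguous case $f \in \mathcal{S}(\R)$ by a density argument, using that all operators in sight are bounded on the relevant spaces. First I would observe that for $f \in \mathcal{S}(\R)$ the identity $(U^{\Gamma,\epsilon,R}\partial)f = U^{\Gamma,\epsilon,R}(\partial f)$ holds essentially by definition: the left-hand side is defined in \eqref{Upartial} precisely as $\int \log_{\epsilon,R}(|\Gamma(x) - \Gamma(y)|) f'(y) \, dy$, which is literally $U^{\Gamma,\epsilon,R}$ applied pointwise to the Schwartz function $f' = \partial f$ via \eqref{form55}. So the content of the lemma is purely the passage from $\mathcal{S}(\R)$ to $L^{2}(\R) \cap \dot{H}^{1}$.

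The key steps, in order: (1) Note that if $f \in L^{2}(\R) \cap \dot{H}^{1}$, then $\partial f \in L^{2}(\R)$: indeed $\widehat{\partial f}(\xi) = 2\pi i \xi \hat{f}(\xi)$, and the $\dot{H}^{1}$-membership (Definition \ref{def3} with $\beta = 1$) says exactly $\xi \mapsto |\xi| \hat{f}(\xi) \in L^{2}(\R)$, so $\partial f \in L^{2}(\R)$ and $U^{\Gamma,\epsilon,R}(\partial f)$ makes sense as an element of $L^{2}(\R)$ by the Schur-test boundedness of $U^{\Gamma,\epsilon,R}$ on $L^{2}$ recorded after Definition \ref{def1}. (2) Choose Schwartz functions $f_{n} \to f$ with $f_{n} \to f$ in $L^{2}(\R)$ and simultaneously $\partial f_{n} \to \partial f$ in $L^{2}(\R)$; this is standard — e.g. mollify and truncate on the Fourier side, or use that $\mathcal{S}(\R)$ is dense in $L^{2}(\R) \cap \dot{H}^{1}$ with the graph norm $\|f\|_{L^{2}} + \||\xi|\hat f\|_{L^{2}}$, which one checks directly via Plancherel. (3) Apply the identity for Schwartz functions: $(U^{\Gamma,\epsilon,R}\partial)f_{n} = U^{\Gamma,\epsilon,R}(\partial f_{n})$ for every $n$. (4) Pass to the limit. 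On the right, $U^{\Gamma,\epsilon,R}(\partial f_{n}) \to U^{\Gamma,\epsilon,R}(\partial f)$ in $L^{2}(\R)$ by $L^{2}$-boundedness of $U^{\Gamma,\epsilon,R}$ and $\partial f_{n} \to \partial f$ in $L^{2}$. On the left, I would argue that $f \mapsto (U^{\Gamma,\epsilon,R}\partial)f$ is also $L^{2}$-continuous in the appropriate sense: either directly, noting that the kernel $L_{\epsilon,R}(x,y)$ of $U^{\Gamma,\epsilon,R}\partial$ from the second line of \eqref{Upartial} is bounded and compactly supported (hence defines an $L^{2} \to L^{2}$ bounded operator by Schur, and in any case Corollary \ref{cor4} gives $L^{p}$-boundedness), so $(U^{\Gamma,\epsilon,R}\partial)f_{n} \to (U^{\Gamma,\epsilon,R}\partial)f$ in $L^{2}$. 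Comparing limits yields \eqref{form101}. (5) The ``in particular'' statement about Fourier transforms is then immediate, since two functions equal in $L^{2}(\R)$ have equal $L^{2}$-Fourier transforms.

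The only mild subtlety — and the step I would be most careful about — is step (4) on the left-hand side: one must make sure that $(U^{\Gamma,\epsilon,R}\partial)f$, defined a priori for $f \in L^{2}(\R)$ via the first line of \eqref{Upartial} (which requires $f'$, i.e.\ only makes literal sense when $f$ is differentiable), is consistently captured by the second-line kernel representation $\int -\partial_{y}(\log_{\epsilon,R}|\Gamma(x) - \Gamma(y)|) f(y)\,dy$, which makes sense for all $f \in L^{2}$. The point is that for $f \in \mathcal{S}(\R)$ the two expressions agree by integration by parts (legitimate since $\log_{\epsilon,R} \in \mathrm{Lip}(\R)$ is compactly supported and $\Gamma$ is Lipschitz, as noted below \eqref{Upartial}), so the second-line kernel is the correct $L^{2}$-extension, and it is against this extension that the convergence $(U^{\Gamma,\epsilon,R}\partial)f_{n} \to (U^{\Gamma,\epsilon,R}\partial)f$ is meant. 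I do not expect any genuine obstacle here; it is a routine density/continuity argument once the bookkeeping of which operator extension is being used is made explicit.
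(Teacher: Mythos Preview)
Your proposal is correct and follows essentially the same approach as the paper: verify \eqref{form101} for $f \in \mathcal{S}(\R)$ directly from the definition \eqref{Upartial}, then pass to general $f \in L^{2}(\R) \cap \dot{H}^{1}$ by the density of $\mathcal{S}(\R)$ and the $L^{2}$-boundedness of both $U^{\Gamma,\epsilon,R}$ and $U^{\Gamma,\epsilon,R}\partial$. The paper's proof is just a terser version of exactly these steps.
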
 

\begin{proof} For $f \in \mathcal{S}(\R)$, \eqref{form101} follows from the definition of $U^{\Gamma,\epsilon,R}\partial$, recall \eqref{Upartial}. Assume next that $f \in L^{2}(\R) \cap \dot{H}^{1}$. By the density of $\mathcal{S}(\R)$ in $ L^2(\R) \cap \dot{H}^1$ (see \cite[Proposition 1, p. 122]{MR290095}) there exists $\{f_{j}\}_{j \in \N} \subset \mathcal{S}(\R)$ such that $f_{j} \to f$ in $L^{2}$ and $\partial f_{j} \to \partial f$ in $L^{2}(\R)$.  Then
\begin{displaymath} (U^{\Gamma,\epsilon,R}\partial)f = \lim_{j \to \infty} (U^{\Gamma,\epsilon,R}\partial)f_{j} = \lim_{j \to \infty} U^{\Gamma,\epsilon,R}(\partial f_{j}) = U^{\Gamma,\epsilon,R}(\partial f), \end{displaymath} 
where the limits are taken in $L^{2}$. \end{proof} 

We finally arrive at the statement which relates $T^{\Gamma,\epsilon,R}_{\beta}$ to $\partial U^{\Gamma,\epsilon,R}$ and $U^{\Gamma,\epsilon,R}\partial$. To understand the statement, one should recall from Corollary \ref{cor3} that the Fourier multipliers with symbols $|\xi|^{\beta}$ and $|\xi|^{1 - \beta}/\xi$ define bounded operators on $L^{p}(\R)$ for $\Rea \beta = 0$. 

\begin{proposition}\label{prop12} Let $f,g \in \mathcal{S}(\R)$, $p \in (1,\infty)$, and $\beta \in \C$ with $\Rea \beta \in \{0,1\}$. Then,
\begin{equation}\label{form103} \left| \int T_{\beta}^{\Gamma,\epsilon,R}f \cdot g\right| \leq \|(\partial U^{\Gamma,\epsilon,R})(\Delta^{\beta/2}f)\|_{L^{p}}\|(\partial^{-1}\Delta^{(1 - \beta)/2})g\|_{L^{p'}}, \qquad \Rea \beta = 0, \end{equation}
\begin{equation}\label{form104} \left| \int T_{\beta}^{\Gamma,\epsilon,R}f \cdot g\right| \leq \|(U^{\Gamma,\epsilon,R}\partial)(\partial^{-1}\Delta^{\beta/2}f)\|_{L^{p}}\|\Delta^{(1 - \beta)/2}g\|_{L^{p'}}, \qquad \Rea \beta = 1. \end{equation}
Clarifications: 
\begin{itemize}
\item In \eqref{form103} the notation $(\partial U^{\Gamma,\epsilon,R})(\Delta^{\beta/2}f)$ refers to $\partial U^{\Gamma,\epsilon,R}$ acting on $\Delta^{\beta/2}f \in L^{p}(\R)$, and the notation $(\partial^{-1}\Delta^{(1 - \beta)/2})g$ refers to the $L^{p'}$-function with Fourier transform $\xi \mapsto |\xi|^{1 - \beta}/(2\pi i \xi)\widecheck{g}(\xi)$.
\item In \eqref{form104}, the notation $(U^{\Gamma,\epsilon,R}\partial)(\partial^{-1}\Delta^{\beta/2}f)$ refers to $U^{\Gamma,\epsilon,R}\partial$ acting on the $L^{p}$-function with Fourier transform $\xi \mapsto |\xi|^{\beta}/(2\pi i \xi)\hat{f}(\xi)$. 
\end{itemize} 

\end{proposition}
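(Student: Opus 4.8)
Both estimates are unpacked from the defining identity \eqref{form56}, which reads
\begin{displaymath} \int T^{\Gamma,\epsilon,R}_{\beta}f \cdot g = \int U^{\Gamma,\epsilon,R}(\Delta^{\beta/2}f)(x) \cdot (\Delta^{(1-\beta)/2}g)(x) \, dx, \qquad f,g \in \mathcal{S}(\R). \end{displaymath}
The plan is, in each case, to "transfer one derivative": in the factorisations $|\xi|^{1-\beta} = (2\pi i \xi) \cdot \tfrac{|\xi|^{1-\beta}}{2\pi i \xi}$ and $|\xi|^{\beta} = (2\pi i \xi) \cdot \tfrac{|\xi|^{\beta}}{2\pi i \xi}$, the factor $2\pi i \xi$ is absorbed into $U^{\Gamma,\epsilon,R}$ (turning it into $\partial U^{\Gamma,\epsilon,R}$ or $U^{\Gamma,\epsilon,R}\partial$ via Lemmas \ref{lemma9} and \ref{lemma8}), while the remaining factor becomes the Fourier symbol of $(\partial^{-1}\Delta^{(1-\beta)/2})g$ or $(\partial^{-1}\Delta^{\beta/2})f$. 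Then a single application of H\"older's inequality closes the estimate. All objects appearing are genuine $L^{p}$-functions: by Corollary \ref{cor3}, the multipliers $|\xi|^{\pm\beta}$, $|\xi|^{1-\beta}/\xi$ and $|\xi|^{\beta}/\xi$ are bounded on every $L^{q}(\R)$ when the relevant exponent has zero real part, so $\Delta^{\beta/2}f$ (for $\Rea\beta = 0$), $\Delta^{(1-\beta)/2}g$ (for $\Rea\beta = 1$), and the two primitives above lie in the stated spaces; moreover Lemma \ref{lemma3} gives the decay $|\Delta^{(1-\beta)/2}g(x)| \lesssim (1+|x|)^{-2}$ when $\Rea(1-\beta) = 1$, so $\Delta^{(1-\beta)/2}g \in L^{1}$ with vanishing integral.

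\textbf{The case $\Rea\beta = 1$.} Here I would not integrate by parts at all. Set $\Psi := (\partial^{-1}\Delta^{\beta/2})f$, the function with $\hat{\Psi}(\xi) = |\xi|^{\beta}/(2\pi i \xi)\hat{f}(\xi)$. Since $\Rea\beta = 1$, the function $\xi \mapsto |\xi|\hat\Psi(\xi)$ has modulus $|\xi|^{\Rea\beta}|\hat f(\xi)| = |\xi||\hat f(\xi)| \in L^{2}$, and $\hat\Psi \in L^{1}_{\mathrm{loc}}$, so $\Psi \in L^{2}(\R) \cap \dot{H}^{1}$; also $\partial\Psi = \Delta^{\beta/2}f$ (on the Fourier side $2\pi i \xi \, \hat\Psi(\xi) = |\xi|^{\beta}\hat f(\xi)$). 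Lemma \ref{lemma8} now applies to $\Psi$ and yields $U^{\Gamma,\epsilon,R}(\Delta^{\beta/2}f) = U^{\Gamma,\epsilon,R}(\partial\Psi) = (U^{\Gamma,\epsilon,R}\partial)\Psi$. Substituting into \eqref{form56} and applying H\"older's inequality with exponents $p,p'$ — using $\Delta^{(1-\beta)/2}g \in L^{p'}(\R)$ (Corollary \ref{cor3}, $\Rea(1-\beta) = 0$) and $(U^{\Gamma,\epsilon,R}\partial)(\partial^{-1}\Delta^{\beta/2}f) \in L^{p}(\R)$ (Corollary \ref{cor4}, since $\partial^{-1}\Delta^{\beta/2}f \in L^{p}$ by Corollary \ref{cor3}) — gives exactly \eqref{form104}.

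\textbf{The case $\Rea\beta = 0$.} Now $\Rea(1-\beta) = 1$ and the clean substitution above is unavailable, so I would instead integrate by parts in physical space, following the template of the proof of Proposition \ref{prop:lap-potential-Lp}. Write $G := U^{\Gamma,\epsilon,R}(\Delta^{\beta/2}f)$ and $\Phi := (\partial^{-1}\Delta^{(1-\beta)/2})g$. By Lemma \ref{lemma9} the distributional derivative of $G$ is $\partial G = (\partial U^{\Gamma,\epsilon,R})(\Delta^{\beta/2}f)$, which lies in $\bigcap_{q \in (1,\infty)} L^{q}(\R)$ (Corollary \ref{cor4}, as $\Delta^{\beta/2}f \in \bigcap_q L^{q}$), while $\Delta^{(1-\beta)/2}g = \partial\Phi$ up to the usual Fourier bookkeeping. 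Crucially, $G$ is bounded and decays like $(1+|x|)^{-1}$ by Lemma \ref{lemma4} (with $\Rea\beta = 0$), and $\Phi$, being the primitive of the mean-zero, $(1+|x|)^{-2}$-decaying function $\Delta^{(1-\beta)/2}g$, is bounded and decays like $(1+|x|)^{-1}$ — exactly the estimate $|\partial^{-1}\Delta^{\alpha/2}g(x)| \lesssim (1+|x|)^{-\alpha}$ used in the proof of Proposition \ref{prop:lap-potential-Lp}. Hence the product $G\Phi$ vanishes at $\pm\infty$, the boundary terms drop out, and integration by parts combined with \eqref{form56} gives $\int T^{\Gamma,\epsilon,R}_{\beta}f\cdot g = -\int (\partial U^{\Gamma,\epsilon,R})(\Delta^{\beta/2}f) \cdot \Phi \cdot (\text{a harmless reflection})$; since only an inequality of norms is wanted and $L^{q}$-norms are reflection-invariant, H\"older's inequality yields \eqref{form103} directly.

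\textbf{Main obstacle.} The computation is routine once the machinery is in place; the delicate points are (i) checking the hypotheses of the consistency Lemmas \ref{lemma8} and \ref{lemma9} — namely that $\partial^{-1}\Delta^{\beta/2}f \in L^{2} \cap \dot{H}^{1}$ when $\Rea\beta = 1$, and that $\Delta^{\beta/2}f \in L^{2}$ when $\Rea\beta = 0$ — which is a short Fourier-side verification, and (ii) the absence of boundary terms in the $\Rea\beta = 0$ integration by parts, which rests on the decay of $U^{\Gamma,\epsilon,R}(\Delta^{\beta/2}f)$ from Lemma \ref{lemma4} together with the vanishing mean of $\Delta^{(1-\beta)/2}g$. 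One must also keep consistent track of Fourier-transform conventions so that the multipliers factor as $|\xi|^{1-\beta} = (2\pi i\xi)(|\xi|^{1-\beta}/2\pi i\xi)$ and $|\xi|^{\beta} = (2\pi i\xi)(|\xi|^{\beta}/2\pi i\xi)$ in a way matching the definitions of $(\partial^{-1}\Delta^{\cdot/2})$ recorded in the statement; reflections of the form $\hat{\cdot} \leftrightarrow \widecheck{\cdot}$ are immaterial because the conclusion is stated with $L^{p}$- and $L^{p'}$-norms.
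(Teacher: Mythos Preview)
Your argument is correct. For $\Rea\beta = 1$ it is essentially identical to the paper's: both verify that $\partial^{-1}\Delta^{\beta/2}f \in L^{2}\cap\dot H^{1}$, invoke Lemma \ref{lemma8} to rewrite $U^{\Gamma,\epsilon,R}(\Delta^{\beta/2}f) = (U^{\Gamma,\epsilon,R}\partial)(\partial^{-1}\Delta^{\beta/2}f)$, and finish with H\"older.

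For $\Rea\beta = 0$ you take a genuinely different route. The paper stays on the Fourier side throughout: starting from the definition $\int T_\beta^{\Gamma,\epsilon,R}f\cdot g = \int \mathcal{F}[U^{\Gamma,\epsilon,R}(\Delta^{\beta/2}f)](\xi)\,|\xi|^{1-\beta}\widecheck g(\xi)\,d\xi$, it factors $|\xi|^{1-\beta} = (2\pi i\xi)\cdot\tfrac{|\xi|^{1-\beta}}{2\pi i\xi}$, uses Lemma \ref{lemma9} to identify $(2\pi i\xi)\mathcal{F}[U^{\Gamma,\epsilon,R}(\Delta^{\beta/2}f)]$ with $\mathcal{F}[(\partial U^{\Gamma,\epsilon,R})(\Delta^{\beta/2}f)]$, and closes with a single Plancherel step plus H\"older. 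Your physical-space integration by parts reaches the same conclusion but at the cost of (i) checking that $G = U^{\Gamma,\epsilon,R}(\Delta^{\beta/2}f)$ is genuinely absolutely continuous with classical derivative $(\partial U^{\Gamma,\epsilon,R})(\Delta^{\beta/2}f)$, (ii) verifying decay of $G$ and of the primitive via Lemmas \ref{lemma4} and \ref{lemma3} so that boundary terms vanish, and (iii) tracking the reflection between the physical primitive of $\Delta^{(1-\beta)/2}g$ and the function $(\partial^{-1}\Delta^{(1-\beta)/2})g$ as defined in the statement (their $L^{p'}$-norms agree, which is all that is needed). The paper's Fourier-side route sidesteps all three of these bookkeeping points; your approach has the virtue of making the underlying integration-by-parts structure explicit rather than hiding it inside Plancherel.
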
 

\begin{proof} We start with \eqref{form103}. Besides using definitions, Plancherel, and H\"older's inequality, the only non-trivial step is to apply Lemma \ref{lemma9} to the function $\Delta^{\beta/2}f \in L^{2}(\R)$:
\begin{align*} \left| \int T^{\Gamma,\epsilon,R}_{\beta}f \cdot g \right| & \stackrel{\mathrm{def.}}{=}  \left|\int \mathcal{F}[U^{\Gamma,\epsilon,R}(\Delta^{\beta/2}f)](\xi) \cdot |\xi|^{1 - \beta}\widecheck{g}(\xi) \, d\xi \right| \\
& = \left|\int (2\pi i\xi) \mathcal{F}[U^{\Gamma,\epsilon,R}(\Delta^{\beta/2}f)](\xi) \cdot \frac{|\xi|^{1 - \beta}}{2\pi i \xi}\widecheck{g}(\xi) \, d\xi \right|\\
& \stackrel{\mathrm{L.\,} \ref{lemma9}}{=} \left| \int \mathcal{F}[(\partial U^{\Gamma,\epsilon,R})(\Delta^{\beta/2}f)](\xi) \cdot \frac{|\xi|^{1 - \beta}}{2\pi i \xi}\widecheck{g}(\xi) \, d\xi \right|\\
& \leq \left| \int (\partial U^{\Gamma,\epsilon,R})(\Delta^{\beta/2}f)(x) (\partial^{-1}\Delta^{(1 - \beta)/2})g(x) \, dx \right|\\
& \leq \|(\partial U^{\Gamma,\epsilon,R})(\Delta^{\beta/2}f)\|_{L^{p}}\|(\partial^{-1}\Delta^{(1 - \beta)/2})g\|_{L^{p'}}.
\end{align*}
We then move to the proof of \eqref{form104}, where $\Rea \beta = 1$. This time, the only non-trivial step is to apply Lemma \ref{lemma8} to the function $\partial^{-1}\Delta^{\beta/2}f \in L^{2}(\R) \cap \dot{H}^{1}$:
\begin{align*} \left| \int T^{\Gamma,\epsilon,R}_{\beta}f \cdot g \right| & \stackrel{\mathrm{def.}}{=} \left|\int \mathcal{F}[U^{\Gamma,\epsilon,R}(\Delta^{\beta/2}f)](\xi) \cdot |\xi|^{1 - \beta}\widecheck{g}(\xi) \, d\xi \right| \\
& \stackrel{\mathrm{L.\,} \ref{lemma8}}{=} \left|\int \mathcal{F}[(U^{\Gamma,\epsilon,R}\partial)(\partial^{-1}\Delta^{\beta/2}f)](\xi) \cdot |\xi|^{1 - \beta}\widecheck{g}(\xi) \, d\xi \right| \\
& = \left| \int (U^{\Gamma,\epsilon,R}\partial)(\partial^{-1}\Delta^{\beta/2}f)(x) \cdot \Delta^{(1 - \beta)/2}g(x) \, dx \right|\\
& \leq \|(U^{\Gamma,\epsilon,R}\partial)(\partial^{-1}\Delta^{\beta/2}f)\|_{L^{p}}\|\Delta^{(1 - \beta)/2}g\|_{L^{p'}}. \end{align*} 
This concludes the proof. \end{proof} 

We derive the following corollary:

\begin{cor}\label{cor5} Let $0 < \epsilon \leq 1 \leq R$, $\Rea \beta \in \{0,1\}$, and $f \in \mathcal{S}(\R)$. Then, the tempered distribution $T_{\beta}^{\Gamma,\epsilon,R}f$ from Definition \ref{def2} is an element of $L^{p}$ for every $p \in (1,\infty)$. Moreover,
\begin{equation}\label{form107} \|T_{\beta}^{\Gamma,\epsilon,R}f\|_{L^{p}} \lesssim_{\mathrm{Lip}(A),d,p}(1 + (\mathrm{Im\,} \beta)^{2})^{2} \cdot \|f\|_{L^{p}}, \qquad f \in \mathcal{S}(\R). \end{equation} 
\end{cor}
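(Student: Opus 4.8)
\textbf{Proof proposal for Corollary \ref{cor5}.}

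The plan is to combine Proposition \ref{prop12} with the $L^p$-boundedness facts already established, namely Corollary \ref{cor4} (for $\partial U^{\Gamma,\epsilon,R}$ and $U^{\Gamma,\epsilon,R}\partial$) and Corollary \ref{cor3} (for the imaginary-order Fourier multipliers $m_\beta$ and $n_\beta$). First I would treat the case $\Rea\beta = 0$. Fix $f \in \mathcal{S}(\R)$ and let $g \in \mathcal{S}(\R)$ with $\|g\|_{L^{p'}} \leq 1$. By \eqref{form103},
\begin{equation*}
\left| \int T_\beta^{\Gamma,\epsilon,R}f \cdot g \right| \leq \|(\partial U^{\Gamma,\epsilon,R})(\Delta^{\beta/2}f)\|_{L^p} \cdot \|(\partial^{-1}\Delta^{(1-\beta)/2})g\|_{L^{p'}}.
\end{equation*}
For the first factor, $\Delta^{\beta/2}$ is the Fourier multiplier with symbol $m_\beta(\xi) = |\xi|^\beta$, so Corollary \ref{cor3} gives $\|\Delta^{\beta/2}f\|_{L^p} \lesssim_p (1 + (\mathrm{Im\,}\beta)^2)\|f\|_{L^p}$; then Corollary \ref{cor4} gives $\|(\partial U^{\Gamma,\epsilon,R})(\Delta^{\beta/2}f)\|_{L^p} \lesssim_{\mathrm{Lip}(A),d,p} (1 + (\mathrm{Im\,}\beta)^2)\|f\|_{L^p}$, with no dependence on $\epsilon,R$. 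For the second factor, the operator $\partial^{-1}\Delta^{(1-\beta)/2}$ has symbol $|\xi|^{1-\beta}/(2\pi i\xi)$, which is $(2\pi i)^{-1} n_{-\beta}(\xi)$ in the notation of Corollary \ref{cor3} (note $\Rea(-\beta) = 0$); hence $\|(\partial^{-1}\Delta^{(1-\beta)/2})g\|_{L^{p'}} \lesssim_{p'} (1 + (\mathrm{Im\,}\beta)^2)\|g\|_{L^{p'}} \leq 1 + (\mathrm{Im\,}\beta)^2$. Multiplying, and taking the supremum over such $g$ via duality (the distribution $T_\beta^{\Gamma,\epsilon,R}f$ is already known to be well-defined, and the bound $|\langle T_\beta^{\Gamma,\epsilon,R}f, g\rangle| \lesssim \|g\|_{L^{p'}}$ for all $g \in \mathcal{S}$ forces $T_\beta^{\Gamma,\epsilon,R}f \in L^p$ with the corresponding norm bound), we obtain \eqref{form107} in the case $\Rea\beta = 0$.

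The case $\Rea\beta = 1$ is entirely parallel, using \eqref{form104} instead: here the relevant factors are $\|(U^{\Gamma,\epsilon,R}\partial)(\partial^{-1}\Delta^{\beta/2}f)\|_{L^p}$ and $\|\Delta^{(1-\beta)/2}g\|_{L^{p'}}$. The operator $\partial^{-1}\Delta^{\beta/2}$ has symbol $|\xi|^\beta/(2\pi i\xi) = (2\pi i)^{-1}n_{\beta-1}(\xi)$ with $\Rea(\beta-1) = 0$, so Corollary \ref{cor3} controls it by $1 + (\mathrm{Im\,}\beta)^2$; then Corollary \ref{cor4} controls $U^{\Gamma,\epsilon,R}\partial$ uniformly in $\epsilon,R$; and $\Delta^{(1-\beta)/2}$ has symbol $m_{1-\beta}$ with $\Rea(1-\beta) = 0$, again controlled by $1 + (\mathrm{Im\,}\beta)^2$ using that $\mathrm{Im}(1-\beta) = -\mathrm{Im\,}\beta$. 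Collecting all factors gives a bound of the shape $(1 + (\mathrm{Im\,}\beta)^2)^2 \|f\|_{L^p}$, as claimed. (The exponent $2$ on $1 + (\mathrm{Im\,}\beta)^2$ is a harmless overestimate — only two of the three multiplier factors genuinely involve $\beta$, but the stated bound is all that will be needed downstream.)

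I do not expect a serious obstacle here: all the hard analytic work has been done in Propositions \ref{prop9} and \ref{prop12} and Corollaries \ref{cor3}, \ref{cor4}. The only point requiring a moment of care is the bookkeeping that identifies each symbol appearing in Proposition \ref{prop12} with one of the two admissible forms $m_\beta, n_\beta$ of Corollary \ref{cor3} — in particular checking that $\partial^{-1}\Delta^{(1-\beta)/2}$ and $\partial^{-1}\Delta^{\beta/2}$ really do have symbols of the form $|\xi|^{1+\gamma}/\xi$ with $\Rea\gamma = 0$ — and the routine duality argument promoting the pairing estimate to membership in $L^p$. Neither step is more than a line or two.
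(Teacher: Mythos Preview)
Your proposal is correct and follows essentially the same route as the paper's own proof: apply Proposition \ref{prop12} to reduce to a product of two factors, bound the Calder\'on--Zygmund factor via Corollary \ref{cor4}, and bound the Fourier-multiplier factors via Corollary \ref{cor3}, then take the supremum over $g$ by duality. The only quibble is your parenthetical remark that the exponent $2$ is an overestimate: in fact exactly two of the three factors carry a $(1+(\mathrm{Im\,}\beta)^2)$, so the square is sharp for this argument.
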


\begin{proof} Fix $p \in (1,\infty)$ and consider first the case $\Rea \beta = 0$. Let $T$ be the Fourier multiplier operator with symbol $m = |\xi|^{1 - \beta}/(2\pi i\xi)$. Then, from the inequality \eqref{form103}, $L^{p}$-duality, and Corollary \ref{cor4}, we see that 
\begin{displaymath} \|T_{\beta}^{\Gamma,\epsilon,R}f\|_{L^{p}} \lesssim_{\mathrm{Lip}(A),d,p} \|\Delta^{\beta/2}f\|_{L^{p}}\|T\|_{L^{p'} \to L^{p'}}. \end{displaymath} 
Since $\Rea \beta = 0$, it now follows from Corollary \ref{cor3} that 
\begin{displaymath} \|\Delta^{\beta/2}f\|_{L^{p}} \lesssim_{p} (1 + (\mathrm{Im\,}\beta)^{2})\|f\|_{L^{p}} \quad \text{and} \quad \|T\|_{L^{p'} \to L^{p'}} \lesssim_{p'}(1 + (\mathrm{Im\,} \beta)^{2}). \end{displaymath}
This yields \eqref{form107} in the case $\Rea \beta = 0$. Moving on to $\Rea \beta = 1$, a similar argument, now applying \eqref{form104} instead of \eqref{form103}, shows that
\begin{displaymath} \|T_{\beta}^{\Gamma,\epsilon,R}f\|_{L^{p}} \lesssim_{\mathrm{Lip}(A),d,p} \|\partial^{-1}\Delta^{\beta/2}f\|_{L^{p}}\|\Delta^{(1 - \beta)/2}g\|_{L^{p'}}. \end{displaymath}
Applying Corollary \ref{cor3} to the factors on the right, we deduce further that
\begin{displaymath} \|\partial^{-1}\Delta^{\beta/2}f\|_{L^{p}} \lesssim_{p}(1 + (\mathrm{Im\,} \beta)^{2})\|f\|_{L^{p}}, \end{displaymath}
and
\begin{displaymath} \|\Delta^{(1 - \beta)/2}g\|_{L^{p'}} \lesssim_{p'} (1 + (\mathrm{Im\,} \beta)^{2})\|g\|_{L^{p'}}. \end{displaymath} 
Combining these estimates completes the proof of \eqref{form107} in the case $\Rea \beta = 1$.  \end{proof}

We finally deal with the general case $\Rea \beta \in [0,1]$. The ingredients are Corollary \ref{cor5}, the analyticity of the map $\beta \mapsto T^{\Gamma,\epsilon,R}_{\beta}$ established in Proposition \ref{prop5}, and the \emph{three lines lemma}, quoted below (see \cite[Lemma 1.3.5]{MR3243734}):
\begin{lemma}[Three lines lemma]\label{l:threeLines} Let $F$ be analytic in the open strip $S = \{\beta \in \C : \Rea \beta \in (0,1)\}$, continuous and bounded in the closure $\bar{S}$, and such that $|F(\beta)| \leq B$ for $\Rea \beta \in \{0,1\}$, where $B > 0$. Then $|F(\beta)| \leq B$ for $\beta \in \bar{S}$.
\end{lemma}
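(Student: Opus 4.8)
The plan is to prove Lemma \ref{l:threeLines} by the classical Phragm\'en--Lindel\"of device: multiply $F$ by a Gaussian-type factor that decays as $|\im \beta| \to \infty$ without worsening the boundary bounds, apply the ordinary maximum modulus principle on large rectangles, and then let the auxiliary parameter tend to $0$. After dividing $F$ by $B$ we may assume $B = 1$, so $|F(\beta)| \leq 1$ for $\Rea \beta \in \{0,1\}$ and $|F| \leq M := \sup_{\bar S}|F| < \infty$.

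First I would introduce, for $\delta > 0$, the function $F_{\delta}(\beta) := F(\beta)\,e^{\delta(\beta^{2} - \beta)}$, which is analytic in $S$ and continuous on $\bar S$. Writing $\beta = x + iy$ with $x \in [0,1]$, one computes $\Rea(\beta^{2} - \beta) = (x^{2} - x) - y^{2} \leq -y^{2}$, since $x^{2} - x \leq 0$ on $[0,1]$. Hence $|F_{\delta}(\beta)| \leq M e^{-\delta y^{2}}$ throughout $\bar S$, which shows $|F_{\delta}(\beta)| \to 0$ uniformly in $x \in [0,1]$ as $|y| \to \infty$. On the two boundary lines $x \in \{0,1\}$ the exponent satisfies $\Rea(\beta^{2} - \beta) = -y^{2} \leq 0$, so $|F_{\delta}(\beta)| \leq |F(\beta)| \leq 1$ there.

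Next I would fix $\epsilon \in (0,1]$ and choose $Y = Y(\delta,\epsilon) > 0$ so large that $|F_{\delta}(\beta)| \leq \epsilon$ whenever $|\im \beta| \geq Y$; this is possible by the uniform decay just established. On the rectangle $Q_{Y} := [0,1] \times [-Y,Y]$, the boundary $\partial Q_{Y}$ consists of the two vertical edges, where $|F_{\delta}| \leq 1$, and the two horizontal edges, where $|F_{\delta}| \leq \epsilon \leq 1$. The maximum modulus principle then gives $|F_{\delta}(\beta)| \leq 1$ for all $\beta \in Q_{Y}$. Letting $Y \to \infty$ shows $|F_{\delta}(\beta)| \leq 1$ for every $\beta \in \bar S$. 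Finally, for each fixed $\beta \in \bar S$ we have $F_{\delta}(\beta) \to F(\beta)$ as $\delta \to 0$, so $|F(\beta)| \leq 1$, i.e.\ $|F(\beta)| \leq B$ after undoing the normalisation.

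The only point requiring any care is the passage from the bounded-strip hypothesis to a genuine maximum-modulus argument on an unbounded domain; this is exactly what the Gaussian factor $e^{\delta(\beta^{2}-\beta)}$ is designed to handle, and I expect no real obstacle beyond bookkeeping. (Since the lemma is standard and quoted with a reference, one could alternatively cite \cite[Lemma 1.3.5]{MR3243734} and omit the proof entirely; the sketch above is the argument behind that reference.)
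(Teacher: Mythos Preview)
Your proof is correct; it is the classical Phragm\'en--Lindel\"of argument behind the three lines lemma. The paper itself does not prove this lemma at all but simply quotes it with a reference to \cite[Lemma 1.3.5]{MR3243734}, so your sketch is exactly the proof underlying that citation (as you yourself note at the end).
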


\begin{cor}\label{cor6}  Let $0 < \epsilon \leq 1 \leq R < \infty$, $\Rea \beta \in [0,1]$, and $f \in \mathcal{S}(\R)$. Then, the tempered distribution $T_{\beta}^{\Gamma,\epsilon,R}f$ from Definition \ref{def2} is an element of $L^{p}$ for every $p \in (1,\infty)$, and
\begin{displaymath} \|T^{\Gamma,\epsilon,R}_{\beta}f\|_{L^{p}} \lesssim_{\mathrm{Lip}(A),d,p} e^{(\mathrm{Im\,} \beta)^{2}} \|f\|_{L^{p}}. \end{displaymath}
\end{cor}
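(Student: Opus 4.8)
The plan is a standard Stein-type complex interpolation between the two endpoint cases $\Rea\beta\in\{0,1\}$, using Corollary \ref{cor5} as the boundary input, Proposition \ref{prop5} as the analyticity input, and the three lines lemma (Lemma \ref{l:threeLines}) as the interpolation device. First I would fix $p\in(1,\infty)$ and $f,g\in\mathcal{S}(\R)$ with $\|f\|_{L^{p}}\le 1$ and $\|g\|_{L^{p'}}\le 1$, and introduce $F(\beta):=\int T^{\Gamma,\epsilon,R}_{\beta}f\cdot g$ on the closed strip $\bar S=\{\beta:\Rea\beta\in[0,1]\}$. By Proposition \ref{prop5}, $F$ is continuous on $\bar S$ and analytic on the open strip $S$.

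The three lines lemma also requires $F$ to be bounded on $\bar S$, so I would next record a crude a priori bound, with constant allowed to depend on $\epsilon,R,f,g$. Using the identity \eqref{form56}, one has $F(\beta)=\int U^{\Gamma,\epsilon,R}(\Delta^{\beta/2}f)(x)\cdot(\Delta^{(1-\beta)/2}g)(x)\,dx$. Since $U^{\Gamma,\epsilon,R}$ is bounded on $L^{2}(\R)$, and since by Plancherel $\|\Delta^{\beta/2}f\|_{L^{2}}=\||\xi|^{\Rea\beta}\hat f\|_{L^{2}}\le\|(1+|\xi|)\hat f\|_{L^{2}}$ and $\|\Delta^{(1-\beta)/2}g\|_{L^{2}}=\||\xi|^{\Rea(1-\beta)}\hat g\|_{L^{2}}\le\|(1+|\xi|)\hat g\|_{L^{2}}$ uniformly for $\Rea\beta\in[0,1]$ (here $|\xi|^{t}\le 1+|\xi|$ for $t\in[0,1]$), the Cauchy-Schwarz inequality gives $|F(\beta)|\le C(\epsilon,R,f,g)<\infty$ on $\bar S$.

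Then I would insert a Gaussian damping factor to absorb the polynomial growth in $\Im\beta$ produced by Corollary \ref{cor5}. Set $G(\beta):=e^{\beta^{2}}F(\beta)$. Then $G$ is analytic on $S$, and continuous and bounded on $\bar S$ because $|e^{\beta^{2}}|=e^{(\Rea\beta)^{2}-(\Im\beta)^{2}}\le e$ there. On the boundary lines $\Rea\beta\in\{0,1\}$, Corollary \ref{cor5} combined with $L^{p}$--$L^{p'}$ duality gives $|F(\beta)|\le\|T^{\Gamma,\epsilon,R}_{\beta}f\|_{L^{p}}\|g\|_{L^{p'}}\lesssim_{\mathrm{Lip}(A),d,p}(1+(\Im\beta)^{2})^{2}$, hence $|G(\beta)|\lesssim_{\mathrm{Lip}(A),d,p}e^{1-(\Im\beta)^{2}}(1+(\Im\beta)^{2})^{2}\lesssim_{\mathrm{Lip}(A),d,p}1$, using that $y\mapsto e^{1-y^{2}}(1+y^{2})^{2}$ is bounded on $\R$ by an absolute constant. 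By Lemma \ref{l:threeLines}, $|G(\beta)|\lesssim_{\mathrm{Lip}(A),d,p}1$ for all $\beta\in\bar S$, and therefore $|F(\beta)|=|e^{-\beta^{2}}|\,|G(\beta)|\le e^{(\Im\beta)^{2}-(\Rea\beta)^{2}}|G(\beta)|\lesssim_{\mathrm{Lip}(A),d,p}e^{(\Im\beta)^{2}}$, since $(\Rea\beta)^{2}\ge 0$.

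Finally I would undo the normalisation and pass from the bilinear estimate to the claimed $L^{p}$-bound. Scaling back, for arbitrary $f\in\mathcal{S}(\R)$ and all $g\in\mathcal{S}(\R)$ we obtain $|(T^{\Gamma,\epsilon,R}_{\beta}f)(g)|\lesssim_{\mathrm{Lip}(A),d,p}e^{(\Im\beta)^{2}}\|f\|_{L^{p}}\|g\|_{L^{p'}}$; since $\mathcal{S}(\R)$ is dense in $L^{p'}(\R)$ for $p'\in(1,\infty)$ and $(L^{p'})^{*}=L^{p}$, the tempered distribution $T^{\Gamma,\epsilon,R}_{\beta}f$ is represented by an $L^{p}$-function with $\|T^{\Gamma,\epsilon,R}_{\beta}f\|_{L^{p}}\lesssim_{\mathrm{Lip}(A),d,p}e^{(\Im\beta)^{2}}\|f\|_{L^{p}}$. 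The only genuinely delicate point is the bookkeeping around the three lines lemma: one must check the (non-uniform) a priori boundedness of $F$ on $\bar S$ so that the hypotheses apply, and choose the damping factor $e^{\beta^{2}}$ precisely so that the boundary estimates become uniform while the surviving factor after interpolation is exactly $e^{(\Im\beta)^{2}}$; everything else is routine.
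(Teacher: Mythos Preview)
Your proof is correct and follows essentially the same approach as the paper: both apply the three lines lemma to the function $e^{\beta^{2}}\int T_{\beta}^{\Gamma,\epsilon,R}f\cdot g$, using Proposition~\ref{prop5} for analyticity, the $L^{2}$-boundedness of $U^{\Gamma,\epsilon,R}$ for the crude a priori bound on $\bar S$, and Corollary~\ref{cor5} for the quantitative boundary estimate. The only cosmetic difference is that the paper builds the damping factor into $F$ from the outset, whereas you introduce it as $G=e^{\beta^{2}}F$ after recording the undamped $F$ first.
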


\begin{proof} Fixing $f,g \in \mathcal{S}(\R)$, consider the function $F \colon \bar{S} \to \C$ defined by
\begin{displaymath} F(\beta) := e^{\beta^{2}} \int T_{\beta}^{\Gamma,\epsilon,R}f \cdot g, \qquad \beta \in \bar{S}.  \end{displaymath} 
It suffices to check that
\begin{equation}\label{form110} |F(\beta)| \lesssim_{\mathrm{Lip}(A),d,p} \lesssim \|f\|_{L^{p}}\|g\|_{L^{p'}}, \qquad p \in (1,\infty). \end{equation}

We plan to apply Lemma \ref{l:threeLines} to $F$. The analyticity of $F$ on $S$ and continuity on $\bar{S}$ follows from Proposition \ref{prop5}. Additionally, $F$ bounded on $\bar{S}$. This follows from the calculations
\begin{equation}\label{form108} |e^{\beta^{2}}| \leq e^{1-(\mathrm{Im\,} \beta)^{2}} \lesssim 1, \qquad \beta \in \bar{S}, \end{equation}
and
\begin{align*} |F(\beta)| & \lesssim \left| \int T^{\Gamma,\epsilon,R}_{\beta} f \cdot g \right| \stackrel{\eqref{form56}}{\lesssim_{\epsilon,R}} \|\Delta^{\beta/2}f\|_{L^{2}}\|\Delta^{(1 - \beta)/2}g\|_{L^{2}} \lesssim_{f,g} 1, \qquad \beta \in \bar{S}.\end{align*} 
Here we used the $L^{2}$-boundedness of the truncated logarithmic potential $U^{\Gamma,\epsilon,R}$. The $L^{2}$-norms $\|\Delta^{\beta/2}f\|_{L^{2}}$ and $\|\Delta^{(1 - \beta)/2}g\|_{L^{2}}$ were finally estimated by Plancherel, and noting that $||\xi|^{\beta}| \leq |\xi|^{\Rea \beta} \leq 1 + |\xi|$ for $\xi \in \R$ and $\Rea \beta \in [0,1]$.

We have now verified that $F$ satisfies the "qualitative" hypotheses of Lemma \ref{l:threeLines}. To check the quantitative hypothesis, fix $p \in (1,\infty)$ and $\beta \in \bar{S}$ with $\Rea \beta \in \{0,1\}$. Write $B := \|f\|_{L^{p}}\|g\|_{L^{p'}}$. Then, by combining Corollary \ref{cor5} and \eqref{form108}, we obtain
\begin{equation}\label{form109} |F(\beta)| \lesssim_{\mathrm{Lip}(A),d,p} e^{-(\mathrm{Im\,} \beta)^{2}} (1 + (\mathrm{Im\,} \beta)^{2})^{2} \cdot B. \end{equation}
Since $e^{-(\mathrm{Im\,} \beta)^{2}}(1 + (\mathrm{Im\,} \beta)^{2}) \lesssim 1$, we conclude from \eqref{form109} that $|F(\beta)| \lesssim_{\mathrm{Lip}(A),d,p} B$ for $\Rea \beta \in \{0,1\}$. The three lines lemma now implies \eqref{form110}, and the proof is complete. \end{proof}

\subsection{Uniform $L^{p} \to L^{p}$ bounds for the differences $T^{\Gamma_{t},\epsilon,R}_{\beta} - T^{\Gamma_{s},\epsilon,R}_{\beta}$}\label{s:LpDifferences} In this section most arguments are adaptations of \cite[Section 9]{MR4649162}. In particular, we need the following elementary lemma, taken from \cite[Lemma 9.14]{MR4649162}: 

\begin{lemma}\label{l:metric} Let $(X,d)$ be a metric space, and let $\gamma \colon [0,1] \to X$ be a map satisfying 
\begin{displaymath} \limsup_{r \to s} \frac{d(\gamma(s),\gamma(r))}{|r - s|} \leq C, \qquad 0 \leq s \leq 1. \end{displaymath}
Then $\gamma$ is $C$-Lipschitz.
\end{lemma}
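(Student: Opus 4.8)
The statement is the standard "local Lipschitz implies Lipschitz" principle for curves in a metric space, and the natural approach is to argue that the real-valued function $\phi(s) := d(\gamma(0),\gamma(s))$ — or, more robustly, a family of functions of the form $s \mapsto d(\gamma(s_0),\gamma(s))$ — is locally Lipschitz with uniform constant, and then invoke the fundamental theorem of calculus for locally Lipschitz (hence absolutely continuous) functions on a compact interval. First I would fix $0 \le s < t \le 1$ and try to bound $d(\gamma(s),\gamma(t))$ by $C(t-s)$. The hypothesis gives, for each $u \in [0,1]$, an $r$-dependent control $d(\gamma(u),\gamma(r)) \le (C+\varepsilon)|u-r|$ for all $r$ sufficiently close to $u$ (depending on $u$ and $\varepsilon$).

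The cleanest route is a compactness/connectedness argument on $[s,t]$. Fix $\varepsilon > 0$ and let $E$ be the set of $\tau \in [s,t]$ such that $d(\gamma(s),\gamma(\tau)) \le (C+\varepsilon)(\tau - s)$. I would show $E$ is closed (continuity of $\tau \mapsto d(\gamma(s),\gamma(\tau))$, which follows from the local estimate — near any point $\gamma$ is continuous), that $s \in E$, and that $E$ is relatively open to the right: if $\tau \in E$ and $\tau < t$, then for $\tau'$ slightly larger than $\tau$ the local estimate at $\tau$ gives $d(\gamma(\tau),\gamma(\tau')) \le (C+\varepsilon)(\tau'-\tau)$, and the triangle inequality combined with $\tau \in E$ yields $d(\gamma(s),\gamma(\tau')) \le (C+\varepsilon)(\tau'-s)$, so $\tau' \in E$. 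By connectedness of $[s,t]$, $E = [s,t]$; in particular $t \in E$, i.e. $d(\gamma(s),\gamma(t)) \le (C+\varepsilon)(t-s)$. Letting $\varepsilon \to 0$ gives $d(\gamma(s),\gamma(t)) \le C(t-s)$, which is exactly the $C$-Lipschitz property since $s,t$ were arbitrary.

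Alternatively — and perhaps preferably for self-containedness — I would reduce to the real-variable statement: if $f \colon [0,1] \to \R$ satisfies $\limsup_{r \to s}|f(r)-f(s)|/|r-s| \le C$ for every $s$, then $f$ is $C$-Lipschitz, applied to $f(s) = d(\gamma(0),\gamma(s))$ and more generally to $f(s) = d(\gamma(s_0),\gamma(s))$ for each fixed $s_0$. The real-variable statement is itself proved by the same open-closed dichotomy on $[s,t]$, or by noting that such an $f$ has a one-sided upper Dini derivative bounded by $C$ everywhere, hence is Lipschitz with constant $C$ by a standard lemma. Once one has $|d(\gamma(s_0),\gamma(s)) - d(\gamma(s_0),\gamma(t))| \le C|s-t|$ for all $s_0,s,t$, specializing $s_0 = s$ gives $d(\gamma(s),\gamma(t)) \le C|s-t|$ directly.

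\textbf{Main obstacle.} There is no serious obstacle here; this is a soft topological argument. The only point requiring a little care is the continuity of $\tau \mapsto d(\gamma(s),\gamma(\tau))$ needed to close up the set $E$ — but this is immediate from the $\limsup$ hypothesis, which forces $\gamma$ to be continuous (indeed locally Lipschitz near every point in a weak pointwise sense), combined with the reverse triangle inequality $|d(\gamma(s),\gamma(\tau)) - d(\gamma(s),\gamma(\tau'))| \le d(\gamma(\tau),\gamma(\tau'))$. One should also make sure the "$r$ sufficiently close to $s$" in the hypothesis is handled uniformly enough to run the connectedness argument — but since the argument only ever uses the estimate for one point at a time (the current right endpoint of the interval $[s,\tau]$), no uniformity in $s$ is actually needed, which is precisely why the conclusion still yields a single global Lipschitz constant $C$.
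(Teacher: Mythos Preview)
Your proof is correct. The paper does not give its own proof of this lemma; it simply states the result and cites \cite[Lemma 9.14]{MR4649162} for it. Your continuous-induction argument (the open--closed dichotomy on the set $E \subset [s,t]$) is the standard way to establish this fact, and the details are sound---in particular, the observation that the $\limsup$ hypothesis already forces continuity of $\gamma$, so that closedness of $E$ follows from the reverse triangle inequality, and right-openness follows by applying the local estimate at the current right endpoint together with the triangle inequality. Since the paper offers no argument to compare against, there is nothing further to contrast.
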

 
\subsubsection{$L^{p}$ bounds for $\beta = 0$} Let $A \colon \R \to \R^{d - 1}$ be Lipschitz. For $s \in \R$, define $A_{s}(x) := sA(x)$. For $0 < \epsilon \leq 1 \leq R < \infty$ and $s \in [0,1]$, fixed let $\partial U^{\Gamma_{s},\epsilon,R}$ be the operator 
\begin{equation}\label{form99} (\partial U^{\Gamma_{s},\epsilon,R})f(x) := \int \partial_{x} (x \mapsto \log_{\epsilon,R} |\Gamma_{s}(x) - \Gamma_{s}(y)|) f(y) \, dy, \qquad f \in \mathcal{S}(\R), \, x \in \R. \end{equation}
Here $\Gamma_{s}(x) = (x,A_{s}(x))$, and $\log_{\epsilon,R}$ is the function from Lemma \ref{truncationLemma}. Since $\log_{\epsilon,R} \in \mathrm{Lip}(\R)$, the kernel $\partial_{x}(x \mapsto \log_{\epsilon,R} |\Gamma_{s}(x) - \Gamma_{s}(y)|)$ equals a.e. the compactly supported $L^{\infty}$-function 
\begin{equation}\label{form92} K_{s,\epsilon,R}(x,y) := k_{\epsilon,R}(|\Gamma_{s}(x) - \Gamma_{s}(y)|) \frac{(x - y) + s^{2}\nabla A(x) \cdot (A(x) - A(y))}{|\Gamma_{s}(x) - \Gamma_{s}(y)|}, \quad x,y \in \R. \end{equation} 

We start by establishing the following:

 \begin{proposition}\label{prop7} Assume that $\mathrm{Lip}(A) \leq 1$. Then, for $0 < \epsilon \leq 1 \leq R < \infty$, $s,t \in [0,1]$, and $p \in (1,\infty)$ fixed,
 \begin{displaymath} \|\partial U^{\Gamma_{s},\epsilon,R} - \partial U^{\Gamma_{t},\epsilon,R}\|_{L^{p} \to L^{p}} \lesssim_{d,p} \mathrm{Lip}(A) \cdot |s - t|. \end{displaymath}
 \end{proposition}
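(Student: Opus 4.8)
\textbf{Proof proposal for Proposition \ref{prop7}.}

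The plan is to apply Lemma \ref{l:metric} to the map $s \mapsto \partial U^{\Gamma_{s},\epsilon,R}$, viewed as a curve in the Banach space of bounded operators on $L^{p}(\R)$. So it suffices to establish the infinitesimal bound
\begin{displaymath} \limsup_{t \to s} \frac{\|\partial U^{\Gamma_{s},\epsilon,R} - \partial U^{\Gamma_{t},\epsilon,R}\|_{L^{p} \to L^{p}}}{|s - t|} \lesssim_{d,p} \mathrm{Lip}(A), \qquad s \in [0,1], \end{displaymath}
uniformly in $\epsilon,R$. To do this I would look at the difference of kernels $K_{s,\epsilon,R}(x,y) - K_{t,\epsilon,R}(x,y)$ from \eqref{form92}, and write it (after adding and subtracting appropriate cross terms) as $(s - t)$ times a sum of kernels that fall under the scope of Proposition \ref{prop9} — that is, bounded odd standard kernels $\mathcal{K}(\Gamma_{s}(x) - \Gamma_{s}(y))$ (or with $\Gamma_{t}$) multiplied by finitely many first-difference quotients $\tfrac{A_{j}(x) - A_{j}(y)}{x - y}$ and $L^{\infty}$-factors $A_{j}'(x)$ or $A_{j}'(y)$. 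Concretely, $\partial_{t}K_{t,\epsilon,R}(x,y)$ is computed by the chain/product rule; every term produced is a product of $k_{\epsilon,R}$ or $k_{\epsilon,R}'$ evaluated at $|\Gamma_{t}(x) - \Gamma_{t}(y)|$ (which are bounded odd standard kernels in $\R^{d}$ with constants independent of $\epsilon,R$ by Lemma \ref{truncationLemma}(iv)), divided by powers of $|\Gamma_{t}(x) - \Gamma_{t}(y)| \geq |x - y|$, times a bounded number of factors $(A_{j}(x) - A_{j}(y))/(x-y)$, times one or two factors $A_{j}'$. Since $\mathrm{Lip}(A) \leq 1$, each such factor is $\lesssim 1$, and at least one factor of $\mathrm{Lip}(A)$ can be extracted (because $\partial_{t}$ of $A_{t} = tA$ brings down a factor $A$, hence a factor controlled by $\mathrm{Lip}(A)$ after forming the difference quotient). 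So $\partial_{t}K_{t,\epsilon,R}$ is $\lesssim \mathrm{Lip}(A)$ times a finite sum of Proposition \ref{prop9}-type kernels, and by the fundamental theorem of calculus $K_{s,\epsilon,R} - K_{t,\epsilon,R} = \int_{t}^{s} \partial_{\tau}K_{\tau,\epsilon,R}\, d\tau$ is $\lesssim \mathrm{Lip}(A)\,|s-t|$ times such kernels (uniformly in $\tau$).

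From here, Proposition \ref{prop9} (applied with the Lipschitz functions $B_{i}$ among $\{A_{j}\}$ and exponents $m_{i} \leq $ some absolute bound depending only on $d$) shows that each of these model operators is a Calderón--Zygmund operator with $L^{2} \to L^{2}$ norm bounded in terms of $C_{\mathcal{K}} \lesssim 1$, $\mathrm{Lip}(A_{\tau}) \leq \mathrm{Lip}(A) \leq 1$, and $d$. Theorem \ref{LpExtension} upgrades this to $L^{p} \to L^{p}$ bounds, with the same dependence, for every $p \in (1,\infty)$. Since the number of model kernels in the sum depends only on $d$, and since all the $L^{\infty}$-prefactors contribute a factor $\lesssim 1$ (using $\mathrm{Lip}(A) \leq 1$ once more), we conclude that the operator $\partial U^{\Gamma_{s},\epsilon,R} - \partial U^{\Gamma_{t},\epsilon,R}$ is bounded on $L^{p}$ with norm $\lesssim_{d,p} \mathrm{Lip}(A)\,|s-t|$. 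Feeding this into Lemma \ref{l:metric} (with $C \sim_{d,p} \mathrm{Lip}(A)$, after first dividing by $|s-t|$ and taking $\limsup$) gives the stated Lipschitz-in-$s$ bound, which is exactly the claim of Proposition \ref{prop7}.

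The main technical obstacle I anticipate is the bookkeeping in differentiating $K_{\tau,\epsilon,R}$ in $\tau$: one must carefully verify that \emph{every} term arising from the product and chain rules, including those where $\partial_{\tau}$ hits the argument $|\Gamma_{\tau}(x) - \Gamma_{\tau}(y)|$ inside $k_{\epsilon,R}$ (producing $k_{\epsilon,R}'$ and an extra $1/|\Gamma_{\tau}(x) - \Gamma_{\tau}(y)|$), still has the form "bounded odd standard kernel in $\R^{d}$, composed with $\Gamma_{\tau}$, times difference quotients of $A$, times $L^{\infty}$-factors," and that it carries a genuine factor of $\mathrm{Lip}(A)$ rather than merely a factor of $1$. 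The decay bounds in \eqref{eq:K-standard} for the new kernels $k_{\epsilon,R}'(|\cdot|)\cdot(\cdot)/|\cdot|$ follow from Lemma \ref{truncationLemma}(iv), and the extraction of $\mathrm{Lip}(A)$ uses that $\partial_{\tau}A_{\tau} = A$ and that difference quotients of $A$ are bounded by $\mathrm{Lip}(A)$; these are the points to get right. This parallels exactly the computation in \cite[Section 9]{MR4649162}.
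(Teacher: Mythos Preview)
Your proposal is correct and follows essentially the same route as the paper: differentiate the kernel $K_{s,\epsilon,R}$ in $s$, recognise each term of $\partial_{s}K_{s,\epsilon,R}$ as a Proposition \ref{prop9}-type kernel carrying a factor $\mathrm{Lip}(A)$ (this is Proposition \ref{prop6} and Corollary \ref{cor1} in the paper), and then integrate back. Two small remarks. First, the paper applies Lemma \ref{l:metric} not to the operator-valued curve but to the \emph{scalar} paths $\gamma_{f,g}(s)=\int \partial U^{\Gamma_{s},\epsilon,R}f\cdot g$, using only weak convergence of the difference quotients (Lemma \ref{lemma6}); your FTC/Minkowski argument bounds the operator norm of the difference directly, so invoking Lemma \ref{l:metric} at the end is redundant. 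Second, your phrasing ``$k_{\epsilon,R}$ or $k_{\epsilon,R}'$ evaluated at $|\Gamma_{t}(x)-\Gamma_{t}(y)|$ \ldots\ are bounded odd standard kernels'' is slightly off: $k_{\epsilon,R}(|z|)$ and $k_{\epsilon,R}'(|z|)$ are radial (even) in $z\in\R^{d}$, and the required oddness of $\mathcal{K}$ comes from pairing them with the factors $z_{1}^{j}/|z|^{j}$, exactly as the paper makes explicit in the list $\mathcal{K}_{1},\ldots,\mathcal{K}_{5}$ in the proof of Proposition \ref{prop6}.
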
 
 The idea of the proof is morally to show that $s \mapsto \partial U^{\Gamma_{s},\epsilon,R}$ is differentiable $\R \to \mathcal{B}(L^{p},L^{p})$ with bounded derivative. This will be formally justified via Lemma \ref{l:metric}. We start by introducing the "$s$-derivative" operator" $\partial_{s} \partial U^{\Gamma_{s},\epsilon,R}$ defined by 
 \begin{displaymath} \partial_{s}\partial U^{\Gamma_{s},\epsilon,R}f(x) := \int \partial_{s} K_{s,\epsilon,R}(x,y)f(y) \, dy, \qquad f \in \mathcal{S}(\R), \, x \in \R. \end{displaymath} 
We note that whenever $\nabla A(x)$ exists and $y \neq x$, the function $s \mapsto K_{s,\epsilon,R}(x,y)$ is differentiable, and $\partial_{s}K_{s,\epsilon,R}(x,y)$ refers to this $s$-derivative. The plan is to show that $\partial_{s} \partial U^{\Gamma_{s},\epsilon,R}$ is bounded on $L^{p}$, see Corollary \ref{cor1}. This will be a consequence of Proposition \ref{prop9} (combined with Theorem \ref{LpExtension}), and to this end will need to compute the kernel $\partial_{s}K_{s,\epsilon,R}(x,y)$ explicitly. In view of \eqref{form92}, this is an exercise in using the product rule. First, 
\begin{displaymath} \partial_{s}(s \mapsto k_{\epsilon,R}(|\Gamma_{s}(x) - \Gamma_{s}(y)|)) = k_{\epsilon,R}'(|\Gamma_{s}(x) - \Gamma_{s}(y)|)\frac{\sum_{j = 1}^{d - 1} (A_{j}(x) - A_{j}(y))^{2}}{2|\Gamma_{s}(x) - \Gamma_{s}(y)|}. \end{displaymath}
second,
\begin{displaymath} \partial_{s}\left(s \mapsto \frac{1}{|\Gamma_{s}(x) - \Gamma_{s}(y)|}\right) = -\sum_{j = 1}^{d}\frac{(A_{j}(x) - A_{j}(y))^{2}}{2|\Gamma_{s}(x) - \Gamma_{s}(y)|^{3}}, \end{displaymath} 
and third $\partial_{s}(s \mapsto s^{2}\nabla A(x) \cdot (A(x) - A(y))) = 2s \nabla A(x) \cdot (A(x) - A(y))$. Therefore,
\begin{align} \partial_{s} K_{s,\epsilon,R}(x,y) & = k_{\epsilon,R}'(|\Gamma_{s}(x) - \Gamma_{s}(y)|) \cdot \frac{(x - y)\sum_{j = 1}^{d - 1}(A_{j}(x) - A_{j}(y))^{2}}{2|\Gamma_{s}(x) - \Gamma_{s}(y)|^{2}} \notag\\
& \quad - k_{\epsilon,R}(|\Gamma_{s}(x) - \Gamma_{s}(y)|) \cdot \frac{(x - y)\sum_{j = 1}^{d - 1}(A_{j}(x) - A_{j}(y))^{2}}{2|\Gamma_{s}(x) - \Gamma_{s}(y)|^{3}} \notag\\
& \quad + k_{\epsilon,R}'(|\Gamma_{s}(x) - \Gamma_{s}(y)|) \cdot \frac{s^{2}\nabla A(x) \cdot (A(x) - A(y))\sum_{j = 1}^{d - 1} (A_{j}(x) - A_{j}(y))^{2}}{2|\Gamma_{s}(x) - \Gamma_{s}(y)|^{2}} \notag\\
& \quad - k_{\epsilon,R}(|\Gamma_{s}(x) - \Gamma_{s}(y)|) \cdot \frac{s^{2}\nabla A(x) \cdot (A(x) - A(y)) \sum_{j = 1}^{d - 1} (A_{j}(x) - A_{j}(y))^{2}}{2|\Gamma_{s}(x) - \Gamma_{s}(y)|^{3}} \notag\\
&\label{form93} \quad + k_{\epsilon,R}(|\Gamma_{s}(x) - \Gamma_{s}(y)|) \cdot \frac{2s \nabla A(x) \cdot (A(x) - A(y))}{|\Gamma_{s}(x) - \Gamma_{s}(y)|}. \end{align}
The lessons of this computation are recorded in the following proposition:
\begin{proposition}\label{prop6} For $s \in [0,1]$, the kernel $\partial_{s} K_{s,\epsilon,R}(x,y)$ can be written as a sum of $\lesssim_{d} 1$ (precisely $(d - 1)(2d + 1)$) bounded kernels of the form
\begin{equation}\label{form94} (x,y) \mapsto \mathrm{Lip}(A)^{m} \cdot B(x) \cdot \mathcal{K}(\Gamma_{s}(x) - \Gamma_{s}(y)) \cdot \left(\tfrac{L_{1}(x) - L_{1}(y)}{x - y} \right)^{n_{1}}\left(\tfrac{L_{2}(x) - L_{2}(y)}{x - y} \right)^{n_{2}}, \end{equation}  
where
\begin{itemize}
\item $\mathcal{K} \in C^{2}_{c}(\R^{d} \, \setminus \, \{0\})$ is an odd bounded standard kernel in the sense of Definition \ref{def:standardKernel},
\item $L_{0},L_{1} \colon \R \to \R$ are $1$-Lipschitz,
\item $B \in \{1,L_{1}'\} \subset L^{\infty}(\R)$,
\item $m,n_{1},n_{2} \in \N$ with $m \geq 1$.
\end{itemize}
Moreover, the standard kernel constants of $\mathcal{K}$ are absolute. 
\end{proposition}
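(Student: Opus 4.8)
The proof is essentially a bookkeeping exercise: I take the five-line formula \eqref{form93} for $\partial_{s}K_{s,\epsilon,R}(x,y)$ and show that each of the five summands, after grouping factors appropriately, has the shape \eqref{form94}. The recurring building blocks are: the scalar quantities $\tfrac{A_{j}(x)-A_{j}(y)}{x-y}$, which are bounded by $\lip(A_{j}) \le \lip(A)$; the quantity $\tfrac{x-y}{|\Gamma_{s}(x)-\Gamma_{s}(y)|}$, which is bounded by $1$; the functions $k_{\epsilon,R}(|z|)$ and $k_{\epsilon,R}'(|z|)\cdot|z|$ evaluated at $z=\Gamma_{s}(x)-\Gamma_{s}(y)$, which produce odd bounded standard kernels on $\R^{d}$ by Lemma \ref{truncationLemma}(iv); and the coefficient $\nabla A(x)$, which contributes the allowed factor $B=L_{1}'$ (after extracting a $\lip(A)$).

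Concretely: in the first summand of \eqref{form93}, I write
\begin{displaymath} k_{\epsilon,R}'(|\Gamma_{s}(x)-\Gamma_{s}(y)|) \cdot \frac{(x-y)\sum_{j}(A_{j}(x)-A_{j}(y))^{2}}{2|\Gamma_{s}(x)-\Gamma_{s}(y)|^{2}} = \sum_{j=1}^{d-1} \tfrac{1}{2}\lip(A)^{2} \cdot \mathcal{K}_{j}(\Gamma_{s}(x)-\Gamma_{s}(y)) \cdot \left(\tfrac{\tilde{A}_{j}(x)-\tilde{A}_{j}(y)}{x-y}\right)^{2}, \end{displaymath}
where $\mathcal{K}_{j}(z) := k_{\epsilon,R}'(|z|)\,|z|\cdot \tfrac{z_{1}}{|z|}$ is odd and — by Lemma \ref{truncationLemma}(iv) applied with $j\in\{1,2,3\}$ — a bounded $C^{2}$ standard kernel with absolute constants; here $\tilde{A}_{j} := A_{j}/\lip(A)$ is $1$-Lipschitz, $m=2$, $n_{1}=2$, $n_{2}=0$, $B=1$. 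Note the power $|z|^{2}$ in the denominator is exactly absorbed: one factor $|z|^{-1}$ pairs with $k_{\epsilon,R}'(|z|)$ to give something of size $|z|^{-2}$, and then multiplying by $|z|$ (from rewriting $k'(|z|)|z|$) and by $(x-y)$ (size $\le|z|$) restores the standard-kernel size $|z|^{-1}$. The second summand is handled identically but with $\mathcal{K}_{j}(z):=k_{\epsilon,R}(|z|)\cdot\tfrac{z_{1}}{|z|}$ and one more power of $\tfrac{x-y}{|z|}$; the fifth summand is the simplest, giving $\lip(A)\cdot \nabla A(x)\cdot k_{\epsilon,R}(|z|)\tfrac{z_{?}}{|z|}\cdot\left(\tfrac{A(x)-A(y)}{x-y}\right)$ — here $B=A_{i}'$ (one component of $\nabla A$, hence in $L^{\infty}$), $m=1$. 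The third and fourth summands carry the extra factor $s^{2}\nabla A(x)\cdot(A(x)-A(y))$, which I split as $\nabla A(x)$ (the $B$-factor after pulling out $\lip(A)$) times $(x-y)\cdot\tfrac{A_{i}(x)-A_{i}(y)}{x-y}$, the $(x-y)$ combining with the remaining $|z|^{-1}$ to keep the kernel standard. Summing the multiplicities, each of lines one through four splits into $\le (d-1)$ or $\le d(d-1)$ pieces and line five into $\le(d-1)$ pieces; totalling gives $(d-1)(2d+1)$, matching the claim, and $s^{2},2s$ are bounded by $1$ for $s\in[0,1]$ and may be absorbed into the implicit constants or the $B$-factor.

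The only genuinely delicate point — and the one I would write out most carefully — is verifying that $\mathcal{K}_{j}(z)=k_{\epsilon,R}(|z|)\tfrac{z_{1}}{|z|}$ and $k_{\epsilon,R}'(|z|)|z|\tfrac{z_{1}}{|z|}$ really satisfy the decay bounds $|\nabla^{j}\mathcal{K}_{j}(z)|\lesssim|z|^{-1-j}$ for $j\in\{0,1,2\}$ with \emph{absolute} implicit constants (independent of $\epsilon,R$), since this is the hypothesis needed to invoke Proposition \ref{prop9} later via Corollary \ref{cor1}. This follows from the chain rule together with Lemma \ref{truncationLemma}(iv), which gives $|\partial^{l}k_{\epsilon,R}(t)|\lesssim|t|^{-1-l}$ for $l\in\{0,1,2,3\}$ with absolute constants — the three derivatives of $k_{\epsilon,R}$ are precisely why the lemma was stated with $C^{3}$ regularity and four derivative bounds. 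One also checks $\mathcal{K}_{j}\in C^{2}_{c}$: compact support comes from $\spt k_{\epsilon,R}\subset\bar B(2R)$, and the $C^{2}$ regularity away from $0$ from $k_{\epsilon,R}\in C^{3}(\R\setminus\{0\})$ and smoothness of $z\mapsto z_{1}/|z|$ on $\R^{d}\setminus\{0\}$. Oddness of each $\mathcal{K}_{j}$ is immediate since $k_{\epsilon,R}$ is odd so $t\mapsto k_{\epsilon,R}(|t|)$... wait — more carefully, $\mathcal{K}_{j}(-z)=k_{\epsilon,R}(|z|)\tfrac{-z_{1}}{|z|}=-\mathcal{K}_{j}(z)$ since $|z|$ is even in $z$; likewise for the $k'$ version. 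With these properties in hand, the proposition is proved by simply collecting the decomposition described above.
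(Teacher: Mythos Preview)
Your approach is the same as the paper's: decompose each of the five summands of \eqref{form93} by pulling out the Lipschitz quotients and identifying an odd bounded standard kernel built from $k_{\epsilon,R}(|z|)$ or $z_{1}k'_{\epsilon,R}(|z|)$ times powers of $z_{1}/|z|$ (the paper lists the five kernels explicitly as $\mathcal{K}_{1},\ldots,\mathcal{K}_{5}$, e.g.\ $\mathcal{K}_{1}(z)=\tfrac{1}{2}z_{1}k'_{\epsilon,R}(|z|)\cdot z_{1}^{2}/|z|^{2}$). Two small slips to fix: your kernel $\mathcal{K}_{j}(z)=k'_{\epsilon,R}(|z|)\,z_{1}$ for the first summand is off by a factor $z_{1}^{2}/|z|^{2}$ (so as written your decomposition does not reproduce the summand), and in the fifth summand one needs $m=2$, not $m=1$, since $B$ must equal $L_{1}'$ with $L_{1}$ \emph{$1$-Lipschitz}, forcing you to normalize both the quotient \emph{and} $A_{i}'$ by $\lip(A)$.
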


\begin{remark} The most non-trivial points of Proposition \ref{prop6} are that the kernels $\mathcal{K}$ are all odd, and the factor $\mathrm{Lip}(A)$ appears with a non-zero exponent. These facts will be crucial to obtain the correct $L^{p}$-bounds for the operator associated with $\partial_{s}K_{s,\epsilon,R}(x,y)$. \end{remark} 

\begin{proof}[Proof of Proposition \ref{prop6}] The kernels can be read off from the five lines leading to \eqref{form93}. In fact, the standard kernel $\mathcal{K} \in C^{2}_{c}(\R^{d} \, \setminus \, \{0\})$ is common for the kernels arising from each line, and is given by the following expressions, where $z = (z_{1},\ldots,z_{d}) \in \R^{d} \, \setminus \, \{0\}$:
\begin{align*} & \mathcal{K}_{1}(z) := \tfrac{1}{2}(z_{1} \cdot k_{\epsilon,R}'(|z|)) \cdot \frac{z_{1}^{2}}{|z|^{2}}, \\
& \mathcal{K}_{2}(z) := -\tfrac{1}{2}k_{\epsilon,R}(|z|) \cdot \frac{z_{1}^{3}}{|z|^{3}}, \\
& \mathcal{K}_{3}(z) := \tfrac{s^{2}}{2}(z_{1} \cdot k_{\epsilon,R}'(|z|)) \cdot \frac{z_{1}^{2}}{|z|^{2}}, \\
& \mathcal{K}_{4}(z) := -\tfrac{s^{2}}{2}k_{\epsilon,R}(|z|) \cdot \frac{z_{1}^{3}}{|z|^{3}},\\ 
& \mathcal{K}_{5}(z) := 2sk_{\epsilon,R}(|z|) \cdot \frac{z_{1}}{|z|}. \end{align*}
The oddness of the kernels $\mathcal{K}_{j}$ is evident, and the standard kernel properties follow from Lemma \ref{truncationLemma}(iv) plus the product rule. The main point is that $z \mapsto k_{\epsilon,R}(|z|)$ and $z \mapsto z_{1} \cdot k_{\epsilon,R}'(|z|)$ are both bounded standard kernels in $C_{c}^{2}(\R^{d} \, \setminus \, \{0\})$, and multiplication by factor of the form $(z_{1}/|z|)^{j}$ does not change this property.

With this notation, it is routine to check that the kernels above \eqref{form93} can be expressed in the form \eqref{form94}. Just to given an example, the kernel appearing exactly on line \eqref{form93} is evidently the sum of $(d - 1)$ kernels of the form
\begin{align*} & k_{\epsilon,R}(|\Gamma_{s}(x) - \Gamma_{s}(y)|) \cdot \frac{2sA_{i}'(x)(A_{i}(x) - A_{i}(y))}{|\Gamma_{s}(x) - \Gamma_{s}(y)|}\\
& = \mathrm{Lip}(A)^{2} \cdot \left(\tfrac{A_{i}}{\mathrm{Lip}(A)}\right)'(x) \cdot \left( k_{\epsilon,R}(|\Gamma_{s}(x) - \Gamma_{s}(y)| \cdot \frac{2s(x - y)}{|\Gamma_{s}(x) - \Gamma_{s}(y)|} \right) \cdot \frac{\tfrac{A_{i}}{\mathrm{Lip}(A)}(x) - \tfrac{A_{i}}{\mathrm{Lip}(A)}(y)}{x - y}.  \end{align*} 
At this point, one finally notes that the expression in middle brackets equals
\begin{displaymath} k_{\epsilon,R}(|\Gamma_{s}(x) - \Gamma_{s}(y)|) \cdot \frac{2s(x - y)}{|\Gamma_{s}(x) - \Gamma_{s}(y)|} = \mathcal{K}_{5}(\Gamma_{s}(x) - \Gamma_{s}(y)), \end{displaymath} 
since $\Gamma_{s}(x) - \Gamma_{s}(y) = (x - y,A_{s}(x) - A_{s}(y))$. We omit the remaining computations. \end{proof} 

\begin{cor}\label{cor1} Assume that $\mathrm{Lip}(A) \leq 1$. For $s \in [0,1]$ and $p \in (1,\infty)$, the operator $\partial_{s} \partial U^{\Gamma_{s},\epsilon,R}$ is bounded on $L^{p}(\R)$, and indeed
\begin{displaymath} \|\partial_{s} \partial U^{\Gamma_{s},\epsilon,R}\|_{L^{p} \to L^{p}} \lesssim_{d,p} \mathrm{Lip}(A). \end{displaymath}
\end{cor}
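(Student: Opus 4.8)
The plan is to deduce Corollary \ref{cor1} directly from Proposition \ref{prop6} together with Proposition \ref{prop9} (and its $L^p$-extension, Theorem \ref{LpExtension}). By Proposition \ref{prop6}, the kernel $\partial_s K_{s,\epsilon,R}(x,y)$ is a sum of $\lesssim_d 1$ kernels of the special form
\begin{displaymath} \mathrm{Lip}(A)^{m} \cdot B(x) \cdot \mathcal{K}(\Gamma_{s}(x) - \Gamma_{s}(y)) \cdot \left(\tfrac{L_{1}(x) - L_{1}(y)}{x - y}\right)^{n_{1}}\left(\tfrac{L_{2}(x) - L_{2}(y)}{x - y}\right)^{n_{2}}, \end{displaymath}
with $\mathcal{K} \in C_c^2(\R^d \setminus \{0\})$ an odd bounded standard kernel with absolute standard-kernel constants, $L_1,L_2$ $1$-Lipschitz, $B \in \{1,L_1'\}$, and $m \geq 1$.

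First I would reduce to a single term of this form. Fix such a term. The factor $\mathrm{Lip}(A)^m$ is a scalar which I pull out of the operator norm; since $m \geq 1$ and $\mathrm{Lip}(A) \leq 1$, it contributes at most $\mathrm{Lip}(A)$ (using $\mathrm{Lip}(A)^m \leq \mathrm{Lip}(A)$). The factor $B(x)$ is a pointwise multiplier with $\|B\|_{L^\infty} \lesssim 1$ (either $B \equiv 1$ or $B = L_1'$ with $\mathrm{Lip}(L_1) \leq 1$), so multiplication by $B$ is bounded on $L^p$ with norm $\lesssim 1$; this commutes past the integral operator as a left multiplication, so it does not affect $L^p \to L^p$ boundedness beyond a harmless factor. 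What remains is precisely the operator with kernel
\begin{displaymath} \mathcal{K}(\Gamma_s(x) - \Gamma_s(y)) \cdot \left(\tfrac{L_1(x) - L_1(y)}{x - y}\right)^{n_1}\left(\tfrac{L_2(x) - L_2(y)}{x - y}\right)^{n_2}, \end{displaymath}
which is exactly the kernel $K_{B,m}$ appearing in Proposition \ref{prop9}, with the Lipschitz graph $\Gamma_s$ (note $\mathrm{Lip}(A_s) = s\,\mathrm{Lip}(A) \leq 1$), Lipschitz functions $B_1 = L_1, B_2 = L_2$, and multiplicities $m_1 = n_1, m_2 = n_2$. Proposition \ref{prop9} then tells us this is a Calder\'on--Zygmund operator whose standard-kernel constants and $L^2 \to L^2$ norm depend only on the (absolute) standard-kernel constants of $\mathcal{K}$, on $\mathrm{Lip}(A_s) \leq 1$, on $\mathrm{Lip}(L_i) \leq 1$, and on $n_i \lesssim_d 1$ — hence only on $d$. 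By Theorem \ref{LpExtension}, it extends to $L^p \to L^p$ with norm depending only on $d$ and $p$.

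Summing over the $\lesssim_d 1$ terms and restoring the extracted factors $B(x)$ and $\mathrm{Lip}(A)^m$, one obtains
\begin{displaymath} \|\partial_s \partial U^{\Gamma_s,\epsilon,R}\|_{L^p \to L^p} \lesssim_{d,p} \mathrm{Lip}(A), \end{displaymath}
uniformly in $\epsilon,R$ and $s \in [0,1]$, which is the claim. The only point requiring a little care is the legitimacy of writing $\partial_s \partial U^{\Gamma_s,\epsilon,R}f$ as an absolutely convergent integral against the kernel $\partial_s K_{s,\epsilon,R}(x,y)$: this is fine because each $\mathcal{K}_j$ is bounded and compactly supported (so the kernel is in $L^\infty$ with compact support in $x - y$), and $\partial_s K_{s,\epsilon,R}(x,y)$ exists for a.e. $x$ (wherever $\nabla A(x)$ exists) and all $y \neq x$; for $f \in \mathcal{S}(\R)$ the integral converges absolutely. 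I do not anticipate a serious obstacle here — the content of the corollary has effectively been front-loaded into Propositions \ref{prop6} and \ref{prop9}, and the remaining work is the bookkeeping of pulling out the scalar $\mathrm{Lip}(A)^m$ and the bounded multiplier $B$, then invoking the CZ machinery.
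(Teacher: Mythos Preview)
Your proposal is correct and follows essentially the same approach as the paper: decompose the kernel via Proposition \ref{prop6}, extract the scalar factor $\mathrm{Lip}(A)^{m} \leq \mathrm{Lip}(A)$ (using $m \geq 1$ and $\mathrm{Lip}(A) \leq 1$) and the bounded multiplier $B$, then apply Proposition \ref{prop9} and Theorem \ref{LpExtension} to each remaining piece, noting that $\mathrm{Lip}(A_s) \leq 1$ so the resulting constants depend only on $d,p$. Your write-up is simply a more detailed expansion of the paper's one-paragraph proof.
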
 

\begin{proof} This follows from Proposition \ref{prop9} combined with Theorem \ref{LpExtension}, noting that the kernels of the form \eqref{form94} without the $\mathrm{Lip}(A)^{m}$-factor (with $m \geq 1$) satisfy the hypotheses of Proposition \ref{prop9} with constants independent of $\epsilon,R$ or $\mathrm{Lip}(A)$. The hypothesis $\mathrm{Lip}(A) \leq 1$ is needed to ensure that (a) $\mathrm{Lip}(A)^{m} \leq \mathrm{Lip}(A)$, and (b) the graphs $\Gamma_{s}$ appearing in $\mathcal{K}(\Gamma_{s}(x) - \Gamma_{s}(y))$ all have Lipschitz constant $\leq 1$. \end{proof} 

The next lemma explains the relation between $\partial_{s}\partial U^{\Gamma_{s},\epsilon,R}$ and $\partial U^{\Gamma_{s},\epsilon,R} - \partial U^{\Gamma_{t},\epsilon,R}$:

\begin{lemma}\label{lemma6} For $s \in [0,1]$ and $f,g \in \mathcal{S}(\R)$ fixed,
\begin{displaymath} \lim_{t \to s} \frac{1}{s - t} \left(\int \partial U^{\Gamma_{s},\epsilon,R}f \cdot g - \int \partial U^{\Gamma_{t},\epsilon,R}f \cdot g \right) = \int \partial_{s} \partial U^{\Gamma_{s},\epsilon,R}f \cdot g. \end{displaymath}
\end{lemma}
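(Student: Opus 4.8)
\textbf{Proof plan for Lemma \ref{lemma6}.}

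The statement is that, for $f,g \in \mathcal{S}(\R)$ fixed, the difference quotient of $\int \partial U^{\Gamma_{s},\epsilon,R}f \cdot g$ in the variable $s$ converges to $\int \partial_{s}\partial U^{\Gamma_{s},\epsilon,R}f\cdot g$. The plan is to pass the limit inside the double integral
\[
\int g(x)\left(\int \frac{K_{s,\epsilon,R}(x,y)-K_{t,\epsilon,R}(x,y)}{s-t}\,f(y)\,dy\right)dx
\]
by a dominated convergence argument. First I would record that, for every $x$ with $\nabla A(x)$ existing and every $y\neq x$, the scalar function $s\mapsto K_{s,\epsilon,R}(x,y)$ is $C^{1}$ on $[0,1]$: this is immediate from the explicit formula \eqref{form92}, since $k_{\epsilon,R}\in C^{3}(\R\setminus\{0\})\cap L^{\infty}$, the denominator $|\Gamma_{s}(x)-\Gamma_{s}(y)|\geq|x-y|>0$ stays bounded away from zero uniformly in $s$, and all the other ingredients are polynomial in $s$. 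Hence, by the one–variable mean value theorem (applied to real and imaginary parts, or just noting everything here is real),
\[
\frac{K_{s,\epsilon,R}(x,y)-K_{t,\epsilon,R}(x,y)}{s-t}=\partial_{s}K_{\tau,\epsilon,R}(x,y)
\]
for some $\tau=\tau(x,y)$ between $s$ and $t$, and the pointwise limit as $t\to s$ is $\partial_{s}K_{s,\epsilon,R}(x,y)$.

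The core of the argument is then a uniform (in $\tau\in[0,1]$) integrable majorant for $\partial_{\tau}K_{\tau,\epsilon,R}(x,y)f(y)$ as a function of $(x,y)$. For this I would use the five–line expansion culminating in \eqref{form93}, together with Lemma \ref{truncationLemma}(iv): every term there is a product of a factor $k_{\epsilon,R}^{(j)}(|\Gamma_{\tau}(x)-\Gamma_{\tau}(y)|)$ (which is bounded by $C_{\epsilon}$ and supported in $|x-y|\le 2R$, since $\spt k_{\epsilon,R}\subset\bar B(2R)$ and $|\Gamma_\tau(x)-\Gamma_\tau(y)|\ge |x-y|$) times bounded Lipschitz–difference quotients $(A_{i}(x)-A_{i}(y))/|\cdots|$ and a factor $\nabla A(x)$ which is in $L^{\infty}$. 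Consequently there is a constant $C=C(\epsilon,R,\mathrm{Lip}(A),d)$ with
\[
|\partial_{\tau}K_{\tau,\epsilon,R}(x,y)|\le C\,\mathbf{1}_{\{|x-y|\le 2R\}}(x,y),\qquad \tau\in[0,1].
\]
Therefore $|\partial_{\tau}K_{\tau,\epsilon,R}(x,y)\,f(y)\,g(x)|\le C\,\mathbf 1_{\{|x-y|\le 2R\}}|f(y)||g(x)|$, which is in $L^{1}(\R^{2})$ because $f,g\in\mathcal{S}(\R)$ (indeed $\int\int \mathbf 1_{\{|x-y|\le 2R\}}|f(y)||g(x)|\,dy\,dx\le 4R\|f\|_{\infty}\|g\|_{L^{1}}<\infty$). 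This majorant dominates the difference quotients as well, via the mean value representation above. The dominated convergence theorem then yields
\[
\lim_{t\to s}\iint \frac{K_{s,\epsilon,R}-K_{t,\epsilon,R}}{s-t}\,f(y)g(x)\,dy\,dx=\iint \partial_{s}K_{s,\epsilon,R}(x,y)\,f(y)g(x)\,dy\,dx,
\]
and unravelling the definitions of $\partial U^{\Gamma_{s},\epsilon,R}$ and $\partial_{s}\partial U^{\Gamma_{s},\epsilon,R}$ (and using Fubini, justified by the same $L^{1}$ bound) gives exactly the claimed identity.

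The only mildly delicate point — and the one I would write out carefully — is the measurability/finiteness bookkeeping at the diagonal and the justification that differentiating under the integral sign in \eqref{form92} to get \eqref{form93} is valid for a.e.\ $x$ (namely wherever $\nabla A(x)$ exists); this is where the fact that $\log_{\epsilon,R}$ is genuinely $C^{4}$ away from $0$, together with $|\Gamma_s(x)-\Gamma_s(y)|\ge|x-y|$, does all the work. No new ideas are needed beyond Lemma \ref{truncationLemma}(iv) and standard dominated convergence; the lemma is essentially a formalisation of the computation already displayed before its statement.
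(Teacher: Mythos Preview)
Your proposal is correct and follows essentially the same approach as the paper: both arguments rest on the uniform bound $|\partial_\tau K_{\tau,\epsilon,R}(x,y)|\le C_{\epsilon,R,A,d}\,\mathbf{1}_{\{|x-y|\le 2R\}}$ extracted from the explicit expansion \eqref{form93} and Lemma \ref{truncationLemma}(iv), followed by dominated convergence. The only cosmetic difference is that the paper writes $K_s-K_t=\int_t^s\partial_r K_r\,dr$ and applies dominated convergence in two stages (first in $y$ for fixed $x$, then in $x$ against $g$), whereas you invoke the mean value theorem and run dominated convergence once on the product measure in $(x,y)$; both routes are equivalent here.
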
 

\begin{proof} Fix $f \in \mathcal{S}(\R)$, $s \neq t$ and $x \in \R$ such that $\nabla A(x)$ exists (then $s \mapsto K_{s,\epsilon,R}(x,y)$ is differentiable for all $y \neq x$). Abbreviate $K_{s} := K_{s,\epsilon,R}$, and write
\begin{align}\label{form96} & \frac{\partial U^{\Gamma_{s},\epsilon,R}f(x) - \partial U^{\Gamma_{t},\epsilon,R}f(x)}{s - t} - \partial_{s} \partial U^{\Gamma_{s},\epsilon,R}f(x)\\
& \qquad = \frac{1}{s - t} \int_{t}^{s} \int \left[\partial_{r} K_{r}(x,y) - \partial_{s} K_{s}(x,y) \right] f(y) \, dy \, dr \notag\\
&\label{form95} \qquad = \int_{0}^{1} \int \left[ \partial_{r(s - t) + t} K_{r(s - t) + t}(x,y)  -\partial_{s} K_{s}(x,y) \right] f(y) \, dy \, dr.  \end{align} 
Now, recall from Proposition \ref{prop6} the explicit form of the kernels $\partial_{s} K_{s}(x,y)$. Using these expressions, one sees that
\begin{displaymath} |\partial_{r(s - t) + t} K_{r(s - t) + t}(x,y) - \partial_{s} K(x,y)| \lesssim_{d} |\mathcal{K}(\Gamma_{r(s - t) + t}(x) - \Gamma_{r(s - t) + t}(y)) - \mathcal{K}(\Gamma_{s}(x) - \Gamma_{s}(y))|, \end{displaymath} 
for each $(x,y) \in \R^{2}$, where $\mathcal{K} \in C^{2}_{c}(\R^{d} \, \setminus \, \{0\})$ is one of $\lesssim_{d} 1$ possible bounded and compactly supported standard kernels (the choice may depend on $(x,y)$). In particular, $|\partial_{r(s - t) + t} K_{r(s - t) + t}(x,y) - \partial_{s} K(x,y)| \lesssim_{A,d,\epsilon,R} 1$, where the bound is independent of $x,y$. For fixed $r \in [0,1]$ and $x \neq y$, the difference also tends to zero as $t \to s$, since
\begin{displaymath} \lim_{t \to s} (\Gamma_{r(t - s) + t}(x) - \Gamma_{r(t - s) + t}(y)) = \Gamma_{s}(x) - \Gamma_{s}(y) \neq 0. \end{displaymath}
From this information one first deduces that
\begin{displaymath} \lim_{t \to s}  \int [\partial_{r(s - t) + t}K_{r(s - t) + t}(x,y) - \partial_{s} K_{s}(x,y)]f(y) \, dy = 0, \qquad x \in \R, \, r \in [0,1]. \end{displaymath}
This implies, using the bounded convergence theorem, that the limit as $t \to s$ of the expressions on line \eqref{form95} equals zero. We have now shown that the limit (as $t \to s$) of the differences on line \eqref{form96} equals zero for every $x \in \R$ such that $\nabla A(x)$ exists. The argument also shows that these differences are uniformly bounded by $\lesssim_{A,d,\epsilon,R} \|f\|_{L^{1}}$. Since $\nabla A(x)$ exists a.e. we deduce from the dominated convergence theorem that 
\begin{displaymath} \lim_{t \to s} \int \left( \frac{\partial U^{\Gamma_{s},\epsilon,R}f(x) - \partial U^{\Gamma_{t},\epsilon,R}f(x)}{s - t} - \partial_{s} \partial U^{\Gamma_{s},\epsilon,R}f(x) \right) g(x) \, dx = 0 \end{displaymath} 
for all $g \in \mathcal{S}(\R)$, which is equivalent to the claim in the lemma. \end{proof} 

We may now prove Proposition \ref{prop7}.

\begin{proof}[Proof of Proposition \ref{prop7}] For $f,g \in \mathcal{S}(\R)$ fixed, define the path $\gamma_{f,g} \colon [0,1] \to \R$,
\begin{displaymath} \gamma_{f,g}(s) := \int \partial U^{\Gamma_{s},\epsilon,R}f \cdot g, \qquad s \in [0,1]. \end{displaymath}
From Lemma \ref{lemma6} and Corollary \ref{cor1}, we infer that (for $p \in (1,\infty)$)
\begin{displaymath} \limsup_{t \to s} \frac{|\gamma_{f,g}(s) - \gamma_{f,g}(t)|}{|s - t|} \leq \left| \int \partial_{s} \partial U^{\Gamma_{s},\epsilon,R}f \cdot g \right| \lesssim_{d,p} \mathrm{Lip}(A)\|f\|_{L^{p}}\|g\|_{L^{p'}}, \qquad s \in [0,1]. \end{displaymath} 
This implies by Lemma \ref{l:metric} that $\gamma_{f,g}$ is $C$-Lipschitz with $C \lesssim_{d,p} \mathrm{Lip}(A)\|f\|_{L^{p}}\|g\|_{L^{p'}}$. Consequently, for $f \in \mathcal{S}(\R)$ with $\|f\|_{L^{p}} = 1$,
\begin{align*} \|\partial U^{\Gamma_{s},\epsilon,R}f - \partial U^{\Gamma_{t},\epsilon,R}f\|_{L^{2}} & = \sup_{\|\varphi\|_{p'} = 1} \left| \int \partial U^{\Gamma_{s},\epsilon,R}f \cdot \varphi - \int \partial U^{\Gamma_{t},\epsilon,R}f \cdot \varphi \right|\\
& = \sup_{\|\varphi\|_{p'} = 1} |\gamma_{f,\varphi}(s) - \gamma_{f,\varphi}(t)| \lesssim_{d,p} \mathrm{Lip}(A) \cdot |s - t|.  \end{align*}  
This completes the proof of Proposition \ref{prop7}. \end{proof}

\subsubsection{$L^{p}$ bounds for $\beta = 1$} As in the previous section, let $A \colon \R \to \R^{d - 1}$ be Lipschitz, and $A_{s}(x) := sA(x)$ for $s,x \in \R$. The purpose of this section is to establish an analogue of Proposition \ref{prop7} for the operators $U^{\Gamma_{s},\epsilon,R}\partial$ defined by
\begin{align} (U^{\Gamma_{s},\epsilon,R}\partial) f(x) & := \int \log_{\epsilon,R} |\Gamma_{s}(x) - \Gamma_{s}(y)| f'(y) \, dy \notag\\
&\label{form102} = \int -\partial_{y}(y \mapsto \log_{\epsilon,R} |\Gamma_{s}(x) - \Gamma_{s}(y)|) f(y) \, dy, \qquad f \in \mathcal{S}(\R), x \in \R. \end{align}
Both integrals above are absolutely convergent, and integration by parts is justified since $y \mapsto \log_{\epsilon,R} |\Gamma_{s}(x) - \Gamma_{s}(y)|$ is a compactly supported Lipschitz function. Explicitly, the kernel $-\partial_{y}(y \mapsto \log_{\epsilon,R} |\Gamma_{s}(x) - \Gamma_{s}(y)|)$ equals a.e. the compactly supported $L^{\infty}$-function 
\begin{equation}\label{form97} L_{s,\epsilon,R}(x,y) := k_{\epsilon,R}(|\Gamma_{s}(x) - \Gamma_{s}(y)|) \cdot \frac{(x - y) + s^{2}\nabla A(y) \cdot (A(x) - A(y))}{|\Gamma_{s}(x) - \Gamma_{s}(y)|}, \quad x,y \in \R. \end{equation} 
So, the only difference between the kernels $K_{s,\epsilon,R}(x,y)$ (from \eqref{form92}) and $L_{s,\epsilon,R}(x,y)$ is the one between the factors $\nabla A(x)$ and $\nabla A(y)$.  Here is the analogue of Proposition \ref{prop7}:

 \begin{proposition}\label{prop10} Assume $\mathrm{Lip}(A) \leq 1$. For $0 < \epsilon \leq 1 \leq R < \infty$, $s,t \in [0,1]$, and $p \in (1,\infty)$,
 \begin{displaymath} \|U^{\Gamma_{s},\epsilon,R}\partial - U^{\Gamma_{t},\epsilon,R}\partial\|_{L^{p} \to L^{p}} \lesssim_{d,p} \mathrm{Lip}(A) \cdot |s - t|. \end{displaymath}
 \end{proposition}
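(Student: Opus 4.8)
The plan is to mirror essentially verbatim the argument used for Proposition \ref{prop7}, replacing the kernels $K_{s,\epsilon,R}(x,y)$ by the kernels $L_{s,\epsilon,R}(x,y)$ from \eqref{form97}. The only difference between the two kernel families is that the factor $\nabla A(x)$ in \eqref{form92} is replaced by $\nabla A(y)$ in \eqref{form97}, and this cosmetic change does not affect any of the estimates. First I would define the "$s$-derivative operator"
\begin{displaymath} \partial_{s}(U^{\Gamma_{s},\epsilon,R}\partial)f(x) := \int \partial_{s} L_{s,\epsilon,R}(x,y) f(y) \, dy, \qquad f \in \mathcal{S}(\R), \, x \in \R, \end{displaymath}
where $\partial_{s} L_{s,\epsilon,R}(x,y)$ exists whenever $\nabla A(y)$ exists and $x \neq y$. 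Applying the product rule exactly as in the computation leading to \eqref{form93} — with $s^{2}\nabla A(y)\cdot(A(x) - A(y))$ in place of $s^{2}\nabla A(x)\cdot(A(x) - A(y))$ — one obtains five terms, whose only structural change is that the single appearance of $\nabla A(x)$ becomes $\nabla A(y)$.

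Next I would record the analogue of Proposition \ref{prop6}: the kernel $\partial_{s}L_{s,\epsilon,R}(x,y)$ is a sum of $\lesssim_{d} 1$ bounded kernels of the form \eqref{form94}, the only modification being that the factor $B \in \{1, L_{1}'\}$ may now be evaluated at $y$ rather than $x$, i.e. $B \in \{1, L_{1}'(\cdot)\}$ where the variable is either $x$ or $y$. Crucially the standard kernels $\mathcal{K}_{j}$ are literally the same odd, bounded, compactly supported functions $\mathcal{K}_{1},\ldots,\mathcal{K}_{5}$ as in the proof of Proposition \ref{prop6}, and the $\mathrm{Lip}(A)^{m}$-factor still appears with exponent $m \geq 1$. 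The fact that $B$ is now evaluated at $y$ is immaterial: Proposition \ref{prop9} handles kernels of the form $\mathcal{K}(\Gamma(x) - \Gamma(y)) \cdot \left(\tfrac{B_{1}(x) - B_{1}(y)}{x - y}\right)^{m_{1}}\left(\tfrac{B_{2}(x) - B_{2}(y)}{x - y}\right)^{m_{2}}$ times a bounded $L^{\infty}$-factor, and an extra factor $L_{1}'(y)$ instead of $L_{1}'(x)$ is still a bounded $L^{\infty}$-factor (with the same $L^{\infty}$-norm), so the standard-kernel and $L^{2}$-boundedness conclusions of Proposition \ref{prop9} apply verbatim. This yields, exactly as in Corollary \ref{cor1}, that $\partial_{s}(U^{\Gamma_{s},\epsilon,R}\partial)$ is bounded on $L^{p}(\R)$ with $\|\partial_{s}(U^{\Gamma_{s},\epsilon,R}\partial)\|_{L^{p}\to L^{p}} \lesssim_{d,p} \mathrm{Lip}(A)$, provided $\mathrm{Lip}(A) \leq 1$.

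Finally I would reprove the analogue of Lemma \ref{lemma6}, namely that for $f,g \in \mathcal{S}(\R)$ fixed and $s \in [0,1]$,
\begin{displaymath} \lim_{t \to s} \frac{1}{s - t}\left(\int (U^{\Gamma_{s},\epsilon,R}\partial)f \cdot g - \int (U^{\Gamma_{t},\epsilon,R}\partial)f \cdot g\right) = \int \partial_{s}(U^{\Gamma_{s},\epsilon,R}\partial)f \cdot g. \end{displaymath}
The proof is identical to that of Lemma \ref{lemma6}: one writes the difference quotient minus $\partial_{s}L_{s}$ as an average over $r \in [0,1]$ of $\int [\partial_{r(s-t)+t}L_{r(s-t)+t}(x,y) - \partial_{s}L_{s}(x,y)]f(y)\,dy$, observes using the explicit form of the kernels that the integrand is bounded by $\lesssim_{d}|\mathcal{K}(\Gamma_{r(s-t)+t}(x) - \Gamma_{r(s-t)+t}(y)) - \mathcal{K}(\Gamma_{s}(x) - \Gamma_{s}(y))|$ for one of $\lesssim_{d}1$ bounded compactly supported $\mathcal{K}$, hence is uniformly bounded by $\lesssim_{A,d,\epsilon,R} 1$ and tends to $0$ pointwise as $t \to s$ (since $\Gamma_{r(s-t)+t}(x) - \Gamma_{r(s-t)+t}(y) \to \Gamma_{s}(x) - \Gamma_{s}(y) \neq 0$ for $x \neq y$), and concludes by the dominated convergence theorem (twice: once in $y$, once in $x$). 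Here $\nabla A(y)$ exists for a.e. $y$, which is what is needed for differentiability of $s \mapsto L_{s,\epsilon,R}(x,y)$. Combining these ingredients exactly as in the proof of Proposition \ref{prop7}: for fixed $f,g \in \mathcal{S}(\R)$ the path $\gamma_{f,g}(s) := \int (U^{\Gamma_{s},\epsilon,R}\partial)f \cdot g$ satisfies $\limsup_{t\to s}|\gamma_{f,g}(s) - \gamma_{f,g}(t)|/|s-t| \lesssim_{d,p} \mathrm{Lip}(A)\|f\|_{L^{p}}\|g\|_{L^{p'}}$, so by Lemma \ref{l:metric} it is Lipschitz with this constant, and taking a supremum over $\|g\|_{L^{p'}} = 1$ by $L^{p}$-duality gives the claimed bound $\|U^{\Gamma_{s},\epsilon,R}\partial - U^{\Gamma_{t},\epsilon,R}\partial\|_{L^{p}\to L^{p}} \lesssim_{d,p} \mathrm{Lip}(A)\cdot|s-t|$. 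I do not anticipate a genuine obstacle here; the only point requiring a modicum of care is checking that replacing $\nabla A(x)$ by $\nabla A(y)$ throughout does not spoil the oddness of the $\mathcal{K}_{j}$ or the exponent $m \geq 1$ on the $\mathrm{Lip}(A)$-factor — but since the $\nabla A$ factor is pulled out as a separate $L^{\infty}$-multiplier (written as $\mathrm{Lip}(A) \cdot (A_{i}/\mathrm{Lip}(A))'$, evaluated at $y$), the decomposition of Proposition \ref{prop6} goes through unchanged.
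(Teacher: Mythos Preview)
Your proposal is correct and follows essentially the same approach as the paper, which likewise computes $\partial_{s}L_{s,\epsilon,R}$, records the analogue of Proposition \ref{prop6} with $B(y)$ in place of $B(x)$, derives the analogue of Corollary \ref{cor1}, and proves the analogue of Lemma \ref{lemma6} before concluding via Lemma \ref{l:metric}. The paper additionally remarks that in the analogue of Lemma \ref{lemma6} the key formula now holds for \emph{all} $x\in\R$ (not just a.e.\ $x$), since differentiability in $s$ only requires $\nabla A(y)$ to exist and one may apply Fubini in the $y$-integral; you implicitly handle this correctly but do not single it out.
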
 
 The proof is similar to the proof of Proposition \ref{prop7}. Again we introduce the "$s$-derivative operator" $\partial_{s} U^{\Gamma_{s},\epsilon,R}\partial$ defined by
  \begin{displaymath} (\partial_{s} U^{\Gamma_{s},\epsilon,R}\partial)f(x) := \int \partial_{s} L_{s,\epsilon,R}(x,y)f(y) \, dy, \qquad f \in \mathcal{S}(\R), \, x \in \R. \end{displaymath} 
Analogously to \eqref{form93}, we note that $s \mapsto L_{s,\epsilon,R}(x,y)$ is differentiable for all $x \neq y$ for which $\nabla A(y)$ exists, and the derivative $\partial_{s}L_{s,\epsilon,R}(x,y)$ is given by the following expression:
  \begin{align} \partial_{s} L_{s,\epsilon,R}(x,y) & = k_{\epsilon,R}'(|\Gamma_{s}(x) - \Gamma_{s}(y)|) \frac{(x - y)\sum_{j = 1}^{d - 1}(A(x) - A(y))^{2}}{2|\Gamma_{s}(x) - \Gamma_{s}(y)|^{2}} \notag\\
& \quad - k_{\epsilon,R}(|\Gamma_{s}(x) - \Gamma_{s}(y)|) \frac{(x - y) \sum_{j = 1}^{d - 1}(A_{j}(x) - A_{j}(y))^{2}}{2|\Gamma_{s}(x) - \Gamma_{s}(y)|^{3}} \notag\\
& \quad + k_{\epsilon,R}'(|\Gamma_{s}(x) - \Gamma_{s}(y)|) \frac{s^{2}\nabla A(y) \cdot (A(x) - A(y)) \sum_{j = 1}^{d - 1} (A_{j}(x) - A_{j}(y))^{2}}{|\Gamma_{s}(x) - \Gamma_{s}(y)|^{2}} \notag\\
& \quad - k_{\epsilon,R}(|\Gamma_{s}(x) - \Gamma_{s}(y)|) \frac{s^{2}\nabla A(y) \cdot (A(x) - A(y)) \sum_{j = 1}^{d - 1} (A_{j}(x) - A_{j}(y))^{2}}{2|\Gamma_{s}(x) - \Gamma_{s}(y)|^{3}} \notag\\
&\quad + k_{\epsilon,R}(|\Gamma_{s}(x) - \Gamma_{s}(y)|) \cdot \frac{2s\nabla A(y) \cdot (A(x) - A(y))}{|\Gamma_{s}(x) - \Gamma_{s}(y)|}. \notag \end{align}
The only difference to \eqref{form93} is the appearance of $\nabla A(y)$ as opposed to $\nabla A(x)$. From these expressions one may deduce the following counterpart of Proposition \ref{prop6}:

\begin{proposition}\label{prop11} For $s \in [0,1]$, the kernel $\partial_{s} L_{s,\epsilon,R}(x,y)$ can be written as a sum of $\lesssim_{d} 1$ bounded kernels of the form
\begin{displaymath} (x,y) \mapsto \mathrm{Lip}(A)^{m} \cdot B(y) \cdot \mathcal{K}(\Gamma_{s}(x) - \Gamma_{s}(y)) \cdot \left(\tfrac{L_{1}(x) - L_{1}(y)}{x - y} \right)^{n_{1}}\left(\tfrac{L_{2}(x) - L_{2}(y)}{x - y} \right)^{n_{2}} \end{displaymath}  
where
\begin{itemize}
\item $\mathcal{K} \in C^{2}_{c}(\R^{d} \, \setminus \, \{0\})$ is an odd bounded standard kernel in the sense of Definition \ref{def:standardKernel}.
\item $L_{1},L_{2} \colon \R \to \R$ are $1$-Lipschitz,
\item $B \in \{1,L_{1}'\} \subset L^{\infty}(\R)$,
\item $m,n_{1},n_{2} \in \N$ with $n \geq 1$.
\end{itemize}
Moreover, the standard kernel constants of $\mathcal{K}$ are absolute. 
\end{proposition}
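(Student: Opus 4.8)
The plan is to mimic, essentially verbatim, the proof of Proposition \ref{prop6}, exploiting the fact that the only difference between the kernels $\partial_{s}K_{s,\epsilon,R}(x,y)$ and $\partial_{s}L_{s,\epsilon,R}(x,y)$ is that the factor $\nabla A(x)$ is replaced by $\nabla A(y)$. Concretely, I would start from the explicit five-line expression for $\partial_{s}L_{s,\epsilon,R}(x,y)$ displayed just above the statement of Proposition \ref{prop11}, and decompose each of the five terms into a sum of $\lesssim_{d}1$ summands by writing each occurrence of $A(x)-A(y)$, $\nabla A(y)\cdot(A(x)-A(y))$, and $\sum_{j}(A_{j}(x)-A_{j}(y))^{2}$ in terms of its individual coordinates $A_{i}(x)-A_{i}(y)$, $A_{i}'(y)$. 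This produces the claimed count of $(d-1)(2d+1)$ (or at any rate $\lesssim_{d}1$) bounded kernels.

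The key algebraic point, exactly as in Proposition \ref{prop6}, is to recognise the ``radial'' factors as odd standard kernels evaluated at $\Gamma_{s}(x)-\Gamma_{s}(y)$. I would reuse the five kernels
\begin{displaymath} \mathcal{K}_{1}(z)=\tfrac{1}{2}(z_{1}k_{\epsilon,R}'(|z|))\tfrac{z_{1}^{2}}{|z|^{2}}, \quad \mathcal{K}_{2}(z)=-\tfrac{1}{2}k_{\epsilon,R}(|z|)\tfrac{z_{1}^{3}}{|z|^{3}}, \quad \mathcal{K}_{3}(z)=\tfrac{s^{2}}{2}(z_{1}k_{\epsilon,R}'(|z|))\tfrac{z_{1}^{2}}{|z|^{2}}, \end{displaymath}
\begin{displaymath} \mathcal{K}_{4}(z)=-\tfrac{s^{2}}{2}k_{\epsilon,R}(|z|)\tfrac{z_{1}^{3}}{|z|^{3}}, \quad \mathcal{K}_{5}(z)=2sk_{\epsilon,R}(|z|)\tfrac{z_{1}}{|z|}, \end{displaymath}
each of which is odd and, by Lemma \ref{truncationLemma}(iv) together with the product rule, a bounded standard kernel in $C_{c}^{2}(\R^{d}\setminus\{0\})$ with absolute standard-kernel constants (using that $z\mapsto k_{\epsilon,R}(|z|)$ and $z\mapsto z_{1}k_{\epsilon,R}'(|z|)$ are such kernels and that multiplication by $(z_{1}/|z|)^{j}$ preserves this class). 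The remaining factors $(A_{i}(x)-A_{i}(y))/(x-y)$ become the ``stable'' factors $\big((L_{1}(x)-L_{1}(y))/(x-y)\big)^{n_{1}}\big((L_{2}(x)-L_{2}(y))/(x-y)\big)^{n_{2}}$ with $L_{\ell}=A_{i}/\mathrm{Lip}(A)$ being $1$-Lipschitz, while each occurrence of $\mathrm{Lip}(A)$ pulled out of $A_{i}(x)-A_{i}(y)$, $A_{i}'(y)$ or $A_{i}(x)-A_{i}(y)$ feeds into the prefactor $\mathrm{Lip}(A)^{m}$; crucially $m\ge 1$ because every term on the five lines contains at least one ``extra'' difference-of-$A$ factor beyond what is needed to form the graph $\Gamma_{s}$. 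The only place where $\nabla A$ appears undifferentiated-in-position is the factor $B$, which is now $A_{i}'(y)$ (divided by $\mathrm{Lip}(A)$) instead of $A_{i}'(x)$; this is still an $L^{\infty}$ function of the single relevant variable, so $B\in\{1,L_{1}'\}\subset L^{\infty}(\R)$ as required. I would illustrate the reduction on the last line, exactly as in the proof of Proposition \ref{prop6}: one writes
\begin{displaymath} k_{\epsilon,R}(|\Gamma_{s}(x)-\Gamma_{s}(y)|)\cdot\frac{2sA_{i}'(y)(A_{i}(x)-A_{i}(y))}{|\Gamma_{s}(x)-\Gamma_{s}(y)|} = \mathrm{Lip}(A)^{2}\Big(\tfrac{A_{i}}{\mathrm{Lip}(A)}\Big)'(y)\cdot\mathcal{K}_{5}(\Gamma_{s}(x)-\Gamma_{s}(y))\cdot\frac{\tfrac{A_{i}}{\mathrm{Lip}(A)}(x)-\tfrac{A_{i}}{\mathrm{Lip}(A)}(y)}{x-y}, \end{displaymath}
and remark that the other four lines are handled identically, with the coordinatewise expansion of $\sum_{j}(A_{j}(x)-A_{j}(y))^{2}$ and of $\nabla A(y)\cdot(A(x)-A(y))$ accounting for the combinatorial count.

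There is essentially no real obstacle here: the statement is a cosmetic variant of Proposition \ref{prop6} and its proof, the substitution $x\leftrightarrow y$ in the position of $\nabla A$ being immaterial since boundedness of $B$ and the standard-kernel estimates are symmetric in the two variables. The only mildly delicate bookkeeping is (i) making sure that every summand genuinely carries a factor $\mathrm{Lip}(A)^{m}$ with $m\ge 1$ — which one checks term by term, each of the five lines containing a factor $(A(x)-A(y))^{2}$ or $\nabla A(y)\cdot(A(x)-A(y))$ — and (ii) confirming that the bounded odd standard kernels obtained are the very same $\mathcal{K}_{1},\dots,\mathcal{K}_{5}$ as before, so that their standard-kernel constants are absolute (independent of $\epsilon,R,\mathrm{Lip}(A)$), which is exactly what makes the subsequent application of Proposition \ref{prop9} uniform. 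I would therefore conclude with the one-line sentence that the proof is verbatim that of Proposition \ref{prop6}, reading the kernels off the five displayed lines and omitting the remaining routine computations.
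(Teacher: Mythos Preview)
Your proposal is correct and follows exactly the approach the paper takes: the paper's own proof is literally the single sentence ``We do not repeat the proof: the five kernels are the same as those in Proposition \ref{prop6}, except that the $L^{\infty}$-factor $B(x)$ is replaced by $B(y)$.'' Your write-up is, if anything, more detailed than what the authors provide, and all the points you flag (same $\mathcal{K}_{1},\dots,\mathcal{K}_{5}$, $m\ge 1$, absolute constants) are exactly the ones that matter.
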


We do not repeat the proof: the five kernels are the same as those in Proposition \ref{prop6}, except that the $L^{\infty}$-factor $B(x)$ is replaced by $B(y)$. Since this difference has no qualitative impact on $L^{p}$-mapping properties, we have the following counterpart of Corollary \ref{cor1}:

\begin{cor}\label{cor2} Assume $\mathrm{Lip}(A) \leq 1$. For $s \in [0,1]$ and $p \in (1,\infty)$ fixed, the operator $\partial_{s} U^{\Gamma_{s},\epsilon,R} \partial$ is bounded on $L^{p}(\R)$, and indeed
\begin{displaymath} \|\partial_{s} U^{\Gamma_{s},\epsilon,R}\partial\|_{L^{p} \to L^{p}} \lesssim_{d,p} \mathrm{Lip}(A). \end{displaymath}
\end{cor}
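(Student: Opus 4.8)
The plan is to follow the proof of Corollary \ref{cor1} essentially verbatim, substituting Proposition \ref{prop11} for Proposition \ref{prop6}. First I would apply Proposition \ref{prop11} to expand the kernel $\partial_{s}L_{s,\epsilon,R}(x,y)$ as a finite sum, of cardinality $\lesssim_{d}1$, of bounded kernels of the shape
\begin{displaymath} (x,y)\mapsto \mathrm{Lip}(A)^{m}\cdot B(y)\cdot\mathcal{K}(\Gamma_{s}(x)-\Gamma_{s}(y))\cdot\left(\tfrac{L_{1}(x)-L_{1}(y)}{x-y}\right)^{n_{1}}\left(\tfrac{L_{2}(x)-L_{2}(y)}{x-y}\right)^{n_{2}}, \end{displaymath}
where $\mathcal{K}\in C^{2}_{c}(\R^{d}\setminus\{0\})$ is an odd bounded standard kernel with absolute standard-kernel constants, $L_{1},L_{2}$ are $1$-Lipschitz, $B\in\{1,L_{1}'\}\subset L^{\infty}(\R)$ with $\|B\|_{L^{\infty}}\leq 1$, and $m,n_{1},n_{2}\in\N$ with $m\geq 1$. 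Since the kernels $\partial_{s}L_{s,\epsilon,R}(x,y)$ are, as in \eqref{form97}, bounded and compactly supported, the operator $\partial_{s}U^{\Gamma_{s},\epsilon,R}\partial$ is already defined pointwise on $L^{p}(\R)$ for all $p\in(1,\infty)$, so no extension argument is needed.

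Next, for each summand I would peel off the scalar $\mathrm{Lip}(A)^{m}$ and the pointwise multiplier $B(y)$. The remaining kernel is exactly of the form \eqref{form106} treated in Proposition \ref{prop9}, with graph map $\Gamma_{s}$ in place of $\Gamma$: here the hypothesis $\mathrm{Lip}(A)\leq 1$ guarantees $\mathrm{Lip}(A_{s})=s\,\mathrm{Lip}(A)\leq 1$, so $\Gamma_{s}$ is a Lipschitz graph with constant $\leq 1$, and consequently all the relevant constants in Proposition \ref{prop9} are independent of $\epsilon$, $R$, $s$, and of $\mathrm{Lip}(A)$. Thus Proposition \ref{prop9}, combined with Theorem \ref{LpExtension}, gives an $L^{p}(\R)\to L^{p}(\R)$ bound $\lesssim_{d,p}1$ for the associated Calder\'on-Zygmund operator. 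Composing on the right with the multiplication operator $f\mapsto Bf$, whose $L^{p}\to L^{p}$ norm is $\|B\|_{L^{\infty}}\leq 1$, and then multiplying through by the constant $\mathrm{Lip}(A)^{m}\leq\mathrm{Lip}(A)$ (using $m\geq 1$ and $\mathrm{Lip}(A)\leq 1$), each summand defines an operator of $L^{p}\to L^{p}$ norm $\lesssim_{d,p}\mathrm{Lip}(A)$. Summing the $\lesssim_{d}1$ summands yields $\|\partial_{s}U^{\Gamma_{s},\epsilon,R}\partial\|_{L^{p}\to L^{p}}\lesssim_{d,p}\mathrm{Lip}(A)$, which is the assertion.

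The only point deserving a moment's care --- and the ``hard part'', though it is purely cosmetic --- is that in Proposition \ref{prop11} the bounded factor is $B(y)$ rather than the $B(x)$ appearing in Proposition \ref{prop6} and in the proof of Corollary \ref{cor1}. This merely changes the order of composition: the integral operator with kernel $B(y)\mathcal{K}(\Gamma_{s}(x)-\Gamma_{s}(y))(\cdots)$ equals $T\circ M_{B}$, where $T$ is the Calder\'on-Zygmund operator from Proposition \ref{prop9} and $M_{B}$ is multiplication by $B$, whereas for Corollary \ref{cor1} the analogous reasoning produced $M_{B}\circ T$. Since $M_{B}$ is bounded on $L^{p}$ in either position, the distinction is irrelevant to the estimate --- this is exactly the remark recorded just before the statement of Corollary \ref{cor2}. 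No ingredient beyond those already used for Corollary \ref{cor1} is required.
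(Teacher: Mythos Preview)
Your proposal is correct and follows exactly the approach the paper takes: the paper explicitly declines to repeat the proof of Corollary \ref{cor1}, noting only that the $L^{\infty}$-factor $B(x)$ is replaced by $B(y)$, which ``has no qualitative impact on $L^{p}$-mapping properties.'' Your handling of this point via $T\circ M_{B}$ versus $M_{B}\circ T$ is precisely the intended justification.
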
 

Here is finally the counterpart of Lemma \ref{lemma6}:

\begin{lemma}\label{lemma7} For $s \in [0,1]$ and $f,g \in \mathcal{S}(\R)$ fixed,
\begin{displaymath} \lim_{t \to s} \frac{1}{s - t} \left(\int (U^{\Gamma_{s},\epsilon,R} \partial) f \cdot g - \int (U^{\Gamma_{t},\epsilon,R}\partial) f \cdot g \right) = \int (\partial_{s} U^{\Gamma_{s},\epsilon,R}\partial) f \cdot g. \end{displaymath}
\end{lemma}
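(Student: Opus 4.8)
\textbf{Proof plan for Lemma \ref{lemma7}.}

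The plan is to repeat, almost verbatim, the argument used in the proof of Lemma \ref{lemma6}, replacing the kernel $K_{s,\epsilon,R}$ by $L_{s,\epsilon,R}$ and the differentiated kernel $\partial_s K_{s,\epsilon,R}$ by $\partial_s L_{s,\epsilon,R}$. The only structural change is that differentiability in $s$ of $s \mapsto L_{s,\epsilon,R}(x,y)$ now requires that $\nabla A(y)$ exists (rather than $\nabla A(x)$), but since $\nabla A$ exists a.e. and the $y$-integration is the one being dominated, this causes no trouble.

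First I would fix $f \in \mathcal{S}(\R)$, $x \in \R$ with $y \mapsto L_{s,\epsilon,R}(x,y)$ differentiable in $s$ for a.e. $y$, and $s \neq t$, and write the difference quotient as
\begin{align*} & \frac{(U^{\Gamma_{s},\epsilon,R}\partial)f(x) - (U^{\Gamma_{t},\epsilon,R}\partial)f(x)}{s - t} - (\partial_{s} U^{\Gamma_{s},\epsilon,R}\partial)f(x)\\
& \qquad = \int_{0}^{1} \int \left[ \partial_{r(s - t) + t} L_{r(s - t) + t}(x,y) - \partial_{s} L_{s}(x,y) \right] f(y) \, dy \, dr, \end{align*}
exactly as in \eqref{form96}--\eqref{form95}, where I abbreviate $L_{s} := L_{s,\epsilon,R}$. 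Next I would invoke the explicit form of $\partial_s L_{s,\epsilon,R}$ recorded just before Proposition \ref{prop11}: each of the five terms is a bounded, compactly supported expression in which the only $s$-dependence that does not pass continuously to the limit is through $\mathcal{K}(\Gamma_s(x) - \Gamma_s(y))$ for one of $\lesssim_d 1$ fixed bounded standard kernels $\mathcal{K} \in C^2_c(\R^d \setminus \{0\})$. Hence
\begin{displaymath} |\partial_{r(s - t) + t} L_{r(s - t) + t}(x,y) - \partial_{s} L_{s}(x,y)| \lesssim_{d} |\mathcal{K}(\Gamma_{r(s - t) + t}(x) - \Gamma_{r(s - t) + t}(y)) - \mathcal{K}(\Gamma_{s}(x) - \Gamma_{s}(y))|, \end{displaymath}
which is bounded by $\lesssim_{A,d,\epsilon,R} 1$ uniformly and tends to $0$ as $t \to s$ for each fixed $r \in [0,1]$ and $x \neq y$, since $\Gamma_{r(s-t)+t}(x) - \Gamma_{r(s-t)+t}(y) \to \Gamma_s(x) - \Gamma_s(y) \neq 0$.

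Then two applications of dominated/bounded convergence finish the argument: first, for each fixed $x$ and $r$, the inner $y$-integral tends to $0$ (using that $f \in L^1$ and the uniform bound); second, the outer $r$-integral tends to $0$; and finally, since the difference quotients are uniformly bounded by $\lesssim_{A,d,\epsilon,R} \|f\|_{L^1}$ and converge to $0$ for a.e. $x$ (namely wherever $\nabla A(x)$ exists — note here it is $\nabla A$ at the integration variable of $L_s$ in the $x$-slot that matters, which after the symmetry of the construction is again an a.e. condition), integrating against $g \in \mathcal{S}(\R)$ and applying dominated convergence gives
\begin{displaymath} \lim_{t \to s} \int \left( \frac{(U^{\Gamma_{s},\epsilon,R}\partial)f(x) - (U^{\Gamma_{t},\epsilon,R}\partial)f(x)}{s - t} - (\partial_{s} U^{\Gamma_{s},\epsilon,R}\partial)f(x) \right) g(x) \, dx = 0, \end{displaymath}
which is the claimed identity. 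I do not expect any genuine obstacle here: the lemma is the direct transcription of Lemma \ref{lemma6}, and the single bookkeeping point to watch is that in $L_{s,\epsilon,R}$ the Jacobian-type factor involves $\nabla A(y)$, so the a.e.-differentiability in $s$ is an a.e. statement in the dominated ($y$-) variable, exactly where it is harmless.
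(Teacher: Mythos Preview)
Your proposal is correct and follows essentially the same route as the paper. One small point: the paper explicitly notes that the central formula \eqref{form98} holds for \emph{every} $x\in\R$, not just a.e.\ $x$, precisely because $L_{s,\epsilon,R}(x,y)$ involves $\nabla A(y)$ rather than $\nabla A(x)$; you identify this correctly at the start of your plan, but your final paragraph then reintroduces an unnecessary a.e.\ restriction on $x$ with a muddled parenthetical about ``$\nabla A$ at the integration variable of $L_s$ in the $x$-slot.'' This does not harm the argument, but it obscures the one structural simplification that the $L$-case has over the $K$-case.
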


\begin{proof} With Proposition \ref{prop11} taking the role of Proposition \ref{prop6}, the proof is the same as the proof of Lemma \ref{lemma6} with one small difference: this time the central formula
\begin{align} & \frac{(U^{\Gamma_{s},\epsilon,R}\partial) f(x) - (U^{\Gamma_{t},\epsilon,R} \partial) f(x)}{s - t} - (\partial_{s}  U^{\Gamma_{s},\epsilon,R}\partial) f(x) \notag\\
&\label{form98}\qquad = \int_{0}^{1} \int \left[ \partial_{r(s - t) + t} L_{r(s - t) + t}(x,y) - \partial_{s} L_{s}(x,y) \right] f(y) \, dy \, dr \end{align}
holds for all $x \in \R$, and not just those at which $\nabla A(x)$ exists. In fact, fixing $x \in \R$ and abbreviating $L_{s} := L_{s,\epsilon,R}$, one may first write
\begin{displaymath} (U^{\Gamma_{s},\epsilon,R}\partial) f(x) - (U^{\Gamma_{t},\epsilon,R} \partial) f(x) = \int [L_{s}(x,y) - L_{t}(x,y)]f(y) \, dy.  \end{displaymath}
The map $s \mapsto L_{s}(x,y)$ is differentiable for all $y \neq x$ such that $\nabla A(y)$ exists, and in particular for a.e. $y \in \R$. Therefore,
\begin{displaymath} \int [L_{s}(x,y) - L_{t}(x,y)]f(y) \, dy = \int \left( \int_{0}^{1} \partial_{r(s - t) + t}L_{r(s - t) + t}(x,y) \, dr \right) f(y) \, dy. \end{displaymath} 
Now the boundedness of the kernels $\partial_{s}L_{s}(x,y)$ enables one to use Fubini's theorem to deduce \eqref{form98}. The remainder of the argument proceeds as in Lemma \ref{lemma6}: for every $(x,y) \in \R^{2}$ fixed, one of the $\lesssim_{d} 1$ kernels $\mathcal{K}$ from Proposition \ref{prop11} satisfies
\begin{displaymath} |\partial_{r(s - t) + t} L_{r(s - t) + t}(x,y) - \partial_{s} L(x,y)| \lesssim_{d} |\mathcal{K}(\Gamma_{r(s - t) + t}(x) - \Gamma_{r(s - t) + t}(y)) - \mathcal{K}(\Gamma_{s}(x) - \Gamma_{s}(y))|, \end{displaymath} 
We leave the remaining details to the reader. \end{proof} 

With Lemma \ref{lemma7} in hand, the proof of Proposition \ref{prop10} follows the proof of Proposition \ref{prop7}, and we will not repeat the details.

\subsubsection{$L^{p}$-bounds for $\Rea \beta \in \{0,1\}$} Propositions \ref{prop7} and \ref{prop10} handled the cases $\beta \in \{0,1\}$, and in this section we generalise this to the cases $\Rea \beta \in \{0,1\}$. The argument is the same as the deduction of Corollary \ref{cor5} from Corollary \ref{cor4}, but we re-record the main steps. Here is the result we are aiming for:
\begin{cor}\label{cor7} Assume $\mathrm{Lip}(A) \leq 1$. Let $f \in \mathcal{S}(\R)$, $p \in (1,\infty)$, $s,t \in [0,1]$, and $\Rea \beta \in \{0,1\}$. Then, for $0 < \epsilon \leq 1 \leq R$, the tempered distribution 
\begin{displaymath} \Lambda := T^{\Gamma_{s},\epsilon,R}_{\beta}f - T^{\Gamma_{t},\epsilon,R}_{\beta}f \end{displaymath}
is an $L^{p}$-function with norm
\begin{displaymath} \|\Lambda\|_{L^{p}} \lesssim_{d,p} (1 + (\mathrm{Im\,} \beta)^{2})^{2} \cdot \mathrm{Lip}(A) \cdot |s - t| \cdot \|f\|_{L^{p}}. \end{displaymath} \end{cor}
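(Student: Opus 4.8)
The statement to prove is Corollary~\ref{cor7}, which upgrades the $\beta\in\{0,1\}$ difference estimates (Propositions~\ref{prop7} and~\ref{prop10}) to the lines $\Rea\beta\in\{0,1\}$. The strategy is identical to the one used to derive Corollary~\ref{cor5} from Corollary~\ref{cor4}: express the bilinear form $\int \Lambda\cdot g$ in terms of the $\beta\in\{0,1\}$ operators composed with purely imaginary fractional Laplacians, use the difference estimates from Propositions~\ref{prop7} and~\ref{prop10} for the "$\partial U$" and "$U\partial$" pieces, and control the imaginary fractional Laplacians via Corollary~\ref{cor3}. Crucially, since the operators $T^{\Gamma_s,\epsilon,R}_\beta$ and $T^{\Gamma_t,\epsilon,R}_\beta$ are built from the \emph{same} fractional Laplacians $\Delta^{\beta/2}$ and $\Delta^{(1-\beta)/2}$ (these do not depend on the graph), the difference $\Lambda$ only "sees" the graph through the middle factor, so Proposition~\ref{prop12} applies verbatim to the difference.

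\textbf{Key steps.} First I would fix $g\in\mathcal{S}(\R)$ and $p\in(1,\infty)$ with $\|g\|_{L^{p'}}\le 1$, and treat the case $\Rea\beta=0$. By the linearity of the bilinear form and Proposition~\ref{prop12}, equation \eqref{form103} (applied with $\Gamma_s$ and with $\Gamma_t$, then subtracted),
\begin{displaymath} \left|\int \Lambda\cdot g\right| \le \left\|\left[(\partial U^{\Gamma_s,\epsilon,R}) - (\partial U^{\Gamma_t,\epsilon,R})\right](\Delta^{\beta/2}f)\right\|_{L^p}\cdot \left\|(\partial^{-1}\Delta^{(1-\beta)/2})g\right\|_{L^{p'}}. \end{displaymath}
Proposition~\ref{prop7} bounds the operator norm of $\partial U^{\Gamma_s,\epsilon,R} - \partial U^{\Gamma_t,\epsilon,R}$ on $L^p$ by $\lesssim_{d,p}\mathrm{Lip}(A)\cdot|s-t|$ (using $\mathrm{Lip}(A)\le 1$), so the first factor is $\lesssim_{d,p}\mathrm{Lip}(A)\,|s-t|\,\|\Delta^{\beta/2}f\|_{L^p}$. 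Then Corollary~\ref{cor3} gives $\|\Delta^{\beta/2}f\|_{L^p}\lesssim_p (1+(\Im\beta)^2)\|f\|_{L^p}$ (the symbol $m_\beta$) and $\|(\partial^{-1}\Delta^{(1-\beta)/2})g\|_{L^{p'}}\lesssim_{p'}(1+(\Im\beta)^2)\|g\|_{L^{p'}}$ (the symbol $n_{-\beta}(\xi)=|\xi|^{1-\beta}/\xi$, noting $\Rea(-\beta)=0$). Taking the supremum over such $g$ and using $L^p$-duality yields $\|\Lambda\|_{L^p}\lesssim_{d,p}(1+(\Im\beta)^2)^2\cdot\mathrm{Lip}(A)\cdot|s-t|\cdot\|f\|_{L^p}$, which is exactly the claimed bound. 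The case $\Rea\beta=1$ is handled the same way, using \eqref{form104} instead of \eqref{form103}, Proposition~\ref{prop10} in place of Proposition~\ref{prop7}, and Corollary~\ref{cor3} applied to the symbols $|\xi|^\beta/\xi$ (for $\partial^{-1}\Delta^{\beta/2}$) and $|\xi|^{1-\beta}$ (for $\Delta^{(1-\beta)/2}$), both of which have $\Rea(\text{exponent})=0$ and hence obey the Mihlin bound $\lesssim 1+(\Im\beta)^2$. One small point worth making explicit is that $\Lambda$ is \emph{a priori} only a tempered distribution (Definition~\ref{def2}); the estimate above shows it acts as a bounded functional on $L^{p'}(\R)\cap\mathcal{S}(\R)$, and since $\mathcal{S}(\R)$ is dense in $L^{p'}$ this identifies $\Lambda$ with an $L^p$-function of the stated norm, exactly as in the proof of Corollary~\ref{cor5}.

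\textbf{Main obstacle.} There is no substantive obstacle here — this corollary is a bookkeeping consequence of machinery already in place. The only thing requiring a moment of care is checking that the exponents of $|\xi|$ appearing in the auxiliary multipliers indeed have zero real part when $\Rea\beta\in\{0,1\}$, so that Corollary~\ref{cor3} is applicable and the $(1+(\Im\beta)^2)$ factors come out correctly; and that the truncation parameters $\epsilon,R$ enter nowhere in the final bound, which follows because Propositions~\ref{prop7} and~\ref{prop10} are already uniform in $\epsilon,R$ and Corollary~\ref{cor3} is independent of the graph entirely. Thus the whole argument is a transcription of the Corollary~\ref{cor4}~$\Rightarrow$~Corollary~\ref{cor5} deduction, with "$U^{\Gamma,\epsilon,R}$" replaced throughout by the difference of two such operators.
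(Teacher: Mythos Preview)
Your proposal is correct and follows essentially the same route as the paper. The paper packages the ``apply Proposition~\ref{prop12} with $\Gamma_s$ and $\Gamma_t$ and subtract'' step as a separate Proposition~\ref{prop13} (the difference version of Proposition~\ref{prop12}), then invokes Propositions~\ref{prop7}, \ref{prop10} and Corollary~\ref{cor3} exactly as you do; the only cosmetic caveat is that \eqref{form103}--\eqref{form104} are inequalities, so one really subtracts the equality chain in the \emph{proof} of Proposition~\ref{prop12} before the final H\"older step, which is precisely what your remark ``the difference only sees the graph through the middle factor'' captures.
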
 
The key step in the proof of Corollary \ref{cor5} was Proposition \ref{prop12}. That argument works exactly in the same way for the difference operators $T_{\beta}^{\Gamma_{s},\epsilon,R} - T_{\beta}^{\Gamma_{t},\epsilon,R}$ in place of $T_{\beta}^{\Gamma,\epsilon,R}$:

\begin{proposition}\label{prop13} Let $f,g \in \mathcal{S}(\R)$, $p \in (1,\infty)$, $s,t \in [0,1]$, and $\Rea \beta \in \{0,1\}$. Then,
\begin{displaymath} \left| \int (T_{\beta}^{\Gamma_{s},\epsilon,R} - T_{\beta}^{\Gamma_{t},\epsilon,R})f \cdot g\right| \leq \|(\partial U^{\Gamma_{s},\epsilon,R} - \partial U^{\Gamma_{t},\epsilon,R})(\Delta^{\beta/2}f)\|_{L^{p}}\|(\partial^{-1}\Delta^{(1 - \beta)/2})g\|_{L^{p'}}, \quad \Rea \beta = 0, \end{displaymath}
\begin{displaymath} \left| \int  (T_{\beta}^{\Gamma_{s},\epsilon,R} - T_{\beta}^{\Gamma_{t},\epsilon,R})f \cdot g\right| \leq \|(U^{\Gamma_{s},\epsilon,R}\partial - U^{\Gamma_{t},\epsilon,R}\partial)(\partial^{-1}\Delta^{\beta/2}f)\|_{L^{p}}\|\Delta^{(1 - \beta)/2}g\|_{L^{p'}}, \quad \Rea \beta = 1. \end{displaymath}
\end{proposition}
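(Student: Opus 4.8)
The plan is to repeat the proof of Proposition \ref{prop12} essentially word for word, exploiting that every map appearing there depends linearly on the graph $\Gamma$. Fix $f,g\in\mathcal{S}(\R)$, $p\in(1,\infty)$, and $\Rea\beta\in\{0,1\}$. For each $u\in\{s,t\}$ I would run the chain of identities from Proposition \ref{prop12} with $\Gamma_u$ in place of $\Gamma$, obtaining (for $\Rea\beta=0$)
\[ \int T^{\Gamma_u,\epsilon,R}_\beta f\cdot g = \int (\partial U^{\Gamma_u,\epsilon,R})(\Delta^{\beta/2}f)(x)\cdot(\partial^{-1}\Delta^{(1-\beta)/2})g(x)\,dx, \]
and (for $\Rea\beta=1$)
\[ \int T^{\Gamma_u,\epsilon,R}_\beta f\cdot g = \int (U^{\Gamma_u,\epsilon,R}\partial)(\partial^{-1}\Delta^{\beta/2}f)(x)\cdot\Delta^{(1-\beta)/2}g(x)\,dx. \]
Subtracting the $u=t$ identity from the $u=s$ identity, and noting that $\partial U^{\Gamma_s,\epsilon,R}-\partial U^{\Gamma_t,\epsilon,R}$ (resp.\ $U^{\Gamma_s,\epsilon,R}\partial-U^{\Gamma_t,\epsilon,R}\partial$) then acts on the same input $\Delta^{\beta/2}f$ (resp.\ $\partial^{-1}\Delta^{\beta/2}f$), an application of H\"older's inequality with exponents $p,p'$ gives the two claimed bounds.

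To justify the displays I would follow Proposition \ref{prop12} verbatim. For $\Rea\beta=0$: by Definition \ref{def2} and Remark \ref{rem1}, $\int T^{\Gamma_u,\epsilon,R}_\beta f\cdot g = \int\mathcal{F}[U^{\Gamma_u,\epsilon,R}(\Delta^{\beta/2}f)](\xi)\cdot|\xi|^{1-\beta}\widecheck g(\xi)\,d\xi$; I would write $|\xi|^{1-\beta}=(2\pi i\xi)\cdot\tfrac{|\xi|^{1-\beta}}{2\pi i\xi}$, apply Lemma \ref{lemma9} to $\Delta^{\beta/2}f\in L^2(\R)$ to replace the first factor by $\mathcal{F}[(\partial U^{\Gamma_u,\epsilon,R})(\Delta^{\beta/2}f)]$, and conclude by Plancherel. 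Here $\Delta^{\beta/2}f\in L^2(\R)$ since $f\in\mathcal{S}(\R)$ and $||\xi|^\beta|=1$, and $\xi\mapsto|\xi|^{1-\beta}/(2\pi i\xi)$ is a bounded multiplier by Corollary \ref{cor3}, so $(\partial^{-1}\Delta^{(1-\beta)/2})g\in L^{p'}(\R)$. For $\Rea\beta=1$ the only change is that Lemma \ref{lemma8} replaces Lemma \ref{lemma9}: the relevant input is $h:=\partial^{-1}\Delta^{\beta/2}f$, the $L^2$-function with Fourier transform $\xi\mapsto\tfrac{|\xi|^\beta}{2\pi i\xi}\hat f(\xi)$; since $\Rea\beta=1$ this transform equals a bounded function times $\hat f$, so $h\in L^2(\R)$, while $\xi\mapsto\xi\hat h(\xi)=\tfrac{1}{2\pi i}|\xi|^\beta\hat f(\xi)$ also lies in $L^2(\R)$, hence $h\in L^2(\R)\cap\dot{H}^1$. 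Lemma \ref{lemma8} then gives $(U^{\Gamma_u,\epsilon,R}\partial)h=U^{\Gamma_u,\epsilon,R}(\partial h)=U^{\Gamma_u,\epsilon,R}(\Delta^{\beta/2}f)$ with matching Fourier transforms, which, inserted into the defining formula for $\int T^{\Gamma_u,\epsilon,R}_\beta f\cdot g$ and followed by Plancherel, yields the second display.

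I do not expect any genuine obstacle here: the Fourier-analytic steps (pointwise Fourier transforms of $L^1\cap L^2$ functions, Plancherel, $L^{p'}$-boundedness of the imaginary multipliers) were all already carried out in the proof of Proposition \ref{prop12}, and the passage to differences is purely linearity in $\Gamma$, which is transparent from the kernel formulas \eqref{partialU}--\eqref{Upartial} (resp.\ \eqref{form99} and \eqref{form102}) defining $\partial U^{\Gamma,\epsilon,R}$ and $U^{\Gamma,\epsilon,R}\partial$. The only points requiring a line of verification are the membership claims $\Delta^{\beta/2}f\in L^2(\R)$ for $\Rea\beta=0$ and $\partial^{-1}\Delta^{\beta/2}f\in L^2(\R)\cap\dot{H}^1$ for $\Rea\beta=1$, both immediate from $f\in\mathcal{S}(\R)$.
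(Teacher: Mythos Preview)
Your proposal is correct and follows exactly the approach the paper intends: the paper itself omits the proof, stating simply that ``it is the same as the proof of Proposition \ref{prop12}.'' Your plan---run the identities from Proposition \ref{prop12} separately for $\Gamma_s$ and $\Gamma_t$, subtract, then apply H\"older---is precisely that argument spelled out, with the membership verifications $\Delta^{\beta/2}f\in L^2$ and $\partial^{-1}\Delta^{\beta/2}f\in L^2\cap\dot H^1$ correctly noted.
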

For further clarifications on the notation, see the statement of Proposition \ref{prop12}. We omit the proof, as it is the same as the proof of Proposition \ref{prop12}.

\begin{proof}[Proof of Corollary \ref{cor7}] The $L^{p}$-norm of $\Lambda$ is estimated by duality and Proposition \ref{prop13}. In the case $\Rea \beta = 0$, this leads to
\begin{displaymath} \|\Lambda\|_{L^{p}} \leq \|\partial U^{\Gamma_{s},\epsilon,R} - \partial U^{\Gamma_{t},\epsilon,R}\|_{L^{p} \to L^{p}} \cdot \|\Delta^{\beta/2}f\|_{L^{p}} \cdot \|(\partial^{-1}\Delta^{(1 - \beta)/2})g\|_{L^{p'}}, \end{displaymath}
and in the case $\Rea \beta = 1$ similarly 
\begin{displaymath} \|\Lambda\|_{L^{p}} \leq \|U^{\Gamma_{s},\epsilon,R}\partial - U^{\Gamma_{t},\epsilon,R}\partial\|_{L^{p} \to L^{p}} \cdot \|\partial^{-1}\Delta^{\beta/2}f\|_{L^{p}} \cdot \Delta^{(1 - \beta)/2}f\|_{L^{p'}}. \end{displaymath} 
The operator norms are now estimated by Proposition \ref{prop7} and Proposition \ref{prop10}, and for the remaining factors one may copy the estimates from the proof of Corollary \ref{cor5}.  \end{proof} 

\subsubsection{$L^{p}$ bounds for $\Rea \beta \in [0,1]$} We finally use complex interpolation to deduce a version of Corollary \ref{cor7} valid for all $\Rea \beta \in [0,1]$. This is analogous to deriving Corollary \ref{cor6} from Corollary \ref{cor5}, so we refer the reader to the proof of Corollary \ref{cor6}.

\begin{cor}\label{cor8} Assume $\mathrm{Lip}(A) \leq 1$. Let $f \in \mathcal{S}(\R)$, $p \in (1,\infty)$, $s,t \in [0,1]$, and $\Rea \beta \in [0,1]$. Then, for $0 < \epsilon \leq 1 \leq R < \infty$, the distribution 
\begin{displaymath} \Lambda := T^{\Gamma_{s},\epsilon,R}_{\beta}f - T^{\Gamma_{t},\epsilon,R}_{\beta}f \end{displaymath}
is an $L^{p}$-function with norm
\begin{displaymath} \|\Lambda\|_{L^{p}} \lesssim_{d,p} e^{(\mathrm{Im\,} \beta)^{2}} \mathrm{Lip}(A) \cdot |s - t| \cdot \|f\|_{L^{p}}. \end{displaymath} \end{cor}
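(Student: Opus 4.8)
The plan is to repeat the proof of Corollary \ref{cor6} almost verbatim, with the single operator $T_{\beta}^{\Gamma,\epsilon,R}$ replaced everywhere by the difference $T_{\beta}^{\Gamma_{s},\epsilon,R} - T_{\beta}^{\Gamma_{t},\epsilon,R}$, and with Corollary \ref{cor7} playing the role of Corollary \ref{cor5}. Concretely, I would fix $f,g \in \mathcal{S}(\R)$, $p \in (1,\infty)$, and $s,t \in [0,1]$, and define $F \colon \bar{S} \to \C$ on the closed strip $\bar{S} = \{\beta : \Rea \beta \in [0,1]\}$ by
\[ F(\beta) := e^{\beta^{2}} \int (T_{\beta}^{\Gamma_{s},\epsilon,R} - T_{\beta}^{\Gamma_{t},\epsilon,R})f \cdot g. \]
First I would check that $F$ meets the ``qualitative'' hypotheses of the three lines lemma (Lemma \ref{l:threeLines}): analyticity on the open strip $S$ and continuity on $\bar{S}$ follow from Proposition \ref{prop5}, applied to each of $\Gamma_{s}$ and $\Gamma_{t}$ separately; and boundedness of $F$ on $\bar{S}$ follows exactly as in the proof of Corollary \ref{cor6}, using $|e^{\beta^{2}}| \leq e^{1 - (\Im \beta)^{2}} \lesssim 1$ for $\beta \in \bar{S}$, the $L^{2}$-boundedness of the truncated potentials $U^{\Gamma_{s},\epsilon,R}$ and $U^{\Gamma_{t},\epsilon,R}$, and the Plancherel bounds that are available since $||\xi|^{\beta}| \leq 1 + |\xi|$ for $\Rea \beta \in [0,1]$. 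This qualitative bound may depend on $\epsilon,R$, but that is harmless, since the three lines lemma only propagates the quantitative boundary bound, which will be $\epsilon,R$-independent.

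Next I would supply the quantitative input on the lines $\Rea \beta \in \{0,1\}$. There Corollary \ref{cor7} applies and gives
\[ \left| \int (T_{\beta}^{\Gamma_{s},\epsilon,R} - T_{\beta}^{\Gamma_{t},\epsilon,R})f \cdot g \right| \lesssim_{d,p} (1 + (\Im \beta)^{2})^{2} \cdot \mathrm{Lip}(A) \cdot |s - t| \cdot \|f\|_{L^{p}}\|g\|_{L^{p'}}. \]
Setting $B := \mathrm{Lip}(A) \cdot |s - t| \cdot \|f\|_{L^{p}}\|g\|_{L^{p'}}$ and using $e^{-(\Im \beta)^{2}}(1 + (\Im \beta)^{2})^{2} \lesssim 1$ together with $|e^{\beta^{2}}| \lesssim 1$, I get $|F(\beta)| \lesssim_{d,p} B$ for $\Rea \beta \in \{0,1\}$. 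The three lines lemma then upgrades this to $|F(\beta)| \lesssim_{d,p} B$ for every $\beta \in \bar{S}$, and since $|e^{\beta^{2}}|^{-1} = e^{(\Im \beta)^{2} - (\Rea \beta)^{2}} \leq e^{(\Im \beta)^{2}}$ this reads
\[ \left| \int (T_{\beta}^{\Gamma_{s},\epsilon,R} - T_{\beta}^{\Gamma_{t},\epsilon,R})f \cdot g \right| \lesssim_{d,p} e^{(\Im \beta)^{2}} \cdot \mathrm{Lip}(A) \cdot |s - t| \cdot \|f\|_{L^{p}}\|g\|_{L^{p'}}. \]
Finally, taking the supremum over $g \in \mathcal{S}(\R)$ with $\|g\|_{L^{p'}} \leq 1$ and invoking $L^{p}$--$L^{p'}$ duality with the density of $\mathcal{S}(\R)$ in $L^{p'}(\R)$, I would conclude that $\Lambda = T_{\beta}^{\Gamma_{s},\epsilon,R}f - T_{\beta}^{\Gamma_{t},\epsilon,R}f$ is an $L^{p}$-function with $\|\Lambda\|_{L^{p}} \lesssim_{d,p} e^{(\Im \beta)^{2}} \cdot \mathrm{Lip}(A) \cdot |s - t| \cdot \|f\|_{L^{p}}$.

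Since all three ingredients --- the analyticity statement (Proposition \ref{prop5}), the boundary estimate (Corollary \ref{cor7}), and the interpolation device (Lemma \ref{l:threeLines}) --- are already established, I do not anticipate any genuine obstacle: this corollary is purely a bookkeeping exercise that interpolates the two endpoint cases $\Rea \beta \in \{0,1\}$. The only mild point requiring attention is the one flagged above, namely keeping track that the $\epsilon,R$-dependence enters only through the qualitative boundedness of $F$ and never through the boundary bound, so that the resulting estimate is uniform in the truncation parameters; this is exactly how the analogous issue was handled in Corollary \ref{cor6}.
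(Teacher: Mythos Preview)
Your proposal is correct and matches the paper's own approach exactly: the paper simply says that Corollary \ref{cor8} is obtained from Corollary \ref{cor7} by the same complex interpolation argument that derived Corollary \ref{cor6} from Corollary \ref{cor5}, and refers the reader back to that proof. You have spelled out precisely those details, including the role of the three lines lemma, the analyticity from Proposition \ref{prop5}, and the care about $\epsilon,R$-uniformity.
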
 

\section{Limit operators \texorpdfstring{$T_{\beta}^{\Gamma}$}{}}\label{s:limit operators}
\subsection{Definition and $L^{p}$ invertibility of $T_{\beta}^{\Gamma}$} Let $A \colon \R \to \R^{d - 1}$ be Lipschitz, $\Gamma(x) = (x,A(x))$, and $\beta \in \C$ with $\Rea \beta \in [0,1]$. In Definition \ref{def2}, we initially defined $T_{\beta}^{\Gamma,\epsilon,R}$ as a mapping from $\mathcal{S}(\R)$ (or slightly more generally $\dot{H}^{\beta}$) to $\mathcal{S}'(\R)$. In Corollary \ref{cor6} we however learned that
\begin{displaymath} \|T^{\Gamma,\epsilon,R}_{\beta}f\|_{L^{p}} \lesssim_{\mathrm{Lip}(A),d,\beta,p} \|f\|_{L^{p}}, \qquad f \in \mathcal{S}(\R), \end{displaymath}
and consequently $T^{\Gamma,\epsilon,R}_{\beta}$ extends to a bounded operator on $L^{p}(\R)$, for $p \in (1,\infty)$. We keep the same notation for the extension. 

The purpose of this section is to verify that there exist (possibly non-unique) subsequential limits of the operators $T^{\Gamma,\epsilon,R}_{\beta}$, as $\epsilon \to 0$ and $R \to \infty$, and that such limits are invertible on $L^{p}$, provided that $\mathrm{Lip}(A)$ is sufficiently small, depending on $d$ and $p$.

We start by quoting a result in abstract functional analysis which says that if $H$ is a separable Hilbert space, then the closed unit ball of $\mathcal{B}(H)$ is compact in the \emph{weak operator topology} (WOT). For a reference, see \cite[Chapter IX, Proposition 5.5]{MR1070713}, and note also that the WOT on the unit ball of $\mathcal{B}(H)$ is metrizable by \cite[Chapter IX, Proposition 1.3(e)]{MR1070713}. Here is what the previous result means. Assume that $\{T_{n}\}$ is a sequence of uniformly bounded operators on $L^{2}(\R)$. Then, there exists a subsequence $\{n_{j}\}_{j \in \N}$, and a bounded operator $T \colon L^{2}(\R) \to L^{2}(\R)$ such that
\begin{equation}\label{form111} \lim_{j \to \infty} \int T_{n_{j}}f \cdot g = \int Tf \cdot g, \qquad f,g \in L^{2}(\R). \end{equation}
In our concrete situation, we may derive the following corollary: 

\begin{thm}[Definition of $T^{\Gamma}_{\beta}$]\label{thm1} Let $\beta \in [0,1]$, and $p \in (1,\infty)$. Then, there exists a vanishing sequence $\{\epsilon_{j}\}_{j \in \N}$ of positive numbers, and a bounded operator $T_{\beta}^{\Gamma} \colon L^{p}(\R) \to L^{p}(\R)$ such that
\begin{equation}\label{form113} \int T^{\Gamma}_{\beta}f \cdot g = \lim_{j \to \infty} \int T^{\Gamma,\epsilon_{j},1/\epsilon_{j}}_{\beta}f \cdot g, \qquad f \in L^{p}(\R), \, g \in L^{p'}(\R). \end{equation}
Moreover, $T_{\beta}^{\Gamma}$ is invertible on $L^{p}(\R)$ if $\mathrm{Lip}(A)$ is sufficiently small, depending only on $d,p$.
\end{thm}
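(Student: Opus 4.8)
\textbf{Proof proposal for Theorem \ref{thm1}.}

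The plan is to obtain $T_\beta^\Gamma$ as a weak operator limit of the truncations $T_\beta^{\Gamma,\epsilon,R}$, and then prove its invertibility on $L^p$ by a perturbation argument off the flat case $A\equiv 0$, using the homotopy $\Gamma_s$ (graph of $sA$) together with the Lipschitz-in-$s$ estimate of Corollary \ref{cor8}. First I would construct the limit operator. By Corollary \ref{cor6}, the family $\{T_\beta^{\Gamma,\epsilon,1/\epsilon}\}_{0<\epsilon\le 1}$ is uniformly bounded on $L^2(\R)$ (with a bound depending on $\mathrm{Lip}(A),d,\beta$, but not on $\epsilon$). The closed ball of radius $M$ in $\mathcal{B}(L^2(\R))$ is metrizable and compact in the weak operator topology (as cited from \cite{MR1070713}), so along some vanishing sequence $\{\epsilon_j\}$ we get a bounded operator $T_\beta^\Gamma\colon L^2\to L^2$ with $\int T_\beta^{\Gamma,\epsilon_j,1/\epsilon_j}f\cdot g\to\int T_\beta^\Gamma f\cdot g$ for all $f,g\in L^2$. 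To upgrade \eqref{form113} to $L^p$, I would note that by Corollary \ref{cor6} the same operators are uniformly bounded $L^p\to L^p$, hence the bilinear functional $(f,g)\mapsto\lim_j\int T_\beta^{\Gamma,\epsilon_j,1/\epsilon_j}f\cdot g$ — where the limit exists first for $f,g\in\mathcal{S}(\R)$ and then extends by density — is bounded on $L^p\times L^{p'}$ and therefore represented by a bounded operator on $L^p$ extending the $L^2$-limit; a standard diagonal/density argument reconciles the two. (One can pass to a further subsequence of $\{\epsilon_j\}$ if needed so that the $L^p$ pairing converges for a countable dense set of $f,g$, then invoke uniform boundedness.)

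Next I would prove invertibility. Fix $p\in(1,\infty)$ and $\beta\in[0,1]$. For each $s\in[0,1]$, apply the construction above to the graph $\Gamma_s$, obtaining a bounded operator $T_\beta^{\Gamma_s}$ on $L^p$; crucially, by choosing a common subsequence $\{\epsilon_j\}$ (possible since $[0,1]$ together with the relevant test functions can be handled by a single diagonalization, or more cheaply by taking the limit only for $s\in\{0,1\}$ and using Corollary \ref{cor8} to control intermediate $s$), I would arrange that
\begin{displaymath} \|T_\beta^{\Gamma_s}-T_\beta^{\Gamma_t}\|_{L^p\to L^p}\lesssim_{d,p} e^{(\Im\beta)^2}\,\mathrm{Lip}(A)\cdot|s-t|, \qquad s,t\in[0,1], \end{displaymath}
by passing to the limit in Corollary \ref{cor8}. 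In particular $\|T_\beta^{\Gamma_1}-T_\beta^{\Gamma_0}\|_{L^p\to L^p}\lesssim_{d,p}\mathrm{Lip}(A)$. On the other hand, in the flat case Proposition \ref{prop4} identifies the weak limit: $T_\beta^{\epsilon,R}f\to\tfrac12 f$ in $\mathcal{S}'(\R)$ as $\epsilon\to 0$, $R\to\infty$, and since $T_\beta^{\epsilon,R}$ are uniformly bounded on $L^p$, the $L^p$-limit $T_\beta^{\Gamma_0}$ must equal $\tfrac12\mathrm{Id}$ on $\mathcal{S}(\R)$, hence on all of $L^p(\R)$ by density. Therefore $T_\beta^{\Gamma}=T_\beta^{\Gamma_1}=\tfrac12\mathrm{Id}+E$ where $\|E\|_{L^p\to L^p}=\|T_\beta^{\Gamma_1}-\tfrac12\mathrm{Id}\|_{L^p\to L^p}\lesssim_{d,p}\mathrm{Lip}(A)$. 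If $\mathrm{Lip}(A)$ is small enough (depending only on $d,p$, through the implicit constant just named) that $\|E\|_{L^p\to L^p}<\tfrac14$, then $T_\beta^\Gamma$ is invertible on $L^p(\R)$ by the Neumann series $\big(\tfrac12\mathrm{Id}+E\big)^{-1}=2\sum_{k\ge0}(-2E)^k$.

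The main obstacle I anticipate is the bookkeeping around subsequences: \eqref{form113} only guarantees convergence along \emph{some} sequence $\{\epsilon_j\}$, and a priori the sequences producing $T_\beta^{\Gamma_0}$ and $T_\beta^{\Gamma_1}$ could differ, which would break the link between the flat-case identity $\tfrac12\mathrm{Id}$ and the difference bound from Corollary \ref{cor8}. I would resolve this by fixing one vanishing sequence $\{\epsilon_j\}$ along which the $L^p$-pairings $\int T_\beta^{\Gamma_s,\epsilon_j,1/\epsilon_j}f\cdot g$ converge simultaneously for $s\in\{0,1\}$ and for a countable dense family of $f\in L^p$, $g\in L^{p'}$ (a single diagonal extraction), and then note that for any such $\{\epsilon_j\}$, Corollary \ref{cor8} applied with $(s,t)=(0,1)$ passes to the limit to give $\|T_\beta^{\Gamma_1}-T_\beta^{\Gamma_0}\|_{L^p\to L^p}\lesssim_{d,p}\mathrm{Lip}(A)$ regardless of which admissible subsequence was used; combined with $T_\beta^{\Gamma_0}=\tfrac12\mathrm{Id}$ (which holds for \emph{every} admissible subsequence, by Proposition \ref{prop4}), this yields the decomposition above. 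A secondary, purely technical point is justifying that the weak-$L^p$ limit of a uniformly $L^p$-bounded sequence of operators is again a bounded operator on $L^p$ with norm at most the uniform bound — this is routine duality and I would state it as such without grinding through it.
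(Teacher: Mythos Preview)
Your proposal is correct and follows essentially the same approach as the paper: WOT-compactness to extract a subsequential limit, density to pass from $L^2$ to $L^p$, and invertibility via Neumann series after comparing with the flat case through Corollary \ref{cor8}. The one place where you overcomplicate things is the subsequence bookkeeping: the paper avoids any diagonal extraction by observing that Proposition \ref{prop4} gives convergence of $T_\beta^{\Gamma_0,\epsilon,1/\epsilon}$ to $\tfrac12 I$ along \emph{every} sequence $\epsilon\to 0$ (not merely along some subsequence), so one only extracts a subsequence once, for $\Gamma_1$, and the $\Gamma_0$-limit automatically exists along that same sequence --- you do eventually note this yourself, but you frame it as a workaround rather than the main point.
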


\begin{proof} According to Corollary \ref{cor6}, the operators $T_{\beta}^{\Gamma,\epsilon,1/\epsilon}$ are uniformly (in $\epsilon > 0$) bounded on both $L^{2}(\R)$ and $L^{p}(\R)$, with operator norm depending only on $\mathrm{Lip}(A),d$, and $p$. By the functional analytic theorem discussed at \eqref{form111}, there exists a vanishing sequence $\{\epsilon_{j}\}_{j \in \N} \subset  (0,\infty)$, and a bounded operator $T \colon L^{2}(\R) \to L^{2}(\R)$ with the property
\begin{equation}\label{form112} \int Tf \cdot g = \lim_{j \to \infty} \int T_{\beta}^{\Gamma,\epsilon_{j},1/\epsilon_{j}}f \cdot g, \qquad f,g \in L^{2}(\R). \end{equation} 
We abbreviate in the sequel
\begin{displaymath} T^{\Gamma,j}_{\beta} := T^{\Gamma,\epsilon_{j},1/\epsilon_{j}}_{\beta}, \qquad j \in \N. \end{displaymath}
It follows from \eqref{form112} (applied with $f,g \in \mathcal{S}(\R)$) that $T$ admits a bounded extension (from $L^{2}(\R) \cap L^{p}(\R)$) to $L^{p}(\R)$, and indeed the operator norm of that extension is bounded from above by
\begin{displaymath} \mathbf{C} := \sup_{\epsilon > 0} \|T_{\beta}^{\Gamma,\epsilon,1/\epsilon}\|_{L^{p} \to L^{p}} < \infty. \end{displaymath}
This extension is our desired operator $T^{\Gamma}_{\beta}$. We next check that \eqref{form113} holds for all $f \in L^{p}(\R)$ and $g \in L^{p'}(\R)$. Fix $f \in L^{p}(\R)$ and $g \in L^{p'}(\R)$, $\delta \in (0,1]$, and pick $\varphi,\gamma \in \mathcal{S}(\R)$ with 
\begin{displaymath} \|f - \varphi\|_{L^{p}} \leq \delta \quad \text{and} \quad \|g - \gamma\|_{L^{p'}} \leq \delta. \end{displaymath}
Since $\varphi,\gamma \in L^{2}(\R) \cap L^{p}(\R)$, by \eqref{form112} there exists $j_{0} = j_{0}(\delta,\varphi,\gamma) \in \N$ such that
\begin{displaymath} \left| \int T^{\Gamma}_{\beta}\varphi \cdot \gamma - \int T^{\Gamma,j}_{\beta} \varphi \cdot \gamma \right| \leq \delta, \qquad j \geq j_{0}. \end{displaymath}
Now, for $j \geq j_{0}$, we conclude
\begin{align*} \left| \int T^{\Gamma}_{\beta}f \cdot g - \int T^{\Gamma,j}_{\beta}f \cdot g \right| & \leq \left| \int T_{\beta}^{\Gamma}(f - \varphi) \cdot g - \int T_{\beta}^{\Gamma,j} (f - \varphi) \cdot g \right|\\
& \quad + \left| \int T_{\beta}^{\Gamma} \varphi \cdot \gamma - \int T_{\beta}^{\Gamma,j} \varphi \cdot \gamma \right|\\
& \quad + \left| \int T_{\beta}^{\Gamma} \varphi \cdot (g - \gamma) - \int T_{\beta}^{\Gamma,j}\varphi \cdot (g - \gamma) \right|\\
& \leq 2\mathbf{C}\|f - \varphi\|_{L^{p}} \|g\|_{L^{p'}} + \delta + 2\mathbf{C}\|f\|_{L^{p}}\|g - \gamma\|_{L^{p'}}\\
& \leq 2\mathbf{C}\delta\|g\|_{L^{p'}} + \delta + 2\mathbf{C}\delta\|f\|_{L^{p}}. \end{align*} 
Since the right hand side can be made arbitrarily small by decreasing the value of $\delta > 0$, we have established \eqref{form113}.

It remains to prove that $T_{\beta}^{\Gamma}$ is invertible on $L^{p}(\R)$, provided that the Lipschitz constant of $A$ is sufficiently small in terms of $p$. To see this, write
\begin{displaymath} T^{\Gamma_{s},j}_{\beta} := T^{\Gamma_{s},\epsilon_{j},\epsilon_{j}^{-1}}_{\beta}, \qquad s \in [0,1], \end{displaymath}
where $\Gamma_{s}(x) = (x,sA(x))$, thus $T_{\beta}^{\Gamma_{1},j} = T_{\beta}^{\Gamma,j}$ in our previous notation. Furthermore $\Gamma_{0}(x) = (x,0)$, so it follows from Proposition \ref{prop4} that
\begin{equation}\label{form114} \lim_{j \to \infty} \int T^{\Gamma_{0},j}_{\beta} \cdot g = \tfrac{1}{2}\int f \cdot g, \qquad f,g \in \mathcal{S}(\R). \end{equation}
Using the uniform $L^{p}$-boundedness of the operators $T_{\beta}^{\Gamma_{0},j}$, the conclusion above can be upgraded to hold for all $f \in L^{p}(\R)$ and $g \in L^{p'}(\R)$ (by repeating the argument we used to establish \eqref{form113}).

Let $I$ be the identity operator on $L^{p}(\R)$. Then, using \eqref{form113} and \eqref{form114}, we deduce that
\begin{displaymath} \int (\tfrac{1}{2}I - T^{\Gamma}_{\beta})f \cdot g = \lim_{j \to \infty} \int (T_{\beta}^{\Gamma_{0},j} - T_{\beta}^{\Gamma_{1},j})f \cdot g, \qquad f \in L^{p}(\R), \, g \in L^{p'}(\R). \end{displaymath}
It now follows from Corollary \ref{cor8} applied with $(s,t) = (0,1)$ that the operator norm of $\tfrac{1}{2}I - T_{\beta}^{\Gamma}$ is bounded from above by 
\begin{displaymath} \|\tfrac{1}{2}I - T_{\beta}^{\Gamma}\|_{L^{p} \to L^{p}} \lesssim_{d,p} \mathrm{Lip}(A), \end{displaymath} 
provided that \emph{a priori} $\mathrm{Lip}(A) \leq 1$. Since $\tfrac{1}{2}I$ is invertible on $L^{p}(\R)$, it now follows (from a standard Neumann series argument) that $T^{\Gamma}_{\beta}$ is also invertible on $L^{p}(\R)$ if $\mathrm{Lip}(A)$ is sufficiently small, depending only on $d,p$. This completes the proof of Theorem \ref{thm1}. \end{proof}

\subsection{Bounding $\Delta^{-\beta/2}\mu$ in terms of $\Delta^{(1 - \beta)/2}U^{\Gamma}\mu$}\label{s:Adams} The purpose of this section is to bound the $L^{p}$-norm of $\Delta^{-\beta/2}\mu$ in terms of the $L^{p}$-norm of $\Delta^{(1 - \beta)/2}\mu$. The formal proof is simple, using the $L^{p}$-invertibility of $\Delta^{(1 - \beta)/2}U^{\Gamma}\Delta^{\beta/2}$:
\begin{displaymath} \|\Delta^{-\beta/2}\mu\|_{L^{p}} \lesssim \|\Delta^{(1 - \beta)/2}U^{\Gamma}\Delta^{\beta/2}\Delta^{-\beta/2}\mu\|_{L^{p}} = \|\Delta^{(1 - \beta)/2}U^{\Gamma}\mu\|_{L^{p}}.  \end{displaymath}
Making sure that everything commutes and converges in the right way is verified in the following proposition. Recall the space $\mathcal{M}(\R)$ from Definition \ref{def5Intro}. We will use the fact that if $\mu \in \mathcal{M}(\R)$, then $x \mapsto \int \max\{-\log |x - y|,0\} \, d\mu(y) \in L^{\infty}(\R)$.

\begin{proposition}\label{prop8-2} Let $\beta \in [0,1)$ and $p > 1/(1 - \beta)$. Let $T^{\Gamma}_{\beta} \colon L^{p}(\R) \to L^{p}(\R)$ be an operator as in Theorem \ref{thm1}. More precisely, we assume the existence of sequences $\{\epsilon_{j}\}_{j},\{R_{j}\}_{j}$ of positive reals such that $\epsilon_{j} \to 0$ and $R_{j} \to \infty$ as $j \to \infty$, and 
\begin{equation}\label{form82} \int T_{\beta}^{\Gamma}f \cdot \psi = \lim_{j \to \infty} \int T_{\beta}^{\Gamma,\epsilon_{j},R_{j}}f \cdot \psi, \qquad f \in L^{p}(\R), \, \psi \in \mathcal{S}(\R). \end{equation} 
We also assume that $T_{\beta}^{\Gamma}$ is invertible on $L^{p}(\R)$ (thus, the proposition applies to the operators in Theorem \ref{thm1} if $\mathrm{Lip}(A)$ is sufficiently small depending on $d,p$).

Then,
\begin{equation}\label{form165} \|\Delta^{-\beta/2}\mu\|_{L^{p}} \lesssim_{T_{\beta}^{\Gamma}} \|\Delta^{(1 - \beta)/2}U^{\Gamma}\mu\|_{L^{p}}, \qquad \mu \in \mathcal{M}(\R), \end{equation}
where the implicit constant only depends on the operator norm of the $L^{p}$-inverse of $T_{\beta}^{\Gamma}$. Here $\Delta^{(1 - \beta)/2}U^{\Gamma}\mu \in \mathcal{S}'(\R)$ is the tempered distribution familiar from Definition \ref{def6}, and its $L^{p}$-norm in \eqref{form165} should be interpreted as $+\infty$ if $\Delta^{(1 - \beta)/2}U^{\Gamma}\mu \notin L^{p}(\R)$. \end{proposition}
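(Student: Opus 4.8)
The plan is as follows. We may assume $M := \|\Delta^{(1-\beta)/2}U^{\Gamma}\mu\|_{L^{p}} < \infty$, as otherwise there is nothing to prove. The estimate is driven by the formal identity $\Delta^{(1-\beta)/2}U^{\Gamma}\mu = \Delta^{(1-\beta)/2}U^{\Gamma}\Delta^{\beta/2}(\Delta^{-\beta/2}\mu) = T^{\Gamma}_{\beta}(\Delta^{-\beta/2}\mu)$, from which one would like to read off $\Delta^{-\beta/2}\mu = (T^{\Gamma}_{\beta})^{-1}(\Delta^{(1-\beta)/2}U^{\Gamma}\mu)$ and hence $\|\Delta^{-\beta/2}\mu\|_{L^{p}} \le \|(T^{\Gamma}_{\beta})^{-1}\|_{L^{p}\to L^{p}}\,M$. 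The only real issue is rigour: $\Delta^{-\beta/2}\mu$ is merely an $L^{1}_{\mathrm{loc}}$ function (a measure when $\beta = 0$), it does not belong to $\dot{H}^{\beta}$, and so the operators of Section \ref{s:truncations} cannot be fed it directly.

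To repair this I would mollify $\mu$. Let $\phi_{\delta}$ be a standard approximate identity, set $\mu_{\delta} := \mu * \phi_{\delta} \in C^{\infty}_{c}(\R)$, and let $g_{\delta} := \Delta^{-\beta/2}\mu_{\delta} = (\Delta^{-\beta/2}\mu) * \phi_{\delta}$. Using the hypothesis $p > 1/(1-\beta)$ one checks that $g_{\delta}$ is a smooth function with $|g_{\delta}(x)| \lesssim_{\delta} (1+|x|)^{\beta-1}$, so $g_{\delta} \in L^{p}(\R)$; moreover $\widehat{g_{\delta}}(\xi)|\xi|^{\beta} = \widehat{\mu_{\delta}}(\xi) \in L^{2}(\R)$ and $\widehat{g_{\delta}} \in L^{1}_{\mathrm{loc}}$ (as $\beta < 1$), so $g_{\delta} \in \dot{H}^{\beta}$ with $\Delta^{\beta/2}g_{\delta} = \mu_{\delta}$. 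Definition \ref{def2} therefore gives $T^{\Gamma,\epsilon,R}_{\beta}g_{\delta} = \Delta^{(1-\beta)/2}(U^{\Gamma,\epsilon,R}\mu_{\delta})$. Now I would let $\epsilon = \epsilon_{j} \to 0$ and $R = R_{j} \to \infty$: on the left, \eqref{form82} applies since $g_{\delta} \in L^{p}$; on the right, $U^{\Gamma,\epsilon_{j},R_{j}}\mu_{\delta} \to U^{\Gamma}\mu_{\delta}$ pointwise, and testing against $\psi \in \mathcal{S}(\R)$ one may pass to the limit by dominated convergence — the additive constant $C_{R}$ of Lemma \ref{truncationLemma} is harmless because $\Delta^{(1-\beta)/2}\psi$ has vanishing integral and decays like $(1+|x|)^{\beta-2}$, while the domination uses the $\delta$-uniform bound $|U^{\Gamma}\mu_{\delta}(x)| \lesssim_{\mu} 1 + \log_{+}|x|$, which in turn rests on the stated fact that $x \mapsto \int \max\{-\log|x-y|,0\}\,d\mu(y)$ is bounded. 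This yields
\begin{equation*} T^{\Gamma}_{\beta}g_{\delta} = \Delta^{(1-\beta)/2}U^{\Gamma}\mu_{\delta} \qquad \text{in } \mathcal{S}'(\R). \end{equation*}
Since the left side lies in $L^{p}(\R)$, so does the right, and inverting $T^{\Gamma}_{\beta}$ gives $g_{\delta} = (T^{\Gamma}_{\beta})^{-1}(\Delta^{(1-\beta)/2}U^{\Gamma}\mu_{\delta})$, whence $\|g_{\delta}\|_{L^{p}} \le \|(T^{\Gamma}_{\beta})^{-1}\|_{L^{p}\to L^{p}}\,\|\Delta^{(1-\beta)/2}U^{\Gamma}\mu_{\delta}\|_{L^{p}}$.

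It remains to take $\delta \to 0$, and here lies the crux. I would compare $U^{\Gamma}\mu_{\delta}$ with $(U^{\Gamma}\mu)*\phi_{\delta}$ by writing $\Delta^{(1-\beta)/2}U^{\Gamma}\mu_{\delta} = (\Delta^{(1-\beta)/2}U^{\Gamma}\mu)*\phi_{\delta} + \Delta^{(1-\beta)/2}w_{\delta}$ with $w_{\delta} := U^{\Gamma}(\mu*\phi_{\delta}) - (U^{\Gamma}\mu)*\phi_{\delta}$. The first term converges to $\Delta^{(1-\beta)/2}U^{\Gamma}\mu$ in $L^{p}(\R)$ (ordinary mollification, $p < \infty$), with norm $\le M$ throughout. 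The decisive point — and the main obstacle — is that the \emph{first coordinate} of $\Gamma(x-u)-\Gamma(z)$ equals that of $\Gamma(x)-\Gamma(z+u)$, so $w_{\delta}$ is governed only by the $C^{1,\alpha}$-regularity of $A$ and the mollification scale $\delta$, in the spirit of the kernel estimates of Section \ref{s:holder}; one expects $\|\Delta^{(1-\beta)/2}w_{\delta}\|_{L^{p}} \to 0$ as $\delta \to 0$ (and a $\delta$-uniform bound would already suffice, via weak-$L^{p}$ compactness). Granting this, $g_{\delta} = (T^{\Gamma}_{\beta})^{-1}[(\Delta^{(1-\beta)/2}U^{\Gamma}\mu)*\phi_{\delta}] + (T^{\Gamma}_{\beta})^{-1}[\Delta^{(1-\beta)/2}w_{\delta}] \to (T^{\Gamma}_{\beta})^{-1}(\Delta^{(1-\beta)/2}U^{\Gamma}\mu)$ in $L^{p}(\R)$; since also $g_{\delta} \to \Delta^{-\beta/2}\mu$ in $\mathcal{S}'(\R)$, the two limits agree, so $\Delta^{-\beta/2}\mu = (T^{\Gamma}_{\beta})^{-1}(\Delta^{(1-\beta)/2}U^{\Gamma}\mu) \in L^{p}(\R)$ and $\|\Delta^{-\beta/2}\mu\|_{L^{p}} \le \|(T^{\Gamma}_{\beta})^{-1}\|_{L^{p}\to L^{p}}\,M$, as claimed. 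Everything except the analysis of $w_{\delta}$ is soft: functional analysis together with bookkeeping for the truncated kernels $\log_{\epsilon,R}$.
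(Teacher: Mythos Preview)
Your mollification step and the passage $j \to \infty$ to reach $T^{\Gamma}_{\beta} g_{\delta} = \Delta^{(1-\beta)/2}U^{\Gamma}\mu_{\delta}$ in $\mathcal{S}'(\R)$ are correct and coincide with the paper's argument.

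The gap is in the $\delta \to 0$ step. Your decomposition $U^{\Gamma}\mu_{\delta} = (U^{\Gamma}\mu)\ast\phi_{\delta} + w_{\delta}$ leaves you needing $\|\Delta^{(1-\beta)/2}w_{\delta}\|_{L^{p}} \to 0$ (or at least $O(1)$), and you do not prove this: you write ``one expects'' and ``granting this'', and you appeal to the $C^{1,\alpha}$-regularity of $A$ and to the kernel estimates of Section~\ref{s:holder}. But Proposition~\ref{prop8-2} assumes only that $A$ is Lipschitz with $T^{\Gamma}_{\beta}$ invertible on $L^{p}$; the $C^{1,\alpha}$ hypothesis is not available here, and the proposition is applied in Section~\ref{s:proofMainTechnical} with constants that must not depend on any H\"older data. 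For merely Lipschitz $A$ the commutator $w_{\delta}$ is not obviously under control: the first-coordinate cancellation you observed yields only $|w_{\delta}(x)| \lesssim \mathrm{Lip}(A)\iint \tfrac{|u|}{|x-y-u|}\,\phi_{\delta}(u)\,du\,d\mu(y)$, and for $\mu \in \mathcal{M}(\R)$ the $\mu$-integral need not even be finite.

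The paper circumvents the commutator entirely. Rather than compare $\|\Delta^{(1-\beta)/2}U^{\Gamma}\mu_{\delta}\|_{L^{p}}$ with $M$, it keeps the duality pairing open: having shown $\int T^{\Gamma}_{\beta} g_{\delta} \cdot \psi = \int U^{\Gamma}\mu_{\delta} \cdot \Delta^{(1-\beta)/2}\psi$ for each $\psi \in \mathcal{S}(\R)$, it proves the pointwise convergence $U^{\Gamma}\mu_{\delta}(x) \to U^{\Gamma}\mu(x)$ together with the $\delta$-uniform domination $\sup_{\delta \le 1/6}|U^{\Gamma}\mu_{\delta}(x)| \lesssim_{\mu} 1 + \log_{+}|x|$. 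Both facts use only the hypothesis $\mu \in \mathcal{M}(\R)$ (via the boundedness of $x \mapsto \int \max\{-\log|x-y|,0\}\,d\mu(y)$) and the Lipschitz bound on $A$; no H\"older regularity enters. Dominated convergence against $\Delta^{(1-\beta)/2}\psi$ (integrable against $1+\log_{+}|x|$ by Lemma~\ref{lemma3}) then gives $\int U^{\Gamma}\mu_{\delta} \cdot \Delta^{(1-\beta)/2}\psi \to \int \Delta^{(1-\beta)/2}U^{\Gamma}\mu \cdot \psi$, and the bound follows from $\|\psi\|_{L^{p'}} \le 1$. In short: do not split off $(U^{\Gamma}\mu)\ast\phi_{\delta}$; work with $U^{\Gamma}\mu_{\delta}$ directly and pass to the limit \emph{inside} the pairing.
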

 
 \begin{remark}\label{rem5} The expression $\Delta^{-\beta/2}\mu(x) \in \R \cup \{+\infty\}$ is defined pointwise by \eqref{form161}. Proposition \ref{prop8-2} does not imply that $\Delta^{-\beta/2}\mu \in L^{p}(\R)$. This conclusion only follows if the tempered distribution $\Delta^{(1 - \beta)/2}U^{\Gamma}\mu$ is represented by an $L^{p}$-function.  \end{remark} 

\begin{proof}[Proof of Proposition \ref{prop8-2}] To apply the $L^{p}$-invertibility of $T_{\beta}^{\Gamma}$, it would be useful to know that $\Delta^{-\beta/2}\mu \in L^{p}(\R)$. We do not know this \emph{a priori}, but a viable substitute is to consider the functions $\Delta^{-\beta/2}\mu_{\delta}$ instead. Here $\mu_{\delta} \coloneqq \mu \ast \varphi_{\delta} \in C^{\infty}_{c}(\R)$, where $\varphi \in C^{\infty}(\R)$ with the usual properties $0 \leq \varphi \lesssim \mathbf{1}_{[-1,1]}$, $\int \varphi = 1$, and $\varphi_{\delta}(x) = \delta^{-1}\varphi(x/\delta)$. 

Recall from \eqref{form161} that
\begin{displaymath} \Delta^{-\beta/2}\mu_{\delta} = c(\beta)(\mu_{\delta} \ast |\cdot|^{\beta - 1}). \end{displaymath}
It now follows easily from the smoothness of $\mu_{\delta}$ and the decay of $|\cdot|^{\beta - 1}$ that $\Delta^{-\beta/2}\mu_{\delta} \in L^{\infty}(\R) \cap L^{p}(\R)$ for $p > 1/(1 - \beta)$. We also claim that
\begin{displaymath} \|\Delta^{-\beta/2}\mu\|_{L^{p}} \leq \liminf_{\delta \to 0} \|\Delta^{-\beta/2}\mu_{\delta}\|_{L^{p}}. \end{displaymath}
To see this, note that $\Delta^{-\beta/2}\mu \in L^{1}_{\mathrm{loc}}(\R)$ (since the kernel $|\cdot|^{\beta - 1} \in L^{1}_{\mathrm{loc}}(\R)$), and also that
\begin{displaymath} \left| \int \Delta^{-\beta/2}\mu \cdot g \right| = \lim_{\delta \to 0} \left| \int \Delta^{-\beta/2}\mu \cdot g_{\delta} \right| = \lim_{\delta \to 0} \left| \int \Delta^{-\beta/2}\mu_{\delta}  \cdot g\right| \leq \liminf_{\delta \to 0} \|\Delta^{-\beta/2}\mu_{\delta}\|_{L^{p}}\|g\|_{L^{p'}} \end{displaymath}
for all $g \in \mathcal{C}^{\infty}_{c}(\R)$. After these preliminaries, the estimate \eqref{form165} is reduced to showing
\begin{equation}\label{form166} \liminf_{\delta \to 0} \|\Delta^{-\beta/2}\mu_{\delta}\|_{L^{p}} \lesssim \|\Delta^{(1 - \beta)/2}U^{\Gamma}\mu\|_{L^{p}}. \end{equation}

Fix $\delta > 0$. Since $T^{\Gamma}_{\beta} \colon L^{p}(\R) \to L^{p}(\R)$ is invertible,
\begin{displaymath} \|\Delta^{-\beta/2}\mu_{\delta}\|_{L^{p}} \lesssim_{T_{\beta}^{\Gamma}} \|T^{\Gamma}_{\beta}(\Delta^{-\beta/2}\mu_{\delta})\|_{L^{p}}. \end{displaymath}
We may furthermore select $\psi \in \mathcal{S}(\R)$ with $\|\psi\|_{L^{p'}} \leq 1$ such that
\begin{equation}\label{form167} \|\Delta^{-\beta/2}\mu_{\delta}\|_{L^{p}} \lesssim_{T_{\beta}^{\Gamma}} \int T^{\Gamma}_{\beta}(\Delta^{-\beta/2}\mu_{\delta}) \cdot \psi. \end{equation}
Since $\Delta^{-\beta/2}\mu_{\delta} \in L^{p}(\R)$ and $\psi \in L^{p'}(\R)$, according to \eqref{form82} the integral on the right equals
\begin{displaymath} \lim_{j \to \infty} \int T^{\Gamma,\epsilon_{j},R_{j}}_{\beta}(\Delta^{-\beta/2}\mu_{\delta}) \cdot \psi. \end{displaymath}  
Recall that the operators $T_{\beta}^{\Gamma,\epsilon_{j},R_{j}}$ (introduced in Definition \ref{def2}) were defined on $\dot{H}^{\beta}$, and clearly $\Delta^{-\beta/2}\mu_{\delta} \in \dot{H}^{\beta}$, since $\Delta^{\beta/2}(\Delta^{-\beta/2}\mu_{\delta}) = \mu_{\delta} \in L^{2}(\R)$. By the definition of of $T^{\Gamma,\epsilon_{j},R_{j}}_{\beta}$:
\begin{align*} \int T^{\Gamma,\epsilon_{j},R_{j}}_{\beta}(\Delta^{-\beta/2}\mu_{\delta}) \cdot \psi \stackrel{\mathrm{Def.\,} \ref{def2}}{=} \int U^{\Gamma,\epsilon_{j},R_{j}}\mu_{\delta} \cdot \Delta^{(1 - \beta)/2}\psi. \end{align*}
Here $\Delta^{(1 - \beta)/2}\psi \in L^{1}(\R)$ thanks to Lemma \ref{lemma2}, and $\int \Delta^{(1 - \beta)/2}\psi = 0$. Note also that $\mu_{\delta}$ is a finite measure with total mass $\|\mu\|$, so 
\begin{displaymath} U^{\Gamma,\epsilon_{j},R_{j}}\mu_{\delta}(x) = \int [\log_{\epsilon_{j},R_{j}} |\Gamma(x) - \Gamma(y)| - C_{R_{j}}] \, d\mu_{\delta}(y) + C_{R_{j}}\|\mu\|, \qquad x \in \R, \end{displaymath}
where $C_{R_{j}} = O(\log R_{j})$ is the constant from Lemma \ref{truncationLemma}. Taking into account the zero mean of $\Delta^{(1 - \beta)/2}\psi$, we may further write
\begin{displaymath} \int T^{\Gamma,\epsilon_{j},R_{j}}_{\beta}(\Delta^{-\beta/2}\mu_{\delta}) \cdot \psi = \int \left(\int [\log_{\epsilon_{j},R_{j}} |\Gamma(x) - \Gamma(y)| - C_{R_{j}}] \, d\mu_{\delta}(y) \right) \Delta^{(1 - \beta)/2}\psi(x) \, dx.  \end{displaymath} 
Fixing $x \in \R$, and using Lemma \ref{truncationLemma}(i), the inner integral comprises of three parts:
\begin{align} \int_{|\Gamma(x) - \Gamma(y)| \leq \epsilon_{j}} O(\log \epsilon_{j}^{-1}) \, d\mu_{\delta}(y) & + \int_{\epsilon_{j} \leq |\Gamma(x) - \Gamma(y)| \leq R_{j}} \log \tfrac{1}{|\Gamma(x) - \Gamma(y)|} d\mu_{\delta}(y) \notag\\
&\label{form84} \quad + \int_{|\Gamma(x) - \Gamma(y)| \geq R_{j}} O(\log R_{j}) \, d\mu_{\delta}(y).   \end{align}
\begin{itemize}
\item Since $\|\mu_{\delta}\|_{L^{\infty}} \lesssim_{\delta} 1$, the first term is bounded by $\lesssim_{\delta} \epsilon_{j}\log \epsilon_{j}^{-1} \lesssim 1$, and tends to zero as $j \to \infty$.
\item The third term also tends to zero thanks to the compact support of $\mu$: for $x \in \R$ fixed, $\mu_{\delta}(y) = 0$ for all $y \in \R$ with $|\Gamma(x) - \Gamma(y)| \geq R_{j}$, as soon as $R_{j} \geq C(|x| + \diam \spt \mu_{\delta})$. This also shows that $\eqref{form84} = O(1 + \max\{\log |x|,0\})$. 
\item Finally, the middle term is bounded in absolute value by $\lesssim \|\mu\| + \|\mu_{\delta}\|_{L^{\infty}}$, and tends (for $x \in \R$ fixed) to $U^{\Gamma}\mu_{\delta}(x)$ as $j \to \infty$.
\end{itemize}
Since $x \mapsto (1 + \max\{\log |x|,0\})\Delta^{(1 - \beta)/2}\psi(x) \in L^{1}(\R)$ thanks to Lemma \ref{lemma3}, and $\beta < 1$, we may now apply dominated convergence to deduce that
\begin{displaymath} \|\Delta^{-\beta/2}\mu_{\delta}\|_{L^{p}} \stackrel{\eqref{form167}}{\lesssim_{T_{\beta}^{\Gamma}}} \lim_{j \to \infty} \int T^{\Gamma,\epsilon_{j},R_{j}}_{\beta}(\Delta^{-\beta/2}\mu_{\delta}) \cdot \psi = \int U^{\Gamma}\mu_{\delta} \cdot \Delta^{(1 - \beta)/2}\psi. \end{displaymath} 
To prove \eqref{form166} (and the proposition), it remains to show that
\begin{equation}\label{form168} \liminf_{\delta \to 0} \left| \int U^{\Gamma}\mu_{\delta} \cdot \Delta^{(1 - \beta)/2}\psi \right| \leq \|\Delta^{(1 - \beta)/2}U^{\Gamma}\mu\|_{L^{p}}. \end{equation} 
We claim that 
\begin{equation}\label{form88} \lim_{\delta \to 0} \int U^{\Gamma}\mu_{\delta} \cdot \Delta^{(1 - \beta)/2}\psi = \int U^{\Gamma}\mu \cdot \Delta^{(1 - \beta)/2}\psi \stackrel{\mathrm{def.}}{=} \int \Delta^{(1 - \beta)/2}U^{\Gamma}\mu \cdot \psi, \end{equation}
from which \eqref{form168} follows, recalling that $\|\psi\|_{L^{p'}} \leq 1$. 

To prove \eqref{form88}, we start by checking that 
\begin{equation}\label{form80} \lim_{\delta \to 0} U^{\Gamma}\mu_{\delta}(x) = U^{\Gamma}\mu(x), \qquad x \in \R. \end{equation}
It will be useful to split the logarithmic kernel into positive and negative parts:
\begin{displaymath} U^{\Gamma}\nu(x) = U^{\Gamma}_{+}\nu(x) + U^{\Gamma}_{-}\nu(x) = \int \log_{+} \frac{1}{|\Gamma(x) - \Gamma(y)|} \, d\nu(y) + \int \log_{-} \frac{1}{|\Gamma(x) - \Gamma(y)|} \, d\nu(y), \end{displaymath} 
for $\nu \in \{\mu,\mu_{\delta}\}$. Here, and only in this proof $\log_{+} := \max\{\log(x),1\}$ (the "$1$" is not a typo but a matter of technical convenience)  and $\log_{-} = \log - \log_{+}$. Thus,
\begin{itemize}
\item $\log_{+}(x) \geq 0$, $\log_{+}$ is non-decreasing, and $\log_{+}(x) = \log(x)$ for all $|x| \geq 2$,
\item $\log_{+}(x) = 1$ for $|x| \leq 2$, and 
\item $\log_{-} \in C(\R \, \setminus \, \{0\})$ and $\log_{-}(x) = 0$ for $|x| \geq 2$.
\end{itemize}
Now, to establish \eqref{form80}, fix $x \in \R$, and expand
\begin{equation}\label{form89} U^{\Gamma}_{\pm}\mu_{\delta}(x) = \int \left( \int \log_{\pm} \frac{1}{|\Gamma(x) - \Gamma(y)|} \varphi_{\delta}(y - z) \, dy \right) \, d\mu(z). \end{equation} 
Note that $z \mapsto \log_{-} 1/|\Gamma(x) - \Gamma(z)| \in C(\R)$. Therefore
\begin{equation}\label{form85} \lim_{\delta \to 0} \int \log_{-} \frac{1}{|\Gamma(x) - \Gamma(y)|} \varphi_{\delta}(y - z) \, dy = \log_{-} \frac{1}{|\Gamma(x) - \Gamma(z)|}, \qquad z \in \R. \end{equation}
Using the compact support of $\mu$, one may then easily justify an application of dominated convergence to show that $\lim_{\delta \to 0} U^{\Gamma}_{-}\mu_{\delta}(x) = U^{\Gamma}_{-}\mu(x)$.

It remains to show that $\lim_{\delta \to 0} U^{\Gamma}_{+}\mu_{\delta}(x) = U^{\Gamma}_{+}\mu(x)$. This time $z \mapsto \log_{+} 1/|\Gamma(x) - \Gamma(z)|$ is continuous on $\R \, \setminus \, \{x\}$, so the $\log_{+}$ counterpart of \eqref{form85} holds for all $z \in \R \, \setminus \, \{x\}$, and in particular $\mu$ almost everywhere ($\mathcal{M}(\R)$ consists of non-atomic measures). So, we need to justify an application of the dominated convergence theorem. To this end, we first observe that
\begin{displaymath} \log_{+} \frac{1}{|\Gamma(x) - \Gamma(y)|} \leq \log_{+} \frac{1}{|x - y|}, \qquad x,y \in \R, \end{displaymath} 
since $|\Gamma(x) - \Gamma(y)| \geq |x - y|$, and $\log_{+}$ is non-decreasing. We claim that
\begin{equation}\label{form86} \sup_{\delta \in (0,\tfrac{1}{6}]} \int \log_{+} \frac{1}{|x - y|} \varphi_{\delta}(y - z) \, dy \lesssim \max\{-\log |x - z|,0\} + 1, \quad x,z \in \R, \end{equation}
Since $z \mapsto  \max\{-\log |x - z|,0\}  \in L^{1}(\mu)$ by the hypothesis $\mu \in \mathcal{M}(\R)$, this will justify an application of dominated convergence. 

To prove \eqref{form86}, we may and will assume that $x = 0$. We briefly dispose of the case $|z| = |z - x| > \tfrac{2}{3}$. Then, for $\delta \in (0,\tfrac{1}{6}]$, we have $|y| \geq \tfrac{1}{2}$ for all $y \in \R$ with $\varphi_{\delta}(y - z) \neq 0$. Thus $y \mapsto \log_{+} |y|^{-1} = 1$ on the support of the integrand, and the integral equals $\int \varphi_{\delta} = 1$.

We then consider the case $|z| \leq \tfrac{2}{3}$, and explicitly $z \in [0,\tfrac{2}{3}]$. We claim that
\begin{equation}\label{form87} \frac{1}{\delta} \int_{z - \delta}^{z + \delta} \log \frac{1}{|y|} \, dy \leq 6 \log \frac{1}{z} + 6, \qquad \delta \in (0,\tfrac{1}{6}]. \end{equation} 
Assume first that $0 < \delta \leq z/2$. In this case, we estimate the left hand side of \eqref{form87} by 
\begin{displaymath} 2\log \frac{1}{z - \delta} \leq 2\log \frac{2}{z} = 2\log \frac{1}{z} + 2\log 2. \end{displaymath}
Next, in the case $\delta \geq z/2$, we estimate
\begin{displaymath} \frac{1}{\delta}\int_{z - \delta}^{z + \delta} \log \frac{1}{|y|} \, dy \leq \frac{2}{\delta}\int_{0}^{3\delta} \log \frac{1}{y} \, dy = 6 + 6 \log \frac{1}{3\delta} \leq 6 + 6\log \frac{1}{z},  \end{displaymath}
proving \eqref{form87}. We have now shown separately that $U_{+}^{\Gamma}\mu_{\delta}(x) \to U_{+}^{\Gamma}\mu(x)$ and $U_{-}^{\Gamma}\mu_{\delta}(x) \to U_{-}^{\Gamma}\mu(x)$ for every $x \in \R$, and \eqref{form80} follows immediately. 

Finally, justifying \eqref{form88} is one more exercise in using dominated convergence. Using \eqref{form86}, we observe that
\begin{align*} \sup_{\delta \in (0,\tfrac{1}{6}]} U^{\Gamma}_{+}\mu_{\delta}(x) & \leq \int \sup_{\delta \in (0,\tfrac{1}{6}]} \left( \int \log_{+} \frac{1}{|\Gamma(x) - \Gamma(y)|} \varphi_{\delta}(y - z) \, dy \right) \, d\mu(z)\\
& \lesssim \int \max\{-\log |x - z|,0\} \, d\mu(z) + \|\mu\|, \end{align*} 
so $\sup_{\delta \in (0,1/6]} U^{\Gamma}_{+}\mu_{\delta} \in L^{\infty}(\R)$ by the hypothesis $\mu \in \mathcal{M}(\R)$. It is elementary to check that $\sup_{\delta \in (0,1/6]} U^{\Gamma}_{-}\mu_{\delta}$ is a locally bounded function with logarithmic growth, so all in all
\begin{displaymath} \sup_{\delta \in (0,\tfrac{1}{6}]} |U^{\Gamma}\mu_{\delta}(x)| \lesssim_{\mu} 1 + \max\{\log |x|,0\}. \end{displaymath}
The right hand side is absolutely integrable against $\Delta^{(1 - \beta)/2}\psi$ thanks to Lemma \ref{lemma3}, so \eqref{form80} combined with dominated convergence implies \eqref{form88}. As mentioned after \eqref{form88}, this completes the proof of the proposition. \end{proof}

\appendix

\section{The Fourier transform of the logarithmic kernel}

\begin{lemma}\label{appLemma1} Let $f \in \mathcal{S}(\R)$. Then,
\begin{equation}\label{form64} \int_{\R} \log |x| \int_{\R} e^{-2\pi ix\xi} |\xi|f(\xi) \, d\xi \, dx = -\tfrac{1}{2} \int_{\R} f(\xi) \, d\xi. \end{equation}
\end{lemma}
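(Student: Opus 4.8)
The identity \eqref{form64} says precisely that the distributional Fourier transform of $\log|\cdot|$, multiplied by $|\xi|$, collapses to the constant $-\tfrac12$. The plan is to make this rigorous by a mollification argument which reduces everything to two classical facts: $\partial_x\log|x| = \mathrm{p.v.}\tfrac{1}{x}$ as tempered distributions, and $\mathcal{F}[\mathrm{p.v.}\tfrac{1}{x}](\xi) = -\pi i\,\sgn(\xi)$ (with the convention $\widehat{\varphi}(\xi) = \int\varphi(x)e^{-2\pi i x\xi}\,dx$ used implicitly in \eqref{form64}). Combining these gives the key distributional identity $\xi\,\mathcal{F}[\log|\cdot|](\xi) = -\tfrac12\,\sgn(\xi)$, since $2\pi i\xi\,\mathcal{F}[\log|\cdot|] = \mathcal{F}[(\log|\cdot|)'] = \mathcal{F}[\mathrm{p.v.}\tfrac{1}{\cdot}] = -\pi i\,\sgn$.

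First I would set $h(\xi) := |\xi|f(\xi)$, so that $h$ is continuous, Lipschitz with $h(0)=0$, and decays faster than any polynomial, and $G(x) := \int_{\R} e^{-2\pi i x\xi}h(\xi)\,d\xi = \widehat{h}(x)$ is the inner integral of \eqref{form64}. I would record two elementary facts about $G$: it is $C^\infty$ with all derivatives bounded (differentiate under the integral sign, using the rapid decay of $h$), and $|G(x)|\lesssim_f (1+|x|)^{-2}$. The decay follows by writing $h(\xi) = (\xi^2+1)^{1/2}f(\xi) - \bigl((\xi^2+1)^{1/2}-|\xi|\bigr)f(\xi)$, where the first summand is Schwartz and the second is a rapidly decaying Lipschitz function with a single corner at $\xi=0$, whose Fourier transform therefore decays quadratically (two integrations by parts, the corner producing a bounded contribution). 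Consequently $x\mapsto \log|x|\,G(x)\in L^1(\R)$, so the left side of \eqref{form64} is an absolutely convergent integral equal to the pairing $\langle\log|\cdot|,G\rangle$.

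Next comes the mollification. Let $\rho_\delta(\xi) := \delta^{-1/2}e^{-\pi\xi^2/\delta}$, so that $\widehat{\rho_\delta}(x) = e^{-\pi\delta x^2}$; put $h_\delta := h*\rho_\delta\in\mathcal{S}(\R)$ and $G_\delta(x) := G(x)e^{-\pi\delta x^2} = \widehat{h_\delta}(x)\in\mathcal{S}(\R)$. Dominated convergence (majorant $|\log|\cdot|\,G|\in L^1$) gives $\langle\log|\cdot|,G_\delta\rangle\to\langle\log|\cdot|,G\rangle$, and by the definition of the Fourier transform of a tempered distribution, $\langle\log|\cdot|,G_\delta\rangle = \langle\mathcal{F}[\log|\cdot|],h_\delta\rangle$. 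Now I would fix $\chi\in\mathcal{S}(\R)$ with $\chi(0)=1$ and split $h_\delta = h_\delta(0)\,\chi + \xi k_\delta$, where $k_\delta := (h_\delta - h_\delta(0)\chi)/\xi\in\mathcal{S}(\R)$ since the numerator vanishes at the origin. Using $\xi\,\mathcal{F}[\log|\cdot|](\xi) = -\tfrac12\,\sgn(\xi)$,
\[
\langle\mathcal{F}[\log|\cdot|],h_\delta\rangle = h_\delta(0)\,\langle\mathcal{F}[\log|\cdot|],\chi\rangle \;-\; \tfrac12\int_{\R}\sgn(\xi)\,k_\delta(\xi)\,d\xi .
\]
Letting $\delta\to0$: $h_\delta(0) = \int h\,\rho_\delta\to h(0)=0$ kills the first term (the bracket is a fixed finite number), while $k_\delta(\xi)\to h(\xi)/\xi = \sgn(\xi)f(\xi)$ pointwise for $\xi\neq0$, with a $\delta$-uniform integrable majorant — on $|\xi|\le1$ from the uniform Lipschitz bound $\|h_\delta'\|_\infty\le\|h'\|_\infty$ together with $|h_\delta(0)|\lesssim1$, and on $|\xi|>1$ from the $\delta$-uniform rapid decay of $h*\rho_\delta$. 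Hence the second term converges to $-\tfrac12\int\sgn(\xi)^2 f(\xi)\,d\xi = -\tfrac12\int_{\R} f$, which is the asserted value.

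The only genuinely delicate point is precisely this last passage: because $h = |\cdot|f$ is not Schwartz (merely Lipschitz at the origin), the pairing $\langle\mathcal{F}[\log|\cdot|],h\rangle$ cannot be written down directly, so each limit has to be justified by an explicit $\delta$-uniform bound rather than by abstract continuity. Everything else — the two classical Fourier transforms, the distributional identity for $\xi\,\mathcal{F}[\log|\cdot|]$, and the quadratic decay of $G$ — is routine. (One could instead shortcut by quoting the known formula $\mathcal{F}[\log|\cdot|] = -\tfrac12\,\mathrm{Pf}\,|\xi|^{-1} + c_0\,\delta_0$ and noting $|\xi|\cdot\mathrm{Pf}\,|\xi|^{-1} = 1$, $|\xi|\cdot\delta_0 = 0$; the argument above has the advantage of never needing the value of $c_0$.)
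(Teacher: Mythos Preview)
Your argument is correct, and it takes a genuinely different route from the paper's proof. The paper proceeds by direct real-variable computation: it uses parity to reduce to $\int_0^\infty \log x\int|\xi|f(\xi)\cos(2\pi x\xi)\,d\xi\,dx$, integrates by parts in $x$ to produce $\int_0^R \frac{\sin(2\pi x\xi)}{x}\,dx$, and then invokes the Dirichlet integral $\int_0^\infty \frac{\sin t}{t}\,dt=\tfrac{\pi}{2}$, along the way checking that a stray $\log R$ boundary term vanishes by a further integration by parts. Your approach is distributional: you identify the statement as $|\xi|\,\mathcal{F}[\log|\cdot|]=-\tfrac12$, derive the key identity $\xi\,\mathcal{F}[\log|\cdot|]=-\tfrac12\sgn(\xi)$ from $(\log|\cdot|)'=\mathrm{p.v.}\tfrac{1}{x}$ and the Hilbert-transform multiplier, and then mollify to legitimise pairing with the non-Schwartz test function $|\xi|f(\xi)$. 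The paper's route is more self-contained (the only external input is the Dirichlet integral) but involves several nested limits that must each be justified; your route is more conceptual --- it explains \emph{why} the constant is $-\tfrac12$ --- and the only delicate point, as you correctly flag, is the dominated-convergence bound on $k_\delta$ near the origin, which you handle via the uniform Lipschitz estimate $\|h_\delta'\|_\infty\le\|h'\|_\infty$. One small remark: your quadratic decay of $G$ is correct but could also be read off from the paper's Lemma~\ref{lemma3} with $\beta=1$ (the paper itself only cites the weaker $|x|^{-3/2}$ bound there, which already suffices).
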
 

\begin{proof} Start by noting that 
\begin{displaymath} \left| \int_{\R} e^{-2\pi ix\xi} |\xi|f(\xi) \, d\xi \right| = |(\Delta^{1/2}\widecheck{f})(x)| \lesssim_{f} \min\{1,|x|^{-3/2}\} \end{displaymath}
by Lemma \ref{lemma3}, so the integral on the left hand side of \eqref{form64} is absolutely convergent.

 Observe next  that
\begin{align*} \int_{\R} \log|x| \int_{\R} e^{-2\pi i x \xi}|\xi|f(\xi) \, d\xi \, dx & = \int_{\R} \log |x| \int_{\R} |\xi|f(\xi) \cos(2\pi x\xi) \, d\xi \, dx\\
&\qquad - i \int_{\R} \log |x| \int_{\R} |\xi|f(\xi) \sin(2\pi x\xi) \, d\xi \, dx. \end{align*} 
Here
\begin{displaymath} x \mapsto \int_{\R} |\xi| f(\xi) \sin (2\pi x \xi) \, d\xi \end{displaymath}
is odd and decays like $\lesssim |x|^{-3/2}$, and $x \mapsto \log |x|$ is even, so the second integral vanishes. Therefore, 
\begin{align} \int_{\R} \log|x| \int_{\R} e^{-2\pi i x \xi}|\xi|f(\xi) \, d\xi & = 2 \int_{0}^{\infty} \log x \int_{\R} |\xi|f(\xi) \cos(2\pi x \xi) \, d\xi \, dx \notag\\
&\label{form83} = 2 \lim_{R \to \infty} \int_{\R} |\xi|f(\xi) \int_{0}^{R} \log x \cos(2\pi x \xi) \, dx \, d\xi. \end{align}
For $\xi \neq 0$ fixed, we integrate by parts to evaluate the inner integral:
\begin{align*} \int_{0}^{R} \log x \cos (2\pi x \xi) \, dx & = \Big/_{x = 0}^{R} \frac{\log x \sin (2\pi x \xi)}{2\pi \xi} - \int_{0}^{R} \frac{\sin (2\pi x \xi)}{2\pi x \xi} \, dx\\
& = \frac{\log R \sin (2\pi R\xi)}{2\pi \xi} - \int_{0}^{R} \frac{\sin (2\pi x \xi)}{2\pi x\xi} \, dx. \end{align*}
Therefore, we find 
\begin{align*} 2\lim_{R \to \infty} \int_{\R} |\xi|f(\xi) \int_{0}^{R} \log x \cos(2\pi x \xi) \, dx \, d\xi & = \lim_{R \to \infty}  \log R \int_{\R} \frac{|\xi|f(\xi)}{\pi \xi} \sin(2\pi R\xi) \, d\xi\\
& \quad - \lim_{R \to \infty} \int \frac{|\xi|f(\xi)}{\pi \xi} \int_{0}^{R} \frac{\sin (2\pi x\xi)}{x} \, dx \, d\xi. \end{align*}
The first term equals zero, as we can see by splitting the $\xi$-integration into $(-\infty,0]$ and $[0,\infty)$, and integrating by parts. Regarding $[0,\infty)$, for example,
\begin{align*} \lim_{R \to \infty} \frac{\log R}{2\pi} \int_{0}^{\infty} f(\xi) \sin (2\pi R\xi) \, d\xi & = -\lim_{R \to \infty} \frac{\log R}{(2\pi)^{2}R} \Big/_{\xi = 0}^{\infty} f(\xi) \cos(2\pi R\xi)\\
& \quad + \lim_{R \to \infty} \frac{\log R}{(2\pi)^{2} R} \int_{0}^{\infty} f'(\xi)\cos(2\pi R\xi) \, d\xi = 0. \end{align*} 
Therefore, in fact
\begin{displaymath} \int_{\R} \log|x| \int_{\R} e^{-2\pi i x \xi}|\xi|f(\xi) \, d\xi = -\lim_{R \to \infty} \int \frac{|\xi|f(\xi)}{\pi \xi} \int_{0}^{R} \frac{\sin (2\pi x\xi)}{x} \, dx \, d\xi.\end{displaymath}
It is well-known (see \cite[Exercise 4.1.1]{MR3243734}) that
\begin{displaymath} \lim_{R \to \infty} \int_{0}^{R} \frac{\sin(2\pi x\xi)}{x} \, dx = \frac{\pi\sgn(\xi)}{2}, \qquad \xi \neq 0, \end{displaymath}
so by the dominated convergence theorem
\begin{displaymath} \lim_{R \to \infty} \int \frac{|\xi|f(\xi)}{\pi \xi} \int_{0}^{R} \frac{\sin (2\pi x\xi)}{x} \, dx \, d\xi = -\frac{1}{2} \int f(\xi) \, d\xi, \end{displaymath}
as desired. \end{proof} 

\section{Proof of Corollary \ref{thm:minimum principleExtension}}\label{appB}

The purpose of this section is to give more details on Corollary \ref{thm:minimum principleExtension}. The main tool is the following "intermediate" extension of \cite[Theorem 2.5]{reznikov2017minimum} (or Theorem \ref{thm:minimum principle}), where the global Ahlfors regularity assumption is kept intact, but the constant "$M$" has been replaced by a more general continuous lower bound:

\begin{thm}\label{thm:minimum principleStrengthening} Let $s \in (0,d]$, and let $\gamma\subset \R^d$ be a compact Ahlfors $s$-regular set. Let $\Omega \subset \R^{d}$ be an open set, and let $N \colon \gamma \cap \Omega \to \R$ be a continuous function. Let $\mu$ be a Radon measure on $\gamma$ satisfying 
	\begin{equation}\label{form164}
		U\mu(x)\ge N(x)\quad\text{for approximately all $x\in \gamma \cap \Omega$.}
	\end{equation}
Then $U\mu(x)\ge N(x)$ for all $x\in \gamma \cap \Omega$. \end{thm}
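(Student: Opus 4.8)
\textbf{Proof proposal for Theorem \ref{thm:minimum principleStrengthening}.}

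The plan is to reduce Theorem \ref{thm:minimum principleStrengthening} to the original minimum principle, Theorem \ref{thm:minimum principle}, by a localisation argument. Fix a point $x_{0} \in \gamma \cap \Omega$; we want to show $U\mu(x_{0}) \geq N(x_{0})$. The idea is to zoom in on a small ball $B := B(x_{0},r)$ with $\bar{B}(x_{0},2r) \subset \Omega$, and to replace the genuinely variable lower bound $N$ by an honest constant on $\gamma \cap B$. The obstruction to doing this naively is twofold: first, $N$ is only \emph{approximately} below $U\mu$, and second, the mass of $\mu$ lying outside $B$ contributes a potential $U(\mu|_{\gamma \setminus B})$ which is not constant. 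I would handle the second point by splitting $\mu = \mu|_{\bar{B}} + \mu|_{\gamma \setminus \bar{B}}$. The potential $U(\mu|_{\gamma \setminus \bar{B}})$ is harmonic (hence continuous) in $B$, so on a slightly smaller ball $B' := B(x_{0},r/2)$ it is a continuous function bounded below by some constant $-C$; likewise $N$, being continuous on the compact set $\gamma \cap \bar{B}'$, is bounded below by a constant $M_{0}$ there. The key cancellation is that the hypothesis $U\mu \geq N$ approximately everywhere on $\gamma \cap \Omega$ now reads $U(\mu|_{\bar{B}})(x) \geq N(x) - U(\mu|_{\gamma \setminus \bar{B}})(x)$ approximately everywhere on $\gamma \cap B'$, and the right-hand side is continuous there.

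The next step is to iterate, or rather to set up a clean statement one level simpler. In fact I realise the cleanest route is: having localised, replace $N$ by its infimum. Concretely, on $\gamma \cap B'$ we have $U(\mu|_{\bar{B}})(x) \geq M_{1}$ for approximately all $x$, where $M_{1} := \inf_{\gamma \cap \bar{B}'}(N - U(\mu|_{\gamma\setminus\bar{B}}))$, a finite constant. If $M_{1} \geq 0$, we would like to invoke Theorem \ref{thm:minimum principle} directly, but that theorem needs the lower bound on all of $\gamma$, not just on $\gamma \cap B'$. This is exactly the gap between Theorem \ref{thm:minimum principle} and Corollary \ref{thm:minimum principleExtension}, so circular reasoning must be avoided. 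The honest fix is to apply the $\epsilon$-version of the minimum principle in \cite[Theorem 2.5]{reznikov2017minimum} to the measure $\mu|_{\bar{B}}$ on the (globally) Ahlfors regular set $\gamma$: the point is that the \emph{negative part} of the set where $U(\mu|_{\bar B})$ fails the bound is contained in $\gamma \setminus B'$, a set bounded away from $x_{0}$, on which $U(\mu|_{\bar{B}})$ is merely continuous and finite (no singularity, since $\spt(\mu|_{\bar B}) \subset \bar B$). One then runs the argument of \cite{reznikov2017minimum}: the exceptional set of inner capacity zero where \eqref{form164} fails is handled by the Ahlfors regularity of $\gamma$ exactly as in the proof of Theorem \ref{thm:minimum principle}, yielding $U(\mu|_{\bar{B}}) \geq M_{1}$ \emph{everywhere} on $\gamma \cap B'$; adding back $U(\mu|_{\gamma \setminus \bar B})$ and using $M_{1} \leq N(x_{0}) - U(\mu|_{\gamma\setminus\bar B})(x_{0}) + o(1)$ as $r \to 0$ (by continuity of $N$ and of the harmonic piece) gives $U\mu(x_{0}) \geq N(x_{0})$.

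The main obstacle I anticipate is making the previous paragraph non-circular and rigorous: one genuinely must either re-examine the proof of \cite[Theorem 2.5]{reznikov2017minimum} to see that, for a measure supported in $\bar B$, the argument is local near $x_{0}$ and uses Ahlfors regularity of $\gamma$ only to control the capacity-zero exceptional set, OR one must phrase everything through the $\epsilon$-minimum principle (inequality with an $\epsilon$-loss) from \cite{reznikov2017minimum}, which already incorporates the right robustness, and then send $\epsilon \to 0$. I would go with the second option. Once Theorem \ref{thm:minimum principleStrengthening} is established, Corollary \ref{thm:minimum principleExtension} follows immediately by taking $N \equiv M$ constant, since a constant is continuous; this deduction is a one-liner and I would state it as such right after the proof. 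A final remark worth including: the role of the hypothesis ``$\gamma \cap \overline{\Omega}$ is contained in a compact Ahlfors $s$-regular set'' in Corollary \ref{thm:minimum principleExtension} (as opposed to $\gamma$ itself being regular in Theorem \ref{thm:minimum principleStrengthening}) is absorbed by first restricting attention to a small ball $B$ with $\bar B \subset \Omega$, on which $\gamma$ \emph{is} a subset of a globally regular set; one replaces $\mu$ by $\mu|_{\bar B}$ and the ambient set $\gamma$ by (a regular extension of) $\gamma \cap \bar B$ before invoking Theorem \ref{thm:minimum principleStrengthening}.
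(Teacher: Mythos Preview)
Your localisation-and-splitting approach is substantially more complicated than the paper's, and in the end it circles back to the very ingredient that makes the paper's proof a two-liner. The paper does not reduce to the constant-$M$ case at all. Instead it quotes \cite[Proposition~2.7]{reznikov2017minimum} directly: on an Ahlfors $s$-regular set, every $x\in\gamma$ is a weak $\mathcal{H}^s$-Lebesgue point of $U\mu$, i.e.
\[
U\mu(x)=\lim_{r\to 0}\frac{1}{\mathcal{H}^{s}(\gamma\cap B(x,r))}\int_{\gamma\cap B(x,r)}U\mu(y)\,d\mathcal{H}^{s}(y).
\]
Combined with the observation (also from \cite{reznikov2017minimum}) that sets of inner capacity zero are $\mathcal{H}^s$-null, the hypothesis $U\mu\ge N$ holds $\mathcal{H}^s$-a.e.\ on $\gamma\cap\Omega$, and one simply passes the integral average through the inequality and uses continuity of $N$ at $x$. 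No splitting of $\mu$, no shrinking balls, no $\epsilon$-version is needed. Your proposal eventually says ``one then runs the argument of \cite{reznikov2017minimum}'' --- that argument \emph{is} the Lebesgue-point result, and once you have it, everything else in your outline is redundant.

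There is also a genuine misreading in your last paragraph. Corollary~\ref{thm:minimum principleExtension} does \emph{not} follow from Theorem~\ref{thm:minimum principleStrengthening} by taking $N\equiv M$: in the corollary $\gamma$ itself is not assumed Ahlfors regular, only $\gamma\cap\overline{\Omega}$ is contained in such a set. The paper's deduction splits $\mu=\mu_{\mathrm{in}}+\mu_{\mathrm{out}}$ with $\mu_{\mathrm{in}}=\mu|_{\gamma\cap\overline{\Omega}}$ and applies Theorem~\ref{thm:minimum principleStrengthening} to $\mu_{\mathrm{in}}$ (supported on the regular set) with the \emph{non-constant} continuous function $N(x)=M-U\mu_{\mathrm{out}}(x)$. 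This is precisely why the continuous-$N$ version is formulated: to absorb the contribution of $\mu_{\mathrm{out}}$, not as a cosmetic generalisation.
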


\begin{proof} The main work behind \cite[Theorem 2.5]{reznikov2017minimum} goes into \cite[Proposition 2.7]{reznikov2017minimum}. That result says that, under the Ahlfors $s$-regularity assumption, every point $x \in \gamma$ is a weak $s$-Lebesgue point for $U\mu$:
\begin{equation}\label{form163} U\mu(x) = \lim_{r \to 0} \frac{1}{\mathcal{H}^{s}(\gamma \cap B(x,r))} \int_{\gamma \cap B(x,r)} U\mu(y) \, d\mathcal{H}^{s}(y), \qquad x \in \gamma. \end{equation}
With this result in hand, the proof of \cite[Theorem 2.5]{reznikov2017minimum} is short, see \cite[p. 242]{reznikov2017minimum}. The key observation is that if a property $P$ holds approximately everywhere, then $P$ holds $\mathcal{H}^{s}$ everywhere (for the proof, see \cite[p. 242]{reznikov2017minimum}). In particular, our main hypothesis \eqref{form164} holds for $\mathcal{H}^{s}$ almost all $x \in \gamma \cap \Omega$. Therefore, using \eqref{form163},
\begin{align*} U\mu(x) & = \lim_{r \to 0} \frac{1}{\mathcal{H}^{s}(\gamma \cap B(x,r))} \int_{\gamma \cap B(x,r)} U\mu(y) \, d\mathcal{H}^{s}(y)\\
& \geq \lim_{r \to 0} \frac{1}{\mathcal{H}^{s}(\gamma \cap B(x,r))} \int_{\gamma \cap B(x,r)} N(y) \, d\mathcal{H}^{s}(y) = N(x), \qquad x \in \gamma \cap \Omega, \end{align*}
by the continuity of $N$, and the openness of $\Omega$. \end{proof}

We finally deduce Corollary \ref{thm:minimum principleExtension} from Theorem \ref{thm:minimum principleStrengthening}:

\begin{proof}[Proof of Corollary \ref{thm:minimum principleExtension}] Write $\bar{\gamma} := \gamma \cap \overline{\Omega}$, and decompose
\begin{displaymath} \mu := \mu|_{\bar{\gamma}} + \mu|_{\gamma \, \setminus \, \overline{\Omega}} =: \mu_{\mathrm{in}} + \mu_{\mathrm{out}}. \end{displaymath}
Now, by hypothesis,
\begin{displaymath} U\mu_{\mathrm{in}}(x) \geq M - U\mu_{\mathrm{out}}(x) =: N(x) \quad \text{for approximately all $x \in \gamma \cap \Omega$}. \end{displaymath}
Note that $N$ is continuous on $\gamma \cap \Omega$. Since $\mu_{\mathrm{in}}$ is supported on a compact Ahlfors $s$-regular set by hypothesis, Theorem \ref{thm:minimum principleStrengthening} implies $U\mu_{\mathrm{in}}(y) \geq N(x)$ for all $x \in \gamma \cap \Omega$. Thus $U\mu(x) = U\mu_{\mathrm{in}}(x) + U\mu_{\mathrm{out}}(x) \geq M$ for all $x \in \gamma \cap \Omega$. \end{proof}

\bibliographystyle{plain}
\bibliography{references}

\end{document}